\newtheorem{dfn}{Definition}[section]
\newtheorem{thm}[dfn]{Theorem}
\newtheorem{lem}[dfn]{Lemma}
\newtheorem{rem}[dfn]{Remark}
\newtheorem{cor}[dfn]{Corollary}
\newtheorem{prop}[dfn]{Proposition}
\newtheorem{ex}[dfn]{Example}
\newtheorem{que}[dfn]{Question}
\title{The Reciprocal Kirchberg Algebras}
\author{Taro Sogabe}
\date{}
\begin{document} 
\maketitle
\abstract
For two unital Kirchberg algebras with finitely generated K-groups,
we introduce a property, called reciprocality, which is proved to be closely related to the homotopy theory of Kirchberg algebras.
We show the Spanier--Whitehead duality for bundles of separable nuclear UCT C*-algebras with finitely generated K-groups and conclude that two reciprocal Kirchberg algebras share the same structure of their bundles.
\tableofcontents
\section{Introduction}
Bundles of C*-algebras naturally appear in several areas, including Kasparov's KK-theory for continuous fields of C*-algebras \cite{K, D3}, the classification of group actions via topological invariants introduced in M. Izumi and H. Matui's work \cite{IM}, the Dixmier--Douady and the Dadarlat--Pennig theory \cite{DD, DP}.
If we restrict ourselves to considering locally trivial bundles over $X$ whose fiber is a fixed C*-algebra $A$,
they are classified by the homotopy set $[X, \operatorname{BAut}(A)]$ consisting of homotopy classes of continuous maps from $X$ to the classifying space of the automorphism group $\operatorname{Aut}(A)$.
The homotopy set $[X, \operatorname{BAut}(A)]$ gives a cohomology group in the Dadarlat--Pennig theory,
and it also provides the topological invariant of group actions in \cite{IM} (i.e., $[\operatorname{B}G, \operatorname{BAut}(A)]$ is equal to the set of cocycle conjugacy classes of the outer actions of $G$ to the stable Kirchberg algebra $A$).
Thus, we are interested in the homotopy theory of $\operatorname{BAut}(A)$ including the homotopy groups $\pi_n(\operatorname{BAut}(A))\cong \pi_{n-1}(\operatorname{Aut}(A))$,
and, from the viewpoint of C*-algebra theory, it is an interesting problem to investigate $\pi_{n-1}(\operatorname{Aut}(A))$, $[X, \operatorname{Aut}(A)]$ and $[X, \operatorname{BAut}(A)]$ using the K-theory of $X$ and $A$,
where the Kirchberg algebras $A$ including the Cuntz algebras $\mathcal{O}_n, \mathcal{O}_\infty$ are appropriate targets to challenge the problem.

The research on the bundles of $\mathcal{O}_n$ was initiated by M. Dadarlat,
and, in \cite[Sec. 2]{D2}, he introduced a crucial idea of focusing on the Puppe sequences appearing in the construction of bundles of $\mathcal{O}_n$ as the Cuntz--Pimsner algebras.
In our previous work,
the above idea and the Dadarlat--Pennig theory lead us to \cite[Thm. 3.8]{S2} which gives a natural bijection $$[X,\operatorname{BAut}(\mathcal{O}_n)]\to [X, \operatorname{BAut}(\mathbb{M}_{n-1}(\mathcal{O}_\infty))].$$
By the above bijection,
the classification of the bundles of $\mathcal{O}_n$ in \cite{D2} via a combination of the Puppe sequence argument and Kirchberg--Gabe's theorem \cite[Thm. E, G]{G}  is now reduced to the classification of corners of $C(X)\otimes\mathcal{O}_\infty\otimes \mathbb{K}$,
and the later classification is easy.
In the point that a difficult classification problem is reduced to easier one,
the bijection is interesting,
and it is natural to ask why these two algebras have such a nice property concerning their bundles.
A notable property of the pair $\mathcal{O}_n$ and $\mathbb{M}_{n-1}(\mathcal{O}_\infty)$ is that they share the same homotopy groups
\[\pi_i(\operatorname{Aut}(\mathcal{O}_n))\cong \pi_i(\operatorname{Aut}(\mathbb{M}_{n-1}(\mathcal{O}_\infty))), \quad i\geq 1,\]
and the question is how to characterize unital Kirchberg algebras with the same homotopy groups.

The reciprocality defined via the Spanier--Whitehead K-duality gives a nice answer to the above question.
For a separable nuclear UCT C*-algebra $C$ with finitely generated K-groups,
one can find  another separable nuclear UCT C*-algebra $D(C)$ uniquely up to $KK$-equivalence,
which is called the Spanier--Whitehead K-dual of $C$.
This is a duality for $KK$-theory established in \cite{KP, KK, KS} by D. S. Kahn, J. Kaminker, I. Putnam, C. Schochet,
and the algebra $D(C)$ is characterized by $KK^i(C, \mathbb{C})\cong KK^i(\mathbb{C}, D(C))$.

In \cite{D4},
M. Dadarlat computes the groups $[X, \operatorname{Aut}(A)]$
using the mapping cone $C_{u_A}$ of the unital map $u_A : \mathbb{C}\to A$,
and obtains the group isomorphisms
$$\pi_i(\operatorname{Aut}(A))\cong KK^{i+1}(C_{u_A}, A),\quad i\geq 1,$$
for every unital UCT Kirchberg algebra $A$ (see \cite[Cor. 5.10.]{D4}).
If $A$ has finitely generated K-groups,
$C_{u_A}$ is dualizable and one has
$$\pi_i(\operatorname{Aut}(A))=K_{i+1}(D(C_{u_A})\otimes A),\quad i\geq 1.$$
Now the condition $\pi_i(\operatorname{Aut}(A))\cong \pi_i(\operatorname{Aut}(B)), i\geq 1$ is equivalent to the following $KK$-equivalence $$D(C_{u_A})\otimes A\sim_{KK}D(C_{u_B})\otimes B.$$
There are two solutions to the above equivalence relation and the nontrivial one is the definition of reciprocality.
\begin{dfn}\label{REC}
The unital UCT Kirchberg algebras $A$ and $B$ with finitely generated K-groups are called reciprocal if the following holds $\colon$
\[A\sim_{KK} D(C_{u_B}),\quad B\sim_{KK} D(C_{u_A}).\]
\end{dfn}
\begin{thm}[{Thm. \ref{MT}}]\label{sm}
Let $A$ and $B$ be two unital UCT Kirchberg algebras with finitely generated K-groups.
\begin{enumerate}
\bibitem{}For every $A$,
there uniquely exists $B$ which is reciprocal to $A$,
and such $B$ is not KK-equivalent to $A$.
\bibitem{}One has $\pi_i(\operatorname{Aut}(A))\cong \pi_i(\operatorname{Aut}(B)), i\geq 1$ if and only if either $A\cong B$ or $A$ and $B$ are reciprocal.
\bibitem{}For $(A, B)$ in 2 and a compact metrizable space $X$,
there exists a natural anti-isomorphism of groups
$$[X, \operatorname{Aut}(A)]\to [X, \operatorname{Aut}(B)].$$
\end{enumerate}
\end{thm}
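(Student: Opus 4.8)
The plan is to reduce all three assertions to computations in $KK$-theory via Dadarlat's identification $\pi_i(\operatorname{Aut}(A))\cong KK^{i+1}(C_{u_A},A)=K_{i+1}(D(C_{u_A})\otimes A)$, and to organize everything around the operation $A\mapsto A^{\vee}$, where $A^{\vee}$ denotes the unital UCT Kirchberg algebra that is $KK$-equivalent to $D(C_{u_A})$ equipped with a suitable unit class. The reciprocal condition then reads $B\sim_{KK}A^{\vee}$ together with $A\sim_{KK}B^{\vee}$, so Statement 1 becomes the claim that $A\mapsto A^{\vee}$ is a well-defined involution that moves every $KK$-class. I would first make this operation explicit on invariants: from the mapping cone extension $0\to SA\to C_{u_A}\to\mathbb{C}\to0$ the six-term sequence gives $K_1(C_{u_A})\cong K_0(A)/\langle[1_A]\rangle$ and a short exact sequence $0\to K_1(A)\to K_0(C_{u_A})\to\ker([1_A]\colon\mathbb{Z}\to K_0(A))\to0$, and feeding these into the $\operatorname{Hom}$--$\operatorname{Ext}$ description $K_i(D(C))\cong\operatorname{Hom}(K_i(C),\mathbb{Z})\oplus\operatorname{Ext}(K_{i-1}(C),\mathbb{Z})$ yields $(K_0(A^{\vee}),K_1(A^{\vee}))$ in terms of $(K_0(A),[1_A],K_1(A))$.

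For Statement 1 the unit class of $A^{\vee}$ must be pinned down, and I would take $[1_{A^{\vee}}]$ to be the image of the canonical quotient $b_A\colon C_{u_A}\to\mathbb{C}$ under $KK(C_{u_A},\mathbb{C})\cong KK(\mathbb{C},D(C_{u_A}))=K_0(A^{\vee})$; a direct check (for instance on $A=\mathcal{O}_n$, where this returns $\mathbb{M}_{n-1}(\mathcal{O}_\infty)$ with unit class $n-1$) confirms this choice. The involution $A^{\vee\vee}\cong A$ is then verified by rerunning the computation, existence of $A^{\vee}$ as a unital Kirchberg algebra is supplied by the Kirchberg--Phillips range-of-invariant theorem, and uniqueness holds because both the $K$-groups of $B$ (forced by $B\sim_{KK}A^{\vee}$) and its unit class (forced by $A\sim_{KK}B^{\vee}$) are determined. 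For the non-$KK$-equivalence I would isolate the numerical obstruction in the formulas above: if $[1_A]$ has infinite order the ranks of $(K_0(A^{\vee}),K_1(A^{\vee}))$ are $(\operatorname{rank}K_1(A),\operatorname{rank}K_0(A)-1)$, while if $[1_A]$ is torsion they are $(\operatorname{rank}K_1(A)+1,\operatorname{rank}K_0(A))$; in either case matching these against $(\operatorname{rank}K_0(A),\operatorname{rank}K_1(A))$ forces a contradiction, so $A\not\sim_{KK}A^{\vee}$. The delicate point here is the bookkeeping of the unit class together with the suspension conventions in the duality, and I would check every identification against the explicit $K$-theory rather than the triangulated shortcut.

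For Statement 2, I would use that the homotopy condition is equivalent to $D(C_{u_A})\otimes A\sim_{KK}D(C_{u_B})\otimes B$, i.e. $A^{\vee}\otimes A\sim_{KK}B^{\vee}\otimes B$. The implication $(\Leftarrow)$ is immediate: if $A\cong B$ there is nothing to prove, and if $A,B$ are reciprocal then $A^{\vee}\otimes A\sim_{KK}B\otimes A\sim_{KK}A\otimes B\sim_{KK}B^{\vee}\otimes B$. The converse is the crux. Here I would translate $A^{\vee}\otimes A\sim_{KK}B^{\vee}\otimes B$ into a system of isomorphisms of finitely generated abelian groups via the Künneth theorem and the dual formulas of Statement 1, and show that the resulting invariant of the unordered pair determines $\{A,A^{\vee}\}$; the symmetry $A\leftrightarrow A^{\vee}$ of $A^{\vee}\otimes A$ is exactly why two solutions appear, and the task is to prove there are no others. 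I expect this uniqueness-of-solutions analysis, in particular controlling the torsion subgroups and the unit class, to be the main obstacle of the whole theorem.

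Finally, for Statement 3 I would start from Dadarlat's description of $[X,\operatorname{Aut}(A)]$ as a $KK$-type group built from $C_{u_A}$, $A$ and $X$, carrying the group structure induced by composition in $\operatorname{Aut}(A)$. Applying the Spanier--Whitehead duality for bundles established earlier in the paper, together with the reciprocal identifications $A\sim_{KK}D(C_{u_B})$ and $B\sim_{KK}D(C_{u_A})$, I would produce a natural bijection between this group and the corresponding group for $B$. Because the duality functor $D$ is contravariant it reverses the order of composition, which is precisely what turns the bijection into an anti-isomorphism, and naturality in $X$ is inherited from that of the duality. The step to watch is verifying that the duality isomorphism intertwines the two group structures up to reversal, which requires a compatible explicit description of the product on both sides.
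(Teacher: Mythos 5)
Your framework coincides with the paper's: Dadarlat's identification $\pi_i(\operatorname{Aut}(A))\cong K_{i+1}(D(C_{u_A})\otimes A)$, the operation $A\mapsto A^{\vee}$ with unit class the image of $e_A\in KK(C_{u_A},\mathbb{C})$ under duality, the rank comparison for $A\not\sim_{KK}A^{\vee}$ (your case analysis $(F_1,F_0-1)$ vs.\ $(F_1+1,F_0)$ is exactly the paper's), and the K\"unneth reduction for Statement~2. But the proposal stops short of the two ingredients that actually carry the theorem, so it has genuine gaps. First and most importantly, in Statement~2 you reduce the converse to showing that the graded group $K_*(A^{\vee}\otimes A)$ determines the unordered pair $\{A,A^{\vee}\}$, and then write that you ``expect this uniqueness-of-solutions analysis \dots to be the main obstacle.'' That obstacle \emph{is} the theorem: the paper resolves it with a delicate induction over finite abelian $p$-groups (Prop.~\ref{mt}, Cor.~\ref{daij}), which only works because the pairs $(K_0(A)(p),K_1(C_{u_A})(p))$ satisfy a rigid interlacing condition ($**$) forced by the mapping-cone structure (Prop.~\ref{vn}, proved via the explicit classification in the Appendix), together with an integer lemma (Lem.~\ref{ele}) to pin the free ranks. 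Nothing in your outline rules out, for instance, that torsion from the summand $(A_p\otimes\tilde{A}_p)^{2}$ reshuffles against the other summands to produce a third, non-reciprocal solution; excluding that is precisely what ($**$) and the induction accomplish.

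Second, you repeatedly treat as automatic a group-theoretic fact that is nontrivial and is used at every stage: if $G$ is finitely generated and $G/\langle g_1\rangle\cong G/\langle g_2\rangle$, then some automorphism of $G$ carries $g_1$ to $g_2$ (the paper's Prop.~\ref{kmc}, which fails for non--finitely-generated groups). Your uniqueness argument in Statement~1 (``its unit class \dots is determined'') is exactly an invocation of this fact, since reciprocality only pins down the quotient $K_0(B)/\langle[1_B]_0\rangle\cong K_1(D(A))$, not the class itself. The same fact is indispensable in Statement~3: contravariance of $D$ does not by itself reverse Dadarlat's product, because $\circ_A$ is not composition but the twisted operation $x\circ_A y=x+y+y\hat{\otimes}((KK(\iota_A)\hat{\otimes}x)\otimes I_{C(X)})\hat{\otimes}(I_{SA}\otimes\Delta_X)$, built from $\iota_A$ and $e_A$; the dual of the cone sequence of $A$ is only the cone sequence of $B$ up to noncanonical KK-equivalences, and one must first construct correcting equivalences $\alpha\in KK(B\otimes S,B\otimes S)^{-1}$, $\beta\in KK(C_{u_B},C_{u_B})^{-1}$ matching $d(e_A)$ with $u_B$ and $d(\iota_A)$ with $\iota_B$ (the paper's Lem.~\ref{36}, \ref{37}, again via Prop.~\ref{kmc}) before the anti-multiplicativity computation (Thm.~\ref{mul}) can be run. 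You flag this as ``the step to watch'' but offer no mechanism for it; also note the relevant duality here is ordinary K-duality with $C(X)$ coefficients (Lem.~\ref{b1}), not the bundle ($K_X$) duality of Section~4 that you invoke.
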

\begin{rem}
It might be surprising that a dichotomy holds in statement 2 because a similar result does not hold in the case of stable Kirchberg algebras.
Statement 3 (and also our main result Thm. \ref{RB}) is interesting because it seems impossible to construct some nice maps between $\operatorname{Aut}(A)$ and $\operatorname{Aut}(B)$
\end{rem}
There is a categorical picture to understand the reciprocality.
Note that, for two dualizable algebras $C_1, C_2$ and a morphism $C_1\to C_2$, the duality
\[KK(C_1, C_2)\cong KK(D(C_2), D(C_1))\]
provides another morphism $D(C_1)\leftarrow D(C_2)$, where the dual morphism depends on the choice of the duality classes (see Def. \ref{SK}, Lem. \ref{b}).
We show that the mapping cone sequences of the reciprocal algebras are ``dual'' to each other.
\begin{thm}[{Thm. \ref{ca}}]\label{mn}
Let $A$ and $B$ be the reciprocal Kirchberg algebras.
For the mapping cone sequence
\[C_{u_A}\xrightarrow{e_A} \mathbb{C}\xrightarrow{u_A} A,\]
the Spanier--Whitehead K-duality gives another sequence

\[\xymatrix{
D(C_{u_A})& D(\mathbb{C})\ar[l]& D(A)\ar[l]\\
B\ar@{.>}[u]&\mathbb{C}\ar[l]^{u_B}\ar@{=}[u]&C_{u_B},\ar@{.>}[u]\ar[l]^{e_B}
}\]
and there exist two broken arrows which are KK-equivalences making the diagram commutative.
\end{thm}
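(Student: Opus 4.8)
The plan is to regard the Spanier--Whitehead K-duality as a contravariant additive functor $D$ on the full subcategory of dualizable separable nuclear UCT $C^*$-algebras inside the $KK$-category, and to exploit that this category is triangulated with the mapping cone sequences as its distinguished triangles. First I would record that $D$ is \emph{exact}: because the duality classes of Def.~\ref{SK} intertwine the suspension with its inverse and convert a Puppe sequence into a Puppe sequence (Lem.~\ref{b}), $D$ carries the distinguished triangle
\[C_{u_A}\xrightarrow{e_A}\mathbb{C}\xrightarrow{u_A}A\xrightarrow{\partial}SC_{u_A}\]
to a distinguished triangle whose terms are $D(A),\ D(\mathbb{C}),\ D(C_{u_A})$. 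Using the canonical self-duality $D(\mathbb{C})\sim_{KK}\mathbb{C}$, the reciprocality equivalence $B\sim_{KK}D(C_{u_A})$ of Def.~\ref{REC}, and the involutivity of $D$ (so that $A\sim_{KK}D(C_{u_B})$ yields $D(A)\sim_{KK}C_{u_B}$), the top row of the displayed diagram is therefore a distinguished triangle with exactly the same three vertices as the mapping cone triangle
\[C_{u_B}\xrightarrow{e_B}\mathbb{C}\xrightarrow{u_B}B\xrightarrow{\partial'}SC_{u_B}\]
of the bottom row, and the two middle terms are identified by the identity of $\mathbb{C}$.

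At this point the statement reduces to a comparison of two distinguished triangles sharing the middle object $\mathbb{C}$, and everything follows once I produce a single commuting square. I would first fix the right-hand square, i.e.\ find a $KK$-equivalence $\beta\colon B\to D(C_{u_A})$ with $\beta\circ u_B=D(e_A)$. Since the source is $\mathbb{C}$, the group $KK(\mathbb{C},D(C_{u_A}))$ is just $K_0(D(C_{u_A}))$, so this is the purely K-theoretic requirement that $\beta$ send the unit class $[1_B]$ to the class $[D(e_A)]$. Here the UCT is decisive: any graded isomorphism $K_*(B)\xrightarrow{\sim}K_*(D(C_{u_A}))$ lifts to a $KK$-equivalence inducing it, so it suffices to exhibit such an isomorphism carrying $[1_B]$ to $[D(e_A)]$. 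The \textbf{main obstacle} is precisely the identification of the dual of the augmentation with a unit: I would compute $[D(e_A)]$ by running the six-term sequence of $0\to SA\to C_{u_A}\xrightarrow{e_A}\mathbb{C}\to 0$, which exhibits $e_{A*}$ as the realisation of $K_0(C_{u_A})$ onto $\ker([1_A]\colon\mathbb{Z}\to K_0(A))$, and then track this class through the duality isomorphism $KK(C_{u_A},\mathbb{C})\cong KK(\mathbb{C},D(C_{u_A}))$; the output must be checked to coincide, as a \emph{pointed} K-group, with $(K_*(B),[1_B])$. This pointed matching is the one underlying the construction of the reciprocal algebra in Thm.~\ref{sm}, and verifying that the duality respects the distinguished element (rather than merely the underlying groups) is the delicate point, since reciprocality a priori only remembers $KK$-equivalence and forgets the unit.

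With $\beta$ in hand the right square commutes, so $(\mathrm{id}_{\mathbb{C}},\beta)$ is a morphism between the two truncated triangles. I would then invoke the completion axiom (TR3) of the triangulated $KK$-category to obtain a third arrow $\gamma\colon C_{u_B}\to D(A)$ fitting into a morphism of distinguished triangles, which forces the left square $D(u_A)\circ\gamma=e_B$ to commute as well. Finally, because the middle vertical arrow $\mathrm{id}_{\mathbb{C}}$ and the right vertical arrow $\beta$ are $KK$-equivalences, the triangulated five-lemma shows that $\gamma$ is a $KK$-equivalence too. The two broken arrows $\beta$ and $\gamma$ are then the required equivalences, and the diagram commutes. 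I expect the routine parts to be the exactness of $D$ and the five-lemma bookkeeping, while the genuine work is concentrated in the pointed K-theory identification $[D(e_A)]\leftrightarrow[1_B]$ described above.
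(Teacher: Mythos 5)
Your strategy has the right overall shape, but it contains a genuine gap at exactly the point you yourself flag as ``the main obstacle,'' and you never close it. Knowing $B\sim_{KK}D(C_{u_A})$ and computing the induced map $e_{A*}$ on $K_0$ cannot, by themselves, produce an isomorphism $K_*(B)\cong K_*(D(C_{u_A}))$ carrying $[1_B]_0$ to the class $d_{\mu_{C_{u_A}},\nu_{\mathbb{C}}}(e_A)$ of the dual morphism: an unpointed $KK$-equivalence gives no control over where the unit class goes, and a priori the two elements could lie in different orbits of $\operatorname{Aut}(K_0(D(C_{u_A})))$. The paper closes this gap by a two-step argument you do not supply. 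First, both elements are shown to have isomorphic quotients: $KK(D(\mathbb{C}),D(C_{u_A}))/\langle d_{\mu_{C_{u_A}},\nu_{\mathbb{C}}}(e_A)\rangle\cong KK(C_{u_A},\mathbb{C})/\langle KK(e_A)\rangle\cong KK(SA,\mathbb{C})$ by Lem.~\ref{b1} and the Puppe sequence, while $K_0(B)/\langle [1_B]_0\rangle\cong K_1(C_{u_B})\cong KK(SA,\mathbb{C})$ by the mapping cone sequence of $u_B$ together with $C_{u_B}\sim_{KK}D(A)$. Second, Prop.~\ref{kmc} --- two elements of a finitely generated abelian group with isomorphic quotients are conjugate under an automorphism --- converts this equality of quotients into the desired pointed isomorphism, which the UCT then lifts to the equivalence $\beta$. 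This proposition is the decisive ingredient (it fails for non-finitely generated groups, which is precisely why finite generation is assumed), and nothing in your proposal plays its role. Appealing to ``the construction of the reciprocal algebra in Thm.~\ref{sm}'' does not rescue the argument either: to pass from your given reciprocal $B$ to the canonically constructed algebra of Cor.~\ref{exb} you need the uniqueness statement Cor.~\ref{uB}, whose proof is again Prop.~\ref{kmc}.

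There is a second, smaller gap: your claim that $D$ is exact, i.e.\ that $D(A)\to D(\mathbb{C})\to D(C_{u_A})$ extends to a distinguished triangle, is attributed to Lem.~\ref{b}, but that lemma only manipulates duality classes and says nothing about triangles; since the $KK$-category is not closed monoidal, this exactness is not formal. It can be proved (complete $d_{\mu_{\mathbb{C}},\nu_A}(u_A)$ to a mapping cone triangle, factor $d_{\mu_{C_{u_A}},\nu_{\mathbb{C}}}(e_A)$ through the cone using the long exact sequence, then use Lem.~\ref{b2}, the five lemma on K-theory and the UCT to see the comparison map is a $KK$-equivalence), but that is real work your sketch omits, and TR3 cannot be applied to the top row until it is done. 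Granting both repairs, your second half is a genuinely different and arguably cleaner route than the paper's: the paper never dualizes the triangle as a triangle, but instead constructs the second broken arrow directly by repeating the quotient-plus-Prop.~\ref{kmc} argument inside $KK(D(A),\mathbb{C})$, whereas you would obtain it from TR3 and the triangulated five lemma, a tool the paper does have available as Lem.~\ref{RMN}. As written, however, the two decisive ideas are missing.
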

Note that $D(-)$ is not a functor (i.e., everything is determined only up to KK-equivalence),
and there should be no way to determine the duality classes and dual sequences canonically in general (see Lem. \ref{b}).
In the proof of the above two theorems,
the K-groups must be finitely generated,
and the following proposition is used frequently.
\begin{prop}[{Prop. \ref{kmc}}]
Let $G$ be a finitely generated Abel group, and let $g_1, g_2\in G$ be two elements satisfying $G/\langle g_1\rangle \cong G/\langle g_2 \rangle$.
Then, there exists an automorphism $$G\ni g_1\mapsto g_2\in  G.$$
\end{prop}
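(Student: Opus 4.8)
The statement is equivalent to saying that $g_1$ and $g_2$ lie in the same orbit of the natural $\operatorname{Aut}(G)$-action on $G$; the ``only if'' direction is clear, so the content is that the isomorphism type of $G/\langle g\rangle$ is a complete invariant of this orbit. The plan is to prove this through a chain of reductions ending in a combinatorial statement about Young diagrams. First I would reduce to $G$ finite. Let $T\subseteq G$ be the (characteristic) torsion subgroup and write $G=F\oplus T$ with $F$ free of rank $r$. Since $G/\langle g\rangle$ has free rank $r$ when $g$ is torsion and $r-1$ otherwise, the hypothesis forces $g_1,g_2$ to be simultaneously torsion or non-torsion. If both are torsion then $g_i\in T$ and $G/\langle g_i\rangle\cong F\oplus(T/\langle g_i\rangle)$, so comparing torsion subgroups gives $T/\langle g_1\rangle\cong T/\langle g_2\rangle$, and an automorphism of $T$ carrying $g_1$ to $g_2$ extends by the identity on $F$. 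If both are non-torsion I would first use $\operatorname{GL}_r(\mathbb{Z})$ to normalise the free parts to $d_ie_1$, where $d_i$ is the content of the free part; since $|T(G/\langle g_i\rangle)|=d_i|T|$ this yields $d_1=d_2=:d$, and the residual freedom reduces matters to prescribing the torsion part modulo $dT$. This is governed by the extension $0\to T\to T(G/\langle g\rangle)\to\mathbb{Z}/d\to0$, whose class in $\operatorname{Ext}^1(\mathbb{Z}/d,T)\cong T/dT$ is the torsion part of $g$, and one must check that the isomorphism type of the middle term determines this class up to $\operatorname{Aut}(T)\times(\mathbb{Z}/d)^{\times}$ --- the same flavour of orbit statement as the finite case below.

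Next I would reduce the finite case to $p$-groups. Writing $T=\bigoplus_pT_p$ with each $T_p$ characteristic, we have $\langle g\rangle=\bigoplus_p\langle g_p\rangle$ and hence $T/\langle g\rangle=\bigoplus_pT_p/\langle g_p\rangle$, which is the primary decomposition of the quotient. Thus the hypothesis passes to each prime, and combining the per-prime automorphisms (using $\operatorname{Hom}(T_p,T_q)=0$ for $p\neq q$) settles the finite case. It therefore suffices to treat a finite abelian $p$-group $G=\bigoplus_i\mathbb{Z}/p^{\lambda_i}$, with partition $\lambda$, and to show that the $\operatorname{Aut}(G)$-orbit of $g$ is determined by the isomorphism type of $G/\langle g\rangle$.

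For this I would work with the indicator (height sequence) $U(g)=(h(g),h(pg),h(p^2g),\dots)$, where $h$ denotes the $p$-height. Two facts drive the argument: (i) the indicator is a complete invariant of the orbit of an \emph{element} (not merely of the subgroup $\langle g\rangle$) --- the passage from subgroups to elements being harmless, since any unit scaling of $g$ is realised by multiplication by a unit on all of $G$; and (ii) forming $G/\langle g\rangle$ deletes from the conjugate partition $\lambda'$ exactly one box in column $u+1$ for each entry $u$ of the indicator, so that $\mu'=\lambda'-\sum_u\mathbf{e}_{u+1}$, where $\mu$ is the partition of the quotient. Because the indicator is strictly increasing, $\lambda'-\mu'$ is a $0/1$ vector whose support is $\{u+1\}$; hence $G$ together with the isomorphism type of $G/\langle g\rangle$ recovers the indicator of $g$ exactly. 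Consequently two elements with isomorphic quotients have equal indicators and lie in one orbit, which finishes the $p$-group case.

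The main obstacle is establishing (i) and (ii). Both flow from the existence of a $g$-adapted decomposition $G=\bigoplus_i\langle c_i\rangle$ in which $g$ appears as a sum of ``monomials'' $p^{b_i}c_i$ whose heights $b_i$ realise the indicator; this is the Ulm-type ingredient that needs genuine care, the delicate point being that $\langle g\rangle$ need not be a direct summand of $G$, so one cannot simply split it off. I expect the bookkeeping in the non-torsion extension step of the first paragraph to be the second most technical point, while all the remaining reductions are formal.
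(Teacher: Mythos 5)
Your strategy is sound, and its skeleton is the same as the paper's: split into the torsion/non-torsion cases by comparing free ranks, normalize the free part and extract the content $d$ from the order of the torsion subgroup of the quotient, decompose into $p$-primary pieces (the paper's Lem.~\ref{mou}), and settle the $p$-primary case by putting $g$ into a normal form from which the quotient can be read off. Your two key facts are the paper's lemmas in different clothing: the $g$-adapted decomposition realizing the indicator is exactly Lem.~\ref{G1}, your deletion rule $\mu'=\lambda'-\sum_u\mathbf{e}_{u+1}$ is exactly Cor.~\ref{G2} (there the part $k_i$ becomes $k_i-r_i+r_{i-1}$, i.e.\ one box is removed from each column indexed by an indicator entry plus one), and your punchline that the strictly increasing indicator is recovered from $\lambda'-\mu'$ is Cor.~\ref{tort}. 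So for torsion elements the two arguments differ only in packaging (classical Ulm/indicator language and Young diagrams versus explicit isomorphisms), and invoking the classical transitivity theorem for finite abelian $p$-groups in place of proving it is legitimate.

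The genuine gap is in the non-torsion case. Your reduction is correct up to the point where you must show that the isomorphism type of the middle term of $0\to T\to T(G/\langle g\rangle)\to\mathbb{Z}/d\to 0$ determines the class $\bar t\in T/dT$ up to $\operatorname{Aut}(T)\times(\mathbb{Z}/d)^{\times}$; but calling this ``the same flavour of orbit statement as the finite case below'' conceals the actual difficulty: the orbit you ultimately need is taken in the \emph{mixed} group $T\oplus\mathbb{Z}$ (you need $t_2\equiv\alpha(t_1)\ \mathrm{mod}\ dT$ with $\alpha\in\operatorname{Aut}(T)$), and neither your indicator machinery nor your finite-case theorem applies verbatim to $T\oplus\mathbb{Z}$. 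This is precisely why the paper reruns the entire normal-form analysis a second time for $G\oplus\mathbb{Z}$ (Lem.~\ref{G3}, Cor.~\ref{G4}, Cor.~\ref{kmc2}). If you insist on deducing it from your finite case, it can be done, but two further arguments are required: first, for $p$-primary $T$ of exponent $p^v$ and $N\geq l+v$ one has $(T\oplus\mathbb{Z})/\langle(t,p^l)\rangle\cong(T\oplus\mathbb{Z}/p^N)/\langle(t,p^l)\rangle$, so your theorem produces an automorphism of the finite group $T\oplus\mathbb{Z}/p^N$ carrying $(t_1,p^l)$ to $(t_2,p^l)$; second, you must convert this into the congruence $t_2\equiv\alpha(t_1)\ \mathrm{mod}\ p^lT$ with $\alpha\in\operatorname{Aut}(T)$, and here the $T$-block of an automorphism of $T\oplus\mathbb{Z}/p^N$ need \emph{not} be an automorphism of $T$ (the swap on $\mathbb{Z}/p\oplus\mathbb{Z}/p$ has zero block); one has to check, e.g., that for $N\geq 2v$ the composite $T\to\mathbb{Z}/p^N\to T$ of the off-diagonal blocks vanishes, which forces the $T$-block to be invertible. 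A further small omission: absorbing the unit $u\in(\mathbb{Z}/d)^{\times}$ into $\operatorname{Aut}(T)$ requires replacing $u$ by a representative coprime to $|T|$ (Chinese Remainder Theorem), since multiplication by $u$ itself need not be an automorphism of $T$. None of these steps fails, but they are missing, and filling them in is roughly the same amount of work as the paper's direct treatment of $G\oplus\mathbb{Z}$.
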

\begin{rem}
For two reciprocal algebras $A$ and $B$,
$[1_A]_0\in \operatorname{Tor}(K_0(A))$ is equivalent to $[1_B]_0\not \in \operatorname{Tor}(K_0(B))$ (see Cor. \ref{exb}).
The class of unital UCT Kirchberg algebras with finitely generated K-groups is divided into two sub-classes,
where one is the class of C*-algebra $A$ with $[1_A]_0\in \operatorname{Tor}(K_0(A))$ and the other is of $B$ with $[1_B]_0\not \in \operatorname{Tor}(K_0(B))$,
and the reciprocality gives a one to one correspondence between these sub-classes.
\end{rem}

The categorical picture of the reciprocal algebras can be partially generalized to the setting of $C(X)$-algebras.
The unital map $\mathbb{C}\xrightarrow{u_A} A$ is replaced by unital $C(X)$-linear map $C(X)\xrightarrow{u_\mathcal{A}} \mathcal{A}$ of the locally trivial continuous $C(X)$-algebra with fiber $A$.
If we have the Spanier--Whitehead duality for $C(X)$-algebras and $KK_X$-groups, applying the duality to  $C_{u_\mathcal{A}}\xrightarrow{e_\mathcal{A}} C(X)$ gives another unital continuous field $\mathcal{B}$ with a unital map $\mathcal{B}\xleftarrow{u_\mathcal{B}} C(X)$.

As the first main result,
we prove the duality for locally trivial continuous $C(X)$-algebras.
We introduce the terminology $K_X$-dual to clarify the space $X$ on which the duality is discussed (see Def. \ref{SK}).
\begin{thm}[{Cor. \ref{CXd}}]\label{sm3}
Let $X$ be a finite CW-complex and let $C$ be a separable nuclear UCT C*-algebra with finitely generated K-groups.
For every locally trivial continuous $C(X)$-algebras $\mathcal{C}$ with fiber $C$,
there exists another locally trivial continuous $C(X)$-algebras $\mathcal{D}(\mathcal{C})$ with fiber $D(C)$ such that $\mathcal{C}$ and $\mathcal{D}(\mathcal{C})$ are Spanier--Whitehead $K_X$-dual.

In particular,
if $C$ and $D(C)$ are both stable Kirchberg algebras,
the duality gives a natural bijection
\[[X, \operatorname{BAut}(C)]\to [X, \operatorname{BAut}(D(C))].\]
\end{thm}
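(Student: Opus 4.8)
The plan is to deduce the statement from the general Spanier--Whitehead $K_X$-duality theorem established earlier, so that the only genuinely new points are (i) the identification of the fiber of $\mathcal{D}(\mathcal{C})$ with $D(C)$ together with local triviality, and (ii) the passage from an existence-and-uniqueness statement about $K_X$-duals to an honest bijection of homotopy sets. First I would invoke the main duality theorem to produce a $C(X)$-algebra $\mathcal{D}(\mathcal{C})$ that is $K_X$-dual to $\mathcal{C}$ in the sense of Def.~\ref{SK}, recording that the defining $RKK_X$-isomorphism characterizes $\mathcal{D}(\mathcal{C})$ up to $KK_X$-equivalence. The remaining work is to show that this abstractly-produced dual can be taken to be locally trivial with fiber $D(C)$, and then to extract the bijection.

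To pin down the fiber and local triviality I would argue locally. Since $X$ is a finite CW-complex one may choose a finite trivializing cover $\{U_\alpha\}$ with $\mathcal{C}|_{U_\alpha}\cong C(U_\alpha)\otimes C$. Duality is compatible with restriction to a subcomplex $U\subset X$, so that the $K_U$-dual of $\mathcal{C}|_U$ is $\mathcal{D}(\mathcal{C})|_U$; this is read off from the characterizing $RKK$-isomorphism, whose duality classes restrict. Over a trivializing piece the problem is then absolute up to tensoring with $C(U_\alpha)$: the $K_{U_\alpha}$-dual of the constant field $C(U_\alpha)\otimes C$ is $C(U_\alpha)\otimes D(C)$, because the absolute Spanier--Whitehead duality classes for the pair $(C,D(C))$ tensor with $\mathrm{id}_{C(U_\alpha)}$ to give $U_\alpha$-parametrized duality classes satisfying the zig-zag relations. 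Hence $\mathcal{D}(\mathcal{C})|_{U_\alpha}\cong C(U_\alpha)\otimes D(C)$, so $\mathcal{D}(\mathcal{C})$ is locally trivial with fiber $D(C)$.

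For the bijection I would use the classification of locally trivial bundles of stable Kirchberg algebras: when $C$ and $D(C)$ are both stable Kirchberg algebras, isomorphism classes of locally trivial fields with fiber $C$ (resp.\ $D(C)$) are in natural bijection with $[X,\operatorname{BAut}(C)]$ (resp.\ $[X,\operatorname{BAut}(D(C))]$). The assignment $\mathcal{C}\mapsto\mathcal{D}(\mathcal{C})$ descends to isomorphism classes because the $K_X$-dual is unique up to $KK_X$-equivalence and, in the stable Kirchberg setting, the relevant $KK_X$-equivalence is realized by an isomorphism of fields. It is bijective because duality is involutive, $\mathcal{D}(\mathcal{D}(\mathcal{C}))\sim_{KK_X}\mathcal{C}$, reflecting $D(D(C))\sim_{KK}C$ fiberwise together with the composition of the duality classes; thus $\mathcal{D}$ is its own inverse. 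Naturality in $X$ would follow from the compatibility of duality with pullback $f^{*}$ along a map $f\colon Y\to X$, which is again a consequence of the characterizing $RKK$-isomorphism applied to the change of base.

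The main obstacle I anticipate is upgrading the abstract $K_X$-dual, which a priori exists only as a $KK_X$-equivalence class, to a genuine locally trivial continuous field with fiber exactly $D(C)$, and doing so consistently across the cover. Since $D(-)$ is not a functor (Lem.~\ref{b}), one cannot simply transport the transition automorphisms of $\mathcal{C}$ through $D$; instead the local duals $C(U_\alpha)\otimes D(C)$ must be glued using the duality classes, whose indeterminacy has to be controlled so that the resulting field lands in the correct $KK_X$-class rather than merely a field with the right fibers. Verifying that these gluings assemble correctly, and that the assembled object is genuinely $K_X$-dual to $\mathcal{C}$, is where the real work lies, and it is precisely here that finite generation of the K-groups, ensuring dualizability and providing the effective classification input, is essential.
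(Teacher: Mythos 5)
Your proposal has a genuine gap at its very first step: you ``invoke the main duality theorem to produce a $C(X)$-algebra $\mathcal{D}(\mathcal{C})$ that is $K_X$-dual to $\mathcal{C}$,'' but no such theorem is available prior to this statement. The only existence result at hand is Thm.~\ref{SWK} (Kaminker--Schochet), which holds strictly for $X=pt$; the paper explicitly notes that not all $C(X)$-algebras admit $K_X$-duals. Producing a $K_X$-dual of an arbitrary locally trivial field is exactly the content of the statement being proved, so this step is circular. Moreover, even if an abstract dual were granted (say via Brown representability in the triangulated category $KK_X$), your local identification overreaches: restricting the duality classes to a trivializing piece $U_\alpha$ only yields $\mathcal{D}(\mathcal{C})(U_\alpha)\sim_{KK_{U_\alpha}}C(U_\alpha)\otimes D(C)$, and a $KK_{U_\alpha}$-equivalence with a trivial field does not make $\mathcal{D}(\mathcal{C})(U_\alpha)$ isomorphic to one --- Dadarlat's Thm.~\ref{Ddeq} upgrades $KK_X$-equivalences to isomorphisms only after stabilization and only for fields of Kirchberg algebras. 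So local triviality with fiber $D(C)$ does not follow from your argument.

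You correctly locate, in your final paragraph, where the real work lies --- gluing the local duals $C(U_\alpha)\otimes D(C)$ while controlling the indeterminacy of the duality classes --- but you leave it undone, and this is precisely what the paper's proof consists of. The paper argues by induction over the cells of $X$: writing $X=Y\cup_\theta \mathbb{D}^{d+1}$ (reduced to an injective attaching map by the mapping-cylinder trick of Lem.~\ref{L2}), it assumes a locally trivial $K_Y$-dual $\mathcal{D}_Y$ exists, trivializes it over $S^d$, and corrects the clutching function by an automorphism $\beta$ supplied by Lem.~\ref{b}~2 so that the restricted duality class $\nu\upharpoonright_{S^d}$, transported through the trivializations, becomes $KK_{S^d}$-equal to the constant class ${\rm id}_{C(S^d)}\otimes\nu_0$; Kirchberg--Gabe's theorem (Thm.~\ref{KGE}) then converts this $KK$-identity into an actual homotopy $\nu_t$ of $*$-homomorphisms, which is what allows $\nu$ (and, via Lem.~\ref{L5}, $\mu$) to extend over the new cell. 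Duality of the glued field $\mathcal{D}_X$ is verified fiberwise by Thm.~\ref{Ddeq} and repaired into exact duality classes by Lem.~\ref{b}~1. Your treatment of the final bijection (well-definedness from uniqueness of duals plus Thm.~\ref{Ddeq}, bijectivity from involutivity of duality) does agree with the paper's, but it rests on the existence and local triviality that your proposal does not establish.
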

\begin{rem}
The $K_X$-duality is a duality for $KK_X$-groups,
and,
for example,
the equation $KK_X(\mathcal{C}, C(X))\cong KK_X(C(X), \mathcal{D}(\mathcal{C}))$ holds in contrast to the known K-duality $KK(C, \mathbb{C})\cong KK(\mathbb{C}, D(C))$.
The proof is done by cell-wisely untwisting the obstruction to extending duality classes.

The proof of the above theorem also provides a new proof of the existence of inverse for arbitrary element in the Dadarlat--Pennig cohomology group $E^1_{\mathcal{O}_\infty}(X)$.
The algebra $\mathcal{O}_\infty\otimes\mathbb{K}\sim_{KK}\mathbb{C}$ is known to be self-dual and the above bijection of the homotopy sets is exactly the map
\[E_{\mathcal{O}_\infty}^1(X)=[X, \operatorname{BAut}(\mathcal{O}_\infty\otimes\mathbb{K})]\ni c\mapsto -c\in E^1_{\mathcal{O}_\infty}(X).\]
\end{rem}

In the second main result,
we generalize the interesting bijection between the bundles of $\mathcal{O}_n$ and $\mathbb{M}_{n-1}(\mathcal{O}_\infty)$ via the categorical picture of reciprocal algebras and Thm. \ref{sm3}.
\begin{thm}[{Thm. \ref{taio}}]\label{RB}
Let $X$ be a finite CW-complex.
For the reciprocal Kirchberg algebras $A$ and $B$,
there is a natural bijection
\[R_{A, B} : [X, \operatorname{BAut}(A)]\to [X, \operatorname{BAut}(B)].\]
\end{thm}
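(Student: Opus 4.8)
The plan is to construct $R_{A,B}$ by transporting a bundle through its mapping cone, applying the $K_X$-duality of Corollary~\ref{CXd}, and recognizing the dual as a genuine $B$-bundle via reciprocality. A class in $[X,\operatorname{BAut}(A)]$ is represented by a locally trivial continuous $C(X)$-algebra $\mathcal{A}$ with fiber $A$, carrying the canonical unital $C(X)$-linear unit inclusion $u_\mathcal{A}\colon C(X)\to\mathcal{A}$. I would form the mapping cone bundle $C_{u_\mathcal{A}}$, sitting in the $C(X)$-algebra extension $0\to S\mathcal{A}\to C_{u_\mathcal{A}}\xrightarrow{e_\mathcal{A}} C(X)\to 0$ and locally trivial with fiber $C_{u_A}$. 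Since $C_{u_A}$ is separable nuclear UCT with finitely generated K-groups, Corollary~\ref{CXd} produces the $K_X$-dual $\mathcal{D}(C_{u_\mathcal{A}})$, a locally trivial bundle whose fiber is $D(C_{u_A})$. By Definition~\ref{REC} one has $D(C_{u_A})\sim_{KK}B$, so choosing the Spanier--Whitehead dual representative of the fiber to be the Kirchberg algebra $B$ makes $\mathcal{D}(C_{u_\mathcal{A}})$ a locally trivial $B$-bundle, and I set $R_{A,B}(\mathcal{A}):=\mathcal{D}(C_{u_\mathcal{A}})$.

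Next I would verify that $R_{A,B}$ is well defined on homotopy classes and natural in $X$. The assignment $\mathcal{A}\mapsto C_{u_\mathcal{A}}$ is functorial in the unital bundle, and the duality $\mathcal{D}$ of Corollary~\ref{CXd} is natural, so isomorphic (hence homotopic) bundles have isomorphic mapping cones and isomorphic duals; pulling back along a map $Y\to X$ commutes with all three operations, yielding naturality of $R_{A,B}$.

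For bijectivity I would exhibit an inverse. Reciprocality is symmetric, so the same recipe defines $R_{B,A}\colon[X,\operatorname{BAut}(B)]\to[X,\operatorname{BAut}(A)]$, with $R_{B,A}(\mathcal{B})=\mathcal{D}(C_{u_\mathcal{B}})$ of fiber $D(C_{u_B})\sim_{KK}A$. The crucial point is a family version of the categorical picture of Theorem~\ref{ca}: applying the $K_X$-duality to the cofiber sequence $C_{u_\mathcal{A}}\xrightarrow{e_\mathcal{A}}C(X)\xrightarrow{u_\mathcal{A}}\mathcal{A}$ should yield the mapping cone sequence of $\mathcal{B}=R_{A,B}(\mathcal{A})$; in particular the dual of $e_\mathcal{A}$ is the unit inclusion $u_\mathcal{B}$ and the dual $\mathcal{D}(\mathcal{A})$ is identified with the mapping cone $C_{u_\mathcal{B}}$, the fiber $D(A)\sim_{KK}C_{u_B}$ matching Theorem~\ref{ca} cell by cell. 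Granting this, $R_{B,A}(R_{A,B}(\mathcal{A}))=\mathcal{D}(C_{u_\mathcal{B}})=\mathcal{D}(\mathcal{D}(\mathcal{A}))\cong\mathcal{A}$ by the involutivity of the $K_X$-duality, and symmetrically $R_{A,B}\circ R_{B,A}\cong\operatorname{id}$; hence $R_{A,B}$ is a bijection.

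The main obstacle is the family-level duality of the mapping cone sequences asserted above. Fiberwise it is exactly Theorem~\ref{ca}, but to conclude $\mathcal{D}(\mathcal{A})\cong C_{u_\mathcal{B}}$ as $C(X)$-algebras one must choose the duality classes implementing the $K_X$-duality compatibly along the whole sequence while preserving local triviality. As in the proof of Corollary~\ref{CXd}, I expect to untwist the obstruction to extending these duality classes cell by cell over $X$, and the real work lies in carrying the cofiber/mapping-cone relationship through this untwisting uniformly, rather than merely up to $KK_X$-equivalence on each fiber.
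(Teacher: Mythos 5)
Your proposal has the right skeleton for the construction (mapping cone, then $K_X$-duality, then reciprocality), and this matches the paper's starting point, but there are two genuine gaps, one in the construction and one, fatal, in the bijectivity argument.

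First, the construction. The dual bundle produced by Cor.~\ref{CXd} has fiber the \emph{stable} K-dual of $C_{u_A}$, which is only $KK$-equivalent to $B$; you cannot simply ``choose the Spanier--Whitehead dual representative of the fiber to be $B$.'' The set $[X,\operatorname{BAut}(B)]$ classifies bundles whose fiber is literally $B$, and an arbitrary unital model of the dual could have fibers $B'$ with $B'\sim_{KK}B$ but $B'\not\cong B$, because the class of the unit in $K_0$ may land in the wrong orbit. The paper resolves this by taking the stabilized dual $\mathcal{B}^s$ with fiber $B\otimes\mathbb{K}$ and cutting by a projection $p$ with the specific class $[p]_0=\mu\hat{\otimes}(e_\mathcal{A}\otimes I_{\mathcal{B}^s})$; Lem.~\ref{bij} then uses the Puppe sequence, Cor.~\ref{pd} and the group-theoretic Prop.~\ref{kmc} to verify that every fiber of $p\mathcal{B}^sp$ is isomorphic to $B$. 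Relatedly, your appeal to functoriality and naturality of $\mathcal{D}$ for well-definedness is unfounded: the paper stresses that $D(-)$ is \emph{not} a functor and that duality classes cannot be chosen canonically, so well-definedness requires the duality-class computation of Prop.~\ref{wel}, not a naturality assertion.

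Second, bijectivity. Your plan is to show $R_{B,A}\circ R_{A,B}\cong\operatorname{id}$ via a family version of Thm.~\ref{ca}, i.e.\ an identification of $\mathcal{D}(\mathcal{A})$ with $C_{u_\mathcal{B}}$ compatible with the cone sequences. This is precisely what the paper states it cannot establish: Rem.~\ref{Q} says that one does not even know whether $C_{u_\mathcal{B}}\sim_{KK_X}\mathcal{D}(\mathcal{A})$ holds, and the Question at the end of Sec.~5 poses $R_{A,B}^{-1}=R_{B,A}$ as open. So the step you flag as ``the real work'' and then grant is not known to be true, and no cell-by-cell untwisting is available for it. The paper's actual proof of Thm.~\ref{taio} circumvents this entirely by proving injectivity and surjectivity separately: injectivity by showing that a $C(X)$-isomorphism $\mathcal{B}_1\cong\mathcal{B}_2$ forces, via duality-class manipulations (Lem.~\ref{b}) and the triangulated-category lemma Lem.~\ref{RMN}, an isomorphism $\mathcal{A}_1\cong\mathcal{A}_2$; and surjectivity by constructing an explicit preimage of a given $\mathcal{B}$, where the key ingredient is Gabe's existence theorem (Thm.~\ref{KGE}): the class $(u_\mathcal{B}\otimes I_\mathcal{D})\hat{\otimes}\nu$ is realized by an honest $C(X)$-linear $*$-homomorphism $d:\mathcal{D}^\sharp\to C(X)\otimes\mathcal{O}_\infty$, and the mapping cone $C_d$, after suspension, unitalization and cutting by a suitable projection, yields $\mathcal{A}$ with $R_{A,B}([\mathcal{A}])=[\mathcal{B}]$. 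Without replacing your inverse-map strategy by an argument of this kind, the proposal does not prove the theorem.
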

\begin{rem}\label{Q}
Since the homotopy group $\pi_0(\operatorname{Aut}(A))$ is non-trivial, non-commutative in general and there are no known maps between the classifying spaces,
this bijection is not obvious even if one knows Thm. \ref{sm}.
For the same reason,
Thm. \ref{RB} does not imply Thm. \ref{sm},
and we do not know whether a similar statement is true or not for the based homotopy sets $[X, \operatorname{BAut}(A)]_0, [X, \operatorname{BAut}(B)]_0$.

For a given locally trivial $C(X)$-algebra $\mathcal{A}$ with fiber $A$, 
the diagram
\[
\xymatrix{
\mathcal{D}(C_{u_\mathcal{A}})&\mathcal{D}(C(X))\ar[l]\ar@{=}[d]&\mathcal{D}(\mathcal{A})\ar[l]\\
\mathcal{B}\ar@{.>}[u]&C(X)\ar[l]^{u_\mathcal{B}}&C_{u_\mathcal{B}}\ar[l]^{e_\mathcal{B}}
}\]
determines the locally trivial continuous $C(X)$-algebra $\mathcal{B}$ with fiber $B$,
and the bijection
\[R_{A, B} : [X, \operatorname{BAut}(A)]\ni [\mathcal{A}]\mapsto [\mathcal{B}]\in[X, \operatorname{BAut}(B)]\]
is defined by this procedure.
As in Thm. \ref{mn},
the broken arrow is a $KK_X$-equivalence.
However, the proof of bijectivity is still complicated because we do not even know whether the equivalence $C_{u_\mathcal{B}}\sim_{KK_X}\mathcal{D}(\mathcal{A})$ is true or not.
The key technical ingredient is J. Gabe's existence theorem \cite[Thm. E]{G}.
\end{rem}
{\bf Organization}
This paper consists of two parts.
In the first part from Sec. 2 to 3,
we give some preliminaries on the Spanier--Whitehead duality and show Thm. \ref{sm}, \ref{ca}.
In the second part from Sec. 4 to 5,
we discuss our main results on bundles of C*-algebras  and prove Thm. \ref{sm3}, \ref{RB}.
In appendix,
we give some computations on finitely generated Abel groups which are crucial for the reciprocality.
\[\]

{\bf Acknowledgments}
The author's deepest appreciation goes to Kan Kitamura who informed him of the relationship between the Spanier--Whitehead K-duality for $C(X)$-algebras and Brown's representation theory for the category $KK_X$ and gave him many interesting ideas including an argument computing cardinality of $KK_X$-groups.
The author is deeply grateful to Marius Dadarlat for many insightful comments and suggestions.
The author is greatly indebted to Masaki Izumi for his support and encouragement,
and he also would like to thank  Kengo Matsumoto for drawing the author's attention to \cite{KP}.
The author is supported by Research Fellow of the Japan Society for the Promotion of Science.  

\section{Preliminaries}
\subsection{Notation}
Let $A$ be a unital C*-algebra with the unit $1_A$ and map $u_A : \mathbb{C}\ni \lambda\mapsto \lambda 1_A\in A$.
Let $SA$ denote the suspension $C_0(0, 1)\otimes A$, and we write $S^iA:=S^{\otimes i}\otimes A$ for short.
Let $\mathbb{K}$ denote  the algebra of compact operators on the separable infinite dimensional Hilbert space,
and we denote by $\mathbb{M}_n$ the $n$ by $n$ matrix algebra.
For a $*$-homomorphism $\varphi : A\to B$,
we denote by $C_\varphi$ the mapping cone $\{(f, a)\in (C_0(0, 1]\otimes B)\oplus A \;|\; \varphi (a)=f(1)\}$,
and the following sequence is called the Puppe sequence (see \cite[Sec. 19]{B})
$$SA\xrightarrow{{\rm id}_S\otimes \varphi}SB\to C_\varphi \to A\xrightarrow{\varphi} B.$$
We denote by $KK(\varphi)\in KK(A, B)$ the Kasparov module represented by the $*$-homomorphism,
and the Kasparov product of two elements $KK(\varphi)\in KK(A, B)$, $KK(\psi)\in KK(B, C)$ is denoted by $KK(\varphi)\hat{\otimes}KK(\psi)=KK(\psi\circ\varphi)$ (see \cite[Sec. 18]{B}).
We write $I_A:=KK({\rm id}_A)\in KK(A, A)$.

In this paper,
we assume that the space $X$ is compact metrizable and of finite covering dimension, for example, a finite CW complex.
Let $C(X)$ be the C*-algebra of continuous functions on $X$,
and let $C_0(X, Y)$ be the ideal of functions vanishing on the closed subset $Y\subset X$.
For a unital $C(X)$-algebra $\mathcal{A}$,
we denote by $u_\mathcal{A} : C(X)\to \mathcal{A}$ the unital $*$-homomorphism which defines  the $C(X)$-linear structure of the C*-algebra.
Since $C_0(X, Y)\mathcal{A}=\{u_{\mathcal{A}}(f)a\in \mathcal{A}\; |\; f\in C_0(X, Y),\; a\in \mathcal{A}\}$ is a closed ideal of $\mathcal{A}$ by Cohen's factorization (see \cite[Th. 4.6.4]{Oz}), the algebra $\mathcal{A}(Y):=\mathcal{A}/C_0(X, Y)\mathcal{A}$ is a $C(Y)$-algebra with the quotient map $\pi_Y : \mathcal{A}\to\mathcal{A}(Y)$.
For $x\in X$,
we write $A_x:=A(\{x\}),\; \pi_x:=\pi_{\{x\}}$.
We write $KK_X(\mathcal{A}, \mathcal{B}):=\mathcal{R}KK(X: \mathcal{A}, \mathcal{B})$ for short,
where $\mathcal{R}KK(X: -, -)$ is the parametrized $KK$-group for $C(X)$-algebras introduced in \cite{K} (see also \cite{D3, MN}).

Let $[X, Y]$ be the set of homotopy classes of continuous maps from $X$ to $Y$.
and let $[X, Y]_0$ be the homotopy set of the maps between the pointed spaces concerning the base point preserving homotopy.
The $i$-th homotopy group of a space $Y$ is denoted by $\pi_i(Y):=[S^i, Y]_0$, where $S^i$ denotes the $i$-dimensional sphere.
For a unital C*-algebra $A$,
we denote by $\operatorname{Aut}(A)$ (resp. $\operatorname{End}(A)$) the set of automorphisms (resp. unital endomorphisms) equipped with the point-norm topology,
and the homotopy sets $[X, \operatorname{Aut}(A)]$ and $[X, \operatorname{End}(A)]$ have the natural semi-group structures.


\subsection{Bundles of C*-algebras and Dadarlat--Pennig theory}
A $C(X)$-algebra such that the map 
$$X\ni x\mapsto ||\pi_x(a)||_{\mathcal{A}_x}\in \mathbb{R},\quad a\in \mathcal{A}$$
is continuous is called continuous $C(X)$-algebra.
The continuous $C(X)$-algebra $\mathcal{A}$ is called locally trivial if there exists a closed neighborhood $Y$ for every $x\in X$ and a $C(Y)$-linear isomorphism $\mathcal{A}(Y)\cong C(Y)\otimes A$ for a C*-algebra $A$.
We always assume that $A$ is separable nuclear, and one can take the tensor product $\mathcal{A}\otimes_{C(X)}\mathcal{B}$ for two nuclear continuous $C(X)$-algebras (see \cite{Bla}).
One can identify locally trivial continuous $C(X)$-algebras with fiber $A$ and locally trivial principal $\operatorname{Aut}(A)$-bundles by the following elementary fact.
\begin{prop}
Let $X$ be a compact metrizable space, and let $A$ be a C*-algebra.
For every locally trivial continuous $C(X)$-algebra $\mathcal{A}$ with fiber $A$,
there exists a principal $\operatorname{Aut}(A)$ bundle $\mathcal{P}\to X$ such that the section algebra of the associated bundle, denoted by $\Gamma(X, \mathcal{P}\times_{\operatorname{Aut}(A)}A)$, is $C(X)$-linearly isomorphic to $\mathcal{A}$.
Using the above correspondence,
the set of isomorphism classes of principal $\operatorname{Aut}(A)$-bundles over $X$ is identified with the set of $C(X)$-linear isomorphism classes of locally trivial continuous $C(X)$-algebras with fiber $A$.
\end{prop}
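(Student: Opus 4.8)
The plan is to pass through the \emph{transition functions} (a \v{C}ech one-cocycle) that mediate between the two kinds of objects. The computational heart of the argument is the identification, for a closed subset $Y\subset X$, of the group of $C(Y)$-linear $*$-automorphisms of $C(Y)\otimes A$ with the group $C(Y,\operatorname{Aut}(A))$ of continuous maps from $Y$ into $\operatorname{Aut}(A)$ equipped with the point-norm topology. Concretely, identifying $C(Y)\otimes A$ with the algebra of continuous $A$-valued functions on $Y$, any $C(Y)$-linear automorphism $\alpha$ commutes with multiplication by $C(Y)$ and hence descends to each fiber, giving $\alpha_y\in\operatorname{Aut}(A)$ with $(\alpha f)(y)=\alpha_y(f(y))$; conversely such a fiberwise family maps continuous functions to continuous functions precisely when $y\mapsto\alpha_y$ is point-norm continuous, so that $\alpha\mapsto(y\mapsto\alpha_y)$ is the desired group isomorphism. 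I would record this as a preliminary lemma since it is used in both directions.

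Given a locally trivial continuous $C(X)$-algebra $\mathcal{A}$ with fiber $A$, compactness of $X$ lets me choose a finite cover by closed sets $\{U_i\}$ together with $C(U_i)$-linear isomorphisms $\phi_i:\mathcal{A}(U_i)\xrightarrow{\sim}C(U_i)\otimes A$. On each overlap the composite $\phi_i\circ\phi_j^{-1}$ is a $C(U_i\cap U_j)$-linear automorphism of $C(U_i\cap U_j)\otimes A$, hence by the lemma a continuous map $g_{ij}:U_i\cap U_j\to\operatorname{Aut}(A)$, and these satisfy the cocycle identity $g_{ij}g_{jk}=g_{ik}$ on triple overlaps. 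The clutching construction applied to $\{g_{ij}\}$ produces a principal $\operatorname{Aut}(A)$-bundle $\mathcal{P}\to X$, and forming the associated bundle $\mathcal{P}\times_{\operatorname{Aut}(A)}A$ for the tautological action of $\operatorname{Aut}(A)$ on $A$, I would verify that its section algebra is glued from the pieces $C(U_i)\otimes A$ by exactly the same transition data as $\mathcal{A}$; the local trivializations $\phi_i$ then assemble into a $C(X)$-linear isomorphism $\Gamma(X,\mathcal{P}\times_{\operatorname{Aut}(A)}A)\cong\mathcal{A}$. The reverse passage, from a principal bundle to its section algebra, is immediate: local triviality of $\mathcal{P}$ gives local triviality of the section algebra, and continuity of the norm function follows fiberwise.

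For the last assertion I would check that the two constructions descend to, and are mutually inverse on, isomorphism classes. A $C(X)$-linear isomorphism $\mathcal{A}\cong\mathcal{A}'$ transports trivializations, changing the cocycle $\{g_{ij}\}$ by a coboundary $h_i g_{ij} h_j^{-1}$ with $h_i\in C(U_i,\operatorname{Aut}(A))$, and cohomologous cocycles yield isomorphic principal bundles; conversely isomorphic principal bundles have cohomologous clutching cocycles and hence isomorphic section algebras. That the round trips are the identity on classes follows from the fact that the cocycle attached to $\Gamma(X,\mathcal{P}\times_{\operatorname{Aut}(A)}A)$ with respect to the canonical local sections of $\mathcal{P}$ is the original clutching cocycle. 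The main obstacle I anticipate is purely topological rather than algebraic: making the passage to the point-norm topology rigorous in the key lemma --- in particular verifying that a fiberwise family of automorphisms patches to a genuinely continuous section of the automorphism bundle --- and arranging, via normality of the compact metrizable $X$, a good closed cover on whose overlaps the transition functions are simultaneously defined and continuous, so that the clutching construction is legitimate.
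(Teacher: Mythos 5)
The paper offers no proof of this proposition at all: it is invoked as an ``elementary fact'' immediately before the discussion of classifying maps, so there is no argument of the author's to compare yours against. Your \v{C}ech-cocycle proof is the standard way to fill this in, and it is correct: the key lemma identifying $C(Y)$-linear $*$-automorphisms of $C(Y)\otimes A$ with point-norm continuous maps $Y\to\operatorname{Aut}(A)$ is exactly the mechanism that converts local trivializations into transition functions and back, and the descent to isomorphism classes via coboundaries is routine. One point worth making explicit, since the paper's notion of local triviality uses \emph{closed} neighborhoods: choose for each $x$ a closed trivializing neighborhood $Y_x$, note $x\in\operatorname{int}(Y_x)$, and extract a finite subcover of $\{\operatorname{int}(Y_x)\}$; then the resulting finite closed cover $\{U_i\}$ has interiors that still cover $X$, so the clutched object is genuinely a locally trivial principal $\operatorname{Aut}(A)$-bundle (trivial over the open sets $\operatorname{int}(U_i)$), not merely trivial over closed sets --- this disposes of the topological obstacle you flag at the end, and normality is not actually needed. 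Also, the gluing step implicitly uses that $\mathcal{A}$ is recovered as the pullback of its restrictions $\mathcal{A}(U_i)$ along the quotients to $\mathcal{A}(U_i\cap U_j)$; this is a standard partition-of-unity argument for $C(X)$-algebras over finite closed covers and deserves a sentence in a complete write-up.
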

Recall that the isomorphism class of a principal $\operatorname{Aut}(A)$-bundle $\mathcal{P}\to X$ is determined  by the homotopy class of the classifying map $f_{\mathcal{P}}$ making the following diagram commute :
$$\xymatrix{
\mathcal{P}\ar[r]\ar[d]&\operatorname{EAut}(A)\ar[d]\\
X\ar[r]^{f_{\mathcal{P}}\quad}&\operatorname{BAut}(A).}$$
Therefore, the set of isomorphism classes of locally trivial continuous $C(X)$-algebras with fiber $A$ is equal to $[X, \operatorname{BAut}(A)]$.
Thus, we denote by $[\mathcal{A}]\in [X, \operatorname{BAut}(A)]$ the isomorphism class of a locally trivial continuous $C(X)$-algebra $\mathcal{A}$ with fiber $A$.

In the case of $A$ is a stabilized strongly self-absorbing C*-algebra introduced in \cite{TW},
M. Dadarlat and U. Pennig reveal a surprising structure of $[X, \operatorname{BAut}(A)]$.

\begin{thm}[{\cite[{Thm. 3.8, Cor. 4.5}]{DP}}]\label{hn}
For every strongly self-absorbing C*-algebra $D$,
the group $\operatorname{Aut}(D\otimes \mathbb{K})$ is an infinite loop space providing a generalized cohomology $E^*_D$.
In particular, the homotopy set $$E^1_D(X) =[X, \operatorname{BAut}(D\otimes\mathbb{K})]$$ has a commutative group structure defined by the tensor product of locally trivial continuous $C(X)$-algebras with fiber $D\otimes\mathbb{K}$.
\end{thm}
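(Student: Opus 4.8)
The plan is to exhibit the commutative group structure first and then bootstrap it to a full infinite loop space. Writing $B:=D\otimes\mathbb{K}$, the starting observation is that $B$ satisfies $B\otimes B\cong B$: indeed $D\otimes D\cong D$ because $D$ is strongly self-absorbing, and $\mathbb{K}\otimes\mathbb{K}\cong\mathbb{K}$. Hence, for two locally trivial continuous $C(X)$-algebras $\mathcal{A}, \mathcal{A}'$ with fiber $B$, the fiberwise minimal tensor product $\mathcal{A}\otimes_{C(X)}\mathcal{A}'$ is again locally trivial with fiber $B\otimes B\cong B$, so it represents a class in $[X,\operatorname{BAut}(B)]$. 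This gives a binary operation on $E^1_D(X)$, and the first task is to check that it descends to isomorphism classes and is natural in $X$; both are routine from the functoriality of $\otimes_{C(X)}$.

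Next I would verify the commutative monoid axioms, which is exactly where the strongly self-absorbing hypothesis does the work. Associativity follows from associativity of the minimal tensor product together with a coherence isomorphism $D^{\otimes 3}\cong D$. The unit should be the class of the trivial bundle $C(X)\otimes B$; that $\mathcal{A}\otimes_{C(X)}(C(X)\otimes B)\cong \mathcal{A}$ uses the Toms--Winter fact that the first-factor embedding $D\to D\otimes D$ is approximately unitarily equivalent to an isomorphism, so tensoring by $D$ does not change the bundle up to isomorphism. Commutativity uses the other defining property of strongly self-absorbing algebras, namely that the flip automorphism of $D\otimes D$ is approximately inner; this makes the flip of $\mathcal{A}\otimes_{C(X)}\mathcal{A}'$ homotopic to a $C(X)$-linear isomorphism, so the two orders of tensoring give the same class.

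The delicate point is the existence of inverses, i.e. showing the monoid $E^1_D(X)$ is grouplike; for each $[\mathcal{A}]$ one must produce $[\mathcal{A}^{-1}]$ with $\mathcal{A}\otimes_{C(X)}\mathcal{A}^{-1}$ trivial, and this is not formal. (Indeed the remark following Thm.~\ref{sm3} notes that the methods of the present paper yield a new proof of this very fact for $D=\mathcal{O}_\infty$.) Finally, to obtain the whole cohomology theory $E^*_D$ rather than a single group, I would promote the homotopy-commutative, homotopy-associative $H$-space structure on $\operatorname{BAut}(B)$ to a genuine infinite loop space structure. The natural route is to package the symmetric monoidal structure of the tensor product into a $\Gamma$-space in the sense of Segal (equivalently, an $E_\infty$-structure) and apply the infinite loop space recognition principle, producing a connective spectrum whose associated reduced cohomology theory is $E^*_D$ and whose representing space for degree one is $\operatorname{BAut}(B)$, giving $E^1_D(X)=[X,\operatorname{BAut}(B)]$. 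The main obstacle is precisely this last step: one must check enough coherence among the tensor, unit, and flip isomorphisms — all of which hold only up to approximate unitary equivalence — to feed the construction into the machine, and it is here, together with grouplikeness, that the strongly self-absorbing structure must be exploited systematically rather than merely fiberwise.
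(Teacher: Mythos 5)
First, a point of order: the paper does not prove this statement at all --- it is quoted, with citation, from Dadarlat--Pennig \cite{DP} (their Thm.~3.8 and Cor.~4.5), so there is no in-paper proof to compare against. Your attempt is therefore a blind reconstruction of a substantial external theorem, and judged on those terms it is an outline rather than a proof. The parts you do carry out are essentially right: the fiberwise tensor product gives a well-defined, natural, associative operation on isomorphism classes, and the strongly self-absorbing hypothesis is indeed what makes the trivial bundle a unit (via $D\cong D\otimes D$ being approximately unitarily equivalent to the first-factor embedding, together with fiberwise-to-global stability results for bundles). One small correction there: commutativity of the operation on \emph{isomorphism classes} is free, because the flip $a\otimes a'\mapsto a'\otimes a$ is itself a $C(X)$-linear isomorphism $\mathcal{A}\otimes_{C(X)}\mathcal{A}'\cong \mathcal{A}'\otimes_{C(X)}\mathcal{A}$; the approximately inner flip is not needed for that, but only for the space-level coherence you invoke later.

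The genuine gap is that the two statements actually asserted by the theorem --- that $E^1_D(X)$ is a \emph{group}, and that $\operatorname{Aut}(D\otimes\mathbb{K})$ is an \emph{infinite loop space} giving a cohomology theory $E^*_D$ --- are exactly the two steps you name as ``delicate'' and ``the main obstacle'' and then do not carry out. These are not loose ends; they are the theorem. In Dadarlat--Pennig's argument the coherence problem is solved by organizing the groups $\operatorname{Aut}((D\otimes\mathbb{K})^{\otimes n})$ into a commutative $I$-monoid (equivalently, feeding a genuinely strict symmetric monoidal structure, not merely an up-to-approximate-unitary-equivalence one, into the $\Gamma$-space machine), and grouplikeness is established separately --- e.g.\ one must check that $\pi_0$ of $\operatorname{Aut}(D\otimes\mathbb{K})$ with the tensor-induced multiplication is a group, which follows from an interchange (Eckmann--Hilton type) argument against composition; only then does the recognition principle return a spectrum whose first delooping is $\operatorname{BAut}(D\otimes\mathbb{K})$, and only then do inverses in $[X,\operatorname{BAut}(D\otimes\mathbb{K})]$ come out. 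Your proposal points at the right machine but does not verify any of its hypotheses, so as written it establishes only a commutative monoid structure on $E^1_D(X)$, not the theorem.
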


The infinite Cuntz algebra $\mathcal{O}_\infty$ is a typical strongly self-absorbing Kirchberg algebra with the following K-groups :
$$(K_0(\mathcal{O}_\infty),\; [1_{\mathcal{O}_\infty}]_0,\; K_1(\mathcal{O}_\infty))\cong(\mathbb{Z}, 1, 0).$$
For every locally trivial continuous $C(X)$-algebra $\mathcal{A}$ with fiber $\mathcal{O}_\infty\otimes\mathbb{K}$,
there exists another one $\mathcal{A}^{-1}$ such that we have a $C(X)$-linear isomorphism $C(X)\otimes\mathcal{O}_\infty\otimes\mathbb{K}\cong \mathcal{A}\otimes_{C(X)}\mathcal{A}^{-1}$ (i.e., $[\mathcal{A}]+[\mathcal{A}^{-1}]=0\in E_{\mathcal{O}_\infty}^1(X)$).

\subsection{$KK_X$-groups}
We review some of the basic facts on $KK_X$-groups.
All C*-algebras in this paper are ungraded.
As in the usual $KK$-theory,
one has the natural maps
$$-\otimes I_\mathcal{C} : KK_X(\mathcal{A}, \mathcal{B})\ni KK(\phi)\mapsto KK(\phi\otimes {\rm id}_\mathcal{C})\in KK_X(\mathcal{A}\otimes_{C(X)}\mathcal{C}, \mathcal{B}\otimes_{C(X)} \mathcal{C}),$$
$$I_\mathcal{C}\otimes- : KK_X(\mathcal{A}, \mathcal{B})\ni KK(\phi)\mapsto KK({\rm id}_\mathcal{C}\otimes \phi)\in KK_X(\mathcal{C}\otimes_{C(X)}\mathcal{A}, \mathcal{C}\otimes_{C(X)} \mathcal{B}),$$
and the Kasparov product is denoted by
$$-\hat{\otimes}- : KK_X(\mathcal{A}, \mathcal{B})\times KK_X(\mathcal{B}, \mathcal{C})\to KK_X(\mathcal{A}, \mathcal{C}).$$
For the isomorphisms $\sigma : \mathcal{A}\otimes_{C(X)}\mathcal{C}\to \mathcal{C}\otimes_{C(X)} \mathcal{A}$ and $\theta : \mathcal{B}\otimes_{C(X)}\mathcal{C}\to \mathcal{C}\otimes_{C(X)}\mathcal{B}$ exchanging two tensor factors,
one has $\sigma\hat{\otimes}(I_\mathcal{C}\otimes -)\hat{\otimes}\theta^{-1}=-\otimes I_\mathcal{C}$.
In particular, the following equations hold for $a\in KK_X(\mathcal{A}, C(X))$ and $b\in KK_X(C(X), \mathcal{B})$ :
$$\sigma\hat{\otimes}(I_\mathcal{C}\otimes a)=a\otimes I_\mathcal{C}\in KK_X(\mathcal{A}\otimes_{C(X)}\mathcal{C},\mathcal{C}),\quad (I_\mathcal{C}\otimes b)\hat{\otimes}\theta^{-1}=b\otimes I_\mathcal{C}\in KK_X(\mathcal{C}, \mathcal{B}\otimes_{C(X)}\mathcal{C}).$$
In this paper, the Kasparov products are computed categorically, and the following commutativity is most important.
For $\alpha \in KK_X(\mathcal{A}, \mathcal{B}), \; \gamma \in KK_X(\mathcal{C}, \mathcal{D})$, one has
$$(\alpha\otimes I_\mathcal{C})\hat{\otimes}(I_\mathcal{B}\otimes \gamma)=(I_\mathcal{A}\otimes \gamma)\hat{\otimes}(\alpha \otimes I_\mathcal{D})\in KK_X(\mathcal{A}\otimes_{C(X)}\mathcal{C}, \mathcal{B}\otimes_{C(X)}\mathcal{D})$$
by \cite[Thm. 2.14. 8), Prop. 2.21.]{K} which is obvious if $\alpha, \gamma$ are given by the $*$-homomorphisms.

For the Puppe sequence
$$S\mathcal{A}\xrightarrow{I_S\otimes \varphi}S\mathcal{B}\xrightarrow{\iota} C_\varphi\xrightarrow{e} \mathcal{A}\xrightarrow{\varphi} \mathcal{B},$$
one has the following exact sequences (see \cite[Sec. 2]{MN})
$$KK_X(\mathcal{C}, S\mathcal{A})\to KK_X(\mathcal{C}, S\mathcal{B})\to KK_X(\mathcal{C}, C_\varphi)\to KK_X(\mathcal{C}, \mathcal{A})\to KK_X(\mathcal{C}, \mathcal{B}),$$
$$KK_X(\mathcal{B}, \mathcal{C})\to KK_X(\mathcal{A}, \mathcal{C})\to KK_X(C_\varphi, \mathcal{C})\to KK_X(S\mathcal{B}, \mathcal{C})\to KK_X(S\mathcal{A}, \mathcal{C}).$$
In \cite{MN},
the Puppe sequences and their shifts, for example $S\mathcal{A}\xrightarrow{S\varphi}S\mathcal{B}\xrightarrow{\iota}\mathcal{C}_\varphi\xrightarrow{e} \mathcal{A}$,
are proved to give the exact triangles of the triangulated category $KK_X$ which is the category of $C(X)$-algebras having $KK_X(-, -)$ as its morphisms.
We frequently use the following lemma in section \ref{5}.
\begin{lem}[{\cite[Sec. 2.1, Appendix]{MN}}]\label{RMN}
For two exact triangles $S\mathcal{B}_i\to\mathcal{C}_i\to \mathcal{A}_i\to\mathcal{B}_i, \;i=1, 2$ with the commutative diagram
\[\xymatrix{
S\mathcal{B}_1\ar[r]\ar[d]^{I_S\otimes\beta}&C_1\ar[r]&\mathcal{A}_1\ar[r]\ar[d]^{\alpha}&\mathcal{B}_1\ar[d]^{\beta}\\
S\mathcal{B}_2\ar[r]&\mathcal{C}_2\ar[r]&\mathcal{A}_2\ar[r]&\mathcal{B}_2,\\
}\]
there exists $\gamma\in KK_X(\mathcal{C}_1, \mathcal{C}_2)$ making the diagram commute.
Furthermore,
if $\alpha, \beta$ are $KK_X$-equivalences,
then $\gamma$ is also a $KK_X$-equivalence.
\end{lem}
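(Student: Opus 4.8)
The statement is a standard feature of triangulated categories: it is the axiom (TR3) that a commutative square on the two outer edges of a pair of exact triangles can be completed to a morphism of triangles, together with the strengthening that the completing morphism is an isomorphism whenever the two given edges are. Since $KK_X$ is asserted in \cite{MN} to be a triangulated category with the Puppe sequences as its exact triangles, the plan is to verify these two properties, taking care that all morphisms are genuine $KK_X$-classes rather than $*$-homomorphisms.

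The plan is to proceed in two steps. First I would establish existence of the completing class $\gamma\in KK_X(\mathcal{C}_1,\mathcal{C}_2)$. Rotating the two triangles into the form $\mathcal{C}_i\to\mathcal{A}_i\to\mathcal{B}_i\to S^{-1}\mathcal{C}_i$ and applying the cohomological functor $KK_X(\mathcal{C}_1,-)$ to the second triangle yields a long exact sequence. The composite $\mathcal{C}_1\xrightarrow{}\mathcal{A}_1\xrightarrow{\alpha}\mathcal{A}_2$ maps to $\mathcal{B}_2$ as $\mathcal{C}_1\to\mathcal{A}_1\xrightarrow{\alpha}\mathcal{A}_2\to\mathcal{B}_2$, which by the hypothesized commutativity of the right-hand square equals $\mathcal{C}_1\to\mathcal{A}_1\xrightarrow{}\mathcal{B}_1\xrightarrow{\beta}\mathcal{B}_2$; but $\mathcal{C}_1\to\mathcal{A}_1\to\mathcal{B}_1$ is the composite of two consecutive maps in an exact triangle, hence zero. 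Thus the image of $\mathcal{C}_1\to\mathcal{A}_2$ in $KK_X(\mathcal{C}_1,\mathcal{B}_2)$ vanishes, so by exactness it lifts to a class $\gamma\in KK_X(\mathcal{C}_1,\mathcal{C}_2)$ with $\gamma\hat\otimes(\mathcal{C}_2\to\mathcal{A}_2)$ equal to $(\mathcal{C}_1\to\mathcal{A}_1)\hat\otimes\alpha$, which is the commutativity of the left-hand square. The remaining square (involving the suspensions $I_S\otimes\beta$) then commutes automatically by the naturality built into the axioms, or by a second diagram-chase with $KK_X(\mathcal{C}_1,-)$ applied to the appropriately rotated triangle.

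Second I would prove the equivalence claim. If $\alpha$ and $\beta$ are $KK_X$-equivalences, then so is $I_S\otimes\beta$, and I want to conclude $\gamma$ is invertible. The clean way is the five lemma in triangulated form: apply the functor $KK_X(\mathcal{D},-)$ for an arbitrary $C(X)$-algebra $\mathcal{D}$ to the morphism of triangles, obtaining a ladder of long exact sequences in which the vertical maps induced by $I_S\otimes\beta$, $\alpha$, $\beta$ (and their shifts) are all isomorphisms; the five lemma forces the map induced by $\gamma$ to be an isomorphism as well. Since this holds for every $\mathcal{D}$, the Yoneda lemma in the category $KK_X$ shows $\gamma$ itself is a $KK_X$-equivalence.

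The main obstacle is not the abstract homological algebra, which is routine once one grants that $KK_X$ is triangulated, but rather the bookkeeping of signs and suspensions: the Puppe sequence uses the suspension $S$ as the shift functor, and one must check that the map on the suspended corner really is $I_S\otimes\beta$ with the correct orientation so that the completed diagram commutes on the nose as drawn. I expect that verifying this compatibility—ensuring the completing morphism $\gamma$ intertwines the left square and the suspended square simultaneously, rather than only one of them—is where the care is needed; this is handled in \cite{MN} and I would cite their Appendix for the precise form of the octahedral and completion axioms in $KK_X$.
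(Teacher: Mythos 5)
Your reading of the statement is correct: the paper gives no proof of this lemma at all --- it is quoted from Meyer--Nest, who establish that $KK_X$ is a triangulated category with the (rotated) Puppe sequences as exact triangles --- and the content is exactly axiom (TR3) (completion of a partial morphism of triangles) together with the ``triangulated five lemma.'' Your second step is correct and is the standard argument: apply $KK_X(\mathcal{D},-)$ to the morphism of triangles, get a commuting ladder of long exact sequences in which the maps induced by $\alpha$, $\beta$, $I_S\otimes\beta$ and their suspensions are isomorphisms, conclude by the five lemma that $\gamma_*\colon KK_X(\mathcal{D},\mathcal{C}_1)\to KK_X(\mathcal{D},\mathcal{C}_2)$ is an isomorphism for every $\mathcal{D}$, and invert $\gamma$ by Yoneda.

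The gap is in your first step. The long-exact-sequence chase produces a class $\gamma$ making only the square over $\mathcal{C}_i\to\mathcal{A}_i$ commute, and your assertion that the remaining square (the one involving $I_S\otimes\beta$) then commutes ``automatically by naturality,'' or can be handled by a second chase, does not hold formally. Concretely, set $\delta:=(S\mathcal{B}_1\to\mathcal{C}_1)\hat{\otimes}\gamma-(I_S\otimes\beta)\hat{\otimes}(S\mathcal{B}_2\to\mathcal{C}_2)\in KK_X(S\mathcal{B}_1,\mathcal{C}_2)$. Composing with $\mathcal{C}_2\to\mathcal{A}_2$ annihilates $\delta$ (use the square you already have and the vanishing of consecutive maps in a triangle), so exactness only tells you $\delta=\epsilon\hat{\otimes}(S\mathcal{B}_2\to\mathcal{C}_2)$ for some $\epsilon\in KK_X(S\mathcal{B}_1,S\mathcal{B}_2)$; nothing forces $\delta=0$. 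A second, independent chase produces a possibly different fill-in $\gamma'$ making the other square commute, and the two need not agree; moreover the allowed corrections of $\gamma$ preserving your first square are exactly those of the form $\zeta\hat{\otimes}(S\mathcal{B}_2\to\mathcal{C}_2)$, and there is no formal reason such a correction can absorb $\epsilon$. This is precisely why (TR3) is an axiom of a triangulated category rather than a consequence of the long exact sequences: it must be verified in the model, which is what Meyer--Nest do (representing $KK_X$-classes by $*$-homomorphisms and using functoriality of mapping cones; alternatively (TR3) follows from the octahedral axiom, whose verification is likewise model-level). So the correct repair is simply to invoke (TR3) of the triangulated structure established in \cite{MN} for the existence of $\gamma$ --- your closing citation of their appendix is in fact carrying all the weight there --- and to keep your five-lemma/Yoneda argument for the equivalence claim.
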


The definition of the $KK_X$-equivalence is the same as in the usual $KK$-theory and we denote by $\sim_{KK_X}$ the equivalence relation.
We denote by $KK_X(\mathcal{A}, \mathcal{B})^{-1}$ the set of $KK_X$-equivalences.
Since $KK_X$ is contravariant with respect to $X$ (see \cite[Prop. 2.20.]{K}),
one has the evaluation map 
$$KK_X(\mathcal{A}, \mathcal{B})\ni\sigma\mapsto \sigma_x\in KK(\mathcal{A}_x, \mathcal{B}_x), \quad x\in X,$$
and M. Dadarlat characterizes the $KK_X$-equivalence using this map.
\begin{thm}[{\cite[Thm. 1.1, Thm. 2.7.]{D3}}]\label{Ddeq}
Let $\mathcal{A}, \mathcal{B}$ be separable nuclear continuous $C(X)$-algebras,
where $X$ is a compact metrizable space of finite covering dimension.
Then, $\sigma\in KK_X(\mathcal{A}, \mathcal{B})$ is a $KK_X$-equivalence if and only if $\sigma_x\in KK(\mathcal{A}_x, \mathcal{B}_x)^{-1}$ for every $x\in X$.

If the fibers $\mathcal{A}_x, \mathcal{B}_x$ are Kirchberg algebras, there is a $C(X)$-linear isomorphisms $$\phi : \mathcal{A}\otimes\mathbb{K}\to \mathcal{B}\otimes\mathbb{K}$$
with $\sigma =KK(\phi)$.
\end{thm}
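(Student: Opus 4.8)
The plan is to reduce both assertions to their fiberwise counterparts and then to assemble the global data by induction over the cell structure of $X$. For the first part, the ``only if'' direction is immediate: the evaluation $KK_X(\mathcal{A},\mathcal{B})\ni\sigma\mapsto\sigma_x\in KK(\mathcal{A}_x,\mathcal{B}_x)$ is a functor compatible with Kasparov products, so a $KK_X$-equivalence restricts to a $KK$-equivalence on every fiber. For the converse I would complete $\sigma$ to the mapping cone exact triangle $\mathcal{D}\to\mathcal{A}\xrightarrow{\sigma}\mathcal{B}\to S\mathcal{D}$ in the triangulated category $KK_X$ (using the machinery of \cite{MN}); then $\sigma$ is a $KK_X$-equivalence precisely when $\mathcal{D}\sim_{KK_X}0$. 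Since evaluation is exact, the hypothesis that each $\sigma_x$ is a $KK$-equivalence says exactly that every fiber $\mathcal{D}_x$ is $KK$-contractible. Thus everything reduces to the assertion that, \emph{over a finite-dimensional $X$, a separable nuclear continuous $C(X)$-algebra all of whose fibers are $KK$-contractible is itself $KK_X$-contractible.}

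To prove this assertion I would argue by induction on the covering dimension, using a closed/open decomposition $X=Y\cup U$ with $Y$ closed and $U=X\setminus Y$. For such a decomposition the ideal $C_0(X,Y)\mathcal{D}$ and the quotient $\mathcal{D}(Y)$ fit into an exact triangle in $KK_X$, yielding Mayer--Vietoris type six-term sequences for $KK_X(\mathcal{E},-)$ and $KK_X(-,\mathcal{E})$. The base case is a single point, where $KK_X$-contractibility is the fiberwise hypothesis. Filtering $X$ by its skeleta $X_0\subset X_1\subset\cdots\subset X_n=X$, the successive subquotients are supported on the open cells $\cong\mathbb{R}^k$; over such a contractible piece homotopy invariance of $KK_X$ together with fiberwise contractibility forces the relevant restriction to be $KK$-contractible, and the six-term sequences then propagate contractibility from $X_{k-1}$ to $X_k$. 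Finiteness of the dimension guarantees the filtration is finite, so the induction terminates.

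For the second part, once $\sigma$ is known to be a $KK_X$-equivalence and the fibers are Kirchberg algebras, I would invoke a parametrized Kirchberg--Phillips classification: a $C(X)$-linear $*$-homomorphism between stabilized continuous fields of Kirchberg algebras exists realizing any prescribed $KK_X$-class, and two such are approximately unitarily equivalent when they induce the same class. Applying the existence statement (the $C(X)$-linear form of Gabe's theorem \cite[Thm. E]{G}) to $\sigma$ and to $\sigma^{-1}$, and the uniqueness statement to their composites, produces a $C(X)$-linear isomorphism $\phi:\mathcal{A}\otimes\mathbb{K}\to\mathcal{B}\otimes\mathbb{K}$ with $KK(\phi)=\sigma$.

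The hard part is the inductive gluing in the second paragraph: although each cell is handled by homotopy invariance, passing from $X_{k-1}$ to $X_k$ requires controlling the connecting homomorphisms of the six-term sequences, that is, showing that the class assembled so far admits a compatible extension over the new cells rather than merely existing locally. It is exactly here that finite-dimensionality is indispensable, bounding the length of the filtration so that the obstruction to globalizing the fiberwise contractions is resolved in finitely many steps and no $\varprojlim^1$ term survives.
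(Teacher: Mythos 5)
The paper does not actually prove this statement: it is imported wholesale from Dadarlat \cite[Thm.~1.1, Thm.~2.7]{D3}, and the only proof-type content surrounding it is the remark that the realization of $\sigma$ by a $C(X)$-linear isomorphism rests on the Kirchberg--Gabe existence and uniqueness theorem (Thm.~\ref{KGE}) together with contractibility of the multiplier unitary group. Your treatment of that second part (existence applied to $\sigma$ and to the fiberwise inverses, uniqueness applied to the composites, then an intertwining) is consistent with this and is essentially correct. Your opening reduction is also sound: evaluation is compatible with mapping-cone triangles, so $\sigma$ is a $KK_X$-equivalence if and only if its cone $\mathcal{D}$ is $KK_X$-contractible, and the hypothesis says exactly that each $\mathcal{D}_x$ is $KK$-contractible.

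The gap is in your second paragraph, and it is not where you locate it. First, a formal point: $X$ is only compact metrizable of finite covering dimension, so it has no cells or skeleta; a filtration argument would have to be run through dimension theory (closed covers of multiplicity $\le \dim X+1$), not a CW structure. Second, and fatally: the base case over a ``cell'' is unjustified. A continuous $C(X)$-algebra is not assumed locally trivial, so over a contractible piece $\mathbb{D}^k$ there is no reason for the restriction $\mathcal{D}(\mathbb{D}^k)$ to be isomorphic, or even $KK_{\mathbb{D}^k}$-equivalent, to the trivial field $C(\mathbb{D}^k)\otimes \mathcal{D}_x$. Homotopy invariance of $KK_X$ concerns homotopic morphisms (equivalently, the passage from $\mathcal{B}$ to $\mathcal{B}\otimes C[0,1]$); it says nothing about trivializing fields over contractible bases. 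The assertion ``a separable nuclear continuous field over $\mathbb{D}^k$ with $KK$-contractible fibers is $KK_{\mathbb{D}^k}$-contractible'' is itself an instance of the theorem you are trying to prove, so your induction is circular at every cell. A symptom of the problem is that your argument never uses nuclearity, whereas the known proof needs it essentially: Dadarlat first replaces $\mathcal{A}$, $\mathcal{B}$ by $KK_X$-equivalent continuous fields of Kirchberg algebras (the paper's Thm.~\ref{de}), then invokes Kirchberg's existence theorem to lift $KK_X$-classes to $C(X)$-linear homomorphisms and a fiberwise-to-global uniqueness theorem over finite-dimensional spaces, and concludes by an Elliott intertwining. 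Finite covering dimension enters through that uniqueness step---there are nontrivial continuous fields with all fibers $\mathcal{O}_2$ over the Hilbert cube, so the hypothesis is not a bookkeeping device that merely truncates a filtration or kills a $\varprojlim^1$ term; without it the statement is false.
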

Here, for the lifting of $\sigma \in KK_X(\mathcal{A}, \mathcal{B})$ to the $C(X)$-linear $*$-homomorphism $\phi$,
we need the existence theorem known as Kirchberg--Gabe's theorem.
\begin{thm}[{\cite{Ki}, \cite[Thm. E, F]{G}}]\label{KGE}
Let $X$ be a compact metrizable space,
and let $\mathcal{A}$ and $\mathcal{B}$ be locally trivial continuous $C(X)$-algebras whose fibers are Kirchberg algebras.
Then,
every $\sigma\in KK_X(\mathcal{A}, \mathcal{B})$ is represented by a $C(X)$-linear $*$-homomorphism $\phi : \mathcal{A}\to\mathcal{B}$ (i.e., $KK_X(\phi)=\sigma$).

If $\mathcal{B}$ is stable and two homomorphisms $\phi_0, \phi_1$ satisfy $KK_X(\phi_0)=KK_X(\phi_1)$,
then there is a continuous path of $C(X)$-linear $*$-homomorphisms $\{\phi_t\}_{t\in [0, 1]}$ connecting $\phi_0$ and $\phi_1$.
\end{thm}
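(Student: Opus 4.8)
The plan is to treat this as the $C(X)$-equivariant version of the Kirchberg--Phillips classification of Kirchberg algebras, establishing the existence statement and the uniqueness-up-to-homotopy statement in tandem through the machinery of $\mathcal{O}_\infty$-absorbing homomorphisms. First I would reduce to the stabilized picture: after tensoring with $\mathbb{K}$ it suffices to produce and compare $C(X)$-linear $*$-homomorphisms $\mathcal{A}\to\mathcal{B}\otimes\mathbb{K}$, where the extra room provided by $\mathbb{K}$ makes absorption arguments available. The hypothesis that every fiber $\mathcal{A}_x,\mathcal{B}_x$ is a Kirchberg algebra is used fiberwise to guarantee pure infiniteness and, in particular, that the fibers are $\mathcal{O}_\infty$-absorbing; the local triviality of $\mathcal{A},\mathcal{B}$ together with the finite covering dimension of $X$ is what should allow these fiberwise properties to be upgraded to uniform, $C(X)$-linear statements.

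For existence, I would represent a given class $\sigma\in KK_X(\mathcal{A},\mathcal{B})$ by a concrete Kasparov/Cuntz-type cycle, i.e.\ a $C(X)$-linear quasihomomorphism from $\mathcal{A}$ into the multiplier algebra $M(\mathcal{B}\otimes\mathbb{K})$ modulo $\mathcal{B}\otimes\mathbb{K}$, using the standard picture of $\mathcal{R}KK(X;-,-)$. The goal is then to straighten this quasihomomorphism into an honest $C(X)$-linear $*$-homomorphism carrying the same class. The key technical input is an \emph{equivariant absorption theorem}: any sufficiently large (nuclearly $\mathcal{O}_\infty$-absorbing) $C(X)$-linear homomorphism absorbs an arbitrary one up to approximate unitary equivalence implemented by unitaries in $M(\mathcal{B}\otimes\mathbb{K})$ that respect the $C(X)$-structure. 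This is precisely Gabe's contribution \cite[Thm. E, F]{G}, and it lets one trade the quasihomomorphism for a genuine homomorphism without changing the $KK_X$-class.

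For the uniqueness part, given $\phi_0,\phi_1:\mathcal{A}\to\mathcal{B}$ with $\mathcal{B}$ stable and $KK_X(\phi_0)=KK_X(\phi_1)$, I would amplify both to $\mathcal{O}_\infty$-absorbing homomorphisms and invoke the same equivariant absorption / stable-uniqueness theorem to conclude that $\phi_0$ and $\phi_1$ are approximately unitarily equivalent through $C(X)$-linear unitaries. Stability of $\mathcal{B}$ then supplies the room for a ``rotation'' homotopy: the connecting unitaries can be pushed into a continuous path to the identity inside $M(\mathcal{B}\otimes\mathbb{K})$, yielding the desired continuous path $\{\phi_t\}_{t\in[0,1]}$ of $C(X)$-linear $*$-homomorphisms. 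I expect the main obstacle to be the equivariant absorption theorem itself, where the unitaries, approximations and homotopies must all be produced continuously in the $C(X)$-module structure simultaneously over the fibers; the fiberwise Kirchberg property gives pure infiniteness pointwise, but converting this into the uniform, parametrized absorption statement is the genuinely hard step, and it is here that the finite-dimensionality of $X$ and the local triviality hypotheses are essential.
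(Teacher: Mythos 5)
Your overall route is the paper's route: neither you nor the paper proves this theorem from scratch, but rather observes that locally trivial continuous $C(X)$-algebras with Kirchberg fibers are exactly $\mathcal{O}_\infty$-stable, tight $X$-algebras in the sense of \cite{G}, so that existence of a representing homomorphism is a special case of \cite[Thm.~E]{G} and uniqueness comes from Gabe's uniqueness theorem. The ``equivariant absorption theorem'' you describe is precisely that cited black box, not something you need to re-derive via quasihomomorphisms. One structural remark on your existence step: Gabe's theorem applies directly to produce $\phi:\mathcal{A}\to\mathcal{B}$, whereas your preliminary reduction to maps $\mathcal{A}\to\mathcal{B}\otimes\mathbb{K}$ creates a new problem you never address, namely how to descend from a homomorphism into $\mathcal{B}\otimes\mathbb{K}$ back to one into $\mathcal{B}$ with the same $KK_X$-class; this detour is unnecessary.

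The genuine gap is in your uniqueness-to-homotopy step. You invoke \emph{approximate} unitary equivalence --- a sequence of unitaries $u_n$ with $\|u_n\phi_0(a)u_n^*-\phi_1(a)\|\to 0$ --- and then claim that connecting the $u_n$ to $1$ inside $U(\mathcal{M}(\mathcal{B}))$ yields a homotopy from $\phi_0$ to $\phi_1$. It does not: conjugating $\phi_0$ along a path from $1$ to $u_n$ ends at $u_n\phi_0 u_n^*$, which is only \emph{close} to $\phi_1$, and concatenating such paths over $n$ gives no control of the conjugates along the connecting segments, so the concatenation need not be continuous at the endpoint $\phi_1$. This is exactly the distinction between approximate and asymptotic unitary equivalence. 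What is actually needed --- and what Gabe's theorem provides and the paper uses --- is \emph{asymptotic} unitary equivalence: a norm-continuous path $\{u_t\}_{t\in[0,\infty)}$ of multiplier unitaries with $u_t\phi_0(a)u_t^*\to\phi_1(a)$ for all $a$. Then the homotopy is immediate: reparametrize $[0,\infty)$ onto $[0,1)$ and append $\phi_1$ at $t=1$ (continuity at $t=1$ is precisely the asymptotic convergence), and connect $\phi_0$ to $u_0\phi_0u_0^*$ by conjugating along a unitary path from $1$ to $u_0$, which exists because $U(\mathcal{M}(\mathcal{B}))$ is contractible for stable $\sigma$-unital $\mathcal{B}$ (Cuntz--Higson); note that conjugation by multiplier unitaries is automatically $C(X)$-linear since $C(X)$ maps into the center of $\mathcal{M}(\mathcal{B})$. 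Finally, a smaller inaccuracy: you assert that finite covering dimension of $X$ is essential here. It is not --- the theorem is stated for arbitrary compact metrizable $X$, and Gabe's results need no dimension hypothesis; finite-dimensionality enters the paper elsewhere, through Dadarlat's Theorem~\ref{Ddeq}, not in this statement.
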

These $C(X)$-algebras in the above theorem are $\mathcal{O}_\infty$-stable, tight $X$-algebras in \cite{G},
and the above statement is a special case of \cite[Thm. E]{G}.
Since $B$ is stable, $\phi_0$ and $\phi_1$ are asymptotically unitary equivalent via multiplier unitaries.
The unitary group $U(\mathcal{M}(\mathcal{B}))$ is contractible and this makes $\phi_0$ and $\phi_1$ homotopy equivalent.

For a given separable nuclear C*-algebra $A$,
there is a unital Kirchberg algebra $KK$-equivalent to $A$. 
By \cite[Proof of Thm. 2.5, Proof of Lem. 2.2.]{D3},
one can easily check a similar statement for locally trivial continuous $C(X)$-algebras.
\begin{thm}[{\cite[Thm. 2.5.]{D3}}]\label{de}
Let $X$ be a compact metrizable space of finite covering dimension.
Let $\mathcal{A}$ be a locally trivial continuous $C(X)$-algebra with fiber a separable nuclear C*-algebra $A$.
Then, there exists a unital locally trivial continuous $C(X)$-algebra $\mathcal{A}^\sharp$ satisfying $\mathcal{A}\sim_{KK_X} \mathcal{A}^\sharp$,
and the fiber ${\mathcal{A}^\sharp}_x(\sim_{KK}A)$ is a unital Kirchberg algebra.
\end{thm}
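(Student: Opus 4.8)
The plan is to reduce the parametrized statement to its non-parametrized counterpart, namely the existence of a unital Kirchberg algebra $B$ with $B\sim_{KK}A$, and then to globalize over $X$ by imitating Dadarlat's construction in \cite[Proof of Thm. 2.5, Lem. 2.2]{D3}, using the fiberwise criterion for $KK_X$-equivalence (Thm. \ref{Ddeq}) and the existence/uniqueness theorem (Thm. \ref{KGE}). First I would fix a unital Kirchberg algebra $B$ together with a $KK$-equivalence $\kappa\in KK(A, B)^{-1}$ and pass to stabilizations, replacing $\mathcal{A}$ by $\mathcal{A}\otimes\mathbb{K}$; the corner inclusion $\mathcal{A}\to\mathcal{A}\otimes\mathbb{K}$ is fiberwise a $KK$-equivalence, hence a $KK_X$-equivalence by Thm. \ref{Ddeq}, so it suffices to produce a field with stable Kirchberg fiber $B\otimes\mathbb{K}$ that is $KK_X$-equivalent to $\mathcal{A}\otimes\mathbb{K}$. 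Note that this route replaces the fiber outright rather than tensoring $A$ with $\mathcal{O}_\infty$, so it does not require $A$ itself to be simple.

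Second, I would build a locally trivial field $\mathcal{B}$ with fiber $B\otimes\mathbb{K}$ over a finite trivializing cover $\{U_i\}$ of the finite CW complex $X$. Over each $U_i$ I set $\mathcal{B}|_{U_i}=C(U_i)\otimes B\otimes\mathbb{K}$, and transport the $C(U_i\cap U_j)$-linear transition automorphisms $g_{ij}$ of $\mathcal{A}\otimes\mathbb{K}$ through $\kappa$: the class $(\mathrm{id}\otimes\kappa)^{-1}\,\hat{\otimes}\,KK(g_{ij})\,\hat{\otimes}\,(\mathrm{id}\otimes\kappa)$ is fiberwise invertible, hence by the existence half of Thm. \ref{KGE} is implemented by a $C(U_i\cap U_j)$-linear automorphism $h_{ij}$ of $C(U_i\cap U_j)\otimes B\otimes\mathbb{K}$. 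The uniqueness half of Thm. \ref{KGE}, valid since $B\otimes\mathbb{K}$ is stable, forces the cocycle relations $h_{ij}h_{jk}=h_{ik}$ to hold up to homotopy, and these homotopies can be absorbed cell by cell over the finite skeleton, so that after adjusting the $h_{ij}$ one obtains a genuine cocycle and thus a locally trivial $\mathcal{B}$. The same local data assemble the constant equivalences $\mathrm{id}\otimes\kappa$ into a class $\sigma\in KK_X(\mathcal{A}\otimes\mathbb{K}, \mathcal{B})$ all of whose fibers equal $\kappa$, whence $\sigma$ is a $KK_X$-equivalence by Thm. \ref{Ddeq}.

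Third, to make the model unital I would cut $\mathcal{B}$ down by a fiberwise-full projection. Since the fibers are stable Kirchberg algebras and $X$ is finite-dimensional, one can choose, cell by cell, a projection $p\in\mathcal{B}$ (after a matrix amplification if necessary) that is locally constant under the trivializations and full in every fiber; then $\mathcal{A}^\sharp:=p\mathcal{B}p$ is unital and locally trivial with fiber the unital Kirchberg algebra $p(B\otimes\mathbb{K})p\sim_{KK}B\sim_{KK}A$. The full-corner inclusion $\mathcal{A}^\sharp\to\mathcal{B}$ is fiberwise a Morita equivalence, hence a $KK$-equivalence, so it is a $KK_X$-equivalence; composing the three equivalences gives $\mathcal{A}\sim_{KK_X}\mathcal{A}^\sharp$, as required.

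I expect the main obstacle to be the second step: turning the pointwise-transported transition classes $h_{ij}$ into an honest $\operatorname{Aut}(B\otimes\mathbb{K})$-valued cocycle. This is exactly the coherence problem that the existence and uniqueness statements of Thm. \ref{KGE} are designed to resolve, and the finite CW (finite covering dimension) hypothesis is essential both to run the inductive repair over skeleta and to invoke Thm. \ref{Ddeq}. The construction of the globally defined unitizing projection $p$ in the third step is a secondary but genuine point, again relying on finite-dimensionality and on the abundance of projections in the stable Kirchberg fibers.
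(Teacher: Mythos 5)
Your second step contains the essential gap, and it is fatal as written. Theorem \ref{KGE} produces, for a \emph{single} $KK_X$-class, a single $C(X)$-linear $*$-homomorphism realizing it, and, when the target is stable, a homotopy between two homomorphisms having the same class; that homotopy runs through $*$-homomorphisms, not automorphisms, so it is not even a path in the structure group $\operatorname{Aut}(B\otimes\mathbb{K})$. It provides no coherence for a \emph{family} of classes: choosing realizations $h_{ij}$ of the transported classes one overlap at a time, you only know that $h_{ij}h_{jk}$ and $h_{ik}$ are homotopic, and promoting this homotopy-coherent \v{C}ech datum to a strict $\operatorname{Aut}(B\otimes\mathbb{K})$-valued cocycle is an obstruction problem with potential obstructions in every degree, governed by the homotopy groups of $\operatorname{Aut}(B\otimes\mathbb{K})$, including its non-commutative $\pi_0$. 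Nothing in your argument makes these obstructions vanish: ``absorbed cell by cell'' is an assertion, not a proof, and it is exactly where the difficulty of the whole subject sits. The paper itself emphasizes this point — there are no known maps between the automorphism groups or classifying spaces implementing a $KK$-equivalence of fibers (see the remark following Thm. \ref{sm} and Rem. \ref{Q}), and the results of Sections 4 and 5, which do carry out such transports in special situations, need the cell-wise untwisting of duality classes and Gabe's theorem; they are not consequences of Thm. \ref{KGE}. A secondary instance of the same gap: even granting $\mathcal{B}$, your local equivalences $\mathrm{id}\otimes\kappa$ intertwine the transition data only up to homotopy, so they do not automatically assemble into a global class $\sigma\in KK_X(\mathcal{A}\otimes\mathbb{K},\mathcal{B})$ either.

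The paper's proof avoids transition functions altogether. It runs the construction of \cite[Proof of Thm. 2.5, Proof of Lem. 2.2]{D3} in the $C(X)$-linear setting: $\mathcal{A}$ itself is modified by an intrinsic $C(X)$-linear procedure — no trivializations are chosen, so no cocycle ever has to be rigidified — yielding a continuous field with Kirchberg fibers that is $KK_X$-equivalent to $\mathcal{A}$; local triviality is then \emph{recovered} from the classification theorem (Thm. \ref{Ddeq}) over each trivializing neighborhood rather than engineered by gluing, and unitality is obtained by cutting with a full properly infinite projection, exactly as the paper later does in Lem. \ref{bij} using $KK_X(C(X),\mathcal{B})\cong K_0(\mathcal{B})$. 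Note also that this intrinsic route works for every compact metrizable $X$ of finite covering dimension, which is the hypothesis of the statement, whereas your argument silently assumes $X$ is a finite CW complex (trivializing covers, skeleta, cells). Your first step is fine, and your third can be made to work along the lines of Lem. \ref{bij}, but the proposal does not close without replacing the second step by an argument of a genuinely different kind.
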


\subsection{Spanier--Whitehead duality}
We introduce the Spanier--Whitehead duality for $KK_X$-groups following the formulation in \cite{KS}.
\begin{dfn}[{cf. \cite[Def. 2.1.]{KS}}]\label{SK}
Two $C(X)$-algebras $\mathcal{A}$ and $\mathcal{B}$ are said to have duality classes if there exist
$$\mu \in KK_X(C(X), \mathcal{A}\otimes _{C(X)}\mathcal{B}),\quad \nu \in KK_X(\mathcal{B}\otimes _{C(X)}\mathcal{A}, C(X))$$
satisfying $$(\mu\otimes I_\mathcal{A})\hat{\otimes}(I_\mathcal{A}\otimes \nu)=I_\mathcal{A},$$
$$(I_\mathcal{B}\otimes \mu)\hat{\otimes}(\nu\otimes I_\mathcal{B})=I_\mathcal{B}.$$
Then,
$\mathcal{A}$ and $\mathcal{B}$ are said to be Spanier--Whitehead $K_X$-dual with duality classes $(\mu, \nu)$.
\end{dfn}
\begin{rem}\label{Sd}
The following diagrams might be helpful to follow the computations using the duality classes.
The morphisms $\mu, I_\mathcal{A}, \nu$ are represented by
\[
\begin{tikzpicture}
\node (O) {$\mu$};
\node (C) [above=of O] {$C(X)$};
\node (OO) [below=of O] {};
\node (A) [left=of OO] {$\mathcal{A}$};
\node (D) [right=of OO] {$\mathcal{B}$};
\draw[-] (A) to[out=60,in=120] (D);
\end{tikzpicture}
\begin{tikzpicture}
\node (A2) {$\mathcal{A}$};
\node (O) [above=of A2]{$I_\mathcal{A}$};
\node (A1) [above=of O] {$\mathcal{A}$};
\node (O1) [left=of O] {};
\node (O2) [right=of O] {};
\draw (A1)--(O);
\draw (A2)--(O);
\end{tikzpicture}
\begin{tikzpicture}
\node (OO) {};
\node (B) [left=of OO] {$\mathcal{B}$};
\node (A) [right=of OO] {$\mathcal{A}$};
\node (O) [below=of OO] {$\nu$};
\node (C) [below=of O] {$C(X)$};
\draw (B) to[out=-60, in=-120] (A);
\end{tikzpicture}
\]
and the unit counit adjunction formula $(\mu\otimes I_\mathcal{A})\hat{\otimes}(I_\mathcal{A}\otimes\nu)=I_\mathcal{A}$ is represented by
\[
\begin{tikzpicture}
\node (B) {$\mathcal{B}$};
\node (A2) [left=of B] {$\mathcal{A}$};
\node (A3) [right=of B] {$\mathcal{A}$};
\node (A1) [below=of A2] {$\mathcal{A}$};
\node (A4) [above=of A3] {$\mathcal{A}$};
\draw (A1)--(A2);
\draw (A2) to[out=45, in=135] (B);
\draw (B) to[out=-45, in=-135] (A3);
\draw (A3)--(A4);
\end{tikzpicture}
\begin{tikzpicture}
\node (O) {=};
\node (On) [above=of O] {};
\node (Ow) [left=of O] {};
\node (Oe) [right=of O] {};
\node (Os) [below=of O] {};
\end{tikzpicture}
\begin{tikzpicture}
\node (A1) {$\mathcal{A}$};
\node (O) [below=of A1] {};
\node (A2) [below=of O] {$\mathcal{A}$.};
\draw (A1)--(A2);
\end{tikzpicture}
\]
\end{rem}

\begin{prop}[{\cite[Thm. 2.6.]{KS}}]
The definition of duality is symmetric for $\mathcal{A}$ and $\mathcal{B}$.
More precisely,
using the isomorphism 
$$\sigma : \mathcal{A}\otimes_{C(X)}\mathcal{B}\ni a\otimes b\mapsto b\otimes a\in \mathcal{B}\otimes_{C(X)}\mathcal{A},$$
$\mathcal{B}$ and $\mathcal{A}$ are also $K_X$-dual with duality classes $(\mu\hat{\otimes}\sigma, \sigma\hat{\otimes}\nu)$.
\end{prop}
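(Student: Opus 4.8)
The plan is to read the statement as the $C(X)$-incarnation of the standard categorical fact that, in a symmetric monoidal category, the relation ``$\mathcal{B}$ is a right dual of $\mathcal{A}$'' is symmetric. Here the category is $KK_X$, the tensor product is $\otimes_{C(X)}$, the unit is $C(X)$, and the symmetry is furnished by the flip isomorphisms. Consequently the entire argument should be a formal manipulation built from three ingredients already recorded in the preliminaries: the functoriality of $-\otimes I_\mathcal{C}$ and $I_\mathcal{C}\otimes-$ with respect to the Kasparov product, the exchange law $(\alpha\otimes I_\mathcal{C})\hat{\otimes}(I_\mathcal{B}\otimes \gamma)=(I_\mathcal{A}\otimes \gamma)\hat{\otimes}(\alpha \otimes I_\mathcal{D})$, and the flip relations $\sigma\hat{\otimes}(I_\mathcal{C}\otimes a)=a\otimes I_\mathcal{C}$ and $(I_\mathcal{C}\otimes b)\hat{\otimes}\theta^{-1}=b\otimes I_\mathcal{C}$. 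The graphical calculus of Remark \ref{Sd} should keep the bookkeeping transparent, since $\mu':=\mu\hat{\otimes}\sigma$ and $\nu':=\sigma\hat{\otimes}\nu$ are simply the cup $\mu$ and the cap $\nu$ with one extra crossing attached. First I would type-check that these are legitimate duality classes for the ordered pair $(\mathcal{B},\mathcal{A})$: one has $\mu'\in KK_X(C(X),\mathcal{B}\otimes_{C(X)}\mathcal{A})$ and $\nu'\in KK_X(\mathcal{A}\otimes_{C(X)}\mathcal{B},C(X))$, which are exactly the homsets required by Definition \ref{SK} after interchanging the roles of $\mathcal{A}$ and $\mathcal{B}$. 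The two identities to be verified are then
\[(\mu'\otimes I_\mathcal{B})\hat{\otimes}(I_\mathcal{B}\otimes \nu')=I_\mathcal{B}, \qquad (I_\mathcal{A}\otimes \mu')\hat{\otimes}(\nu'\otimes I_\mathcal{A})=I_\mathcal{A}.\]

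For the first identity I would expand the duality classes using functoriality of the two tensoring operations, obtaining
\[(\mu\otimes I_\mathcal{B})\hat{\otimes}(\sigma\otimes I_\mathcal{B})\hat{\otimes}(I_\mathcal{B}\otimes\sigma)\hat{\otimes}(I_\mathcal{B}\otimes\nu).\]
Next I would rewrite the leading factor by the flip relation $(I_\mathcal{C}\otimes b)\hat{\otimes}\theta^{-1}=b\otimes I_\mathcal{C}$ taken with $b=\mu$ and $\mathcal{C}=\mathcal{B}$, which gives $\mu\otimes I_\mathcal{B}=(I_\mathcal{B}\otimes\mu)\hat{\otimes}\theta^{-1}$ for the block flip $\theta:(\mathcal{A}\otimes_{C(X)}\mathcal{B})\otimes_{C(X)}\mathcal{B}\to\mathcal{B}\otimes_{C(X)}(\mathcal{A}\otimes_{C(X)}\mathcal{B})$. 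The expression then becomes $(I_\mathcal{B}\otimes\mu)\hat{\otimes}\big[\theta^{-1}\hat{\otimes}(\sigma\otimes I_\mathcal{B})\hat{\otimes}(I_\mathcal{B}\otimes\sigma)\hat{\otimes}(I_\mathcal{B}\otimes\nu)\big]$, and the core computation is to establish
\[\theta^{-1}\hat{\otimes}(\sigma\otimes I_\mathcal{B})\hat{\otimes}(I_\mathcal{B}\otimes\sigma)\hat{\otimes}(I_\mathcal{B}\otimes\nu)=\nu\otimes I_\mathcal{B}.\]
At the diagrammatic level this is just sliding the two interior crossings along the bent $\mathcal{A}$-strand so that $\nu$ consumes the correct pair of legs; algebraically it amounts to decomposing $\theta^{-1}$ into adjacent transpositions and cancelling them against the crossings via the exchange law. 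Once this holds, the first identity collapses onto $(I_\mathcal{B}\otimes\mu)\hat{\otimes}(\nu\otimes I_\mathcal{B})=I_\mathcal{B}$, which is precisely the second hypothesis of Definition \ref{SK}.

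The second new identity is treated symmetrically: expanding $\mu',\nu'$ gives $(I_\mathcal{A}\otimes\mu)\hat{\otimes}(I_\mathcal{A}\otimes\sigma)\hat{\otimes}(\sigma\otimes I_\mathcal{A})\hat{\otimes}(\nu\otimes I_\mathcal{A})$, and the same crossing-sliding, together with the companion flip relation $\sigma\hat{\otimes}(I_\mathcal{C}\otimes a)=a\otimes I_\mathcal{C}$, reduces it to the first hypothesis $(\mu\otimes I_\mathcal{A})\hat{\otimes}(I_\mathcal{A}\otimes \nu)=I_\mathcal{A}$. Thus each new triangle identity collapses onto one of the two given ones, and the duality with classes $(\mu\hat{\otimes}\sigma,\sigma\hat{\otimes}\nu)$ follows. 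I expect the only real obstacle to be the combinatorics of tracking which flip acts on which tensor factor, since the block flip $\theta$ must be resolved into elementary transpositions before the stated relations apply; this is routine but error-prone, so I would organize the whole computation through the string diagrams of Remark \ref{Sd}, where the required equalities are manifestly the planar straightenings of a zigzag and the exchange law is invoked only to justify each elementary move.
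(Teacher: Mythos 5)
Your proposal is correct and follows essentially the same route as the paper: both expand the candidate classes $(\mu\hat{\otimes}\sigma,\sigma\hat{\otimes}\nu)$, use the flip relations (the paper via $\theta_{B,BA}$, $\theta_{B,AB}$ applied to the whole classes, you via resolving the block flip against the individual crossings) to slide the permutation isomorphisms out of the way, and collapse each new adjunction identity onto the corresponding given one, $(I_\mathcal{B}\otimes\mu)\hat{\otimes}(\nu\otimes I_\mathcal{B})=I_\mathcal{B}$ and $(\mu\otimes I_\mathcal{A})\hat{\otimes}(I_\mathcal{A}\otimes\nu)=I_\mathcal{A}$. The only difference is that you spell out the intermediate crossing-sliding step that the paper dismisses as ``straightforward to check.''
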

\begin{proof}
Using the isomorphisms $\theta_{B, BA} : \mathcal{B}\otimes_{C(X)}(\mathcal{B}\otimes_{C(X)}\mathcal{A})\to (\mathcal{B}\otimes_{C(X)}\mathcal{A})\otimes_{C(X)}\mathcal{B}$ and $\theta_{B, AB} : \mathcal{B}\otimes_{C(X)}(\mathcal{A}\otimes_{C(X)}\mathcal{B})\to (\mathcal{A}\otimes_{C(X)}\mathcal{B})\otimes_{C(X)}\mathcal{B}$ exchanging tensor factors,
one can obtain the following equations $$(\mu\hat{\otimes}\sigma)\otimes I_\mathcal{B}=(I_\mathcal{B}\otimes(\mu\hat{\otimes}\sigma))\hat{\otimes}\theta_{B, BA},$$
$$I_\mathcal{B}\otimes (\sigma\hat{\otimes}\nu)=\theta_{B, AB}\hat{\otimes}((\sigma\hat{\otimes}\nu)\otimes I_\mathcal{B}).$$
It is straightforward to check $$((\mu\hat{\otimes}\sigma)\otimes I_\mathcal{B})\hat{\otimes}(I_\mathcal{B}\otimes(\sigma\hat{\otimes}\nu))=(I_\mathcal{B}\otimes\mu)\hat{\otimes}(\nu\otimes I_\mathcal{B})=I_\mathcal{B},$$
and the other equation for the duality classes is proved similarly.
\end{proof}
The following results which are obtained by a similar argument as in \cite[Proof of Thm. 2.2.]{KS} are known for the specialists,
but we write their proofs for the convenience of readers.
\begin{lem}\label{b}
Let $\mathcal{A}$ and $\mathcal{B}$ be $C(X)$-algebras with two elements
$$\mu\in KK_X(C(X), \mathcal{A}\otimes_{C(X)}\mathcal{B}),\quad \nu\in KK_X(\mathcal{B}\otimes_{C(X)}\mathcal{A}, C(X)).$$
\begin{enumerate}
\bibitem{}If the above elements satisfy
$$(\mu\otimes I_\mathcal{A})\hat{\otimes}(I_\mathcal{A}\otimes \nu)=\alpha \in KK_X(\mathcal{A}, \mathcal{A})^{-1},$$
$$(I_\mathcal{B}\otimes \mu)\hat{\otimes}(\nu\otimes I_\mathcal{B})=\beta \in KK_X(\mathcal{B}, \mathcal{B})^{-1},$$
then $\mathcal{A}$ and $\mathcal{B}$ are $K_X$-dual with duality classes
$(\mu\hat{\otimes}(\alpha^{-1}\otimes I_\mathcal{B}), \nu)$.
\bibitem{}Assume that the both $(\mu, \nu)$ and $(\mu', \nu')$ provide duality classes for $\mathcal{A}$ and $\mathcal{B}$.
Then, they give invertible elements $$\alpha :=(\mu\otimes I_\mathcal{A})\hat{\otimes}(I_\mathcal{A}\otimes\nu'),\; \alpha':=(\mu'\otimes I_\mathcal{A})\hat{\otimes}(I_\mathcal{A}\otimes\nu)\in KK_X(\mathcal{A}, \mathcal{A})^{-1},$$
$$\beta:= (I_\mathcal{B}\otimes\mu)\hat{\otimes}(\nu'\otimes I_\mathcal{B}),\; \beta':=(I_\mathcal{B}\otimes\mu')\hat{\otimes}(\nu\otimes I_\mathcal{B})\in KK_X(\mathcal{B}, \mathcal{B})^{-1}$$
satisfying
$$\mu=\mu'\hat{\otimes}(\alpha\otimes I_\mathcal{B}),\quad \nu=(I_\mathcal{B}\otimes\alpha')\hat{\otimes}\nu',$$
$$\mu=\mu'\hat{\otimes}(I_\mathcal{A}\otimes\beta),\quad \nu=(\beta'\otimes I_\mathcal{A})\hat{\otimes}\nu'.$$
\bibitem{}Let $\mathcal{A}$ and $\mathcal{B}_i$ be $K_X$-dual with duality classes $(\mu_i, \nu_i)$ for $i=1, 2$.
Then, we have $\gamma:=(I_{\mathcal{B}_2}\otimes \mu_1)\hat{\otimes}(\nu_2\otimes I_{\mathcal{B}_1})\in KK_X(\mathcal{B}_2, \mathcal{B}_1)^{-1}$ (i.e., $\mathcal{B}_1\sim_{KK_X} \mathcal{B}_2$) with $\gamma^{-1}=(I_{\mathcal{B}_1}\otimes\mu_2)\hat{\otimes}(\nu_1\otimes I_{\mathcal{B}_2})$.
In particular, the K-dual is uniquely determined up to $KK_X$-equivalence.
\bibitem{}Let $\mathcal{A}$ and $\mathcal{B}$ be $C(X)$-algebras with duality classes $(\mu, \nu)$,
and let $\mathcal{A}\xrightarrow{\alpha}\tilde{\mathcal{A}}$ and $\mathcal{B}\xrightarrow{\beta}\tilde{\mathcal{B}}$ be $KK_X$-equivalences.
Then, the both of $(\mu\hat{\otimes}(\alpha\otimes I_\mathcal{B}), (I_\mathcal{B}\otimes \alpha^{-1})\hat{\otimes}\nu)$ and $(\mu\hat{\otimes}(I_\mathcal{A}\otimes\beta), (\beta^{-1}\otimes I_\mathcal{A})\hat{\otimes}\nu)$ are duality classes.
\end{enumerate}
\end{lem}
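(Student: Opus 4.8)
The four statements are precisely the standard facts about a pair of mutually dual objects in the symmetric monoidal category $KK_X$ whose tensor product is $\otimes_{C(X)}$ and whose unit is $C(X)$; the two equations in Definition \ref{SK} are the triangle (zig-zag) identities for such a duality. My plan is therefore to run the graphical calculus of Remark \ref{Sd} throughout, where the only non-formal input is the interchange relation $(\alpha\otimes I_{\mathcal{C}})\hat{\otimes}(I_{\mathcal{B}}\otimes\gamma)=(I_{\mathcal{A}}\otimes\gamma)\hat{\otimes}(\alpha\otimes I_{\mathcal{D}})$ of \cite{K}, together with bilinearity of $\hat{\otimes}$ and of $\otimes_{C(X)}$ and the usual associativity and unit identifications. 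Everything else is bookkeeping of tensor factors.

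First I would isolate the one genuinely useful auxiliary identity, the mate relation
$$(I_{\mathcal{B}}\otimes\alpha)\hat{\otimes}\nu=(\beta\otimes I_{\mathcal{A}})\hat{\otimes}\nu.$$
Expanding $\alpha$ and $\beta$ by their definitions, both sides become the same zig-zag morphism $\mathcal{B}\otimes_{C(X)}\mathcal{A}\to C(X)$ assembled from one $\mu$-cup and two $\nu$-caps on disjoint tensor factors, and the two orders of performing the $\nu$-caps agree by the interchange relation; the mirror version with $\mu$ in place of $\nu$ is obtained identically. Granting this, Part 1 is quick. For $\mu':=\mu\hat{\otimes}(\alpha^{-1}\otimes I_{\mathcal{B}})$ the first triangle identity $(\mu'\otimes I_{\mathcal{A}})\hat{\otimes}(I_{\mathcal{A}}\otimes\nu)$ is handled by pulling $\alpha^{-1}$, which acts on a factor disjoint from the one $\nu$ consumes, through $I_{\mathcal{A}}\otimes\nu$ by interchange, turning the expression into $\alpha\hat{\otimes}\alpha^{-1}=I_{\mathcal{A}}$. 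For the second identity I would use the mate relation to convert the $\alpha^{-1}$-correction on the $\mathcal{A}$-leg into a $\beta^{-1}$-correction on the first $\mathcal{B}$-leg, pull $\beta^{-1}$ to the front by interchange, and collapse the remaining $\mathcal{B}$-zig-zag to $\beta$, so that the composite becomes $\beta^{-1}\hat{\otimes}\beta=I_{\mathcal{B}}$.

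Parts 2--4 are then routine zig-zag collapses. For Part 2 I would verify that $\alpha$ and $\alpha'$ (resp. $\beta$ and $\beta'$) are mutually inverse by composing them and repeatedly applying the four triangle identities of the two given dualities, and I would read off the rigidity relations $\mu=\mu'\hat{\otimes}(\alpha\otimes I_{\mathcal{B}})$ and its companions by inserting a $(\mu',\nu')$-zig-zag into $\mu$ and straightening. Part 3 is the same computation once more: I would compose the proposed $\gamma$ with the proposed $\gamma^{-1}$ and straighten the resulting double zig-zag using the triangle identities of both $(\mu_1,\nu_1)$ and $(\mu_2,\nu_2)$ to reach $I_{\mathcal{B}_1}$ and $I_{\mathcal{B}_2}$, whence uniqueness of the $K_X$-dual up to $KK_X$-equivalence is immediate. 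Part 4 requires only substituting the transported classes into the two triangle identities and cancelling $\alpha\hat{\otimes}\alpha^{-1}$ and $\beta\hat{\otimes}\beta^{-1}$ after a single use of interchange.

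I expect the main obstacle to be organizational rather than conceptual, namely keeping the tensor-factor indices straight so that the interchange relation is applied only to genuinely disjoint factors. The single subtle point is the second triangle identity in Part 1: it is not a priori clear that correcting $\mu$ by $\alpha^{-1}$ also repairs the $\mathcal{B}$-side zig-zag, and this is exactly what the mate relation is designed to supply.
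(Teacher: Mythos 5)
Your proposal is correct and follows essentially the same route as the paper: the mate relation $(I_\mathcal{B}\otimes\alpha)\hat{\otimes}\nu=(\beta\otimes I_\mathcal{A})\hat{\otimes}\nu$ you isolate is exactly the key displayed computation in the paper's proof of part 1, established there the same way (expand $\beta$ into one $\mu$ and two $\nu$'s, swap the order of the two $\nu$-caps by the interchange relation), and the paper likewise treats parts 2--4 as similar zig-zag computations, respectively as following by definition. The only cosmetic difference is in the second triangle identity of part 1, where the paper pre-composes with $\beta$ and cancels it by invertibility, while you invert the mate relation to replace the $\alpha^{-1}$-correction by a $\beta^{-1}$-correction directly; the two manipulations are trivially equivalent.
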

\begin{proof}
First, we prove 1.
Let $\mu \hat{\otimes}(\alpha^{-1}\otimes I_\mathcal{B})$ denote by $\mu_\alpha$,
then one has
\begin{align*}
I_\mathcal{A}=&(\mu\otimes I_\mathcal{A})\hat{\otimes}(I_\mathcal{A}\otimes \nu)\hat{\otimes}\alpha^{-1}\\
=&(\mu\otimes I_\mathcal{A})\hat{\otimes}(I_\mathcal{A}\otimes \nu)\hat{\otimes}(\alpha^{-1}\otimes I_{C(X)})\\
=&(\mu\otimes I_\mathcal{A})\hat{\otimes}(\alpha^{-1}\otimes I_{\mathcal{B}\otimes_{C(X)}\mathcal{A}})\hat{\otimes}(I_\mathcal{A}\otimes\nu)\\
=&(\mu_\alpha\otimes I_\mathcal{A})\hat{\otimes}(I_\mathcal{A}\otimes\nu),
\end{align*}
and direct computation yields
\begin{align*}
\beta\hat{\otimes}(I_\mathcal{B}\otimes\mu_\alpha)\hat{\otimes}(\nu\otimes I_\mathcal{B})=&(\beta\otimes I_{C(X)})\hat{\otimes}(I_\mathcal{B}\otimes\mu_\alpha)\hat{\otimes}(\nu\otimes I_\mathcal{B})\\
=&(I_\mathcal{B}\otimes\mu_\alpha)\hat{\otimes}(\beta\otimes I_{\mathcal{A}\otimes_{C(X)}\mathcal{B}})\hat{\otimes}(\nu\otimes I_\mathcal{B})\\
=&(I_\mathcal{B}\otimes \mu)\hat{\otimes}(I_\mathcal{B}\otimes \alpha^{-1}\otimes I_\mathcal{B})\hat{\otimes}(((\beta\otimes I_\mathcal{A})\hat{\otimes}\nu)\otimes I_\mathcal{B}).
\end{align*}
We also have
\begin{align*}
(\beta\otimes I_\mathcal{A})\hat{\otimes}\nu=&(I_\mathcal{B}\otimes \mu\otimes I_\mathcal{A})\hat{\otimes}(\nu\otimes I_{\mathcal{B}\otimes_{C(X)}\mathcal{A}})\hat{\otimes}(I_{C(X)}\otimes \nu)\\
=&(I_\mathcal{B}\otimes \mu\otimes I_\mathcal{A})\hat{\otimes}(I_{\mathcal{B}\otimes_{C(X)}\mathcal{A}}\otimes \nu)\hat{\otimes}(\nu\otimes I_{C(X)})\\
=&(I_\mathcal{B}\otimes\alpha)\hat{\otimes}\nu,
\end{align*}
and this yields
$$\beta\hat{\otimes}(I_\mathcal{B}\otimes\mu_\alpha)\hat{\otimes}(\nu\otimes I_\mathcal{B})=\beta$$
which proves 1.

The proof of 2 is done by a similar computation.
So we only show $\alpha^{-1}=\alpha'$,
and it is enough to check $\alpha\hat{\otimes}\alpha'=I_\mathcal{A}$ by symmetry.
The direct computation yields
\begin{align*}
\alpha\hat{\otimes}\alpha'=&(I_{C(X)}\otimes\mu\otimes I_\mathcal{A})\hat{\otimes}(I_{C(X)}\otimes I_\mathcal{A}\otimes \nu')\hat{\otimes}(((\mu'\otimes I_\mathcal{A})\hat{\otimes}(I_\mathcal{A}\otimes \nu)) \otimes I_{C(X)})\\
=&(I_{C(X)}\otimes \mu \otimes I_\mathcal{A})\hat{\otimes}(((\mu'\otimes I_\mathcal{A})\hat{\otimes}(I_\mathcal{A}\otimes \nu))\otimes I_{\mathcal{B}\otimes_{C(X)}\mathcal{A}})\hat{\otimes}(I_\mathcal{A}\otimes I_{C(X)}\otimes\nu')\\
=&(\mu'\otimes I_{C(X)}\otimes I_\mathcal{A})\hat{\otimes}(I_\mathcal{A}\otimes I_\mathcal{B}\otimes\mu\otimes I_\mathcal{A})\hat{\otimes}(I_\mathcal{A}\otimes\nu\otimes I_\mathcal{B}\otimes I_\mathcal{A})\hat{\otimes}(I_\mathcal{A}\otimes I_{C(X)}\otimes \nu')\\
=&(\mu'\otimes I_\mathcal{A})\hat{\otimes}(I_\mathcal{A}\otimes \nu')\\
=&I_\mathcal{A}.
\end{align*}
Statement 3 is shown by the same computation,
and 4 follows by definition.
\end{proof}
Among the 4 statements on the duality classes, 
statement 1. is crucial in Sec. \ref{spwh}.  
From the above lemma and its proof,
one can easily show the following.
\begin{cor}[{\cite[Cor. 2.3.]{KS}}]\label{pd}
Let $\mathcal{A}$ and $\mathcal{B}$ be $K_X$-dual with the duality classes $(\mu, \nu)$.
Then,
the map $$\mu\hat{\otimes}(-\otimes I_\mathcal{B}) : KK_X(\mathcal{A}, C(X))\to KK_X(C(X), \mathcal{B})$$
is an isomorphism with the inverse map $(-\otimes I_\mathcal{A})\hat{\otimes}\nu$.
\end{cor}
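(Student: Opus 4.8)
The plan is to verify directly that the two stated maps are mutually inverse group homomorphisms. Write $\Phi := \mu\hat{\otimes}(-\otimes I_\mathcal{B})$ and $\Psi := (-\otimes I_\mathcal{A})\hat{\otimes}\nu$ for the two maps in question. Both are additive, since the Kasparov product $\hat{\otimes}$ and the external operations $-\otimes I_\mathcal{A}$, $-\otimes I_\mathcal{B}$ are additive, so it suffices to show $\Psi\circ\Phi=\mathrm{id}$ on $KK_X(\mathcal{A}, C(X))$ and $\Phi\circ\Psi=\mathrm{id}$ on $KK_X(C(X), \mathcal{B})$. The only inputs I expect to need are the two defining duality relations of Definition \ref{SK}, together with the functoriality of $-\otimes I_\mathcal{A}$ (it preserves Kasparov products), the associativity of the external product, and the interchange identity $(\alpha\otimes I_\mathcal{C})\hat{\otimes}(I_\mathcal{B}\otimes\gamma)=(I_\mathcal{A}\otimes\gamma)\hat{\otimes}(\alpha\otimes I_\mathcal{D})$ recorded in the preliminaries.

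For the first composition, take $a\in KK_X(\mathcal{A}, C(X))$. Using that $-\otimes I_\mathcal{A}$ preserves products and that $(a\otimes I_\mathcal{B})\otimes I_\mathcal{A}=a\otimes I_{\mathcal{B}\otimes_{C(X)}\mathcal{A}}$, I would rewrite
$$\Psi(\Phi(a))=\big((\mu\hat{\otimes}(a\otimes I_\mathcal{B}))\otimes I_\mathcal{A}\big)\hat{\otimes}\nu=(\mu\otimes I_\mathcal{A})\hat{\otimes}(a\otimes I_{\mathcal{B}\otimes_{C(X)}\mathcal{A}})\hat{\otimes}\nu.$$
Applying the interchange identity with $\alpha=a$ and $\gamma=\nu$ (and absorbing the spurious $C(X)$-factors via $I_{C(X)}\otimes\nu=\nu$, $a\otimes I_{C(X)}=a$) gives $(a\otimes I_{\mathcal{B}\otimes_{C(X)}\mathcal{A}})\hat{\otimes}\nu=(I_\mathcal{A}\otimes\nu)\hat{\otimes}a$, whence $\Psi(\Phi(a))=(\mu\otimes I_\mathcal{A})\hat{\otimes}(I_\mathcal{A}\otimes\nu)\hat{\otimes}a$. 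The first duality relation $(\mu\otimes I_\mathcal{A})\hat{\otimes}(I_\mathcal{A}\otimes\nu)=I_\mathcal{A}$ then collapses this to $a$.

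The second composition is symmetric. For $b\in KK_X(C(X), \mathcal{B})$, the same functoriality and associativity give $\Phi(\Psi(b))=\mu\hat{\otimes}(b\otimes I_{\mathcal{A}\otimes_{C(X)}\mathcal{B}})\hat{\otimes}(\nu\otimes I_\mathcal{B})$, and the interchange identity (now moving $b$ past $\mu$, using $\mu=I_{C(X)}\otimes\mu$) rewrites the first two factors as $b\hat{\otimes}(I_\mathcal{B}\otimes\mu)$; the second duality relation $(I_\mathcal{B}\otimes\mu)\hat{\otimes}(\nu\otimes I_\mathcal{B})=I_\mathcal{B}$ then yields $\Phi(\Psi(b))=b$. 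I expect the main obstacle to be purely bookkeeping: correctly tracking the ordering of the tensor legs and instantiating the interchange identity with the right choice of $\alpha,\gamma$ and identity factors, since a misplaced $\otimes I$ versus $I\otimes$ would break the cancellation. The graphical calculus of Remark \ref{Sd} is the clean way to keep this straight, and indeed the two computations are exactly the two zig-zag straightenings of the duality diagrams there.
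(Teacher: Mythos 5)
Your proof is correct and is essentially the paper's own argument: the paper dismisses this corollary with ``from the above lemma and its proof, one can easily show the following,'' and your two zig-zag computations (using functoriality of $-\otimes I$, the interchange identity, and the two duality relations) are precisely the Lemma \ref{b}-style manipulations being invoked there, spelled out with the correct instantiations of $\alpha$ and $\gamma$.
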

\begin{cor}[{cf. \cite[Sec. 3.3]{P}}]\label{up}
Let $\mathcal{A}$ be a locally trivial continuous $C(X)$-algebra with fiber $\mathcal{O}_\infty\otimes\mathbb{K}$,
and let $\mathcal{A}^{-1}$ be its inverse in $E^1_{\mathcal{O}_\infty}(X)$ (i.e., $\mathcal{A}\otimes_{C(X)}\mathcal{A}^{-1}\cong C(X)\otimes \mathcal{O}_\infty\otimes\mathbb{K}$).
Then, $\mathcal{A}$ and $\mathcal{A}^{-1}$ are  $K_X$-dual.
\end{cor}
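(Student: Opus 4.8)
The plan is to exhibit explicit candidate duality classes built from the stabilization equivalence and the trivializing isomorphism for $\mathcal{A}\otimes_{C(X)}\mathcal{A}^{-1}$, and then to correct them into genuine duality classes by Lemma \ref{b}.1. Since $\mathcal{O}_\infty\otimes\mathbb{K}\sim_{KK}\mathbb{C}$, the constant field $C(X)\otimes\mathcal{O}_\infty\otimes\mathbb{K}$ is $KK_X$-equivalent to $C(X)$ (its fibers are all equivalences, so this is a $KK_X$-equivalence by Theorem \ref{Ddeq}); write $\eta\in KK_X(C(X), C(X)\otimes\mathcal{O}_\infty\otimes\mathbb{K})^{-1}$ for the stabilization class. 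By the definition of $\mathcal{A}^{-1}$ in $E^1_{\mathcal{O}_\infty}(X)$ and the commutativity of the group structure in Theorem \ref{hn}, I would fix $C(X)$-linear isomorphisms
$$\phi : \mathcal{A}\otimes_{C(X)}\mathcal{A}^{-1}\xrightarrow{\;\cong\;}C(X)\otimes\mathcal{O}_\infty\otimes\mathbb{K},\qquad \phi' : \mathcal{A}^{-1}\otimes_{C(X)}\mathcal{A}\xrightarrow{\;\cong\;}C(X)\otimes\mathcal{O}_\infty\otimes\mathbb{K}.$$
Setting $\mathcal{B}:=\mathcal{A}^{-1}$, I would then define candidate classes
$$\mu:=\eta\hat{\otimes}KK_X(\phi)^{-1}\in KK_X(C(X), \mathcal{A}\otimes_{C(X)}\mathcal{B}),\qquad \nu:=KK_X(\phi')\hat{\otimes}\eta^{-1}\in KK_X(\mathcal{B}\otimes_{C(X)}\mathcal{A}, C(X)).$$

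The next step is to form the two composites appearing in Lemma \ref{b}.1,
$$\alpha:=(\mu\otimes I_\mathcal{A})\hat{\otimes}(I_\mathcal{A}\otimes\nu)\in KK_X(\mathcal{A}, \mathcal{A}),\qquad \beta:=(I_\mathcal{B}\otimes\mu)\hat{\otimes}(\nu\otimes I_\mathcal{B})\in KK_X(\mathcal{B}, \mathcal{B}),$$
and to show that both are $KK_X$-equivalences. By Theorem \ref{Ddeq} it suffices to check that $\alpha_x$ and $\beta_x$ are $KK$-equivalences for every $x\in X$. Evaluating at $x$ collapses everything to ordinary $KK$-theory for the Kirchberg algebra $\mathcal{A}_x\cong\mathcal{O}_\infty\otimes\mathbb{K}\sim_{KK}\mathbb{C}$: via the fiberwise restrictions of $\eta,\phi,\phi'$, the classes $\mu_x$ and $\nu_x$ become generators of $KK(\mathbb{C},\mathbb{C})\cong\mathbb{Z}$, hence equal to $\pm1$. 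Since the Kasparov product in $KK(\mathbb{C},\mathbb{C})$ is multiplication in $\mathbb{Z}$ and tensoring with an identity class acts trivially, $\alpha_x$ and $\beta_x$ reduce to products of units in $\mathbb{Z}$ and are therefore invertible. This is precisely the fiberwise self-duality of $\mathcal{O}_\infty\otimes\mathbb{K}$ that is quoted in the remark following the statement.

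With $\alpha,\beta$ known to be $KK_X$-equivalences, Lemma \ref{b}.1 applies verbatim: replacing $\mu$ by $\mu\hat{\otimes}(\alpha^{-1}\otimes I_\mathcal{B})$ while keeping $\nu$ produces a genuine pair of duality classes, so $\mathcal{A}$ and $\mathcal{A}^{-1}$ are $K_X$-dual. The main obstacle I anticipate is bookkeeping rather than conceptual. One must verify carefully that the fiberwise evaluations of $\mu$ and $\nu$ really are the standard generators, so that the composites are units (and not merely nonzero) in $KK(\mathbb{C},\mathbb{C})$; and one must keep the associativity and commutativity identities for $\hat{\otimes}$ and $\otimes_{C(X)}$ straight when expanding $\alpha$ and $\beta$. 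The commutativity relation recorded in the $KK_X$ preliminaries, together with Lemma \ref{b}, is what makes these manipulations routine once the fiber computation is secured.
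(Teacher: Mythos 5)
Your proposal is correct and is essentially the argument the paper intends: Cor.~\ref{up} is stated as an easy consequence of Lemma~\ref{b}, and your construction (candidate classes $\mu,\nu$ built from the stabilization class and the trivializing isomorphisms, fiberwise invertibility of the composites checked via Thm.~\ref{Ddeq} using $\mathcal{O}_\infty\otimes\mathbb{K}\sim_{KK}\mathbb{C}$, then correction to genuine duality classes by Lemma~\ref{b}.1) is exactly the pattern the paper itself uses later in the proof of Thm.~\ref{ind}. The only point to keep careful is the one you already flag: the identification of the fiberwise composites with integer multiplication under KK-equivalences with $\mathbb{C}$, which is routine bookkeeping.
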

We will discuss the $K_X$-duality for $C(X)$-algebras in Sec. 4,
and we restrict ourselves to the case $X=pt$ in the remainder of this section and Sec. 3.
Not all $C(X)$-algebras admit the $K_X$-duality,
but the following fact is known in the case $X=pt$.
\begin{thm}[{\cite[Thm. 3.1, 6.2]{KS}, \cite[Sec. 4.4.]{KPW}}]\label{SWK}
Let $A$ be an arbitrary separable nuclear UCT C*-algebra.
One has a separable nuclear UCT C*-algebra $D(A)$ such that $A$ and $D(A)$ are Spanier--Whitehead K-dual if and only if $A$ has finitely generated K-groups. 
\end{thm}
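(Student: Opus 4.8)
The plan is to prove the two implications separately: when $K_*(A)$ is finitely generated I construct $D(A)$ together with duality classes along mapping cone triangles, while the converse is a compactness argument extracted from the adjunction that the duality provides.

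For the direction ``finitely generated K-groups $\Rightarrow$ dualizable,'' the strategy is to exploit that dualizability is a $KK$-invariant property (Lemma \ref{b}, item 4, with $X=pt$), so it suffices to establish it for a convenient $KK$-representative of $A$. First I would record the self-duality of $\mathbb{C}$, with $D(\mathbb{C})=\mathbb{C}$ and $\mu=\nu=I_\mathbb{C}$, which is immediate from the unit--counit relations, and note that suspension is compatible with duality. Next I would treat direct sums (for which $D(A_1\oplus A_2)=D(A_1)\oplus D(A_2)$ is routine) and the torsion building block: the mapping cone $C_n$ of $\mathbb{C}\xrightarrow{\,n\,}\mathbb{C}$ has K-groups $(\mathbb{Z}/n,0)$, and the UCT computes $KK^0(C_n,\mathbb{C})=0$, $KK^1(C_n,\mathbb{C})=\mathbb{Z}/n$, so its dual is $SC_n$, with duality classes produced by dualizing the defining triangle. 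The inductive heart of this direction is the claim that the mapping cone of a morphism between dualizable algebras is again dualizable, with dual the mapping cone of the dual morphism: here I would feed the known duality classes of the two ends into the morphism-of-triangles lemma (Lemma \ref{RMN}, $X=pt$) to manufacture a candidate $D(A)$ and candidate $(\mu,\nu)$, then verify the two unit--counit identities by chasing the two Puppe exact sequences against the identities already established at the ends.

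Finite generation is used precisely to reduce to these building blocks: by the structure theorem for finitely generated abelian groups, $K_*(A)$ is presented by finitely many generators and relations, so $A$ is $KK$-equivalent to a finite direct sum of copies of $\mathbb{C}$, $S\mathbb{C}$, $C_n$ and $SC_n$, each of which lies in the subcategory generated by $\mathbb{C}$ through finitely many mapping cone triangles. Repeated application of the direct sum and mapping cone steps then yields dualizability of this representative, and $KK$-invariance transports it back to $A$.

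For the converse, suppose $A$ has a dual $B$. From the adjunction underlying Corollary \ref{pd} (case $X=pt$) one obtains a natural isomorphism $KK(A,C)\cong KK(\mathbb{C},B\otimes C)=K_*(B\otimes C)$ for every separable nuclear $C$. Since $K$-theory commutes with countable $c_0$-direct sums and $B\otimes\bigoplus_i C_i\cong\bigoplus_i(B\otimes C_i)$, the functor $KK(A,-)$ commutes with countable direct sums; that is, $A$ is a compact object of the bootstrap category. Applying $KK(A,-)$ to families of test algebras and comparing via the UCT reduces this compactness to the assertion that both $\operatorname{Hom}(K_*(A),-)$ and $\operatorname{Ext}^1(K_*(A),-)$ commute with countable coproducts of copies of $\mathbb{Z}$. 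By the classical characterization of compact objects in the derived category of abelian groups, this holds if and only if $K_0(A)$ and $K_1(A)$ admit finite free resolutions, i.e.\ are finitely generated, which proves the claim.

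\emph{Main obstacle.} The principal difficulty is the inductive step of the existence direction: constructing the duality classes of a mapping cone from those of its ends and checking the two unit--counit relations. The morphism-of-triangles lemma only guarantees the existence of a filler $KK$-class, not the sharp compatibility the adjunction demands, so the filler must be chosen with care and the relations verified by hand against the exact sequences; tracking the degree shifts and the asymmetry between $\mu$ and $\nu$ is the delicate part, whereas the converse reduces to a standard homological fact about finitely generated groups.
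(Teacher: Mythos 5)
The first thing to say is that the paper contains no proof of Theorem \ref{SWK} at all: it is imported from Kaminker--Schochet \cite{KS} and Kaminker--Putnam--Whittaker \cite{KPW}, so your proposal can only be compared with the arguments in those sources. Your existence direction is sound, and it is a genuinely different (more algebraic) route than the cited one: you decompose $A$ up to $KK$-equivalence into a finite direct sum of the building blocks $\mathbb{C}$, $S\mathbb{C}$, $C_n$, $SC_n$ (exactly the reduction recorded in Remark \ref{usi}) and handle the torsion blocks by a cone argument. That cone step is legitimate: in a tensor-triangulated category with exact tensor product the dualizable objects form a thick subcategory, and for the one cone you actually need, the unit for the pair $(C_n, SC_n)$ is a generator of $K_0(C_n\otimes SC_n)\cong \operatorname{Tor}(\mathbb{Z}/n,\mathbb{Z}/n)\cong\mathbb{Z}/n$, with the unit--counit identities reduced to a finite K\"unneth/Puppe bookkeeping exercise --- delicate, as you say, but standard. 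By contrast, \cite{KS} realizes $A$ (up to $KK$-equivalence and suspension) by a finite CW-complex and imports classical Spanier--Whitehead duality; your route avoids topological realization entirely, at the price of the by-hand verification you flag as the main obstacle.

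The converse, however, has a genuine gap as written. Your adjunction argument establishes only that $\operatorname{Hom}(K_*(A),-)$ and $\operatorname{Ext}(K_*(A),-)$ commute with countable direct sums of copies of $\mathbb{Z}$, whereas the ``classical characterization of compact objects in $D(\mathbb{Z})$'' that you invoke characterizes objects whose Hom-functor commutes with \emph{all} (countable) coproducts; it does not by itself assert that testing against coproducts of copies of the generator suffices. To close this you need an extra argument, e.g.\ the Neeman-type bootstrap: commuting with sums of copies of $\mathbb{Z}$ upgrades, by the five lemma applied to cones, to commuting with sums of arbitrary perfect complexes; since a countable abelian group is the homotopy colimit of its finitely generated subgroups, $\operatorname{id}_{K_*(A)}$ then factors through a finite stage, so $K_*(A)$ is a retract of a finitely generated group and hence finitely generated. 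Alternatively you can discard compactness altogether and argue as the cited sources do: if $D(A)$ exists it is separable, so $K_*(D(A))\cong KK^*(A,\mathbb{C})$ is countable; by the UCT this forces $\operatorname{Hom}(K_*(A),\mathbb{Z})$ and $\operatorname{Ext}(K_*(A),\mathbb{Z})$ to be countable, and a countable group with countable $\operatorname{Hom}$ and $\operatorname{Ext}$ into $\mathbb{Z}$ is finitely generated (its torsion must be finite, the torsion-free quotient must be free by the countable Whitehead/Stein theorem since a countable non-free torsion-free group has $\operatorname{Ext}$ of cardinality $2^{\aleph_0}$, and a free group of infinite rank has uncountable $\operatorname{Hom}$). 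Either repair makes your converse correct, but the step from ``commutes with sums of copies of $\mathbb{Z}$'' to ``finitely generated'' is not the fact you cite.
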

\begin{rem}\label{usi}
By Cor. \ref{pd} and the UCT,
one can determine $D(A)$ from the K-groups of $A$.
For example,
$\mathbb{C}$ and $\mathbb{S}$ are self-dual,
and $\mathcal{O}_n$ and $S\mathcal{O}_n$ are K-dual.
We note that the duality class $\mu_S\in KK(\mathbb{C}, S^2)$ can be chosen as the Bott element.

Every UCT C*-algebras with finitely generated K-groups are $KK$-equivalent to the direct sum of the above building blocks,
and one can check the equivalence $D(A)\oplus D(B)\sim_{KK} D(A\oplus B)$ which enables us to describe the K-groups of $A$ and $D(A)$ completely.
\end{rem}
\begin{lem}[{\cite[Thm. 2.2, Cor. 2.3.]{KS}}]\label{b1}
Let $A$ (resp. $B$ and $C$) be C*-algebra with the duality classes 
$$\mu_A\in KK(\mathbb{C}, A\otimes D(A)),\quad \nu_A\in KK(D(A)\otimes A, \mathbb{C}).$$
The map $d^X_{\mu_A, \nu_B} : KK(A, B\otimes C(X))\to KK(D(B), C(X)\otimes D(A))$ defined by $$d^X_{\mu_A, \nu_B}(x):= (I_{D(B)}\otimes \mu_A)\hat{\otimes}(I_{D(B)}\otimes x\otimes I_{D(A)})\hat{\otimes}(\nu_B\otimes I_{C(X)\otimes D(A)})$$
is an isomorphism satisfying
$$d^{X\times Y}_{\mu_A, \nu_C}(y\hat{\otimes}(x\otimes I_{C(Y)}))=d^X_{\mu_B, \nu_C}(x)\hat{\otimes}(I_{C(X)}\otimes d^Y_{\mu_A, \nu_B}(y))$$
for $y\in KK(A, B\otimes C(Y))$ and $x\in KK(B, C\otimes C(X))$.

In particular,
the map $$d_{\mu_A, \nu_A}:=d^{pt}_{\mu_A, \nu_A} : KK(A, A)\to KK(D(A), D(A))$$ is an anti-isomorphism of the rings.
\end{lem}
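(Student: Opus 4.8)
The plan is to recognize this as the standard transpose (or ``mate'') construction attached to a dual pair in the symmetric monoidal category $KK$, and to verify the three assertions --- bijectivity, the tensor compatibility, and the anti-multiplicativity --- by the zig-zag manipulations of the graphical calculus of Remark \ref{Sd}. Throughout, the only inputs are the two triangle identities coming from Definition \ref{SK} applied at $X=pt$ (so $C(X)=\mathbb{C}$), namely
\begin{align*}
(\mu_A\otimes I_A)\hat{\otimes}(I_A\otimes \nu_A)&=I_A, & (I_{D(A)}\otimes \mu_A)\hat{\otimes}(\nu_A\otimes I_{D(A)})&=I_{D(A)},
\end{align*}
together with their analogues for $B$ and $C$, and the interchange law $(\alpha\otimes I)\hat{\otimes}(I\otimes \gamma)=(I\otimes \gamma)\hat{\otimes}(\alpha\otimes I)$ from \cite[Thm. 2.14. 8)]{K}, which lets one slide tensor factors past one another inside a Kasparov product. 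Here the space $X$ plays no structural role: the factor $C(X)$ is a passive coefficient strand carried along the computation.

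First I would dispose of additivity: since the external product and the Kasparov product are biadditive, $x\mapsto d^X_{\mu_A,\nu_B}(x)$ is a homomorphism of abelian groups, so it suffices to produce a two-sided inverse. Guided by the fact that $d$ uses $\mu_A$ and $\nu_B$, the natural candidate is the transpose built from the complementary classes $\mu_B$ and $\nu_A$, namely the map $e^X_{\nu_A,\mu_B}:KK(D(B),C(X)\otimes D(A))\to KK(A,B\otimes C(X))$ given on $z$ by
$$e^X_{\nu_A,\mu_B}(z):=(\mu_B\otimes I_A)\hat{\otimes}(I_B\otimes z\otimes I_A)\hat{\otimes}(I_{B\otimes C(X)}\otimes \nu_A).$$
Tracking the source and target of each factor confirms that the composite does land in $KK(A,B\otimes C(X))$. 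To see $e^X_{\nu_A,\mu_B}\circ d^X_{\mu_A,\nu_B}=\mathrm{id}$, I would substitute the definition of $d^X_{\mu_A,\nu_B}(x)$, use the interchange law to migrate the $x$-block and the $C(X)$-strand into position, and then straighten in turn the $B$-$D(B)$ zig-zag (the triangle identity for $B$) and the $A$-$D(A)$ zig-zag (the triangle identity for $A$); what remains is exactly $x$. The identity $d^X_{\mu_A,\nu_B}\circ e^X_{\nu_A,\mu_B}=\mathrm{id}$ is the mirror-image computation with the two dual pairs interchanged. This establishes that $d^X_{\mu_A,\nu_B}$ is a group isomorphism; it is the parametrized analogue of Corollary \ref{pd}, to which it reduces when $B=\mathbb{C}$.

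The tensor compatibility equation is proved by the same mechanism. Expanding the right-hand side by the definition of $d$, the composition over $D(B)$ brings together the cup $\mu_B$ coming from $d^X_{\mu_B,\nu_C}(x)$ and the cap $\nu_B$ coming from $d^Y_{\mu_A,\nu_B}(y)$; sliding them adjacent by the interchange law and straightening the resulting $B$-$D(B)$ zig-zag via the triangle identity for $B$ leaves precisely the expansion of the left-hand side $d^{X\times Y}_{\mu_A,\nu_C}(y\hat{\otimes}(x\otimes I_{C(Y)}))$ in terms of $\mu_A,\nu_C,x$ and $y$. Finally, the ``in particular'' statement is the specialization $X=Y=pt$, $A=B=C$, $\mu_A=\mu_B=\mu_C$ and $\nu_A=\nu_B=\nu_C$: the compatibility equation then reads $d(y\hat{\otimes}x)=d(x)\hat{\otimes}d(y)$, so $d$ reverses the order of Kasparov products, while $d(I_A)=I_{D(A)}$ follows from the second triangle identity by setting $x=I_A$ (the inner identity factor drops out and the $\mu_A$, $\nu_A$ strands straighten). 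Combined with the bijectivity proved above, this yields the claimed unital ring anti-isomorphism.

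The main obstacle is organizational rather than conceptual: there is no idea beyond the two triangle identities, but the explicit Kasparov-product expressions carry up to five tensor factors, and the repeated, order-sensitive applications of the interchange law make it easy to misplace a strand. I would therefore perform each straightening step graphically in the style of Remark \ref{Sd}, where a triangle identity is literally the straightening of a bent strand, and only translate back into formulas at the very end.
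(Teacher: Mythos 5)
Your proposal is correct and follows essentially the same route as the paper: the paper also inverts $d^X_{\mu_A,\nu_B}$ by exactly your candidate $z\mapsto(\mu_B\otimes I_A)\hat{\otimes}(I_B\otimes z\otimes I_A)\hat{\otimes}(I_{B\otimes C(X)}\otimes\nu_A)$, and proves the tensor-compatibility equation by the same insertion of the triangle identity $(\mu_B\otimes I_B)\hat{\otimes}(I_B\otimes\nu_B)=I_B$ combined with the interchange law, with the anti-isomorphism statement obtained as the specialization $X=Y=pt$, $A=B=C$. The only cosmetic difference is that the paper expands the left-hand side of the compatibility equation toward the right-hand side, whereas you straighten the right-hand side toward the left, which is the same computation read in the opposite direction.
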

\begin{proof}
By a similar computation as in Lem. \ref{b}, the inverse map of $d^X_{\mu_A, \nu_B}$ is given by 
$$KK(D(B), C(X)\otimes D(A))\ni z \mapsto (\mu_B \otimes I_A)\hat{\otimes}(I_B\otimes z \otimes I_A)\hat{\otimes}(I_{B\otimes C(X)}\otimes \nu_A)\in KK(A, B\otimes C(X)).$$

We show the equation
$$d^{X\times Y}_{\mu_A, \nu_C}(y\hat{\otimes}(x\otimes I_{C(Y)}))=d^X_{\mu_B, \nu_C}(x)\hat{\otimes}(I_{C(X)}\otimes d^Y_{\mu_A, \nu_B}(y)).$$
By $(\mu_B\otimes I_B)\hat{\otimes}(I_B\otimes \nu_B)=I_B$ and the commutativity of $\hat{\otimes}$,
we have
\begin{align*}
y\hat{\otimes}(x\otimes I_{C(Y)})=&y\hat{\otimes}(\mu_B\otimes I_B\otimes I_{C(Y)})\hat{\otimes}(I_B\otimes\nu\otimes I_{C(Y)})\hat{\otimes}(x\otimes I_{C(Y)})\\
=&(\mu_B\otimes I_A)\hat{\otimes}(I_{B\otimes D(B)}\otimes y)\hat{\otimes}(x\otimes I_{D(B)\otimes B\otimes C(Y)})\hat{\otimes}(I_{C\otimes C(X)}\otimes\nu_B\otimes I_{C(Y)})\\
=&(\mu_B\otimes I_A)\hat{\otimes}(x\otimes I_{D(B)\otimes A})\hat{\otimes}(I_{C\otimes C(X)\otimes D(B)}\otimes y)\hat{\otimes}(I_{C\otimes C(X)}\otimes \nu_B\otimes I_{C(Y)}).
\end{align*}
We write $Z:=I_{D(C)}\otimes ((\mu_B\otimes I_A)\hat{\otimes}(x\otimes I_{D(B)\otimes A}))\otimes I_{D(A)}$,
and direct computation yields
\begin{align*}
(I_{D(C)}\otimes \mu_A)\hat{\otimes}Z=&(I_{D(C)}\otimes \mu_A)\hat{\otimes}(I_{D(C)}\otimes \mu_B\otimes I_{A\otimes D(A)})\hat{\otimes}(I_{D(C)}\otimes x\otimes I_{D(B)\otimes A\otimes D(A)})\\
=&(I_{D(C)}\otimes \mu_B)\hat{\otimes}(I_{D(C)\otimes B\otimes D(B)}\otimes \mu_A)\hat{\otimes}(I_{D(C)}\otimes x\otimes I_{D(B)\otimes A\otimes D(A)})\\
=&(I_{D(C)}\otimes \mu_B)\hat{\otimes}(I_{D(C)}\otimes x\otimes I_{D(B)})\hat{\otimes}(I_{D(C)\otimes C\otimes C(X)\otimes D(B)}\otimes \mu_A).
\end{align*}
Similarly,
one can check
\begin{align*}
&(I_{D(C)\otimes C\otimes C(X)\otimes D(B)}\otimes y\otimes I_{D(A)})\hat{\otimes}(I_{D(C)\otimes C\otimes C(X)}\otimes \nu_B\otimes I_{C(Y)\otimes D(A)})\hat{\otimes}(\nu_C\otimes I_{C(X\times Y)\otimes D(A)})\\
=&(\nu_C\otimes I_{C(X)\otimes D(B)\otimes A\otimes D(A)})\hat{\otimes}(I_{C(X)}\otimes ((I_{D(B)}\otimes y\otimes I_{D(A)})\hat{\otimes}(\nu_B\otimes I_{C(Y)\otimes D(A)})))=:W,
\end{align*}
and the commutativity shows
\begin{align*}
&(I_{D(C)\otimes C\otimes C(X)\otimes D(B)}\otimes \mu_A)\hat{\otimes}(\nu_C\otimes I_{C(X)\otimes D(B)\otimes A\otimes D(A)})\\
=&(\nu_C\otimes I_{C(X)\otimes D(B)})\hat{\otimes}(I_{C(X)}\otimes (I_{D(B)}\otimes \mu_A)).
\end{align*}
Finally,
we get
\begin{align*}
d_{\mu_A, \nu_C}^{X\times Y}(y\hat{\otimes}(x\otimes I_{C(Y)}))=&(I_{D(C)}\otimes\mu_A)\hat{\otimes}Z\hat{\otimes}W\\
=&d^X_{\mu_B, \nu_C}(x)\hat{\otimes}(I_{C(X)}\otimes d^Y_{\mu_A, \nu_B}(y)).
\end{align*}
\end{proof}
The map $d^X_{\mu_A, \nu_B}$ is the composition of two maps
$$KK(A, B\otimes C(X))\ni x\mapsto (I_{D(B)}\otimes x)\hat{\otimes}(\nu_B\otimes I_{C(X)})\in KK(D(B)\otimes A, C(X)),$$
$$KK(D(B)\otimes A, C(X))\ni y \mapsto (I_{D(B)}\otimes \mu_A)\hat{\otimes}(y\otimes I_{D(A)})\in KK(D(B), C(X)\otimes D(A)).$$
So $d^X_{\mu_A, \nu_B}$ is natural with respect to $X$.

\begin{lem}\label{b2}
Let $A, B$ and $C$ be C*-algebras as in Lem. \ref{b1}.
Assume 
$$KK(C, S^i)\xrightarrow{h\hat{\otimes}} KK(B, S^i)\xrightarrow{f\hat{\otimes}} KK(A, S^i)\quad i=0, 1,$$ are exact for $f\in KK(A, B),\; h\in KK(B, C)$.
Then,
the following sequences are exact
$$K_i(D(C))\xrightarrow{\hat{\otimes}d_{\mu_B, \nu_C}(h)} K_i(D(B))\xrightarrow{\hat{\otimes}d_{\mu_A, \nu_B}(f)} K_i(D(A))\quad i=0, 1.$$
\end{lem}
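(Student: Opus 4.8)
The plan is to exhibit the two sequences as the top and bottom rows of a commutative ladder whose vertical arrows are the duality isomorphisms, after which exactness transfers formally from one row to the other. The key is to use the self-duality of $\mathbb{C}$: I take $D(\mathbb{C}) = \mathbb{C}$ with duality classes $\mu_{\mathbb{C}} = \nu_{\mathbb{C}} = I_{\mathbb{C}}$, which satisfy the relations of Definition \ref{SK} since these reduce to $I_{\mathbb{C}}\hat{\otimes}I_{\mathbb{C}} = I_{\mathbb{C}}$. With this choice, for each algebra $P$ among $A, B, C$ the map
$$\Phi_P := d_{\mu_P, \nu_{\mathbb{C}}} : KK(P, \mathbb{C}) \to KK(D(\mathbb{C}), D(P)) = KK(\mathbb{C}, D(P)) = K_0(D(P))$$
is an isomorphism by Lemma \ref{b1} (it coincides with the isomorphism of Corollary \ref{pd}). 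I take $\Phi_A, \Phi_B, \Phi_C$ as the vertical arrows for $i = 0$.

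The heart of the argument is that these isomorphisms intertwine the horizontal maps, and this is exactly the composition law of Lemma \ref{b1} with the third algebra set to $\mathbb{C}$. Applying that law at $X = Y = \mathrm{pt}$ to the triple $(B, C, \mathbb{C})$, with $y = h \in KK(B, C)$ and $x = \xi \in KK(C, \mathbb{C})$, gives
$$\Phi_B(h\hat{\otimes}\xi) = d_{\mu_B, \nu_{\mathbb{C}}}(h\hat{\otimes}\xi) = d_{\mu_C, \nu_{\mathbb{C}}}(\xi)\hat{\otimes} d_{\mu_B, \nu_C}(h) = \Phi_C(\xi)\hat{\otimes} d_{\mu_B, \nu_C}(h),$$
which is precisely the commutativity of the left square, the right-hand side being $\hat{\otimes}\,d_{\mu_B, \nu_C}(h)$ applied to $\Phi_C(\xi)$. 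The same law applied to the triple $(A, B, \mathbb{C})$ with $y = f$ yields $\Phi_A(f\hat{\otimes}\zeta) = \Phi_B(\zeta)\hat{\otimes} d_{\mu_A, \nu_B}(f)$, the commutativity of the right square. Thus the hypothesized exact sequence $KK(C,\mathbb{C})\to KK(B,\mathbb{C})\to KK(A,\mathbb{C})$ and the target sequence $K_0(D(C))\to K_0(D(B))\to K_0(D(A))$ fit into a commuting ladder with isomorphic columns, so exactness of the former forces exactness of the latter: since the $\Phi$'s are bijections, kernels and images of the bottom maps are the $\Phi$-images of those of the top, and these coincide by exactness of the top.

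For $i = 1$ I would run the identical argument with $\mathbb{C}$ replaced by its suspension $S$, which is again self-dual, or equivalently tensor the whole $i = 0$ ladder with $I_S$: the operation $-\otimes I_S$ is compatible with all Kasparov products, carries $KK(-, \mathbb{C})$ to $KK(-, S) = KK(-, S^1)$ and $K_0(D(-))$ to $K_1(D(-))$, and intertwines $\Phi^{(0)}$ with the degree-one duality isomorphism $\Phi^{(1)} : KK(P, S) \to K_1(D(P))$, so the commuting squares and the conclusion are preserved.

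I do not expect a genuine obstacle here, since the real work has been front-loaded into the composition law of Lemma \ref{b1}: once that naturality is available, the present statement is essentially the assertion that the duality turns the represented functors $KK(-, S^i)$ into $K_i(D(-))$ and hence sends exact sequences to exact sequences. The only points requiring care are bookkeeping ones, namely choosing the duality isomorphism $\Phi_P$ to be precisely the $d$-map for which Lemma \ref{b1} applies (so that commutativity is literally the composition law rather than merely compatible with it), keeping the order of the Kasparov products straight under the anti-multiplicativity of $d$, and checking that the suspension in the $i = 1$ case leaves the duality classes undisturbed.
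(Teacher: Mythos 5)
Your proposal is correct and follows essentially the same route as the paper: both identify $K_i(D(-))$ with $KK(S^i,D(-))$ via the self-duality of $\mathbb{C}$ (resp.\ $S$), and both use the composition law of Lem.~\ref{b1} (anti-multiplicativity of $d$) to intertwine $h\hat{\otimes}-$, $f\hat{\otimes}-$ with $-\hat{\otimes}d_{\mu_B,\nu_C}(h)$, $-\hat{\otimes}d_{\mu_A,\nu_B}(f)$, after which exactness transfers through the duality isomorphisms. The paper merely phrases the transfer as an element-wise chase (composition vanishes; kernel lies in image, using injectivity and surjectivity of the $d$-maps) and treats $i=0,1$ uniformly with $S^i$, whereas you organize it as a commutative ladder and handle $i=1$ by suspension, which is the same argument.
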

\begin{proof}
We identify $K_i(-)$ with $KK(S^i, -)$ and choose duality classes $$\mu_{S^i}\in KK(\mathbb{C}, S^i\otimes S^i),\quad \nu_{S^i}\in KK(S^i\otimes S^i, \mathbb{C}).$$
By Lem. \ref{b1},
every element of $KK(S^i, D(C))$ is of the form $d_{\mu_C, \nu_{S^i}}(c), \; c\in KK(C, S^i)$,
and one has
$$d_{\mu_C, \nu_{S^i}}(c)\hat{\otimes}d_{\mu_B, \nu_{C}}(h)\hat{\otimes}d_{\mu_A, \nu_{B}}(f)=d_{\mu_A, \nu_{S^i}}(f\hat{\otimes} h\hat{\otimes}c)=0$$
by the assumption of $f$ and $h$.

Suppose that $b\in KK(B, S^i)$ satisfies $$0=d_{\mu_B, \nu_{S^i}}(b)\hat{\otimes}d_{\mu_A, \nu_{B}}(f)=d_{\mu_A, \nu_{S^i}}(f\hat{\otimes}b).$$
Since the map $d_{\mu_A, \nu_{S^i}}$ is injective,
one has $f\hat{\otimes}b=0$.
By assumption,
there is $c\in KK(C, S^i)$ satisfying $b=h\hat{\otimes} c$,
which implies $$d_{\mu_B, \nu_{S^i}}(b)=d_{\mu_C, \nu_{S^i}}(c)\hat{\otimes}d_{\mu_B, \nu_{C}}(h).$$
This proves the statement.
\end{proof}

Recall the mapping cone sequence (see \cite[Thm. 19.4.3.]{B})
$$\xymatrix{
SSA\ar[r]^{I_S\otimes \iota_A}&SC_{u_A}\ar[r]^{I_S\otimes e_A}& S\mathbb{C}\ar[r]^{I_S\otimes u_A}&SA\ar[r]^{\iota_A}&C_{u_A} \ar[r]^{e_A}&\mathbb{C}\ar[d]^{\mu_S}\ar[r]^{u_A}& A\ar[d]^{\mu_S\otimes I_A}\\
&&&&&SS\ar[r]^{I_{S^2}\otimes u_A}& SSA.
}$$
Applying Lem. \ref{b2} to 
$$S\to SA\to C_{u_A}\xrightarrow{e_A}\mathbb{C}\xrightarrow{u_A} A\xrightarrow{(\mu_S\otimes I_A)\hat{\otimes}(I_S\otimes \iota_A)}SC_{u_A}\to S,$$
one has an exact sequence
$$0\leftarrow K_1(D(A))\leftarrow K_0(D(C_{u_A}))\leftarrow \mathbb{Z}\leftarrow K_0(D(A))\leftarrow K_1(D(C_{u_A}))\leftarrow 0,$$
and there is a unital UCT Kirchberg algebra $B\sim_{KK}D(C_{u_A})$ making the following diagram commute (see \cite[Sec.4.3.]{Ro})
$$\xymatrix{
K_0(D(C_{u_A}))\ar@{=}[d]&\mathbb{Z}\ar[l]\ar@{=}[d]\\
K_0(B)&K_0(\mathbb{C}).\ar[l]^{(u_B)_*}
}$$
Then, the equations $K_i(C_{u_B})\cong K_i(D(A))$ automatically hold.
Since the rank of the free part of $K_i(A)$ is equal to that of $K_i(D(A))$,
one can observe $A\not\sim_{KK} B$ by comparing ranks of the free parts of their K-groups.
Furthermore, if $[1_A]_0\in K_0(A)$ is a torsion element, $[1_B]_0\in K_0(B)$ is not.
Conversely, if $[1_A]_0\in K_0(A)$ is not a torsion element,
then $[1_B]_0\in K_0(B)$ is a torsion element.
So one has the following corollary.
\begin{cor}\label{exb}
For every unital UCT Kirchberg algebra $A$ with finitely generated K-groups,
there exists a unital UCT Kirchberg algebra $B\not \sim_{KK} A$ satisfying
$$D(C_{u_A})\sim_{KK} B,\quad D(C_{u_B})\sim_{KK} A.$$
One has $[1_A]_0\in \operatorname{Tor}(K_0(A))$ if and only if $[1_B]_0\not\in \operatorname{Tor}(K_0(B))$.
\end{cor}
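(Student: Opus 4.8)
The plan is to read off all three assertions from the two mapping-cone six-term sequences, invoking Spanier--Whitehead K-duality only to pass between $C_{u_A}$ and its dual.

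First I would record the K-theory of the cone. Since $\mathbb{C}$ and $A$ are separable, nuclear and satisfy the UCT, so does the mapping cone $C_{u_A}$, and the six-term sequence attached to $SA\to C_{u_A}\xrightarrow{e_A}\mathbb{C}\xrightarrow{u_A}A$ reads
\[
0\to K_1(A)\to K_0(C_{u_A})\to \mathbb{Z}\xrightarrow{(u_A)_*}K_0(A)\to K_1(C_{u_A})\to 0,
\]
with $(u_A)_*(1)=[1_A]_0$. In particular $K_*(C_{u_A})$ is finitely generated, so $D(C_{u_A})$ exists by Thm. \ref{SWK}. I would then take $B$ to be a unital UCT Kirchberg algebra with $B\sim_{KK}D(C_{u_A})$ whose unit class $[1_B]_0$ is the image of $1$ under the map $q\colon \mathbb{Z}=K_0(D(\mathbb{C}))\to K_0(D(C_{u_A}))$ appearing in the dualized exact sequence furnished by Lem. \ref{b2}; the Kirchberg--Phillips classification supplies such a $B$, because the class $KK(u_B)\in KK(\mathbb{C},B)\cong K_0(B)$ is determined by $[1_B]_0$. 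By construction $KK(u_B)$ is the $d$-dual of $KK(e_A)$ (Lem. \ref{b1}), so the dual of the cone triangle $C_{u_A}\xrightarrow{e_A}\mathbb{C}\xrightarrow{u_A}A$ is the triangle $D(A)\xrightarrow{D(u_A)}\mathbb{C}\xrightarrow{u_B}B$; comparing it with the cone triangle of $u_B$ via Lem. \ref{RMN} gives $C_{u_B}\sim_{KK}D(A)$, and applying $D(-)$ together with the double-dual identity $D(D(A))\sim_{KK}A$ (uniqueness of duals, statement 3 of Lem. \ref{b}) yields $D(C_{u_B})\sim_{KK}A$.

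Next I would prove $A\not\sim_{KK}B$ by comparing the ranks of the free parts. From the UCT and the defining property $K_i(D(C))\cong KK^i(C,\mathbb{C})$ one gets $K_i(D(C))\cong \operatorname{Hom}(K_i(C),\mathbb{Z})\oplus\operatorname{Ext}(K_{i-1}(C),\mathbb{Z})$, so the free rank of $K_i(D(C))$ equals that of $K_i(C)$. Writing $r_i$ for the free rank of $K_i(A)$ and reading off the displayed sequence, the free rank of $K_0(B)=K_0(D(C_{u_A}))$ equals that of $K_0(C_{u_A})$, namely $r_1$ when $[1_A]_0$ is non-torsion and $r_1+1$ when $[1_A]_0$ is torsion; similarly the free rank of $K_1(B)$ equals that of $K_1(C_{u_A})=K_0(A)/\langle[1_A]_0\rangle$, namely $r_0-1$ or $r_0$ respectively. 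In either case the pair of free ranks of $(K_0(B),K_1(B))$ cannot coincide with $(r_0,r_1)$, so $A$ and $B$ have non-isomorphic K-groups and, being UCT, are not $KK$-equivalent.

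Finally, the torsion dichotomy comes from the same bookkeeping applied to the cone of $u_B$, whose six-term sequence gives $K_1(C_{u_B})=K_0(B)/\langle[1_B]_0\rangle$. An element of a finitely generated abelian group is torsion exactly when passing to the quotient by it leaves the free rank unchanged; since $K_1(C_{u_B})=\operatorname{coker}(q)\cong K_1(D(A))$ has free rank $r_1$, the class $[1_B]_0$ is torsion if and only if the free rank of $K_0(B)$ equals $r_1$, which by the computation above happens precisely when $[1_A]_0$ is non-torsion. Hence $[1_A]_0\in\operatorname{Tor}(K_0(A))$ if and only if $[1_B]_0\notin\operatorname{Tor}(K_0(B))$. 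I expect the main obstacle to lie in the first paragraph: verifying that $[1_B]_0$ is exactly $q(1)$ and, more delicately, that duality genuinely intertwines the two cone triangles so that one obtains the honest equivalence $C_{u_B}\sim_{KK}D(A)$ rather than a mere abstract matching of K-groups; once that is in place, the rank computations are routine.
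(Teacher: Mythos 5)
Your construction of $B$ (a Kirchberg--Phillips realization of $D(C_{u_A})$ with $[1_B]_0=q(1)$), your free-rank comparison showing $A\not\sim_{KK}B$, and your torsion dichotomy all coincide with the paper's argument and are correct. The gap is exactly where you suspected, and it is genuine: the step ``the dual of the cone triangle $C_{u_A}\xrightarrow{e_A}\mathbb{C}\xrightarrow{u_A}A$ is the triangle $D(A)\to\mathbb{C}\xrightarrow{u_B}B$, so Lem.~\ref{RMN} gives $C_{u_B}\sim_{KK}D(A)$'' presupposes that the dualized sequence is an \emph{exact triangle} in $KK$. Nothing in Lem.~\ref{b1} or Lem.~\ref{b2} gives this: those lemmas dualize morphisms and K-group exact sequences, not triangles, and $D(-)$ is not a functor, so there is no a priori reason the specific morphisms $d(u_A)$ and $d(e_A)$ fit into an exact triangle. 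In fact, the assertion that duality carries the mapping cone sequence to (a sequence KK-equivalent to) another mapping cone sequence is precisely Thm.~\ref{ca}, which the paper proves \emph{after}, and \emph{from}, Cor.~\ref{exb}; taking it as an input here is circular relative to the paper's logical structure, and proving it from scratch requires machinery (a triangulated duality functor on dualizable objects) that neither you nor the paper sets up.

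The fix is the paper's own proof, and it shows that the ``mere abstract matching of K-groups'' you dismiss at the end is in fact sufficient. Both $C_{u_B}$ and $D(A)$ are separable nuclear UCT algebras, so by the UCT any graded isomorphism $K_*(C_{u_B})\cong K_*(D(A))$ lifts to a KK-equivalence; no intertwining of triangles is needed for this corollary. You already computed $K_1(C_{u_B})=K_0(B)/\langle [1_B]_0\rangle\cong\operatorname{coker}(q)\cong K_1(D(A))$ in your last paragraph; the same bookkeeping in degree zero, namely that $K_0(C_{u_B})$ is an extension of $\ker\bigl([1_B]_0\colon\mathbb{Z}\to K_0(B)\bigr)$ by $K_1(B)$, that $K_0(D(A))$ is an extension of $\ker(q)$ by $K_1(D(C_{u_A}))$, that the two kernels coincide by your choice of $[1_B]_0$, and that both extensions split because the kernels are free, gives $K_0(C_{u_B})\cong K_0(D(A))$. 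Then $C_{u_B}\sim_{KK}D(A)$ by the UCT, and dualizing (uniqueness of duals together with $D(D(A))\sim_{KK}A$, i.e.\ statement 3 of Lem.~\ref{b} and the symmetry of the duality) yields $D(C_{u_B})\sim_{KK}A$, completing the proof along lines you essentially already had.
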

\subsection{$K_0(A)$ and $K_1(C_{u_A})$}\label{nota}
In this section,
we investigate a relationship between $K_0(A)$ and $K_1(C_{u_A})$.
Every finitely generated Abel group $G$ admits the following presentation
$$G=\mathbb{Z}^{ F}\oplus \bigoplus_{p\colon {\rm prime}}G(p),$$
where we write $G(p)=\mathbb{Z}_{p^{k_1}}\oplus\cdots\oplus\mathbb{Z}_{p^{k_{t_p}}},\; k_i>0$ for finitely many $p$ and otherwise $G(p)=0$.
We write $I(G(p)) : =\{k_1, \cdots, k_{t_p}\}$ if $G(p)\not =0$.
For the above $p$-group $G(p)$, we define $L(G(p))$ by $$L(G(p)):=t_p=|I(G(p))|\;\;\; {\rm for}\; G(p)\not =0, \quad L(0):=0.$$
For example,
one has $$I(\mathbb{Z}_p\oplus\mathbb{Z}_p)=\{1, 1\},\quad I(\mathbb{Z}_p\oplus\mathbb{Z}_p\oplus\mathbb{Z}_{p^2})=I(\mathbb{Z}_p\oplus\mathbb{Z}_{p^2}\oplus\mathbb{Z}_p)=\{1, 1, 2\},$$
$$L(\mathbb{Z}_p\oplus\mathbb{Z}_p\oplus\mathbb{Z}_{p^2})=3,\quad I(\mathbb{Z}_p\oplus \mathbb{Z}_p)\cap I(\mathbb{Z}_p\oplus\mathbb{Z}_p\oplus\mathbb{Z}_{p^2})=\{1,1\}.$$
We note the precise meaning of the intersection $I(G(p))\cap I(H(p))$.
It is the collection of numbers identified with the subset of both  $I(G(p))$ and $I(H(p))$ such that the complements $I(G(p))\backslash (I(G(p))\cap I(H(p)))$ and $I(H(p))\backslash (I(G(p))\cap I(H(p)))$ do not share the same number.
Since every $I(G(p))$ is a finite set,
the intersection is well-defined and uniquely determined by $I(G(p))$ and $I(H(p))$.
We also write $I(G(p))\backslash I(H(p)):= I(G(p))\backslash (I(G(p))\cap I(H(p)))$.

Let $(G, \tilde{G})$ be a pair of finite Abel $p$-groups (i.e., $G=G(p), \tilde{G}=\tilde{G}(p)$).
The pair $(G, \tilde{G})$ is said to satisfy ($*$), if either $G=\tilde{G}$, or one can take the strictly increasing sequence by alternating all elements of $I(G)\backslash I(\tilde{G})$ and $I(\tilde{G})\backslash I(G)$.
More precisely,
a pair $(G, \tilde{G})$ with $G\not =\tilde{G}$ satisfies ($*$) if and only if one has 
$$G=N\oplus \mathbb{Z}_{p^{k_1}}\oplus\cdots\oplus\mathbb{Z}_{p^{k_s}},\quad \tilde{G}=N\oplus\mathbb{Z}_{p^{\tilde{k}_1}}\oplus\cdots\oplus\mathbb{Z}_{p^{\tilde{k}_{s}}}$$ for some finite Abel p-group $N$ and $k_i, \tilde{k}_j$ that satisfy either $0\leq k_1<\tilde{k}_1<\cdots <k_s<\tilde{k}_{s}$, or $0\leq \tilde{k}_1<k_1<\cdots<\tilde{k}_{s} <k_s$ for $1\leq s$.

For a presentation $N=\mathbb{Z}_{p^{n_1}}\oplus\cdots\oplus \mathbb{Z}_{p^{n_t}}$, we have
$$I(G)\backslash I(\tilde{G})=\{k_i \;|\; 0<k_i\}, \quad I(\tilde{G})\backslash I(G)=\{\tilde{k}_j\; |\; 0<\tilde{k}_j\},$$
$$I(G)\cap I(\tilde{G})=\{n_1,\cdots, n_t\}.$$
In particular,
if $(G, \tilde{G})$ satisfies ($*$), then so does $(\tilde{G}, G)$,
and one has $|L(G)-L(\tilde{G})|\leq 1$.
For a pair $(G, \tilde{G})\not =(0, 0)$ with ($*$),
we write $m(G, \tilde{G}): ={\rm max}\;\{k\;|\; k\in I(G)\cup I(\tilde{G})\}$.

A pair $(G, \tilde{G})$ with ($*$) is said to satisfy ($**$) if either $G=\tilde{G}$ or one has 
$${\rm max}\;\{k \;|\; k\in (I(\tilde{G})\backslash I(G))\cup \{0\}\}>{\rm max}\;\{k \;|\; k\in (I(G)\backslash I(\tilde{G}))\cup \{0\}\}.$$


\begin{prop}\label{vn}
For a unital C*-algebra $A$ with finitely generated K-groups, the following hold.
\begin{enumerate}
\bibitem{}If $[1_A]_0\in K_0(A)$ is a torsion element, the pair $(K_1(C_{u_A})(p), K_0(A)(p))$ satisfies ($**$) for every prime $p$.
\bibitem{}If $[1_A]_0\in K_0(A)$ is not a torsion element, the pair $(K_0(A)(p), K_1(C_{u_A})(p))$ satisfies ($**$) for every prime $p$.
\end{enumerate}
\end{prop}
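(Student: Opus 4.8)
The plan is to reduce the statement to a purely group-theoretic comparison and then to read off the change of $p$-primary type from one filtration computation. First I would record the only K-theoretic input. Applying the six term exact sequence to the mapping cone sequence $C_{u_A}\xrightarrow{e_A}\mathbb{C}\xrightarrow{u_A}A$, and using $K_1(\mathbb{C})=0$ together with $(u_A)_*(1)=[1_A]_0$, the sequence collapses to
\[0\to K_1(A)\to K_0(C_{u_A})\to\mathbb{Z}\xrightarrow{[1_A]_0}K_0(A)\to K_1(C_{u_A})\to0,\]
so that $K_1(C_{u_A})\cong K_0(A)/\langle[1_A]_0\rangle$. Writing $G:=K_0(A)$ and $g:=[1_A]_0$, the proposition becomes the comparison of the $p$-primary components of $G$ and $G/\langle g\rangle$, the two cases corresponding to $g$ being torsion or not.

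Next I would produce, for each prime $p$, a short exact sequence of finite abelian $p$-groups with a cyclic third term linking $K_0(A)(p)$ and $K_1(C_{u_A})(p)$. If $g$ is torsion then $\langle g\rangle=\bigoplus_p\langle g_p\rangle$ splits along primes, whence $K_1(C_{u_A})(p)=G(p)/\langle g_p\rangle$ and one has $0\to\langle g_p\rangle\to K_0(A)(p)\to K_1(C_{u_A})(p)\to0$ with cyclic kernel, so that $K_0(A)(p)$ is the larger group. If $g$ is not torsion I would localise at $p$: $G_{(p)}=\mathbb{Z}_{(p)}^F\oplus P$ with $P=G(p)$, and after choosing a $\mathbb{Z}_{(p)}$-basis of the free part adapted to $g$ one may write $g=p^a e_1+t$ with $t\in P$ and $a$ the $p$-adic valuation of the content (gcd of coordinates) of the free part of $g$. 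The remaining free directions split off, and a direct computation identifies the torsion of $G_{(p)}/\langle g\rangle$ as a finite group of order $p^a|P|$ sitting in $0\to K_0(A)(p)\to K_1(C_{u_A})(p)\to\mathbb{Z}_{p^a}\to0$ with cyclic cokernel; here $K_1(C_{u_A})(p)$ is the larger group (and the two groups agree when $a=0$).

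Finally I would prove a uniform statement: if $B$ and $b$ are finite abelian $p$-groups with $|B|=p^c|b|$ and either $b$ is a quotient of $B$ by a cyclic group, or $b\subseteq B$ with $B/b$ cyclic, then the pair $(b,B)$ satisfies ($**$). Put $r_j(M):=\dim_{\mathbb{F}_p}\bigl(p^{j-1}M/p^{j}M\bigr)$, so the multiplicity of $k$ in $I(M)$ is $r_k(M)-r_{k+1}(M)$. The crux is that $\delta_j:=r_j(B)-r_j(b)$ lies in $\{0,1\}$ for every $j$: the bound $\delta_j\ge0$ follows from surjectivity on the graded pieces $p^{j-1}(-)/p^{j}(-)$, and $\delta_j\le1$ because in the cyclic-kernel case the relevant kernel is a subquotient of the cyclic group killed by $p$ (the mixed case reduces to this by Pontryagin duality, which interchanges cyclic subgroups and cyclic quotients without altering types). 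Since $r_j$ eventually vanishes, the $0$–$1$ sequence $(\delta_j)$ has its up- and down-steps strictly alternating, which is exactly ($*$) for $(b,B)$; moreover the topmost index with $\delta_j\neq0$ is necessarily a down-step, so the largest value occurring in $(I(B)\setminus I(b))\cup(I(b)\setminus I(B))$ lies in $I(B)$. As $B=K_0(A)(p)$ in the torsion case and $B=K_1(C_{u_A})(p)$ in the non-torsion case, this is precisely the inequality demanded by ($**$) for the two pairs in the statement.

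The step I expect to be the main obstacle is the non-torsion case of the second paragraph: one must isolate the $p$-local effect of quotienting by a single non-torsion relation and show that it attaches exactly one cyclic group $\mathbb{Z}_{p^a}$ on top of $K_0(A)(p)$ rather than entangling the free directions. The adapted-basis/localisation argument is what keeps the extension cyclic and hence inside the regime of the previous paragraph. The remaining passage from $(\delta_j)$ to the precise alternating form ($*$) and the correct direction in ($**$) is elementary bookkeeping on the invariants $I(\cdot)$, $L(\cdot)$ and $m(\cdot,\cdot)$, which are set up in the appendix for exactly this interlacing.
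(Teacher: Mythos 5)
Your proposal is correct, but its combinatorial core is genuinely different from the paper's. The K-theoretic reduction is identical: both you and the paper use the six-term sequence to get $K_1(C_{u_A})\cong K_0(A)/\langle [1_A]_0\rangle$ and split along primes (the paper via Lem.~\ref{mou}, after a global change of basis putting $[1_A]_0$ in the form $((g_p)_p,n,0,\dots,0)$; you via localization at $p$ and an adapted basis, which is equivalent), arriving at the same two model situations: a quotient of $K_0(A)(p)$ by a cyclic subgroup in the torsion case, and an extension of the cyclic group $\mathbb{Z}_{p^{n_p}}$ by $K_0(A)(p)$ in the non-torsion case. From there the paper argues by explicit normal forms: Lem.~\ref{G1} puts $g_p$ into the shape $(0,\dots,0,p^{k_1-r_1},\dots,p^{k_s-r_s})$ with interlacing exponents, and Cor.~\ref{G2}, Cor.~\ref{G4} compute $G/\langle g\rangle$ and $(G\oplus\mathbb{Z})/\langle (g,p^{l})\rangle$ outright, so that ($*$) and ($**$) are read off from the resulting exponents. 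You instead prove one uniform lemma---any pair of finite abelian $p$-groups related by a cyclic kernel or a cyclic cokernel satisfies ($**$)---by tracking $r_j(M)=\dim_{\mathbb{F}_p}(p^{j-1}M/p^jM)$: surjectivity on graded pieces gives $\delta_j\geq 0$, the kernel of the graded surjection is a $p$-torsion subquotient of the cyclic group so $\delta_j\leq 1$, Pontryagin duality interchanges the two cases, and the $0$--$1$ sequence $(\delta_j)$ forces strictly alternating up/down steps whose top step is a down step; since multiplicities in $I(\cdot)$ are $r_k-r_{k+1}$, this is exactly ($*$) together with the maximality clause of ($**$). Each of these steps checks out, including the crucial kernel bound. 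Your route is shorter, avoids normal forms, and yields a clean general statement; what the paper's heavier computation buys is the explicit presentation of the quotient groups, which the appendix reuses to prove Prop.~\ref{kmc} (uniqueness of the element up to automorphism, via Cor.~\ref{tort} and Cor.~\ref{kmc2})---information invisible to the isomorphism invariants $r_j$ alone. So your argument fully replaces the paper's proof of Prop.~\ref{vn}, but not the surrounding appendix machinery.
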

\begin{prop}\label{kmc}
Let $G$ be a finitely generated Abel group, and let $g_1, g_2\in G$ be two elements satisfying $G/\langle g_1\rangle \cong G/\langle g_2 \rangle$.
Then, there exists an isomorphism $$G\ni g_1\mapsto g_2\in  G.$$
\end{prop}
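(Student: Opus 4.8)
The plan is to recast the statement as an orbit problem: writing $\operatorname{Aut}(G)$ for the automorphism group, I must exhibit $g_1$ and $g_2$ in a common $\operatorname{Aut}(G)$-orbit, i.e. show that the isomorphism type of $G/\langle g\rangle$ is a complete invariant of the orbit of $g$. First I would record that the hypothesis forces the two elements to have matching order data. Comparing the free ranks in the exact sequences $0\to\langle g_i\rangle\to G\to G/\langle g_i\rangle\to 0$ shows that $g_1$ has infinite order if and only if $g_2$ does, and in the torsion case the finite orders agree because $|G/\langle g_1\rangle|=|G/\langle g_2\rangle|$.

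Next I would reduce to a finite Abelian $p$-group. Since the torsion subgroup $T\subseteq G$ and each primary component $G(p)=T_p$ are characteristic, every automorphism respects $T=\bigoplus_p T_p$. When $g_1,g_2\in T$ I would decompose $g_i=\sum_p g_{i,p}$ with $g_{i,p}\in T_p$; extracting torsion subgroups in $G/\langle g_1\rangle\cong G/\langle g_2\rangle$ gives $T/\langle g_1\rangle\cong T/\langle g_2\rangle$, and uniqueness of primary decomposition yields $T_p/\langle g_{1,p}\rangle\cong T_p/\langle g_{2,p}\rangle$ for every $p$. It then suffices to produce each $\phi_p\in\operatorname{Aut}(T_p)$ with $\phi_p(g_{1,p})=g_{2,p}$ and to set $\phi=\operatorname{id}_{\mathbb{Z}^F}\oplus\bigoplus_p\phi_p$, which sends $g_1$ to $g_2$. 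When $g_1,g_2$ have infinite order I would instead use $GL_F(\mathbb{Z})$ on the free part together with the torsion-shear automorphisms $e_1\mapsto e_1+s$ $(s\in T)$ to normalize $g_i=c_i e_1+t_i$ with $c_i\geq 1$ and $t_i\in T$; the integer $c_i$ is recovered as the order of the torsion part of $G/\langle g_i\rangle$ divided by $|T|$, so $c_1=c_2=:c$, and the residual datum $t_i\bmod cT$ (up to $\operatorname{Aut}(T)$) is again governed prime by prime, reducing to the same $p$-primary analysis.

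The heart of the argument, and the step I expect to be the main obstacle, is the finite Abelian $p$-group case. Here I would invoke the classical fact that the $\operatorname{Aut}(G(p))$-orbit of an element $g$ is completely determined by its height sequence $U(g)=(h(g),h(pg),h(p^2g),\dots)$, where $h(x)=\max\{n:x\in p^nG(p)\}$. It then remains to show that, for fixed $G(p)$ and fixed order of $g$, the isomorphism type of $G(p)/\langle g\rangle$ determines $U(g)$: concretely I would express the invariants $d_i=\dim_{\mathbb{F}_p}\bigl(p^{i-1}(G(p)/\langle g\rangle)/p^i(G(p)/\langle g\rangle)\bigr)$ in terms of $I(G(p))$ and of $U(g)$, and invert this relation to read off $U(g)$ from the $d_i$. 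This bookkeeping — in particular the handling of \emph{non-proper} elements, whose height sequence has gaps larger than one and which therefore lie in no cyclic direct summand of $G(p)$ — is the delicate point, and it is exactly where the combinatorics of $I(G(p))$ and the conditions $(*)$, $(**)$ from Proposition \ref{vn} enter. Once $U(g_{1,p})=U(g_{2,p})$ is established for each $p$, the orbit classification supplies the required $\phi_p$, completing the reduction and hence the proof.
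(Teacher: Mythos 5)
Your reduction scaffolding is sound and parallels the paper's: the rank/order comparisons, the primary decomposition of the torsion case, and the normalization $g_i=c\,e_1+t_i$ with $c_1=c_2$ are exactly what Lem.~\ref{mou} and the final argument in the paper's proof of Prop.~\ref{kmc} accomplish, and replacing the paper's explicit normal form by the classical fact that $\operatorname{Aut}$-orbits in a finite abelian $p$-group are classified by height (Ulm) sequences is a legitimate alternative strategy. However, as written the proposal has a genuine gap at the step you yourself flag as the main obstacle: showing that the isomorphism type of $G(p)/\langle g\rangle$ determines the height sequence $U(g)$. That implication is not routine bookkeeping to be deferred; it is the entire mathematical content of the proposition. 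It is precisely what the paper's Lem.~\ref{G1} and Cor.~\ref{G2} establish: the normal form $(0,\dots,0,p^{k_1-r_1},\dots,p^{k_s-r_s})$ is an explicit representative of the height-sequence orbit, and the interlacing inequalities $k_{i-1}<k_i-r_i+r_{i-1}<k_i$ in Cor.~\ref{G2} are exactly what make the exponents $r_i$ recoverable from $I(G)$ and $I(G/\langle g\rangle)$ --- i.e.\ they are the ``inversion'' you postpone. A proposal that cites the orbit classification but leaves this inversion as a plan has not proved the statement.

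Second, the claim that the infinite-order case is ``again governed prime by prime, reducing to the same $p$-primary analysis'' is inaccurate as stated. After your normalization the invariant is the orbit of $t_i+cT$ in $T/cT$ under $\operatorname{Aut}(T)$, equivalently (prime by prime) the orbit of $(t_{i,p},p^{m_p})$ in $T_p\oplus\mathbb{Z}$ under $\operatorname{Aut}(T_p\oplus\mathbb{Z})$; this is not an instance of the finite-$p$-group orbit problem, and the height-sequence classification does not apply to $T_p\oplus\mathbb{Z}$. This is exactly why the paper runs a second, parallel analysis (Lem.~\ref{G3}, Cor.~\ref{G4}, Cor.~\ref{kmc2}), including the separate treatment of whether $(g,p^l)$ can be moved to $(0,p^l)$. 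Your route can be repaired --- e.g.\ replace $\mathbb{Z}$ by $\mathbb{Z}_{p^N}$ with $N$ large, check that $(T_p\oplus\mathbb{Z})/\langle(t_{i,p},p^{m_p})\rangle\cong(T_p\oplus\mathbb{Z}_{p^N})/\langle(t_{i,p},p^{m_p})\rangle$, and show that an automorphism of $T_p\oplus\mathbb{Z}_{p^N}$ matching the two elements yields, by reduction mod $p$ and block-triangularity, a pair $(\phi,s)\in\operatorname{Aut}(T_p)\times T_p$ with $\phi(t_{1,p})+p^{m_p}s=t_{2,p}$ --- but this argument is absent from the proposal.
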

The above propositions follow from elementary arguments on finitely generated Abel groups,
and we prove them in Appendix.
We note that Prop. \ref{kmc} does not hold for non-finitely generated groups such as $G=\bigoplus_\mathbb{N}\mathbb{Z}$.
\begin{cor}\label{uB}
Let $B_1, B_2$ be unital C*-algebras with finitely generated K-groups satisfying
$$K_0(B_1)\cong K_0(B_2),\quad K_1(C_{u_{B_1}})\cong K_1(C_{u_{B_2}}).$$
Then, there exists an isomorphism $K_0(B_1)\ni [1_{B_1}]_0\mapsto [1_{B_2}]_0\in K_0(B_2)$.
In particular,
the choice of the unital UCT Kirchberg algebra $B$ in Cor. \ref{exb} is unique.
\end{cor}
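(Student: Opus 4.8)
The plan is to reduce the statement to the two preceding propositions. The hypotheses give group isomorphisms $K_0(B_1)\cong K_0(B_2)$ and $K_1(C_{u_{B_1}})\cong K_1(C_{u_{B_2}})$, and we want to upgrade these to a single isomorphism $K_0(B_1)\to K_0(B_2)$ carrying the distinguished element $[1_{B_1}]_0$ to $[1_{B_2}]_0$. The natural tool is Prop.~\ref{kmc}: if I can produce, inside a \emph{fixed} group $G\cong K_0(B_1)\cong K_0(B_2)$, two elements $g_1,g_2$ with $g_i$ playing the role of $[1_{B_i}]_0$ and satisfying $G/\langle g_1\rangle\cong G/\langle g_2\rangle$, then the desired automorphism of $G$ exists and, transported back along the fixed isomorphisms, gives the sought isomorphism $K_0(B_1)\to K_0(B_2)$ sending unit to unit.

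First I would identify the quotient $G/\langle[1_B]_0\rangle$ with an invariant already controlled by the hypotheses. From the mapping cone sequence $C_{u_B}\xrightarrow{e_B}\mathbb{C}\xrightarrow{u_B}B$ one reads off, via the six-term exact sequence in K-theory, that $K_1(C_{u_B})$ fits into an exact sequence relating $K_0(\mathbb{C})=\mathbb{Z}\xrightarrow{(u_B)_*}K_0(B)$ (the map sending the generator to $[1_B]_0$) and $K_1(B)$. Concretely, the segment $\mathbb{Z}\xrightarrow{\,[1_B]_0\,}K_0(B)\to K_0(C_{u_B})\to 0$ together with $0\to K_1(C_{u_B})\to K_1(B)\oplus\text{(cokernel data)}$ should let me extract $\operatorname{coker}\big((u_B)_*\big)=K_0(B)/\langle[1_B]_0\rangle$ as a summand or subquotient determined entirely by $K_0(B)$ and $K_1(C_{u_B})$. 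Thus the two hypotheses $K_0(B_1)\cong K_0(B_2)$ and $K_1(C_{u_{B_1}})\cong K_1(C_{u_{B_2}})$ force $K_0(B_1)/\langle[1_{B_1}]_0\rangle\cong K_0(B_2)/\langle[1_{B_2}]_0\rangle$.

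Once that quotient identity is in hand, I fix an isomorphism $\psi\colon K_0(B_2)\to K_0(B_1)=:G$ and set $g_1:=[1_{B_1}]_0$ and $g_2:=\psi([1_{B_2}]_0)$, both elements of the single group $G$. The quotient identity above rewrites as $G/\langle g_1\rangle\cong G/\langle g_2\rangle$, so Prop.~\ref{kmc} supplies an automorphism $\theta\colon G\to G$ with $\theta(g_1)=g_2$. Then $\psi^{-1}\circ\theta\colon K_0(B_1)\to K_0(B_2)$ is an isomorphism sending $[1_{B_1}]_0$ to $[1_{B_2}]_0$, proving the first assertion.

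For the uniqueness claim about the algebra $B$ in Cor.~\ref{exb}, I would combine the above with the UCT classification of unital UCT Kirchberg algebras (Ro\o rdam's invariant, \cite[Sec.~4.3.]{Ro}): such an algebra is determined up to isomorphism by the triple $\big(K_0,[1]_0,K_1\big)$. The construction in Cor.~\ref{exb} fixes $K_0(B)\cong K_0(D(C_{u_A}))$ and $K_1(B)\cong K_1(D(C_{u_A}))$ independently of choices, and it forces $K_1(C_{u_B})\cong K_0(D(A))$, again determined by $A$. Hence any two candidate algebras $B_1,B_2$ satisfy the hypotheses of the corollary, so the isomorphism just produced matches their units, and the UCT classification yields $B_1\cong B_2$. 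The main obstacle I anticipate is the first reduction: extracting $G/\langle[1_B]_0\rangle$ cleanly as an invariant of the pair $\big(K_0(B),K_1(C_{u_B})\big)$ requires careful bookkeeping of the six-term sequence, since $K_1(C_{u_B})$ mixes the cokernel of $(u_B)_*$ with $K_1(B)$; isolating exactly the $\operatorname{coker}$ part (rather than the full $K_1(C_{u_B})$) is where the argument must be made precise, and this is presumably why the finitely-generated hypothesis and the detailed $p$-group analysis of Prop.~\ref{vn} are invoked elsewhere.
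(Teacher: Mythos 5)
Your proposal is correct and follows essentially the same route the paper intends: the mapping-cone exact sequence with $K_1(\mathbb{C})=0$ gives $K_1(C_{u_B})\cong K_0(B)/\langle[1_B]_0\rangle$ \emph{exactly} (the ``mixing'' with $K_1(B)$ you worry about only occurs in $K_0(C_{u_B})$, so no careful bookkeeping is needed), after which Prop.~\ref{kmc} yields the unit-preserving isomorphism and Kirchberg--Phillips classification gives the uniqueness in Cor.~\ref{exb}. The only flaw is a harmless indexing slip: the construction forces $K_1(C_{u_B})\cong K_1(D(A))$, not $K_0(D(A))$, which does not affect the argument.
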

\subsection{Homotopy set $[X, \operatorname{Aut}(A)]$}\label{23}
We recall M. Dadarlat's result \cite[Thm. 4.6]{D4} which asserts $$[X, \operatorname{End}(A)]=KK(C_{u_A},  SA\otimes C(X))$$ for every unital Kirchberg algebra $A$.
Every continuous map $\alpha : X\to \operatorname{End}(A)$ is identified with a $*$-homomorphism $\alpha : A\to A\otimes C(X)$ sending $a\in A$ to the function $X\ni x\mapsto \alpha_x(a)\in A$,
which is also identified with the $C(X)$-linear map $$\tilde{\alpha} :  A\otimes C(X) \ni f\otimes a\mapsto f\alpha(a)\in  A\otimes C(X).$$
Let $l$ denote the constant map $l_x={\rm id}_A$,
and we write $C\alpha : C_{u_A}\ni a(t)\mapsto \alpha_x(a(t))\in  C_{u_A}\otimes C(X)$.
Since $a(1)\in \mathbb{C}1_A$, one has $C\alpha(a(t))-Cl(a(t))\in  SA\otimes C(X)$,
and the Cuntz picture of KK-theory gives an element $\langle C\alpha, Cl\rangle\in KK(C_{u_A}, SA\otimes C(X))$ (see \cite[Sec. 3]{D4}, \cite[Sec. 17.6.]{B}).

Let $\iota_A$ denote the inclusion $SA\hookrightarrow C_{u_A}$,
and let $\beta : X\to \operatorname{End}(A)$ be another map.
For the composition $(\alpha \circ \beta )_x =\alpha_x\circ \beta_x$,
the direct computation yields
\begin{eqnarray*}
\langle C(\alpha\circ \beta), Cl\rangle &=&\langle C(\alpha\circ \beta), C\alpha\rangle +\langle C\alpha, Cl\rangle\\
&=&(\langle C\beta, Cl\rangle)\hat{\otimes}(I_S\otimes KK(\tilde{\alpha}))+\langle C\alpha, Cl\rangle\\
&=&\langle C\beta, Cl\rangle\hat{\otimes} (I_S\otimes (KK(\tilde{\alpha})-KK(\tilde{l})))+\langle C\alpha, Cl\rangle +\langle C\beta, Cl\rangle\\
&=&\langle C\beta, Cl\rangle\hat{\otimes}((KK(\iota_A)\hat{\otimes}\langle C\alpha, Cl\rangle)\otimes I_{C(X)})\hat{\otimes}(I_{SA}\otimes \Delta_X)\\
&& +\langle C\alpha, Cl\rangle+\langle C\beta, Cl\rangle,
\end{eqnarray*}
where $\Delta_X : C(X)\otimes C(X)\ni f(x, y)\mapsto f(x, x)\in C(X)$ is the diagonal map.
For $x, y\in KK(C_{u_A}, SA\otimes C(X))$,
we define a multiplication $\circ_A$ by 
$$x\circ_A y:=x+y+y\hat{\otimes}((KK(\iota_A)\hat{\otimes}x)\otimes I_{C(X)})\hat{\otimes} (I_{SA}\otimes\Delta_X).$$
Then, one has the following theorem.
\begin{thm}[{\cite[Thm. 4.6, Prop. 5.8]{D4}}]\label{DEn}
Let $X$ be a compact metrizable space,
and let $A$ be a unital Kirchberg algebra.
\begin{enumerate}
\bibitem{}The following map is an isomorphism of semigroups $\colon$
$$[X, \operatorname{End}(A)]\ni [\alpha]\mapsto \langle C\alpha, Cl \rangle\in (KK(C_{u_A}, SA\otimes C(X)), \circ_A).$$
\bibitem{}If $A$ satisfies the UCT and has finitely generated K-groups, by the inclusion $\operatorname{Aut}(A)\hookrightarrow \operatorname{End}(A)$,
the homotopy set $[X, \operatorname{Aut}(A)]$ is identified with the set of invertible elements of $[X, \operatorname{End}(A)]$.
\end{enumerate}
\end{thm}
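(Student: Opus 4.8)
The plan is to regard statement 1 as carrying the substance and to deduce statement 2 from it. Multiplicativity of $[\alpha]\mapsto\langle C\alpha,Cl\rangle$ is already the content of the displayed computation preceding the theorem, which yields $\langle C(\alpha\circ\beta),Cl\rangle=\langle C\alpha,Cl\rangle\circ_A\langle C\beta,Cl\rangle$; so the remaining tasks for statement 1 are well-definedness on homotopy classes and bijectivity. Well-definedness is routine: a point-norm homotopy of unital endomorphisms over $X$ is a unital endomorphism family over $X\times[0,1]$, producing a homotopy of Cuntz pairs $(C\alpha,Cl)$ valued in $SA\otimes C(X)$, and homotopy invariance of the Cuntz picture of $KK$ gives a constant class. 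The heart of the matter is injectivity and surjectivity, which I would extract from the existence theory for $*$-homomorphisms out of the Kirchberg algebra $A$ (Thm. \ref{KGE}) together with a uniqueness argument run relative to the unit.

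The organising device is the mapping-cone long exact sequence obtained by applying $KK(-,SA\otimes C(X))$ to $SA\xrightarrow{\iota_A}C_{u_A}\xrightarrow{e_A}\mathbb{C}\xrightarrow{u_A}A$, whose relevant portion reads
\[
KK(\mathbb{C},SA\otimes C(X))\xrightarrow{e_A^{*}}KK(C_{u_A},SA\otimes C(X))\xrightarrow{\iota_A^{*}}KK(SA,SA\otimes C(X)).
\]
By the computation preceding the theorem, $\iota_A^{*}\langle C\alpha,Cl\rangle$ recovers the difference class $KK(\tilde\alpha)-I$; since $\alpha$ is unital this difference is annihilated by restriction along $u_A$ and hence lies in $\operatorname{im}\iota_A^{*}$, so $\langle C\alpha,Cl\rangle$ is a distinguished lift of $KK(\tilde\alpha)-I$ along $\iota_A^{*}$. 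The indeterminacy of such a lift is the subgroup $\operatorname{im}e_A^{*}$, a quotient of $KK(\mathbb{C},SA\otimes C(X))\cong K_1(A\otimes C(X))$ that records inner perturbations by paths of unitaries. This gives the conceptual picture driving both directions: the underlying class $KK(\tilde\alpha)$ classifies $\alpha$ up to approximate unitary equivalence, while the refinement to a class over $C_{u_A}$ records the extra homotopy datum carried by the implementing unitary path, which the $e_A^{*}$-term accounts for. Accordingly I would prove injectivity by showing that equality of the two Cuntz classes forces $KK(\tilde\alpha)=KK(\tilde\beta)$ and that the two unitary paths differ by a null-homotopic loop, and surjectivity by realising a prescribed difference class by a genuine unital endomorphism via the existence theorem and then correcting the lift by an inner family to hit the given $e_A^{*}$-component.

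The step I expect to be the main obstacle is precisely this bijectivity in the \emph{unital}, hence non-stable, setting, and over the parameter space $X$. The clean uniqueness statement in Thm. \ref{KGE} requires a stable target, so that the implementing unitaries can be connected to $1$ inside the contractible unitary group of the multiplier algebra; for the unital algebra $A\otimes C(X)$ the unitary group is genuinely disconnected and carries exactly the $K_1$-information that the mapping cone is designed to absorb. The technical core is therefore to run the existence and uniqueness arguments at the level of $C_{u_A}$ rather than $A$, keeping all choices continuous in $x\in X$; this is what simultaneously forces the appearance of $C_{u_A}$ and accounts for the twisted product $\circ_A$.

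For statement 2 one inclusion is immediate: if $\alpha\in\operatorname{Aut}(A)$ then $[\alpha^{-1}]$ is a two-sided $\circ_A$-inverse of $[\alpha]$, so automorphism classes are invertible. For the converse I would argue that invertibility of $[\alpha]$ forces $KK(\alpha_x)=I+\iota_A^{*}\langle C\alpha,Cl\rangle_x$ to be invertible in $KK(A_x,A_x)$ for every $x$, i.e. $\tilde\alpha$ is a fibrewise $KK$-equivalence; by Thm. \ref{Ddeq} this makes $\tilde\alpha$ a $KK_X$-equivalence, after which the classification of UCT Kirchberg algebras promotes $\alpha$ to a homotopy through $C(X)$-linear automorphisms, i.e. a class in $[X,\operatorname{Aut}(A)]$. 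The UCT and finite generation hypotheses enter precisely here, to guarantee that monoid-invertibility is equivalent to the fibrewise $KK$-equivalence condition and that the resulting $KK_X$-equivalence is realised by an honest family of unital automorphisms; injectivity of $[X,\operatorname{Aut}(A)]\to[X,\operatorname{End}(A)]$ follows from the uniqueness half of the same argument.
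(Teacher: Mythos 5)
Your proposal is not a proof of this theorem but a plan that defers its core, and the deferred step is the entire content of the result. In the paper this theorem is quoted from Dadarlat: the bijection $[X,\operatorname{End}(A)]\cong KK(C_{u_A},SA\otimes C(X))$ is \cite[Thm. 4.6]{D4} and the identification of $[X,\operatorname{Aut}(A)]$ with the invertible elements is \cite[Prop. 5.8]{D4}; the only part the paper itself establishes is the displayed computation showing that composition corresponds to $\circ_A$, which you correctly take as given. What a self-contained proof must supply is therefore exactly the bijectivity in statement 1, and here you describe a strategy (lift along $\iota_A^{*}$, control the indeterminacy $\operatorname{im}e_A^{*}$, run existence/uniqueness relative to the unit and continuously in $x$) and then explicitly flag the non-stable, parametrized uniqueness as ``the main obstacle'' without resolving it. Moreover, the tools you point to cannot close it as stated: the uniqueness half of Thm. \ref{KGE} requires a stable target, whereas the target here is the unital algebra $A\otimes C(X)$ (this is precisely the difficulty you yourself identify), and neither Thm. \ref{KGE} nor the lifting part of Thm. \ref{Ddeq} applies ``at the level of $C_{u_A}$'', because the mapping cone $C_{u_A}$ is not a Kirchberg algebra --- it is non-simple, containing $SA$ as an ideal with quotient $\mathbb{C}$. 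Dadarlat's actual argument goes through asymptotic unitary equivalence for unital maps and an analysis of the implementing unitary paths in the Cuntz-pair picture; nothing in this paper's toolbox substitutes for that, which is why the paper cites the result rather than reproving it.

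The same gap recurs in your statement 2. The easy inclusion is fine, as is the deduction that invertibility of $[\alpha]$ makes each $KK(\alpha_x)$ invertible, whence $\tilde\alpha$ is a $KK_X$-equivalence by Thm. \ref{Ddeq}. But the final step --- ``the classification of UCT Kirchberg algebras promotes $\alpha$ to a homotopy through $C(X)$-linear automorphisms'' --- is not delivered by anything you invoke: Thm. \ref{Ddeq} produces a $C(X)$-linear isomorphism of the stabilization $A\otimes C(X)\otimes\mathbb{K}$ realizing the $KK_X$-class, which is neither unital nor a point-norm homotopy from the family $\alpha$ to a family of automorphisms. Converting it into such a homotopy requires exactly the non-stable parametrized uniqueness theory again, i.e. statement 1 together with a realization statement for invertible classes, which is the content of \cite[Prop. 5.8]{D4}. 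Likewise, your assertion that injectivity of $[X,\operatorname{Aut}(A)]\to[X,\operatorname{End}(A)]$ ``follows from the uniqueness half of the same argument'' leaves the actual issue untouched: one must replace a homotopy through endomorphisms between two automorphism families by a homotopy through automorphisms, and no argument for this is given. In short, you have correctly reconstructed the formal skeleton (well-definedness, multiplicativity, the role of the Puppe sequence and of the $e_A^{*}$-indeterminacy), but the analytic core of both statements is named rather than proven.
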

The invertible element means the element of $[X, \operatorname{End}(A)]$ which admits both left and right inverse,
and this is an algebraic condition.
\section{The reciprocal Kirchberg algebras}
In this section,
we show several important properties of the reciprocal algebras defined in Def. \ref{REC}.
\begin{thm}\label{MT}
Let $A$ and $B$ be two unital UCT Kirchberg algebras with finitely generated K-groups.
We have the following $\colon$
\begin{enumerate}
\bibitem{}For every $A$,
there uniquely exists $B$ which is reciprocal to $A$,
and such $B$ is non-isomorphic to $A$.
\bibitem{}One has $\pi_i(\operatorname{Aut}(A))\cong \pi_i(\operatorname{Aut}(B)), \; i\geq 1$ if and only if either $A\cong B$ or $A$ and $B$ are reciprocal.
\bibitem{}For $(A, B)$ in 2. and a compact metrizable space $X$,
there exists a natural anti-isomorphism of groups
$$[X, \operatorname{Aut}(A)]\to [X, \operatorname{Aut}(B)].$$
\end{enumerate}
\end{thm}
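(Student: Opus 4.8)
The plan is to dispatch the three statements by reducing each to the $KK$-theoretic data assembled in Section 2.4, and to isolate the genuinely hard point. For statement 1, existence and non-isomorphism are immediate from Corollary \ref{exb}: the algebra $B$ produced there satisfies $B\sim_{KK}D(C_{u_A})$ and $A\sim_{KK}D(C_{u_B})$, which is precisely Definition \ref{REC}, and $B\not\sim_{KK}A$ forces $A\not\cong B$ by Kirchberg--Phillips classification. For uniqueness, if both $B$ and $B'$ are reciprocal to $A$ then $B\sim_{KK}D(C_{u_A})\sim_{KK}B'$ gives $K_*(B)\cong K_*(B')$, while $D(C_{u_B})\sim_{KK}A\sim_{KK}D(C_{u_{B'}})$ together with $D\circ D\sim_{KK}\mathrm{id}$ gives $C_{u_B}\sim_{KK}C_{u_{B'}}$ and hence $K_1(C_{u_B})\cong K_1(C_{u_{B'}})$. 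Corollary \ref{uB} then supplies an isomorphism matching $[1_B]_0$ with $[1_{B'}]_0$, so the complete invariants $(K_0,[1]_0,K_1)$ agree and $B\cong B'$.

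For statement 2, I would first record that, for $i\geq 1$, the isomorphisms $\pi_i(\operatorname{Aut}(A))\cong\pi_i(\operatorname{Aut}(B))$ amount, via $\pi_i(\operatorname{Aut}(A))=K_{i+1}(D(C_{u_A})\otimes A)$, two-periodicity, and the UCT, to the single $KK$-equivalence $D(C_{u_A})\otimes A\sim_{KK}D(C_{u_B})\otimes B$. The implication from $A\cong B$ is trivial, and for reciprocal $A,B$ one substitutes $B\sim_{KK}D(C_{u_A})$, $A\sim_{KK}D(C_{u_B})$ to obtain $D(C_{u_A})\otimes A\sim_{KK}B\otimes D(C_{u_B})\cong D(C_{u_B})\otimes B$. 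The converse is the substantial part. Passing to K-groups by Künneth and the UCT turns the hypothesis into an identity of finitely generated abelian groups that depends only on the triple $(K_0(A),[1_A]_0,K_1(A))$ and its dual; one must then show that the triple of $B$ equals that of $A$ or that of the unique reciprocal $B_0$. The free-rank comparison already forces a split according to whether $[1_A]_0$ is a torsion element (Corollary \ref{exb}), and the torsion summands together with the position of the unit class are controlled by Propositions \ref{vn} and \ref{kmc}. \emph{This exact-dichotomy step is the main obstacle}; its rigidity is genuinely a finite-generation phenomenon, since Proposition \ref{kmc} fails for infinitely generated groups, which is why (as the Remark notes) no such dichotomy survives in the stable setting.

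For statement 3, I would construct the anti-isomorphism from the duality map $d^X_{\mu,\nu}$ of Lemma \ref{b1} applied to the pair $(C_{u_A},SA)$, namely $d^X:KK(C_{u_A},SA\otimes C(X))\to KK(D(SA),C(X)\otimes D(C_{u_A}))$. The reciprocal identifications $D(C_{u_A})\sim_{KK}B$ and $D(A)\sim_{KK}C_{u_B}$ (so $D(SA)\sim_{KK}SC_{u_B}$), Bott periodicity, and the suspension isomorphism $KK(SC_{u_B},B\otimes C(X))\cong KK(C_{u_B},SB\otimes C(X))$ then identify the target with $KK(C_{u_B},SB\otimes C(X))$. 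Under Theorem \ref{DEn} this yields a bijection $[X,\operatorname{End}(A)]\to[X,\operatorname{End}(B)]$, natural in $X$ because $d^X$ is. The decisive computation is that $d$ is anti-multiplicative: the formula $d^{X\times Y}(y\hat{\otimes}(x\otimes I_{C(Y)}))=d^X(x)\hat{\otimes}(I_{C(X)}\otimes d^Y(y))$ reverses the order of Kasparov products, which is exactly what is needed to carry the twisted composition $\circ_A$ to $\circ_B$ with its factors interchanged; verifying this requires identifying the dual of the structure map $\iota_A:SA\hookrightarrow C_{u_A}$ with the connecting map of the dual Puppe sequence of Theorem \ref{mn} and tracking the diagonal terms $\Delta_X$. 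Once the map is a semigroup anti-isomorphism it sends invertibles to invertibles, so part 2 of Theorem \ref{DEn} restricts it to the desired anti-isomorphism $[X,\operatorname{Aut}(A)]\to[X,\operatorname{Aut}(B)]$. I expect reconciling the two twisted products $\circ_A$ and $\circ_B$ to be the principal technical hurdle of this part.
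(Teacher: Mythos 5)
Your statement 1 is complete and coincides with the paper's argument (existence and $A\not\cong B$ from Cor. \ref{exb}, uniqueness from Cor. \ref{uB} together with Kirchberg--Phillips classification). The problem is statements 2 and 3, where you reproduce the paper's set-up but defer exactly the steps that constitute the proof. For statement 2, the hard direction cannot be closed by Propositions \ref{vn} and \ref{kmc}, which is all you invoke. After the K\"unneth computation (and Lemma \ref{ele} for the free ranks) the hypothesis reduces, prime by prime, to an isomorphism
\[\tilde{A}_p^{\,F}\oplus A_p^{\,F+1}\oplus(\tilde{A}_p\otimes A_p)^{2}\;\cong\;B_p^{\,F}\oplus\tilde{B}_p^{\,F+1}\oplus(B_p\otimes\tilde{B}_p)^{2},\]
where $A_p=K_1(A)(p)\oplus K_0(A)(p)$, $\tilde{A}_p=K_1(A)(p)\oplus K_1(C_{u_A})(p)$, and similarly for $B$. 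The step that converts this single group isomorphism into the equality of pairs $(\tilde{A}_p,A_p)=(B_p,\tilde{B}_p)$ is the cancellation result Prop. \ref{mt}/Cor. \ref{daij} for pairs satisfying ($**$), proved by a nontrivial induction on $L(G)+L(\tilde{G})$ using the invariants $I(\cdot)$; Prop. \ref{kmc} is an automorphism-extension statement and cannot substitute for it (it only enters at the very end, via Cor. \ref{uB}, to upgrade K-group data to an isomorphism of algebras). What you label ``the main obstacle'' is precisely this missing theorem, so your statement 2 is not proved.

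For statement 3 the gap is sharper than ``reconciling the two twisted products'': the map you propose --- the raw $d^X_{\mu_{CA},\nu_{SA}}$ composed with suspension and Bott identifications --- is in general \emph{not} an anti-homomorphism. The product $\circ_A$ is built from $KK(\iota_A)$, and duality carries $KK(\iota_A)$ to $d_{\mu_{SA},\nu_{CA}}(KK(\iota_A))$, which there is no reason to equal $KK(\iota_B)$: the dual of the mapping-cone sequence of $A$ agrees with the mapping-cone sequence of $B$ only up to non-canonical KK-equivalences (this is the content of Thm. \ref{ca} and the caveat after Thm. \ref{mn}). The paper therefore constructs correcting equivalences $\alpha\in KK(B\otimes S,B\otimes S)^{-1}$ matching the dual of $e_A$ with $u_B\otimes I_S$ (Lem. \ref{36}, whose existence again rests on Prop. \ref{kmc}) and $\beta\in KK(C_{u_B},C_{u_B})^{-1}$ matching the dual of $\iota_A$ with $\iota_B$ (Lem. \ref{37}, whose proof uses that $[1_B]_0$ is non-torsion to obtain surjectivity and finite generation of $K_*(C_{u_B})$ to promote the resulting one-sided comparisons $Z_i$ to automorphisms). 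Only the corrected map $D^X_{A,B}=\beta\hat{\otimes}\gamma\hat{\otimes}\bigl(d^X_{\mu_{CA},\nu_{SA}}(-)\otimes I_S\bigr)\hat{\otimes}\bigl(I_{C(X)}\otimes\alpha\bigr)$ satisfies the anti-multiplicativity of Thm. \ref{mul}. Since you never produce $\alpha$ and $\beta$ --- which is exactly where finite generation enters this part of the argument --- your proposed map need not intertwine $\circ_A$ with $_B\circ$, and the proof of statement 3 does not close.
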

Statement 1 immediately follows from Lem. \ref{exb} and Cor. \ref{uB},
and the statement 2 and 3 are discussed in Sec. \ref{M2} and \ref{M3}.
\begin{rem}
Statement 2 asserts that, for a given $A$, there is only one non-trivial  $B$ sharing the same homotopy groups of automorphism groups in the category of C*-algebras with finitely generated K-groups.
However,
there are many different $B$ sharing the same homotopy groups with $A$ if we allow C*-algebras with non-finitely generated K-groups.
For example, $A=\mathcal{O}_{n+1}$ and $B=\mathbb{M}_n(\mathcal{O}_\infty)\otimes \mathbb{M}_{p^\infty}$ with $GCD(n, p)=1$ give such pairs.
\end{rem}
\begin{thm}\label{ca}
Let $A$ and $B$ be the reciprocal Kirchberg algebras.
For the mapping cone sequence
\[C_{u_A}\xrightarrow{e_A} \mathbb{C}\xrightarrow{u_A} A,\]
the Spanier--Whitehead K-duality gives another sequence

\[\xymatrix{
D(C_{u_A})& D(\mathbb{C})\ar[l]& D(A)\ar[l]\\
B\ar@{.>}[u]&\mathbb{C}\ar[l]^{u_B}\ar@{=}[u]&C_{u_B},\ar@{.>}[u]\ar[l]^{e_B}
}\]
and there exist two broken arrows which are KK-equivalences making the diagram commutative.
\end{thm}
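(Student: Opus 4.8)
The plan is to recognise the lower row of the diagram as the Puppe exact triangle of $u_B\colon \mathbb{C}\to B$, to promote the upper row to the Spanier--Whitehead dual of the Puppe triangle of $u_A$, and then to splice the two triangles together with Lem.~\ref{RMN}. First I fix duality classes for the dualizable (Thm.~\ref{SWK}) algebras $A$ and $C_{u_A}$ and use the self-duality $D(\mathbb{C})=\mathbb{C}$ from Rem.~\ref{usi}; the anti-isomorphism $d$ of Lem.~\ref{b1} then yields dual classes $D(u_A):=d(u_A)\in KK(D(A),\mathbb{C})$ and $D(e_A):=d(e_A)\in KK(\mathbb{C},D(C_{u_A}))$. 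Because $d$ is an anti-homomorphism and $e_A\hat{\otimes}u_A=0$, one has $D(u_A)\hat{\otimes}D(e_A)=d(e_A\hat{\otimes}u_A)=0$, so $D(A)\xrightarrow{D(u_A)}\mathbb{C}\xrightarrow{D(e_A)}D(C_{u_A})$ is a complex. To see that it is an exact triangle I form the genuine mapping cone triangle $SD(C_{u_A})\to C_{D(e_A)}\xrightarrow{\delta}\mathbb{C}\xrightarrow{D(e_A)}D(C_{u_A})$; the vanishing $D(u_A)\hat{\otimes}D(e_A)=0$ together with the long exact sequence obtained by applying $KK(D(A),-)$ to this cone triangle lets me lift $D(u_A)$ to some $\psi\in KK(D(A),C_{D(e_A)})$ with $\psi\hat{\otimes}\delta=D(u_A)$. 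Comparing the $K$-theory long exact sequence of the cone triangle with the exact sequence produced by Lem.~\ref{b2} from the Puppe sequence of $u_A$, the five lemma shows that $\psi_*$ is an isomorphism on $K_0$ and $K_1$, whence $\psi$ is a $KK$-equivalence by the UCT. This is the content that turns the upper row into an exact triangle with structure maps $D(u_A)$ and $D(e_A)$.

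Next I build the left broken arrow together with the commutativity of the left square, insisting that the middle vertical be the identity of $\mathbb{C}$. What is needed is a $KK$-equivalence $\beta\colon D(C_{u_A})\to B$ with $D(e_A)\hat{\otimes}\beta=u_B$ in $KK(\mathbb{C},B)\cong K_0(B)$; writing $g:=(D(e_A))_*[1_\mathbb{C}]\in K_0(D(C_{u_A}))$ this says $\beta_*(g)=[1_B]_0$. The reciprocal $B$ was manufactured in the construction preceding Cor.~\ref{exb} exactly so that the identification $K_0(D(C_{u_A}))=K_0(B)$ carries $g$ to $[1_B]_0$; for the arbitrary reciprocal $B$ of the statement, the uniqueness in Cor.~\ref{uB}, which rests on the group-theoretic Prop.~\ref{kmc}, supplies an automorphism of $K_0(B)$ moving the image of $g$ onto $[1_B]_0$, and the UCT realises the resulting pair of $K$-group isomorphisms as the desired $KK$-equivalence $\beta$. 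Together with $\alpha:=I_\mathbb{C}$ this produces a commuting square with horizontal edges $D(e_A)$ and $u_B$.

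Finally I apply Lem.~\ref{RMN} to the two exact triangles $SD(C_{u_A})\to D(A)\to\mathbb{C}\xrightarrow{D(e_A)}D(C_{u_A})$ and $SB\to C_{u_B}\to\mathbb{C}\xrightarrow{u_B}B$, using the square with $\alpha=I_\mathbb{C}$ and $\beta$. As both $\alpha$ and $\beta$ are $KK$-equivalences, the lemma produces a $KK$-equivalence on the cones $\gamma\colon D(A)\to C_{u_B}$ satisfying $\gamma\hat{\otimes}e_B=D(u_A)$. Taking $\beta^{-1}\colon B\to D(C_{u_A})$ and $\gamma^{-1}\colon C_{u_B}\to D(A)$ as the two broken arrows, the identities $u_B\hat{\otimes}\beta^{-1}=D(e_A)$ and $e_B=\gamma^{-1}\hat{\otimes}D(u_A)$ are exactly the commutativity of the left and right squares, which completes the diagram.

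The principal obstacle is the exactness claim of the first paragraph: since $D$ is defined only up to $KK$-equivalence and is not a functor, the fact that the dual of the Puppe triangle is again exact cannot be taken formally and must be extracted from Lem.~\ref{b2}, the five lemma and the UCT. The secondary difficulty is the unit bookkeeping that forces the middle vertical to be $I_\mathbb{C}$; this is where finite generation of the $K$-groups is indispensable, entering through Prop.~\ref{kmc} and Cor.~\ref{uB}.
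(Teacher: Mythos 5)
Your second paragraph (the left broken arrow) is correct and is essentially the paper's own argument: both routes come down to comparing the two cyclic quotients of $K_0(D(C_{u_A}))\cong K_0(B)$ and invoking Prop.~\ref{kmc} plus a UCT realisation. The genuine gap is the five-lemma step in your first paragraph, and since your third paragraph (the Lem.~\ref{RMN} splice producing the right broken arrow) needs the upper row to be an exact triangle, the gap propagates to the right-hand square. Concretely: you compare the Lem.~\ref{b2} sequence
\[
0\to K_1(D(C_{u_A}))\xrightarrow{\ a\ } K_0(D(A))\xrightarrow{D(u_A)_*}\mathbb{Z}\xrightarrow{D(e_A)_*}K_0(D(C_{u_A}))\xrightarrow{\ a'\ }K_1(D(A))\to 0
\]
with the $K$-theory sequence of the cone triangle of $D(e_A)$, via the vertical maps $(\mathrm{id},\psi_*,\mathrm{id},\mathrm{id},\psi_*)$. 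Of the four squares, only the two adjacent to $\mathbb{Z}=K_0(\mathbb{C})$ are known to commute — one from $\psi\hat{\otimes}\delta=D(u_A)$, one trivially. The outer squares, which would say that $\psi_*$ intertwines the connecting map $a$ (resp.\ $a'$) with the cone inclusion $\iota_*$, are not available: $a$ is the product with the \emph{dual} of the connecting map of the original Puppe sequence, $\iota_*$ is the product with the genuine cone inclusion of $D(e_A)$, and an arbitrary lift $\psi$ is constrained only through its composition with $\delta$ (the lifts form a torsor over the image of $KK(D(A),SD(C_{u_A}))$, so different lifts act differently on $\mathrm{im}\,a$). The most one can extract is $\psi_*(\mathrm{im}\,a)\subseteq\ker\delta_*=\mathrm{im}\,\iota_*$, so $\psi_*$ induces \emph{some} endomorphism $\theta$ of $K_1(D(C_{u_A}))$ with $\iota_*\circ\theta=\psi_*\circ a$; but nothing forces $\theta$ to be invertible, and without that neither the five lemma nor any diagram chase gives bijectivity of $\psi_*$. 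Proving that a compatible lift can be chosen to be an equivalence is exactly the statement that duality carries the Puppe triangle to an exact triangle, which — as the paper itself warns, $D(-)$ not being a functor — cannot be had formally; your argument does not supply it.

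The paper sidesteps this entirely: it never proves exactness of the dual triangle. The right broken arrow is produced by the same correction trick you already used on the left, just applied on the other side: Cor.~\ref{exb} gives \emph{some} KK-equivalence $\gamma\in KK(C_{u_B},D(A))^{-1}$; then the two elements $\gamma^{-1}\hat{\otimes}e_B$ and $d_{\mu_{\mathbb{C}},\nu_A}(u_A)$ of $KK(D(A),\mathbb{C})$ have isomorphic quotients (both $\cong K_1(C_{u_A})$, by the Puppe sequences for $u_B$ and $u_A$ together with Lem.~\ref{b1}), so Prop.~\ref{kmc} and the UCT give $\alpha\in KK(A,A)^{-1}$ with $d_{\mu_{\mathbb{C}},\nu_A}(u_A\hat{\otimes}\alpha)=\gamma^{-1}\hat{\otimes}e_B$, and $\gamma\hat{\otimes}d_{\mu_{\mathbb{C}},\nu_A}(\alpha)$ is the desired equivalence making the right square commute. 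You could repair your first paragraph the same way — take any abstract KK-equivalence $\psi_0\colon D(A)\to C_{D(e_A)}$ (the $K$-groups agree, as both sequences above present them as the same extensions) and correct it by Prop.~\ref{kmc}/UCT so that $\psi_0\hat{\otimes}\delta=D(u_A)$ — but once you do this you have reproduced the paper's mechanism, and the five-lemma detour and the Lem.~\ref{RMN} splice become unnecessary.
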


\begin{proof}
First,
we prove the existence of the left broken arrow.
We fix the duality classes
\[\mu_A\in KK(\mathbb{C}, A\otimes D(A)),\quad \nu_A\in KK(D(A)\otimes A, \mathbb{C}),\]
\[\mu_{C_{u_A}}\in KK(\mathbb{C}, C_{u_A}\otimes D(C_{u_A})),\quad \nu_{C_{u_A}}\in KK(D(C_{u_A})\otimes C_{u_A}, \mathbb{C}).\]
Let $d_{\mu_\mathbb{C}, \nu_A}(u_A) : D(A)\to D(\mathbb{C})$ be a dual morphism defined as in Lem. \ref{b1}.
By definition, one has $K_0(B)\cong K_0(D(C_{u_A}))$,
and we have \[KK(D(\mathbb{C}), D(C_{u_A}))/\langle d_{\mu_{C_{u_A}}, \nu_\mathbb{C}}(e_A)\rangle\cong KK({C_{u_A}}, \mathbb{C})/\langle e_A\rangle \cong KK(SA, \mathbb{C})\]
by the Puppe sequence.
Since $K_0(B)/\langle [1_B]_0\rangle \cong K_1(C_{u_B})\cong KK(SA, \mathbb{C})$,
Prop. \ref{kmc} and the UCT gives the left broken arrow.

Next, we construct the right broken arrow.
By Cor. \ref{exb},
there is a KK-equivalence $\gamma\in KK(C_{u_B}, D(A))^{-1}$,
and one has
\[KK(D(A), \mathbb{C})/\langle \gamma^{-1}\hat{\otimes} e_B\rangle \cong KK(SB, \mathbb{C})\cong K_1(C_{u_A}),\]
\[KK(D(A), \mathbb{C})/\langle d_{\mu_\mathbb{C}, \nu_A}(u_A)\rangle \cong KK(\mathbb{C}, A)/\langle u_A\rangle\cong K_1(C_{u_A}).\]
By Prop. \ref{kmc},
there exists a KK-equivalence $\alpha\in KK(A, A)^{-1}$ with $d_{\mu_\mathbb{C}, \nu_A}(u_A\hat{\otimes}\alpha)=\gamma^{-1}\hat{\otimes}e_B$.
Thus,
$\gamma\hat{\otimes}d_{\mu_\mathbb{C}, \nu_A}(\alpha)\in KK(C_{u_B}, D(A))^{-1}$ gives the desired KK-equivalence.
\end{proof} 

\subsection{Proof of the statement 2.}\label{M2}
To prove the statement 2,
we need the following proposition (see Sec. \ref{nota} for notation).
\begin{prop}\label{mt}
Let $(G, \tilde{G})$ and $(H, \tilde{H})$ be finite Abel $p$-groups satisfying ($*$).
Let $f\geq 1, F\geq 0$ be integers with $f-F\in 2\mathbb{Z}$.
Then the following equation holds if and only if $(G, \tilde{G})=(H, \tilde{H})$ $\colon$
$$G^{ f}\oplus \tilde{G}^ {(F+1)}\oplus (G\otimes\tilde{G})^{ 2}\cong H^{ f}\oplus \tilde{H}^{ (F+1)}\oplus (H\otimes \tilde{H})^{ 2}.$$
\end{prop}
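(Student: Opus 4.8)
The \emph{if} direction is immediate, so I focus on the \emph{only if} direction. The plan is to replace the isomorphism of groups by a family of numerical identities indexed by $k\geq 1$, obtained from a single well-chosen invariant, and then to solve the resulting Diophantine problem using the parity hypothesis $f-F\in 2\mathbb{Z}$ together with the bound coming from ($*$).

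First I would introduce, for a finite Abelian $p$-group $M$, the invariant
$$r_k(M):=\dim_{\mathbb{F}_p}\bigl(p^{k-1}M/p^kM\bigr),\qquad k\geq 1,$$
which counts the number of cyclic summands of $M$ of order $\geq p^k$; the sequence $(r_k(M))_{k\geq 1}$ is a complete isomorphism invariant by the classification of finite Abelian groups. The point of this invariant is that it linearizes both operations appearing in the statement: one has $r_k(M\oplus N)=r_k(M)+r_k(N)$, and, since $\mathbb{Z}_{p^i}\otimes\mathbb{Z}_{p^j}\cong\mathbb{Z}_{p^{\min(i,j)}}$, one also has $r_k(M\otimes N)=r_k(M)\,r_k(N)$. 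Applying $r_k$ to the assumed isomorphism therefore yields, for every $k\geq 1$,
$$f\,r_k(G)+(F+1)\,r_k(\tilde G)+2\,r_k(G)\,r_k(\tilde G)=f\,r_k(H)+(F+1)\,r_k(\tilde H)+2\,r_k(H)\,r_k(\tilde H).$$

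Next I would clear the cross term by the identity
$$2\bigl(f x+(F+1)y+2xy\bigr)+f(F+1)=(2x+F+1)(2y+f),$$
which recasts each side as a single product. Writing $x=r_k(G),\ y=r_k(\tilde G),\ x'=r_k(H),\ y'=r_k(\tilde H)$, the displayed equation becomes
$$(2x+F+1)(2y+f)=(2x'+F+1)(2y'+f),\qquad k\geq 1.$$
Here I would record two facts: since $f\equiv F \pmod 2$, the integers $F+1$ and $f$ have opposite parity, so the two factors on each side have opposite parity and in particular none is $0$ (using $f\geq 1$, $F\geq 0$); and condition ($*$), applied to $(G,\tilde G)$ and to $(H,\tilde H)$, forces $|x-y|\leq 1$ and $|x'-y'|\leq 1$ for every $k$. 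The latter follows from the interleaving description of ($*$) by counting, for each threshold $k$, the exponents exceeding $k$ in the two alternating chains: a suffix of an alternating chain contains equal numbers of the two kinds of exponents, up to one.

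The hard part will be the remaining purely arithmetic lemma: if $c,d\geq 1$ have opposite parity and $x,y,x',y'\geq 0$ satisfy $|x-y|\leq 1$, $|x'-y'|\leq 1$ and $(2x+c)(2y+d)=(2x'+c)(2y'+d)$, then $(x,y)=(x',y')$. I would prove this by assuming without loss of generality $x'\geq x$; the subcase $x'=x$ forces $y=y'$ since the common factor $2x+c\geq 1$ is nonzero, while the subcase $x'>x$ forces $y'<y$, and then the two bounds $|x-y|\leq 1$, $|x'-y'|\leq 1$ pin the configuration down to $(x,y)=(x,x+1)$ and $(x',y')=(x+1,x)$; substituting into the product identity, the two sides differ by $2(c-d)$, which is nonzero because $c\not\equiv d\pmod 2$, a contradiction. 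Applying this lemma for each $k$ gives $r_k(G)=r_k(H)$ and $r_k(\tilde G)=r_k(\tilde H)$ for all $k$, hence $G\cong H$ and $\tilde G\cong\tilde H$, that is $(G,\tilde G)=(H,\tilde H)$. The essential role of the hypotheses is visible at the end: the parity condition $f-F\in 2\mathbb{Z}$ is exactly what makes $2(c-d)\neq 0$, and ($*$) supplies the bounds without which the product identity has genuine extra solutions (already for $c=1,d=2$ one has a collision at $(x,y)=(1,2)$ and $(x',y')=(0,8)$, the latter excluded only by the bound $|x'-y'|\leq 1$).
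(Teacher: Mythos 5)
Your proof is correct, and it takes a genuinely different route from the paper's. The paper proves the proposition by induction on $h=L(G)+L(\tilde G)$: it looks at the largest exponent $m=m(G,\tilde G)$, counts how many copies of $\mathbb{Z}_{p^m}$ can occur as direct summands on each side of the assumed isomorphism to decide where $m$ sits (in $I(G)\cap I(\tilde G)$, in $I(G)\backslash I(\tilde G)$, or in $I(\tilde G)\backslash I(G)$ --- this is where $f\geq 1$, $F+1\geq 1$ and the parity hypothesis enter, e.g.\ to rule out the coincidence $f-F=1$), then peels off those $\mathbb{Z}_{p^m}$ summands and applies the inductive hypothesis to the smaller pair with shifted multiplicities ($f\mapsto f+2$ or $F\mapsto F+2$, which preserves $f-F\in 2\mathbb{Z}$). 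You instead linearize everything at once through the complete invariant $r_k(M)=\dim_{\mathbb{F}_p}(p^{k-1}M/p^kM)$, whose additivity over $\oplus$ and multiplicativity over $\otimes$ convert the isomorphism into the family of factored identities $(2r_k(G)+F+1)(2r_k(\tilde G)+f)=(2r_k(H)+F+1)(2r_k(\tilde H)+f)$, and you finish with a short Diophantine lemma in which the hypotheses appear exactly as the opposite parity of the two factors and the bound $|r_k(G)-r_k(\tilde G)|\leq 1$ read off from the interleaving in ($*$). All of your ingredients check out: the tensor identity $r_k(M\otimes N)=r_k(M)r_k(N)$, the suffix count giving the bound, and the pinning-down of the only dangerous configuration to $(x,y)=(x,x+1)$, $(x',y')=(x+1,x)$, where the two sides differ by $2(c-d)\neq 0$. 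What your approach buys is a non-inductive, nearly computation-free proof in which the role of each hypothesis is transparent (your $c=1$, $d=2$ collision shows that ($*$) cannot be dropped); what the paper's approach buys is that its counting-and-peeling mechanism is exactly the step reused immediately afterwards in the proof of Cor.~\ref{daij}, which extends the statement to pairs satisfying ($**$) with $f\geq 0$ by peeling once and then invoking the proposition, so there the two arguments come packaged as a single technique.
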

\begin{proof}
We prove the statement by the induction over $h=L(G)+L(\tilde{G})\geq 0$.
In the case of $h=0$,
we have $0=H^{ f}\oplus \tilde{H}^{ (F+1)}\oplus (H\otimes \tilde{H})^{ 2}$ that implies $(G, \tilde{G})=(0, 0)=(H, \tilde{H})$.
So we will prove the statement for a pair $(G, \tilde{G})$ satisfying ($*$), $L(G)+L(\tilde{G})=h+1, h\geq 0$ under the assumption that the statement holds for every pair $(G', \tilde{G}')$ satisfying ($*$), $L(G')+L(\tilde{G}')\leq h$.

One has the following two cases :
$$I)\quad m(G, \tilde{G})\in I(G)\cap I(\tilde{G}),$$
$$II)\quad m(G, \tilde{G})\not\in I(G)\cap I(\tilde{G}).$$
First,
we discuss case I).
Let $(H, \tilde{H})$ satisfy ($*$) and the equation
$$G^{ f}\oplus \tilde{G}^{ (F+1)}\oplus (G\otimes\tilde{G})^{ 2}\cong H^{ f}\oplus \tilde{H}^{ (F+1)}\oplus (H\otimes \tilde{H})^{ 2}.$$
Since $f\geq 1$ and $F+1\geq 1$,
one has $m(G, \tilde{G})=m(H, \tilde{H})=m$.
The left hand side has at least $f+(F+1)+2$ copies of $\mathbb{Z}_{p^m}$ as direct summands.
If $m\not\in I(H)\cap I(\tilde{H})$,
one has $m>k$ for every $k\in I(H)\cap I(\tilde{H})$ and it follows that $H\otimes \tilde{H}$ does not contain $\mathbb{Z}_{p^m}$ as a direct summand by the definition of ($*$).
It also follows that $H^{ f}\oplus \tilde{H}^{ (F+1)}$ has at most ${\rm max}\{f, F+1\}$ copies of $\mathbb{Z}_{p^m}$ as direct summands that implies $f+(F+1)+2\leq {\rm max}\{f, F+1\}$.
This is a contradiction and we have $m\in I(H)\cap I(\tilde{H})$.
Thus, one can write $$(G, \tilde{G})=(\mathbb{Z}_{p^m}\oplus G', \mathbb{Z}_{p^m}\oplus \tilde{G}'),\quad (H, \tilde{H})=(\mathbb{Z}_{p^m}\oplus H', \mathbb{Z}_{p^m}\oplus \tilde{H}')$$
by the pairs $(G', \tilde{G}'), (H', \tilde{H}')$ satisfying ($*$),  $L(G')+L(\tilde{G}')=h-1\leq h$, 
and the following equation holds
$$G'^{ (f+2)}\oplus \tilde{G}'^{ (F+2+1)}\oplus (G'\otimes\tilde{G}')^{ 2}\cong H'^{ (f+2)}\oplus \tilde{H}'^{(F+2+1)}\oplus (H'\otimes \tilde{H}')^{ 2}.$$
The assumption of the induction shows $(G', \tilde{G}')=(H', \tilde{H}')$ and the statement is proved for $L(G)+L(\tilde{G})=h+1$ in case I).

Next,
we discuss case II).
As in case I),
we have $m(G, \tilde{G})=m(H, \tilde{H})=m$ for a pair $(H, \tilde{H})$ satisfying ($*$) and
$$G^{ f}\oplus \tilde{G}^{ (F+1)}\oplus (G\otimes\tilde{G})^{ 2}\cong H^{ f}\oplus \tilde{H}^{ (F+1)}\oplus (H\otimes \tilde{H})^{ 2}.$$
The same argument as in case I) shows that $m\not \in I(H)\cap I(\tilde{H})$ and both $G\otimes\tilde{G}$ and $H\otimes \tilde{H}$ contain no copies of $\mathbb{Z}_{p^m}$ as direct summands.

If $m\in I(G)\backslash I(\tilde{G})$,
the left hand side of the above equation contains exactly $f$ copies of $\mathbb{Z}_{p^{m}}$ as direct summands.
If $m\not\in I(H)\backslash I(\tilde{H})$,
it means $m\in I(\tilde{H})\backslash I(H)$ and the right hand side contains exactly $F+1$ copies of $\mathbb{Z}_{p^m}$.
Since $f-F=1$ contradicts to the assumption,
one has $m\in I(H)\backslash I(\tilde{H})$.
One can write $(G, \tilde{G})=(\mathbb{Z}_{p^m}\oplus G', \tilde{G}), (H, \tilde{H})=(\mathbb{Z}_{p^m}\oplus H', \tilde{H})$ and it is easy to check that $(G', \tilde{G}), (H', \tilde{H})$ also satisfy ($*$) and
$$G'^{ f}\oplus \tilde{G}^{ (F+2+1)}\oplus (G'\otimes \tilde{G})^{ 2}\cong H'^{ f}\oplus \tilde{H}^{ (F+2+1)}\oplus (H'\otimes \tilde{H})^{ 2}.$$ 
Since $L(G')+L(\tilde{G})=h$,
the assumption of the induction yields $(G', \tilde{G})=(H', \tilde{H})$ which implies $(G, \tilde{G})=(H,\tilde{H})$.

If $m\in I(\tilde{G})\backslash I(G)$ the same argument shows $(G, \tilde{G})=(H,\tilde{H})$,
and we have proven the statement for $L(G)+L(\tilde{G})=h+1$ in case II).
This completes the induction.
\end{proof}
\begin{cor}\label{daij}
Let $(G, \tilde{G})$ and $(H, \tilde{H})$ be two pairs of finite Abel $p$-groups satisfying ($**$).
Let $f\geq 0, F\geq 0$ be integers with $f-F\in 2\mathbb{Z}$.
Then, the following equation holds if and only if $(G, \tilde{G})=(H, \tilde{H})$ $\colon$
$$G^{ f}\oplus \tilde{G}^{ (F+1)}\oplus (G\otimes\tilde{G})^{ 2}\cong H^{ f}\oplus \tilde{H}^{ (F+1)}\oplus (H\otimes \tilde{H})^{ 2}.$$

\end{cor}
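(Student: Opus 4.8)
The plan is to reduce everything to Proposition \ref{mt}. Since a pair satisfying ($**$) automatically satisfies ($*$), for $f\geq 1$ the asserted equivalence is already the content of Proposition \ref{mt}, so the only genuinely new case is $f=0$ (whence $F\in 2\mathbb{Z}$). The ``if'' direction being trivial, I would concentrate on deducing $(G,\tilde G)=(H,\tilde H)$ from
\[\tilde G^{F+1}\oplus(G\otimes\tilde G)^2\cong \tilde H^{F+1}\oplus(H\otimes\tilde H)^2.\]
First I would dispose of the degenerate pair: if $\tilde G=0$ then ($**$) forces $G=0$, the left-hand side vanishes, so $\tilde H=0$ and likewise $H=0$; hence I may assume $\tilde G,\tilde H\neq 0$.

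The decisive point, which replaces the hypothesis $f\geq 1$ of Proposition \ref{mt}, is that ($**$) controls where the top exponent lives. Writing $m$ for the largest exponent occurring in the equation, I would observe that ($**$) excludes $m\in I(G)\setminus I(\tilde G)$, so that $m\in I(\tilde G)$ and therefore $m=\max I(\tilde G)$ with $\max I(G)\leq m$ (and likewise for $H,\tilde H$). Thus the summand $\tilde G^{F+1}$ alone already realizes the top exponent, and comparing maximal exponents on the two isomorphic sides gives $\max I(\tilde G)=\max I(\tilde H)=m$.

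Next I would count the multiplicity of $\mathbb{Z}_{p^m}$. Let $a,b$ be the multiplicities of $m$ in $I(G),I(\tilde G)$, and $a',b'$ the analogues for $H,\tilde H$. Using ($*$), the top exponent either sits on the $\tilde G$-side of the alternating chain, giving $(a,b)=(0,1)$, or lies in the common part, giving $(a,b)=(c,c)$ with $c\geq1$; the same dichotomy holds for $(a',b')$. Since $\mathbb{Z}_{p^{k}}\otimes\mathbb{Z}_{p^m}\cong\mathbb{Z}_{p^{k}}$ for $k<m$, the number of $\mathbb{Z}_{p^m}$-summands on the left equals $b(F+1)+2ab$, and this quantity is injective on the constrained set $\{(0,1)\}\cup\{(c,c):c\geq1\}$, forcing $(a,b)=(a',b')$. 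I would then split off the top summands, writing $G=\mathbb{Z}_{p^m}^{a}\oplus G'$ and $\tilde G=\mathbb{Z}_{p^m}^{b}\oplus\tilde G'$ with $G',\tilde G'$ of exponent $<m$, expand the tensor term using $\mathbb{Z}_{p^m}\otimes(-)\cong\mathrm{id}$ on these lower summands, and cancel the common copies of $\mathbb{Z}_{p^m}$ (valid for finitely generated abelian groups). The key feature is that peeling the top off $\tilde G$ promotes $G'$ out of the tensor into an honest direct summand of multiplicity $2b\geq2$: the residual identity is precisely the hypothesis of Proposition \ref{mt} for the ($*$)-pair $(G',\tilde G')$ with $f''=2b\geq1$ and $F''=F+2a$, where $f''-F''\equiv -F\pmod 2$ is again even. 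Proposition \ref{mt} then yields $(G',\tilde G')=(H',\tilde H')$, and combined with $(a,b)=(a',b')$ this gives $(G,\tilde G)=(H,\tilde H)$.

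The main obstacle is conceptual rather than computational: one must see that ($**$), by forbidding the top exponent from lying on the $G$-side, is exactly what allows the $\tilde G^{F+1}$-term to stand in for the missing $G^{f}$-term, and that removing the top $\mathbb{Z}_{p^m}$ from $\tilde G$ converts the tensor contribution $G\otimes\mathbb{Z}_{p^m}\cong G$ into a positive-multiplicity summand, upgrading the degenerate exponent $f=0$ to $f''\geq1$ and thereby reconnecting with Proposition \ref{mt}. The remaining work is bookkeeping: verifying the constraint on $(a,b)$, the injectivity of the multiplicity count, and the preservation of ($*$) and of the parity of $f-F$ under peeling.
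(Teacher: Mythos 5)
Your overall strategy (reduce to $f=0$, locate the top exponent $m$ via ($**$), match the multiplicities of $\mathbb{Z}_{p^m}$ on both sides, peel, and feed the result back into Proposition \ref{mt}) is sound and close in spirit to the paper's proof, but one step fails: the claimed dichotomy $(a,b)\in\{(0,1)\}\cup\{(c,c):c\geq 1\}$ is false. Since $I(G)$ and $I(\tilde G)$ are multisets, the top exponent $m$ can lie simultaneously in the common part $I(G)\cap I(\tilde G)$ and in the difference $I(\tilde G)\backslash I(G)$. Concretely, $(G,\tilde G)=(\mathbb{Z}_{p^m},\,\mathbb{Z}_{p^m}\oplus\mathbb{Z}_{p^m})$ satisfies ($*$) (write $G=N\oplus\mathbb{Z}_{p^{0}}$, $\tilde G=N\oplus\mathbb{Z}_{p^{m}}$ with $N=\mathbb{Z}_{p^m}$) and satisfies ($**$), yet has $(a,b)=(1,2)$. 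In general the admissible patterns are $\{(c,c+1):c\geq 0\}\cup\{(c,c):c\geq 1\}$, so your injectivity check covers only part of the constraint set, and the case $(a,b)=(c,c+1)$ with $c\geq 1$ is simply missing. Without it, the equality $(a,b)=(a',b')$ on which the entire peeling step rests is not established.

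The gap is repairable: the count $N(a,b)=b(F+1)+2ab$ is still injective on the corrected set. Indeed, setting $u=c+1$ one has $N(c,c+1)=2u^2+(F-1)u$ and $N(d,d)=2d^2+(F+1)d$; if $u\geq d+1$ then $N(c,c+1)-N(d,d)\geq 2d+F+1>0$, while if $u\leq d$ then $N(c,c+1)<N(d,d)$, and each family is strictly increasing, so no collisions occur. With that fixed, the rest of your argument (peeling all copies of $\mathbb{Z}_{p^m}$, checking that ($*$) and the parity of $f-F$ survive, and invoking Proposition \ref{mt} with $f''=2b\geq 1$) goes through. It is worth noting how the paper sidesteps this bookkeeping: it only asks whether $m$ lies in $I(G)\cap I(\tilde G)$ or not --- a coarser dichotomy that genuinely is exhaustive --- shows by a counting contradiction (using inequalities, not exact multiplicities) that the answer agrees for both pairs, peels a \emph{single} copy of $\mathbb{Z}_{p^m}$, and lets the induction inside Proposition \ref{mt} absorb whatever copies remain.
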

\begin{proof}
Since $(G, \tilde{G})$ (resp. $(H, \tilde{H})$) satisfies ($**$),
one has $m(G, \tilde{G})\in I(\tilde{G})$ (resp. $m(H, \tilde{H})\in I(\tilde{H})$),
and $F+1\geq 1$ implies $m(G, \tilde{G})=m(H, \tilde{H})=m$.

We first consider the case $m\in I(G, p)\cap I(\tilde{G}, p)$.
The left hand side of the above equation has at least $f+(F+1)+2$ copies of $\mathbb{Z}_{p^m}$ as direct summands.
If $m\not \in I(H, p)\cap I(\tilde{H}, p)$,
the right hand side has exactly $F+1$ copies of $\mathbb{Z}_{p^m}$ as direct summands that implies $f+(F+1)+2\leq F+1$.
This is a contradiction,
and we have $m\in I(H, p)\cap I(\tilde{H}, p)$.
Thus, there exist $(G', \tilde{G}')$ and $(H', \tilde{H}')$ satisfying ($*$) and $$(G, \tilde{G})=(\mathbb{Z}_{p^m}\oplus G', \mathbb{Z}_{p^m}\oplus\tilde{G}'),\quad (H, \tilde{H})=(\mathbb{Z}_{p^m}\oplus H', \mathbb{Z}_{p^m}\oplus \tilde{H}'),$$
$$G'^{ f+2}\oplus \tilde{G}'^{ (F+2+1)}\oplus (G'\otimes\tilde{G}')^{ 2}\cong H'^{ f+2}\oplus \tilde{H}'^{ (F+2+1)}\oplus (H'\otimes \tilde{H}')^{ 2},$$
and Prop. \ref{mt} shows $(G', \tilde{G}')=(H', \tilde{H}')$.

Next, we discuss the case $m\not \in I(G, p)\cap I(\tilde{G}, p)$.
The same argument as in the previous case shows $m\not\in I(H, p)\cap I(\tilde{H}, p)$, and we write $$(G, \tilde{G})=(G, \mathbb{Z}_{p^m}\oplus\tilde{G} '),\quad (H, \tilde{H})=(H, \mathbb{Z}_{p^m}\oplus\tilde{H}'),$$
where $(G, \tilde{G}')$ and $(H, \tilde{H}')$ satisfy ($*$) and
$$G^{ f+2}\oplus \tilde{G}'^{ (F+1)}\oplus (G\otimes\tilde{G}')^{ 2}\cong H^{ f+2}\oplus \tilde{H}'^{ (F+1)}\oplus (H\otimes \tilde{H}')^{ 2}.$$
Prop. \ref{mt} shows $(G, \tilde{G}')=(H, \tilde{H}')$,
and this proves the statement.
\end{proof}

Following the notation in Section \ref{nota},
we write $$K_i(A)=\mathbb{Z}^{F_i^A}\oplus \bigoplus_p K_i(A)(p),\quad K_i(C_{u_A})=\mathbb{Z}^{f^A_i}\oplus\bigoplus_p K_i(C_{u_A})(p).$$
Note that Remark \ref{usi} implies 
$$K_i(D(A))=\mathbb{Z}^{F_i^A}\oplus \bigoplus_p K_{i+1}(A)(p),\quad K_i(D(C_{u_A}))=\mathbb{Z}^{f_i^A}\oplus\bigoplus_p K_{i+1}(C_{u_A})(p).$$
Using the mapping cone sequence $SA\to C_{u_A}\to \mathbb{C}\to A$,
one can easily check $K_0(C_{u_A})(p)\cong K_1(A)(p)$, $f_0^A\geq F_1^A, F_0^A\geq f_1^A$ and $F_1^A-f_0^A+1-F_0^A+f_1^A=0$.
The K$\rm\ddot{u}$nneth theorem yields
\begin{align*}
&K_0(A\otimes D(C_{u_A}))\\
=&\mathbb{Z}^{F_0^Af_0^A+F_1^Af_1^A}\oplus \bigoplus_{p} \left({A_p}^{F^A_1}\oplus {\tilde{A}_p}^{f^A_1}\oplus (A_p\otimes \tilde{A}_p)\oplus K_0(A)(p)^{f^A_0-F^A_1}\oplus K_1(C_{u_A})(p)^{F_0^A-f_1^A}\right),\\
&\\
&K_1(A\otimes D(C_{u_A}))\\
=&\mathbb{Z}^{F_1^Af_0^A+F_0^Af_1^A}\oplus \bigoplus_p \left(A_p^{f_1^A}\oplus \tilde{A}_p^{F_1^A}\oplus (A_p\otimes \tilde{A}_p)\oplus K_1(A)(p)^{(f_0^A-F_1^A)+(F_0^A-f_1^A)}\right)\\
=&\mathbb{Z}^{F_1^Af_0^A+F_0^Af_1^A}\oplus \bigoplus_p \left( A_p^{f_1^A}\oplus \tilde{A}_p^{F^A_1}\oplus(A_p\otimes \tilde{A}_p)\oplus K_1(A)(p)\right),
\end{align*}
where we write $$A_p:=K_1(A)(p)\oplus K_0(A)(p), \quad \tilde{A}_p:=K_1(A)(p)\oplus K_1(C_{u_A})(p).$$
If $[1_A]_0\in K_0(A)$ is a torsion element,
one has $(f_0^A-F_1^A, F_0^A-f_1^A)=(1, 0)$.
Conversely, one has $(f_0^A-F_1^A, F_0^A-f_1^A)=(0, 1)$ if $[1_A]_0\in K_0(A)$ is not a torsion.
One can easily check the following lemma.
\begin{lem}\label{ele}
Let $n, m, s, t$ be non-negative integers satisfying
$$2mn+m=2st+s,\quad n^2+n+m^2=t^2+t+s^2.$$
Then, we have $m=s, n=t$.
\end{lem}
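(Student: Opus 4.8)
The plan is to symmetrize the two given relations into a form where the first becomes a multiplicative constraint and the second a sum of two squares. First I would rewrite the first equation as
\[
m(2n+1)=s(2t+1),
\]
and transform the second one by multiplying by $4$ and completing the square via $(2n+1)^2=4(n^2+n)+1$, which yields
\[
(2n+1)^2+4m^2=(2t+1)^2+4s^2.
\]
Setting $a:=2n+1$ and $b:=2t+1$, both odd positive integers, the problem reduces to showing that $ma=sb$ together with $a^2+4m^2=b^2+4s^2$ forces $m=s$ and $a=b$ (the latter being equivalent to $n=t$). The symmetrization is the conceptual heart of the argument, and I expect identifying the completing-the-square trick to be the main insight; everything afterward is bookkeeping.

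Next I would dispose of the degenerate cases. If $m=0$, then $s(2t+1)=0$ with $2t+1\geq 1$ forces $s=0$, and the second relation collapses to $n(n+1)=t(t+1)$; since $x\mapsto x(x+1)$ is strictly increasing on the non-negative integers, this gives $n=t$. By symmetry $s=0$ likewise forces $m=0$, so in the remaining analysis I may assume $m,s\geq 1$.

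For the main case I would extract a common factor: write $d:=\gcd(m,s)$, $m=dm'$, $s=ds'$ with $\gcd(m',s')=1$, so that $ma=sb$ becomes $m'a=s'b$. Coprimality then yields $a=s'k$ and $b=m'k$ for some positive integer $k$. Substituting into $a^2+4m^2=b^2+4s^2$ and collecting terms gives the clean factorization
\[
\bigl((s')^2-(m')^2\bigr)\bigl(k^2-4d^2\bigr)=0.
\]
The decisive step, which I anticipate to be the only genuinely nonobvious obstacle after the symmetrization, is a parity argument ruling out the extraneous factor: since $a$ and $b$ are odd and $a=s'k$, $b=m'k$, the integer $k$ must be odd, whereas $2d$ is even (as $d\geq 1$), so $k^2=4d^2$ is impossible. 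Hence $(s')^2=(m')^2$, and $\gcd(m',s')=1$ forces $m'=s'=1$. This gives $m=s=d$ and $a=b=k$, i.e.\ $n=t$, completing the proof.

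Throughout, the only facts I rely on are elementary properties of $\gcd$, the strict monotonicity of $x\mapsto x(x+1)$, and the parity of the odd quantities $2n+1$, $2t+1$; no result beyond the statement itself is needed.
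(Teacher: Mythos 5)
Your proof is correct, and every step checks out: the symmetrization $m(2n+1)=s(2t+1)$ and $(2n+1)^2+4m^2=(2t+1)^2+4s^2$ is exact; the degenerate case $m=0$ (equivalently $s=0$) is disposed of properly; in the main case the substitution $a=s'k$, $b=m'k$ does yield $\bigl((s')^2-(m')^2\bigr)\bigl(k^2-4d^2\bigr)=0$, and the parity point ($k$ odd because $a=s'k$ is odd, while $2d$ is even) correctly kills the factor $k^2-4d^2$, forcing $m'=s'=1$, hence $m=s$ and $a=b$, i.e.\ $n=t$. There is nothing in the paper to compare against: the lemma is introduced there with the phrase ``one can easily check'' and no argument is given, so your write-up supplies a proof the paper omits. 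For what it is worth, the same symmetrized data admits a slightly shorter endgame that avoids the gcd decomposition: from $ma=sb$ one has $(a^2-4m^2)^2=(a^2+4m^2)^2-(4ma)^2=(b^2+4s^2)^2-(4sb)^2=(b^2-4s^2)^2$, so $a^2-4m^2=\pm\,(b^2-4s^2)$; the minus sign gives $a^2+b^2=4(m^2+s^2)$, which is impossible modulo $4$ since $a$ and $b$ are odd, and the plus sign, added to $a^2+4m^2=b^2+4s^2$, gives $a=b$ and then $m=s$ at once. Both routes rest on the same two insights (completing the square to odd integers, then a mod-$4$ parity obstruction), so this is a simplification of bookkeeping rather than a different method.
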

\begin{proof}[{Proof of Theorem \ref{MT} 2.}]
Let $A$ and $B$ be unital UCT Kirchberg algebras with finitely generated K-groups satisfying
$$\pi_i(\operatorname{Aut}(A))=K_{i+1}(A\otimes D(C_{u_A}))\cong K_{i+1}(B\otimes D(C_{u_B}))=\pi_i(\operatorname{Aut}(B)), \; i\geq 1,$$
and we use the following notation
 $$A_p:=K_1(A)(p)\oplus K_0(A)(p), \quad \tilde{A}_p:=K_1(A)(p)\oplus K_1(C_{u_A})(p),$$
 $$B_p:=K_1(B)(p)\oplus K_0(B)(p), \quad \tilde{B}_p:=K_1(B)(p)\oplus K_1(C_{u_B})(p).$$
We consider the following three cases :
\begin{enumerate}
\bibitem{III} $[1_A]_0$ is a torsion element and $[1_B]_0$ is not.
\bibitem{I} Both $[1_A]_0$ and $[1_B]_0$ are torsion elements.
\bibitem{II} Both $[1_A]_0$ and $[1_B]_0$ are non-torsion elements.
\end{enumerate}

First,
we discuss case 1 and prove that $A$ and $B$ satisfy
$$K_i(B)=K_i(D(C_{u_A})),\quad K_i(A)=K_i(D(C_{u_B})).$$
One has $f_0^A=F_1^A+1,\; f_1^A=F_0^A$ and $F_1^B=f_0^B,\; F_0^B=f_1^B+1$,
and the comparison of ranks of the free parts yields
$$2F_0^AF_1^A+F_0^A=2f_0^Bf_1^B+f_0^B,\quad {F_1^A}^2+F_1^A+{F_0^A}^2={f_1^B}^2+f_1^B+{f_0^B}^2.$$
Lem. \ref{ele} shows $F_i^A=f_i^B,\; f_i^A=F_i^B$,
and we have
$${\tilde{A}_p}^F\oplus A_p^{F+1}\oplus (\tilde{A}_p\otimes A_p)^2\cong B_p^F\oplus \tilde{B}_p^{F+1}\oplus (B_p\otimes \tilde{B}_p)^2$$
for $F=F_1^A+f_1^A=f_1^B+F_1^B$.
Since $(\tilde{A}_p, A_p)$ and $(B_p, \tilde{B}_p)$ satisfy ($**$) by Prop. \ref{vn},
 Cor. \ref{daij} shows $$(\tilde{A}_p, A_p)=(B_p, \tilde{B}_p).$$
Thus, the assumption $K_i(A\otimes D(C_{u_A}))=K_i(B\otimes D(C_{u_B}))$ shows $$K_0(A)(p)=K_1(C_{u_B})(p),\; K_1(A)(p)=K_1(B)(p),$$ and we have
\begin{align*}
K_0(A)=&\mathbb{Z}^{F_0^A}\oplus \bigoplus_p K_0(A)(p)\\
=&\mathbb{Z}^{f_0^B}\oplus \bigoplus_p K_1(C_{u_B})(p)\\
=&K_0(D(C_{u_B})),
& \\
K_1(A)=&\mathbb{Z}^{F_1^A}\oplus \bigoplus_p K_1(A)(p)\\
=&\mathbb{Z}^{f_1^B}\oplus \bigoplus_p K_1(B)(p)\\
=&\mathbb{Z}^{f_1^B}\oplus \bigoplus_p K_0(C_{u_B})(p)\\
=&K_1(D(C_{u_B})).
\end{align*}
Similar argument shows $K_i(B)\cong K_i(D(C_{u_A})),\; i=0, 1$.

Next, we show $A=B$ in case 2.
In case 2, one has $f_0^A=F_1^A+1, \; f_1^A=F_0^A$ and $f_0^B=F_1^B+1, \; f_1^B=F_0^B$.
Comparing ranks of free parts of the homotopy groups,
one has 
$$2F_0^AF_1^A+F_0^A=2F_0^BF_1^B+F_0^B,\quad {F_1^A}^2+F_1^A+ {F_0^A}^2={F_1^B}^2+F_1^B+{F_0^B}^2,$$
and Lem. \ref{ele} implies $F_i^A=F_i^B, \; f_i^A=f_i^B$.
Thus,
we obtain
$$\tilde{A}_p^{F}\oplus A_p^{F+1}\oplus (\tilde{A}_p\otimes A_p)^2\cong \tilde{B}_p^F \oplus B_p^{F+1}\oplus (\tilde{B}_p\otimes B_p)^2$$
for $F:=F_1^A+f_1^A=F_1^B+f_1^B$.
Since $(\tilde{A}_p, A_p)$ and $(\tilde{B}_p, B_p)$ satisfy ($**$) by Prop. \ref{vn},
Cor. \ref{daij} shows $(\tilde{A}_p, A_p)=(\tilde{B}_p, B_p)$, which implies $A\sim_{KK} B,\; C_{u_A}\sim_{KK} C_{u_B}$ by a similar argument as in case 1,
and Cor. \ref{uB} gives $A\cong B$.

The same argument also shows $A=B$ in case 3,
and this completes the proof.
\end{proof}
\subsection{Proof of the statement 3.}\label{M3}
In this section,
$A$ and $B$ are unital separable UCT Kirchberg algebras satisfying
$$D(C_{u_A})\sim_{KK} B,\quad D(SA)\sim_{KK} SD(A)\sim_{KK} SC_{u_B},$$
and we denote their duality classes by
$$\mu_{CA}\in KK(\mathbb{C}, C_{u_A}\otimes B),\quad \nu_{CA}\in KK(B\otimes C_{u_A}, \mathbb{C}),$$
$$\mu_{SA}\in KK(\mathbb{C}, SA\otimes SC_{u_B}),\quad \nu_{SA}\in KK(SC_{u_B}\otimes SA, \mathbb{C}).$$
Let $\gamma \in KK(C_{u_B}, SC_{u_B}\otimes S)^{-1}$ be a KK-equivalence,
and we also denote by $\iota_B$ the natural map $B\otimes S\hookrightarrow C_{u_B}$.
Then,
as in Section \ref{23},
the semi-group $(KK(C_{u_B}, C(X)\otimes B\otimes S), {_B\circ})$ with the multiplication
$$x {_{_B}\circ} y:= x+y+y\hat{\otimes}(I_{C(X)}\otimes (KK(\iota_B)\hat{\otimes} x))\hat{\otimes}(\Delta_X\otimes I_{B\otimes S})$$
is isomorphic to $[X, \operatorname{End}(B)]$.

We may assume that $[1_A]_0$ is a torsion element and $[1_B]_0$ is not,
and we will prove $(KK(C_{u_A}, SA\otimes C(X)), \circ_A)$ and $(KK(C_{u_B}, C(X)\otimes B\otimes S), {_B\circ})$ are anti-isomorphic.
Combining Thm. \ref{DEn} and Lem. \ref{b1},
one can expect that the map
$$d^X_{\mu_{CA}, \nu_{SA}} : KK(C_{u_A}, SA\otimes C(X))\to KK(SC_{u_B}, C(X)\otimes B)$$
provides the anti-isomorphism 
which proves the statement 3.
To do this end,
we need the following lemmas.
\begin{lem}\label{36}
There exists $\alpha \in KK(B\otimes S, B\otimes S)^{-1}$ satisfying
$$(d_{\mu_{CA}, \nu_{\mathbb{C}}}(KK(e_A))\otimes I_S)\hat{\otimes}\alpha =KK(u_B)\otimes I_S\in KK(S, B\otimes S)$$
for the map $e_A : C_{u_A}\to \mathbb{C}$.
\end{lem}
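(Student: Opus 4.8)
The plan is to reduce the stated identity to a single statement about the finitely generated group $K_0(B)=KK(\mathbb{C}, B)$ and then to apply Proposition \ref{kmc}. First I would note that both sides of the equation lie in $KK(S, B\otimes S)$, which is canonically identified with $KK(\mathbb{C}, B)=K_0(B)$ by suspension invariance; under this identification the two elements in question are $g_1:=d_{\mu_{CA}, \nu_{\mathbb{C}}}(KK(e_A))$ and $g_2:=KK(u_B)=[1_B]_0$. Writing the sought equivalence in the form $\alpha=\bar{\alpha}\otimes I_S$ for some $\bar{\alpha}\in KK(B, B)^{-1}$, the product formula for $-\otimes I_S$ turns the desired equation into $g_1\hat{\otimes}\bar{\alpha}=g_2$. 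Hence it suffices to produce a $KK$-self-equivalence $\bar{\alpha}$ of $B$ whose induced automorphism of $K_0(B)$ sends $g_1$ to $g_2$.

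To arrange such an automorphism I would compute the two cyclic quotients $K_0(B)/\langle g_1\rangle$ and $K_0(B)/\langle g_2\rangle$ and show they agree. For $g_2=[1_B]_0$ the six-term sequence attached to the mapping cone extension $0\to SB\to C_{u_B}\to \mathbb{C}\to 0$ gives $K_0(B)/\langle g_2\rangle\cong K_1(C_{u_B})$, since the boundary map $\mathbb{Z}=K_0(\mathbb{C})\to K_0(B)$ is $(u_B)_*$, i.e. multiplication by $[1_B]_0$. For $g_1$ I would dualize the Puppe sequence of $u_A$ and feed it into Lemma \ref{b2}: the dual of $e_A$ realizes multiplication by $g_1$ on $\mathbb{Z}=K_0(D(\mathbb{C}))\to K_0(D(C_{u_A}))=K_0(B)$, and the resulting exact sequence identifies $K_0(B)/\langle g_1\rangle\cong K_1(D(A))$. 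Finally, the standing hypothesis $SD(A)\sim_{KK}SC_{u_B}$ desuspends to $D(A)\sim_{KK}C_{u_B}$, whence $K_1(D(A))\cong K_1(C_{u_B})$, and the two quotients coincide.

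With $K_0(B)/\langle g_1\rangle\cong K_0(B)/\langle g_2\rangle$ in hand, Proposition \ref{kmc} supplies an automorphism $\phi$ of $K_0(B)$ with $\phi(g_1)=g_2$. By the UCT the pair $(\phi, {\rm id}_{K_1(B)})$ of automorphisms lifts to a class $\bar{\alpha}\in KK(B, B)^{-1}$ inducing $\phi$ on $K_0(B)$; then $\alpha:=\bar{\alpha}\otimes I_S$ satisfies $(g_1\otimes I_S)\hat{\otimes}\alpha=(g_1\hat{\otimes}\bar{\alpha})\otimes I_S=g_2\otimes I_S=KK(u_B)\otimes I_S$, as required.

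The step I expect to be the main obstacle is the identification $K_0(B)/\langle g_1\rangle\cong K_1(D(A))$: it requires matching the dual morphism $d_{\mu_{CA}, \nu_{\mathbb{C}}}(KK(e_A))$ precisely with the connecting homomorphism in the exact sequence produced by Lemma \ref{b2} from the dualized Puppe sequence, and checking that $\mathbb{Z}=K_0(D(\mathbb{C}))$ enters by multiplication by $g_1$ rather than by some twist. The remaining ingredients — the suspension reduction, the mapping-cone computation for $g_2$, and the UCT lift — are routine; the decisive point is that Proposition \ref{kmc} upgrades a mere isomorphism of quotients into an automorphism fixing the prescribed element, which is exactly what the equation demands.
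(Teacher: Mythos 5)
Your proof is correct and takes essentially the same approach as the paper: both reduce the identity to producing an automorphism of $K_0(B)$ carrying $g_1=d_{\mu_{CA},\nu_{\mathbb{C}}}(KK(e_A))$ to $g_2=[1_B]_0$, show the two cyclic quotients are both isomorphic to $K_1(C_{u_B})$, and then combine Prop.\ \ref{kmc} with a UCT lift and suspension. The only cosmetic difference is in computing $K_0(B)/\langle g_1\rangle$: the paper transfers the quotient through the isomorphism $d_{\mu_{CA},\nu_{\mathbb{C}}}$ and the undualized Puppe sequence, obtaining $KK(C_{u_A},\mathbb{C})/\langle KK(e_A)\rangle\cong KK(SA,\mathbb{C})\cong KK(\mathbb{C},SC_{u_B})$, while you use the dualized exact sequence of Lem.\ \ref{b2} together with $D(A)\sim_{KK}C_{u_B}$; these routes are interchangeable.
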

\begin{proof}
The isomorphism $d_{\mu_{CA}, \nu_{\mathbb{C}}}$ and the Puppe sequence $$KK(\mathbb{C}, \mathbb{C})\xrightarrow{e_A\hat{\otimes}-} KK(C_{u_A}, \mathbb{C})\to KK(SA, \mathbb{C})\to KK(S, \mathbb{C})$$ show
$$KK(\mathbb{C}, B)/\langle d_{\mu_{CA}, \nu_{\mathbb{C}}}(KK(e_A))\rangle\cong KK(C_{u_A}, \mathbb{C})/\langle KK(e_A)\rangle\cong KK(SA, \mathbb{C}),$$
and the right hand side is isomorphic to
$$KK(\mathbb{C}, SC_{u_B})=KK(\mathbb{C}, B)/\langle KK(u_B)\rangle.$$
Thus, Prop. \ref{kmc} gives a $KK$-equivalence $\alpha\in KK(B\otimes S, B\otimes S)^{-1}$ satisfying
$$(d_{\mu_{CA}, \nu_\mathbb{C}}(KK(e_A))\otimes I_S)\hat{\otimes}\alpha =KK(u_B)\otimes I_S.$$
\end{proof}
\begin{lem}\label{37}
There exists $\beta\in KK(C_{u_B}, C_{u_B})^{-1}$ satisfying
$$KK(\iota_B)=\alpha^{-1}\hat{\otimes}\left(d_{\mu_{SA}, \nu_{CA}}(KK(\iota_A))\otimes I_S\right)\hat{\otimes}\gamma^{-1}\hat{\otimes}\beta^{-1}\in KK(B\otimes S, C_{u_B}).$$
\end{lem}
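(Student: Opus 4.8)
The plan is to realize $\beta$ as the comparison map between two mapping-cone triangles, obtained by completing a morphism of exact triangles. Write
\[
\psi:=\alpha^{-1}\hat{\otimes}\bigl(d_{\mu_{SA},\nu_{CA}}(KK(\iota_A))\otimes I_S\bigr)\hat{\otimes}\gamma^{-1}\in KK(B\otimes S,C_{u_B}),
\]
so that the asserted identity is equivalent to producing a $KK$-equivalence $\beta\in KK(C_{u_B},C_{u_B})^{-1}$ with $KK(\iota_B)\hat{\otimes}\beta=\psi$.

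First I would dualize the entire Puppe sequence $SA\xrightarrow{\iota_A}C_{u_A}\xrightarrow{e_A}\mathbb{C}\xrightarrow{u_A}A$. Using that the maps $d_{\mu,\nu}$ are compatible with Kasparov products (Lem. \ref{b1}) and that the Spanier--Whitehead dual of $A$ is realized, up to the equivalences $D(SA)\sim_{KK}SC_{u_B}$ and $D(A)\sim_{KK}C_{u_B}$ (Cor. \ref{exb}) and the Bott class $\gamma$, as a shift of the mapping cone of the dual map $d_{\mu_{CA},\nu_{\mathbb{C}}}(KK(e_A))\colon\mathbb{C}\to B$, the dual sequence becomes an exact triangle $SB\xrightarrow{j}C_{u_B}\xrightarrow{d(u_A)}\mathbb{C}\xrightarrow{d(e_A)}B$ with $j:=(d_{\mu_{SA},\nu_{CA}}(KK(\iota_A))\otimes I_S)\hat{\otimes}\gamma^{-1}$, so that $\psi=\alpha^{-1}\hat{\otimes}j$.

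Next I would align the two triangles at their $(\mathbb{C},B)$-ends. Lemma \ref{36} gives $d_{\mu_{CA},\nu_{\mathbb{C}}}(KK(e_A))=KK(u_B)\hat{\otimes}\alpha^{-1}$ (modulo $I_S$), so the square with horizontal maps $u_B$, $d(e_A)$ and vertical maps $I_{\mathbb{C}}$, $\alpha^{-1}$ commutes. Feeding this square, together with the $B$-Puppe triangle $SB\xrightarrow{\iota_B}C_{u_B}\xrightarrow{e_B}\mathbb{C}\xrightarrow{u_B}B$ and the dual triangle above, into Lemma \ref{RMN}, I obtain a morphism of exact triangles whose middle arrow $\beta\colon C_{u_B}\to C_{u_B}$ is a $KK$-equivalence (because $I_{\mathbb{C}}$ and $\alpha^{-1}$ are). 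The commutativity of the remaining square reads $KK(\iota_B)\hat{\otimes}\beta=(I_S\otimes\alpha^{-1})\hat{\otimes}j$; after reconciling the placement of the suspension with the convention $\alpha\in KK(B\otimes S,B\otimes S)$ of Lem. \ref{36}, the right-hand side equals $\alpha^{-1}\hat{\otimes}j=\psi$, which is exactly $KK(\iota_B)=\psi\hat{\otimes}\beta^{-1}$.

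The main obstacle is turning the dualized sequence into a genuine exact triangle: since $D(-)$ is not functorial I cannot simply ``apply $D$'' to the Puppe triangle, and must instead realize the dual triangle concretely as the cone of $d_{\mu_{CA},\nu_{\mathbb{C}}}(KK(e_A))$ and verify, through Lem. \ref{b1}, that its three structure maps coincide with $d(u_A)$, $d(e_A)$ and the rotated $d(\iota_A)$; a secondary, purely bookkeeping, difficulty is tracking $\gamma$ and the suspensions so that Lemma \ref{RMN} lands on the stated equation on the nose. If one instead prefers a direct $K$-theoretic route, one reads off $\beta$ from its action on $K_*(C_{u_B})$, computed from the cone six-term sequence, where $[1_B]_0\notin\operatorname{Tor}(K_0(B))$ forces $K_0(C_{u_B})\cong K_1(B)$ via $(\iota_B)_*$ and $K_1(C_{u_B})\cong K_0(B)/\langle[1_B]_0\rangle$; the residual obstruction then lives in the UCT $\operatorname{Ext}$-term and can be annihilated precisely because $\langle[1_B]_0\rangle\cong\mathbb{Z}$ makes $\operatorname{Ext}(\langle[1_B]_0\rangle,-)=0$ and hence the relevant restriction map on $\operatorname{Ext}$ surjective, so that the non-torsion hypothesis is exactly what rescues the construction.
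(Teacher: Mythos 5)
Your reduction of the statement to finding a KK-equivalence $\beta$ with $KK(\iota_B)\hat{\otimes}\beta=\psi$, and your use of Lem. \ref{36} to make the square over $\mathbb{C}\xrightarrow{u_B}B$ commute, are both correct; but your primary route has a genuine gap at exactly the point you flag as ``the main obstacle'', and your proposed fix does not overcome it. Lemma \ref{RMN} applies only when \emph{both} rows are already known to be exact triangles, and nothing available here establishes that the dualized sequence $SB\xrightarrow{j}C_{u_B}\to\mathbb{C}\to B$ (with $j=(d_{\mu_{SA},\nu_{CA}}(KK(\iota_A))\otimes I_S)\hat{\otimes}\gamma^{-1}$) is distinguished. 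Your verification plan is circular: forming the honest mapping cone of $d_{\mu_{CA},\nu_{\mathbb{C}}}(KK(e_A))$ gives an exact triangle, but ``checking that its structure maps coincide with the rotated duals'' means producing a KK-equivalence from that cone to $C_{u_B}$ carrying the cone inclusion to $j$ --- a statement of exactly the same type as Lem. \ref{37} itself. Lem. \ref{b1} only gives anti-multiplicativity of $d_{\mu,\nu}$, hence $d_{\mu_{CA},\nu_{\mathbb{C}}}(KK(e_A))\hat{\otimes}d_{\mu_{SA},\nu_{CA}}(KK(\iota_A))=0$ and, with Puppe exactness, equality of kernels; it can never by itself upgrade ``two elements generate the same right coset of $KK(C_{u_B},C_{u_B})$'' to ``they differ by an invertible''. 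It is telling that your primary route never uses the standing hypothesis that $[1_B]_0$ is not a torsion element: that hypothesis is precisely what performs this upgrade in the paper.

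Concretely, the paper never shows the dual sequence is a triangle. It proves only the coset identity $KK(\iota_B)\hat{\otimes}KK(C_{u_B},C_{u_B})=\psi\hat{\otimes}KK(C_{u_B},C_{u_B})$, by identifying both sides with $\ker\bigl((KK(u_B)\otimes I_S)\hat{\otimes}-\bigr)$ --- the $\iota_B$ side by contravariant Puppe exactness for the $B$-cone, the $\psi$ side by dualizing the Puppe exactness $KK(SA,SA)\to KK(SA,C_{u_A})\to KK(SA,\mathbb{C})$ through Lem. \ref{b1} and then invoking Lem. \ref{36}. This yields $Z_1,Z_2\in KK(C_{u_B},C_{u_B})$ with $KK(\iota_B)\hat{\otimes}Z_1=\psi$ and $\psi\hat{\otimes}Z_2=KK(\iota_B)$, hence $KK(\iota_B)\hat{\otimes}Z_1\hat{\otimes}Z_2=KK(\iota_B)$; since $[1_B]_0$ is non-torsion, $-\hat{\otimes}KK(\iota_B):K_i(B\otimes S)\to K_i(C_{u_B})$ is surjective, so $(Z_1\hat{\otimes}Z_2)_*=\mathrm{id}$, finite generation forces $(Z_i)_*$ to be automorphisms, and the UCT makes $Z_i$ invertible, so $\beta=Z_1$ works. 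Your fallback ``direct K-theoretic route'' is much closer to this in spirit, and your observation that $0\to\mathbb{Z}\to K_0(B)\to K_1(C_{u_B})\to 0$ together with $\operatorname{Ext}(\mathbb{Z},-)=0$ makes the relevant restriction map on $\operatorname{Ext}$ surjective is correct; but as written it skips the step that makes any candidate $\beta$ exist at all, namely showing $\psi\in\ker\bigl((KK(u_B)\otimes I_S)\hat{\otimes}-\bigr)$ (the kernel computation above), without which $\psi$ cannot be ``read off'' through $(\iota_B)_*$ and the Ext correction has nothing to correct. Supplying that kernel/coset computation and then running either the paper's $Z_1,Z_2$ promotion or your Ext argument carefully would close the proof; as proposed, both of your routes are incomplete.
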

\begin{proof}
By the Puppe sequence $KK(SA, SA)\xrightarrow{-\hat{\otimes}\iota_A}KK(SA, C_{u_A})\xrightarrow{-\hat{\otimes}e_A}KK(SA, \mathbb{C})$,
the kernel of the map $-\hat{\otimes}KK(e_A)$ is $KK(SA, SA)\hat{\otimes}KK(\iota_A)\subset KK(SA, C_{u_A})$,
Lem. \ref{b1} implies that the kernel of the map
$$d_{\mu_{CA}, \nu_\mathbb{C}}(KK(e_A))\hat{\otimes}- : KK(B, SC_{u_B})\to KK(\mathbb{C}, SC_{u_B})$$
is $d_{\mu_{SA}, \nu_{CA}}(KK(\iota_A))\hat{\otimes}KK(SC_{u_B}, SC_{u_B})$.
Thus,
the kernel of the map $$(d_{\mu_{CA}, \nu_\mathbb{C}}(KK(e_A))\otimes I_S)\hat{\otimes}- : KK(B\otimes S, C_{u_B})\to KK(S, C_{u_B})$$
is equal to $(d_{\mu_{SA}, \nu_{CA}}(KK(\iota_A))\otimes I_S)\hat{\otimes}\gamma^{-1}\hat{\otimes}KK(C_{u_B}, C_{u_B})$.
By Lem. \ref{36},
one can identify the kernel of the map
$$(KK(u_B)\otimes I_S)\hat{\otimes}- : KK(B\otimes S, C_{u_B})\to KK(S, C_{u_B})$$
with $$\alpha^{-1}\hat{\otimes}(d_{\mu_{SA}, \nu_{CA}}(KK(\iota_A))\otimes I_S)\hat{\otimes}\gamma^{-1}\hat{\otimes}KK(C_{u_B}, C_{u_B}),$$
and this implies
$$KK(\iota_B)\hat{\otimes}KK(C_{u_B}, C_{u_B})=\alpha^{-1}\hat{\otimes}(d_{\mu_{SA}, \nu_{CA}}(KK(\iota_A))\otimes I_S)\hat{\otimes}\gamma^{-1}\hat{\otimes}KK(C_{u_B}, C_{u_B}).$$
There exist two elements $Z_1, Z_2\in KK(C_{u_B}, C_{u_B})$ satisfying
$$KK(\iota_B)\hat{\otimes}Z_1=\alpha^{-1}\hat{\otimes}(d_{\mu_{SA}, \nu_{CA}}(KK(\iota_A))\otimes I_S)\hat{\otimes}\gamma^{-1},$$
$$\alpha^{-1}\hat{\otimes}(d_{\mu_{SA}, \nu_{CA}}(KK(\iota_A))\otimes I_S)\hat{\otimes}\gamma^{-1}\hat{\otimes}Z_2=KK(\iota_B),$$
and we have $KK(\iota_B)\hat{\otimes}Z_1\hat{\otimes}Z_2=KK(\iota_B)$.
Since $[1_B]_0$ is not a torsion element,
the Puppe sequence shows that $-\hat{\otimes}KK(\iota_B)$ induces the following surjections
$$K_0(B\otimes S)\to K_0(C_{u_B}),\quad K_1(B\otimes S)\to K_1(C_{u_B}).$$
Since $K_i(C_{u_B})$ are finitely generated,
one can easily check that $Z_i$ induce automorphisms of $K_i(C_{u_B})$,
and the UCT proves $Z_i\in KK(C_{u_B}, C_{u_B})^{-1}$.
\end{proof}
One has a natural isomorphism of KK-groups
$$D_{A, B}^X : KK(C_{u_A}, SA\otimes C(X)) \to KK(C_{u_B}, C(X)\otimes B\otimes S)$$
defined by
$$D_{A, B}^X(x):=\beta\hat{\otimes}\gamma\hat{\otimes}(d^X_{\mu_{CA}, \nu_{SA}}(x)\otimes I_S)\hat{\otimes}(I_{C(X)}\otimes \alpha).$$
Now the following theorem proves the statement 3.
\begin{thm}\label{mul}
For any $x, y\in KK(C_{u_A}, SA\otimes C(X))$,
we have
$$D_{A, B}^X(x\circ_A y)=D_{A, B}^X(y)\; {_{_B}\circ}\; D_{A, B}^X(x).$$
\end{thm}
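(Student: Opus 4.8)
The plan is to split both products into their additive parts and their bilinear ``correction'' terms and to treat each separately. First I would note that $D_{A,B}^X$ is an additive isomorphism of groups, being the duality isomorphism $d^X_{\mu_{CA},\nu_{SA}}$ pre- and post-composed with the fixed $KK$-equivalences $(-)\hat{\otimes}(I_{C(X)}\otimes\alpha)$ and $\beta\hat{\otimes}\gamma\hat{\otimes}((-)\otimes I_S)$, each of which respects addition. Writing
\[
Q_A(x,y):=y\hat{\otimes}\big((KK(\iota_A)\hat{\otimes}x)\otimes I_{C(X)}\big)\hat{\otimes}(I_{SA}\otimes\Delta_X),
\]
\[
Q_B(u,v):=v\hat{\otimes}\big(I_{C(X)}\otimes(KK(\iota_B)\hat{\otimes}u)\big)\hat{\otimes}(\Delta_X\otimes I_{B\otimes S}),
\]
so that $x\circ_A y=x+y+Q_A(x,y)$ and $u\,{_{_B}\circ}\,v=u+v+Q_B(u,v)$, additivity of $D_{A,B}^X$ together with commutativity of the $KK$-groups reduces the theorem to the single identity
\[
D_{A,B}^X\big(Q_A(x,y)\big)=Q_B\big(D_{A,B}^X(y),\,D_{A,B}^X(x)\big).
\]
Note the reversal of the arguments on the right; producing this reversal is what makes $D_{A,B}^X$ an \emph{anti}-homomorphism.

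The engine for the reduced identity is the composition law
\[
d^{X\times Y}_{\mu_A,\nu_C}\big(y\hat{\otimes}(x\otimes I_{C(Y)})\big)=d^X_{\mu_B,\nu_C}(x)\hat{\otimes}\big(I_{C(X)}\otimes d^Y_{\mu_A,\nu_B}(y)\big)
\]
of Lem. \ref{b1}, which already exhibits the order reversal. I would apply it with $Y=X$ to the class $y\hat{\otimes}\big((KK(\iota_A)\hat{\otimes}x)\otimes I_{C(X)}\big)$, viewing $\Delta_X$ as the restriction of an external class over $X\times X$ to the diagonal; naturality of $d$ in the space variable (recorded just after Lem. \ref{b1}) guarantees that this restriction is respected. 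The composition law then turns the $A$-side product, in which the $x$-factor sits \emph{inside} the composite through $KK(\iota_A)\hat{\otimes}x$, into a product on the $B$-side in which $d(x)$ and $d(y)$ appear in the opposite order, matching the shape of $Q_B$.

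The heart of the argument is to verify that the internal factor $KK(\iota_A)$ in $Q_A$ dualizes, after the fixed equivalences $\alpha,\beta,\gamma$ are absorbed, to exactly the factor $KK(\iota_B)$ in $Q_B$. This is precisely the content of Lem. \ref{37}, whose defining relation
\[
KK(\iota_B)=\alpha^{-1}\hat{\otimes}\big(d_{\mu_{SA},\nu_{CA}}(KK(\iota_A))\otimes I_S\big)\hat{\otimes}\gamma^{-1}\hat{\otimes}\beta^{-1}
\]
was engineered for this step, with Lem. \ref{36} playing the analogous role for the boundary map $e_A$ against $u_B$. Substituting this relation into the dualized correction term and unwinding the definition of $D_{A,B}^X$, the equivalences $\alpha,\beta,\gamma$ and their inverses should cancel in consecutive pairs, leaving exactly $Q_B\big(D_{A,B}^X(y),D_{A,B}^X(x)\big)$.

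What remains is the bookkeeping that makes the two previous paragraphs precise: keeping track of the positions of the diagonal maps $\Delta_X$ and of the tensor-swap isomorphisms, and commuting the various factors of the form $I_{(-)}\otimes(-)$ and $(-)\otimes I_{(-)}$ past one another by repeated use of the commutativity relation $(\phi\otimes I)\hat{\otimes}(I\otimes\psi)=(I\otimes\psi)\hat{\otimes}(\phi\otimes I)$ of Sec. 2. I expect this final bookkeeping to be the main obstacle, because the correction terms $Q_A$ and $Q_B$ carry their $C(X)$- and $B\otimes S$-factors in opposite tensor positions, and the extra suspension coordinate coming from $\gamma\in KK(C_{u_B},SC_{u_B}\otimes S)^{-1}$ must be threaded through every rearrangement, with each such move justified by the commutativity of the Kasparov product. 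No ingredient beyond Lem. \ref{b1}, \ref{36}, \ref{37} and the formal properties of $\hat{\otimes}$ collected in Sec. 2 should be required.
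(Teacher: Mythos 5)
Your proposal is correct and follows essentially the same route as the paper's proof: reduce by additivity to the single identity $D_{A,B}^X(Q_A(x,y))=Q_B\bigl(D_{A,B}^X(y),D_{A,B}^X(x)\bigr)$, handle $\Delta_X$ via compatibility of $d^X_{\mu_{CA},\nu_{SA}}$ with the space variable (the paper checks this by a short direct computation rather than citing naturality, but it is the same point), apply the composition law of Lem.~\ref{b1} twice to reverse the order and isolate $d_{\mu_{SA},\nu_{CA}}(KK(\iota_A))$, and finish with Lem.~\ref{37} to convert that factor into $KK(\iota_B)$ after absorbing $\alpha$, $\beta$, $\gamma$. The only difference is one of presentation, not substance.
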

\begin{proof}
We write 
$$t:=y\hat{\otimes}((KK(\iota_A)\hat{\otimes}x)\otimes I_{C(X)})\in KK(C_{u_A}, SA\otimes C(X\times X))$$
and direct computation yields
\begin{align*}
&d^X_{\mu_{CA}, \nu_{SA}}(t\hat{\otimes}(I_{SA}\otimes \Delta_X))\\
=&(I_{SC_{u_B}}\otimes (\mu_{CA}\hat{\otimes}((t\hat{\otimes}(I_{SA}\otimes \Delta_X))\otimes I_B)))\hat{\otimes}(\nu_{SA}\otimes I_{C(X)\otimes B})\\
=&(I_{SC_{u_B}}\otimes(\mu_{CA}\hat{\otimes}(t\otimes I_B)))\hat{\otimes}(I_{SC_{u_B}\otimes SA}\otimes \Delta_X\otimes I_B)\hat{\otimes}(\nu_{SA}\otimes I_{C(X)\otimes B})\\
=&(I_{SC_{u_B}}\otimes (\mu_{CA}\hat{\otimes}(t\otimes I_B)))\hat{\otimes}(\nu_{SA}\otimes I_{C(X\times X)\otimes B})\hat{\otimes}(\Delta_X\otimes I_B)\\
=&d^{X\times X}_{\mu_{CA}, \nu_{SA}}(t)\hat{\otimes}(\Delta_X\otimes I_B).
\end{align*}
By Lem. \ref{b1},
one has
\begin{align*}
d^{X\times X}_{\mu_{CA}, \nu_{SA}}(t)=&d^X_{\mu_{SA}, \nu_{SA}}(KK(\iota_A)\hat{\otimes} x)\hat{\otimes}(I_{C(X)}\otimes d^X_{\mu_{CA}, \nu_{SA}}(y))\\
=&d^X_{\mu_{CA}, \nu_{SA}}(x)\hat{\otimes}(I_{C(X)}\otimes (d_{\mu_{SA}, \nu_{CA}}(KK(\iota_A))\hat{\otimes} d^X_{\mu_{CA}, \nu_{SA}}(y))).
\end{align*}
By Lem. \ref{37},
it is straightforward to check
\begin{align*}
&D_{A, B}^X(x)\hat{\otimes}(I_{C(X)}\otimes (KK(\iota_B)\hat{\otimes} D_{A, B}^X(y)))\hat{\otimes}(\Delta_X\otimes I_{B\otimes S})\\
=&D^X_{A, B}(t\hat{\otimes}(I_{SA}\otimes\Delta_X)),
\end{align*}
and this proves the statement.
\end{proof}
\begin{proof}[{Proof of the statement 3.}]
We have natural isomorphisms of the semi-groups
$$[X, \operatorname{End}(A)]\to (KK(C_{u_A}, SA\otimes C(X)), \circ_A),$$
$$(KK(C_{u_B}, C(X)\otimes B\otimes S),\; _B\circ )\to [X, \operatorname{End}(B)].$$
Thus, Thm. \ref{mul} gives a natural anti-isomorphism $[X, \operatorname{End}(A)]\to [X, \operatorname{End}(B)]$,
and this induces a natural anti-isomorphism of groups $[X, \operatorname{Aut}(A)]\to [X, \operatorname{Aut}(B)]$ by Thm. \ref{DEn}.
\end{proof}
\subsection{K-groups of the reciprocal algebras}
We describe how the K-groups of the reciprocal algebras look like with a simple example.
Let $A$ and $B$ be the reciprocal Kirchberg algebras,
where we may assume $[1_A]_0\not\in \operatorname{Tor}(K_0(A))$ and $[1_B]_0\in\operatorname{Tor}(K_0(B))$.
Combining the previous results,
K-groups of $A$ has the following presentation :
\[K_0(A)=T_0\oplus\mathbb{Z}\oplus\mathbb{Z}^{F_0},\quad [1_A]_0=(t, n, 0),\quad K_1(A)=T_1\oplus\mathbb{Z}^{F_1},\]
where $T_0, T_1$ are finite Abel groups and $F_0, F_1\geq 0$, $n>0$ are integers.
We write $\tilde{T}_0:=(T_0\oplus\mathbb{Z})/\langle (t, n)\rangle$,
and K-groups of $B$ are given by
\[K_0(B)=\tilde{T}_0\oplus \mathbb{Z}^{F_1},\quad [1_B]_0=(\tilde{t}, 0),\quad K_1(B)=T_1\oplus\mathbb{Z}^{F_0}.\]
The group $\tilde{T}_0$ is a finite Abel group and $\tilde{t}$ is an element satisfying $\tilde{T}_0/\langle \tilde{t}\rangle\cong T_0$.
Note that such $\tilde{t}$ is unique up to automorphisms of $\tilde{T}_0$ by Prop. \ref{kmc}.
Now we have the following group theoretic consequence.
\begin{cor}
Let $T$ and $\tilde{T}$ be finite Abel groups,
and let $n>0$ be an integer.
\begin{enumerate}
\bibitem{}
For $(t, n)\in T\oplus \mathbb{Z}$ with  $\tilde{T}\cong (T\oplus\mathbb{Z})/\langle (t, n)\rangle$,  there uniquely exists $\tilde{t}\in\tilde{T}$ satisfying
\[T\cong \tilde{T}/\langle \tilde{t}\rangle\]
up to automorphism.
Conversely,
for any $\tilde{t}\in \tilde{T}$ with $T\cong \tilde{T}/\langle \tilde{t}\rangle$,
one has $(t, n)\in T\oplus\mathbb{Z}$ satisfying
\[\tilde{T}\cong (T\oplus\mathbb{Z})/\langle (t, n)\rangle.\]
\bibitem{}
For any element $u\in T\oplus\mathbb{Z}$,
there exists an element $v\in T\oplus \mathbb{Z}$ with \[T\cong (T\oplus\mathbb{Z})/\langle u, v\rangle.\]
\end{enumerate}
\end{cor}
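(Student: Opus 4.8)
The plan is to derive everything from one principle: for a finite abelian group, being realised as a cyclic \emph{subgroup} with prescribed quotient is Pontryagin-dual to being realised as a cyclic \emph{quotient} with prescribed kernel. Throughout I write $\hat{G}:=\operatorname{Hom}(G,\mathbb{Q}/\mathbb{Z})$ and use that $\hat{(-)}$ is an exact contravariant involution on finite abelian groups, that each finite abelian $G$ is (non-canonically) isomorphic to $\hat{G}$, and that $\widehat{\mathbb{Z}/n}\cong\mathbb{Z}/n$. For the forward part of statement 1, let $\tilde{T}\cong(T\oplus\mathbb{Z})/\langle(t,n)\rangle$ with quotient map $\pi$. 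I first check the elementary exact sequence $0\to T\xrightarrow{\pi|_{T\oplus 0}}\tilde{T}\xrightarrow{\bar{p}}\mathbb{Z}/n\to 0$, where $\bar{p}$ is induced by the coordinate projection $T\oplus\mathbb{Z}\to\mathbb{Z}\to\mathbb{Z}/n$: it is well defined and surjective since $(t,n)\mapsto n$, one has $\ker\bar{p}=\pi(T\oplus 0)$ because $(0,n)\equiv-(t,0)$ modulo $\langle(t,n)\rangle$, and $\pi|_{T\oplus 0}$ is injective because $(a,0)\in\langle(t,n)\rangle$ forces $a=0$ (as $n>0$). Dualising gives $0\to\mathbb{Z}/n\to\hat{\tilde{T}}\to\hat{T}\to 0$, so $\hat{\tilde{T}}/\langle\xi\rangle\cong\hat{T}$ for the order-$n$ generator $\xi$ of the image of $\mathbb{Z}/n$; transporting through isomorphisms $\tilde{T}\cong\hat{\tilde{T}}$ and $T\cong\hat{T}$ yields $\tilde{t}\in\tilde{T}$ (automatically of order $n$) with $T\cong\tilde{T}/\langle\tilde{t}\rangle$. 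Uniqueness up to automorphism is then immediate from Prop.~\ref{kmc}.

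For the converse of statement 1 I would dualise in the opposite order. Given $\tilde{t}$ with $T\cong\tilde{T}/\langle\tilde{t}\rangle$ and $n:=\operatorname{ord}(\tilde{t})$, the sequence $0\to\mathbb{Z}/n\xrightarrow{\tilde{t}}\tilde{T}\to T\to 0$ dualises to $0\to\hat{T}\to\hat{\tilde{T}}\xrightarrow{\rho}\mathbb{Z}/n\to 0$. Here I use the elementary ``un-reduction'' observation: for any extension $0\to H\to G\xrightarrow{\rho}\mathbb{Z}/n\to 0$ of finite abelian groups, a lift $y\in G$ of $1\in\mathbb{Z}/n$ and the element $s:=ny\in H$ make the map $H\oplus\mathbb{Z}\to G$, $(a,m)\mapsto a+my$, surjective with kernel $\langle(-s,n)\rangle$, so $G\cong(H\oplus\mathbb{Z})/\langle(-s,n)\rangle$. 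Applying this to $\hat{T}\hookrightarrow\hat{\tilde{T}}$ and transporting back through $\hat{\tilde{T}}\cong\tilde{T}$ and $\hat{T}\cong T$ produces the required $(t,n)\in T\oplus\mathbb{Z}$ with $\tilde{T}\cong(T\oplus\mathbb{Z})/\langle(t,n)\rangle$.

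Statement 2 I would reduce to statement 1. Given $u=(u_{0},u_{1})\in T\oplus\mathbb{Z}$, set $Q:=(T\oplus\mathbb{Z})/\langle u\rangle$ and seek $\bar{v}\in Q$ with $Q/\langle\bar{v}\rangle\cong T$; any lift $v$ of $\bar{v}$ then satisfies $(T\oplus\mathbb{Z})/\langle u,v\rangle\cong T$. If $u_{1}\neq 0$ I may assume $u_{1}>0$ (replacing $u$ by $-u$), so $Q$ is finite and the forward direction of statement 1 supplies $\bar{v}$. If $u_{1}=0$ then $Q=(T/\langle u_{0}\rangle)\oplus\mathbb{Z}$, and producing $\bar{v}$ is precisely the converse direction of statement 1 applied with $(T/\langle u_{0}\rangle,\,T,\,u_{0})$ in the roles of $(T,\tilde{T},\tilde{t})$, using $T/\langle u_{0}\rangle\cong\tilde{T}/\langle\tilde{t}\rangle$. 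I expect the real obstacle to be exactly the passage between a cyclic subgroup and a cyclic quotient, which is resolved by the single application of Pontryagin duality in each direction; the remaining work is the bookkeeping that matches the two directions of statement 1 to the two cases $u_{1}\neq 0$ and $u_{1}=0$ of statement 2, and the verification of the two short exact sequences used above.
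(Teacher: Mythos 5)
Your proof is correct, but it takes a genuinely different route from the paper. The paper obtains this corollary as a byproduct of the reciprocality machinery: the data $(T,t,n)$ is realized as the K-theory data of a unital UCT Kirchberg algebra $A$ with $[1_A]_0=(t,n,0)$, the reciprocal algebra $B$ supplied by Cor.~\ref{exb} and Cor.~\ref{uB} then carries $\tilde{T}$ and $\tilde{t}$ as $\operatorname{Tor}(K_0(B))$ and $[1_B]_0$, and the group-theoretic engine underneath is the explicit normal-form calculus of the Appendix (Lem.~\ref{G1}, Cor.~\ref{G2}, Lem.~\ref{G3}, Cor.~\ref{G4}, with the reduction to $p$-primary parts in Lem.~\ref{mou}); statement 2 is then read off by taking $u=(t,n)$ and $v$ a lift of $\tilde{t}$, as noted in the remark following the corollary. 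You instead replace all existence arguments by Pontryagin duality $\operatorname{Hom}(-,\mathbb{Q}/\mathbb{Z})$, exploiting that it exchanges ``cyclic subgroup with prescribed quotient'' and ``cyclic quotient with prescribed kernel'': your two short exact sequences are verified correctly, the ``un-reduction'' observation is sound, the case split $u_1\neq 0$ versus $u_1=0$ in statement 2 matches exactly the two directions of statement 1 (including the degenerate cases $u=0$ and $n=1$), and uniqueness rests, as in the paper, on Prop.~\ref{kmc}. Your argument is shorter and more conceptual, and it is in fact the K-theoretic shadow of the paper's: for finite groups $\operatorname{Ext}(G,\mathbb{Z})\cong\operatorname{Hom}(G,\mathbb{Q}/\mathbb{Z})$, which by the UCT is precisely how the Spanier--Whitehead dual $D(-)$ acts on torsion K-groups (cf.\ Rem.~\ref{usi}), so dualizing $0\to T\to\tilde{T}\to\mathbb{Z}/n\to 0$ mirrors the paper's dualization of the mapping cone sequence. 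What the paper's longer route buys is explicitness: the normal-form computations produce the concrete presentations of $\tilde{T}$ and $\tilde{t}$ displayed in the Example after the corollary, whereas your $\tilde{t}$ is only determined through a non-canonical isomorphism $\tilde{T}\cong\operatorname{Hom}(\tilde{T},\mathbb{Q}/\mathbb{Z})$; note also that your proof is not fully independent of the Appendix, since Prop.~\ref{kmc}, which you invoke for uniqueness, is itself proved there by those same computations.
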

\begin{rem}
As a direct consequence of Appendix,
one can verify that the above $(t, n)$ and $\tilde{t}$ in Statement 1. exist for a given pair $(T, \tilde{T})$ if and only if $(T(p), \tilde{T}(p))$ satisfies the condition $(**)$ for every prime $p$ (see Sec. \ref{nota}).

The elements $u, v$  in Statement 2. correspond to $(t, n)\in T\oplus\mathbb{Z}$ and a lift of $\tilde{t}\in (T\oplus\mathbb{Z})/\langle (t, n)\rangle$.
There might be some algebraic relation characterizing $u$ and $v$,
but we do not know how to characterize them at present.
\end{rem}
\begin{ex}
To give more concrete presentation of $t\in T$ and $\tilde{t}\in \tilde{T}$,
we consider the case $T=T(p), n=p^l$ for simplicity.
After choosing suitable presentation we may assume
\[t=(0, p^{k_1-r_1},\cdots, p^{k_s-r_s})\in (\mathbb{Z}_{p^{n_1}}\oplus\cdots\oplus\mathbb{Z}_{p^{n_h}})\oplus\mathbb{Z}_{p^{k_1}}\oplus\cdots\oplus\mathbb{Z}_{p^{k_s}}\]
where the integers $k_i, r_i$ satisfy
\[0\leq k_1-r_1<k_1<k_2-r_2+r_1<k_2<\cdots<k_{s-1}<k_s-r_s+r_{s-1}<k_s<l+r_s\]
(see Lem. \ref{G1}).
Then,
one has
\[\tilde{T}=(\mathbb{Z}_{p^{n_1}}\oplus\cdots\oplus\mathbb{Z}_{p^{n_h}})\oplus\mathbb{Z}_{p^{k_1-r_1}}\oplus\mathbb{Z}_{p^{k_2-r_2+r_1}}\oplus\cdots\oplus\mathbb{Z}_{p^{k_s-r_s+r_{s-1}}}\oplus\mathbb{Z}_{p^l+r_s},\]
and $\tilde{t}$ is given by
\[(0, 1, p^{r_1}, \cdots, p^{r_{s-1}}, p^{r_s}).\]
\end{ex}
\section{Spanier--Whitehead duality for bundles of C*-algebras}\label{spwh}
In this section,
we show that the Spanier--Whitehead $K_X$-duality holds for locally trivial continuous $C(X)$-algebras with  fiber $A$,
where $X$ is a finite CW-complex and $A$ is a separable nuclear UCT C*-algebra with finitely generated K-groups.

Let $\mathcal{A}$ be a locally trivial continuous $C(X)$-algebra with fiber $A$,
and we may assume that $A$ is a stable Kirchberg algebra by Thm. \ref{de}.
Let $D$ be the stable Kirchberg algebra which is K-dual to $A$ with the $*$-homomorphisms
\[\mu_0 : \mathcal{O}_\infty\otimes\mathbb{K}\to D\otimes A,\quad \nu_0 : A\otimes D\to \mathcal{O}_\infty\otimes\mathbb{K}\]
providing the duality classes.
In this section,
we frequently use Thm. \ref{KGE} without mentioning it.

Let $X$ be a finite CW-complex obtained by attaching $\mathbb{D}^{d+1}$ to another finite CW-complex $Y$ via $\theta : S^d\to Y$.
We assume that there exists a locally trivial continuous $C(Y)$-algebra $\mathcal{D}_Y$ with fiber $D$ which is $K_Y$-dual to $\mathcal{A}(Y)$.
For example,
if $Y$ is the $0$-skeleton,
Thm. \ref{SWK} shows that the above assumption holds.
Then, we will construct the $K_X$-dual of $\mathcal{A}$ and their duality classes.

We consider the case $X=Y\cup \mathbb{D}^{d+1}, \;Y\cap \mathbb{D}^{d+1}=S^d$ (i.e., the attaching map $\theta : S^d\to Y$ is injective).
By the lemma below,
the general case is reduced to the above easy case.
\begin{lem}\label{L2}
Let $X$ and $Y$ be finite CW-complexes,
and let $\theta : S^d\to Y$ be a general map attaching $\mathbb{D}^{d+1}$ to $Y$.
Assume that $\mathcal{A}(Y)$ satisfies $K_Y$-duality.
We write
\[Y_{1/2}:=([1/2, 1]\times S^d) \sqcup Y/(1, x)\sim \theta (x),\quad x\in S^d,\]
\[\mathbb{D}^{d+1}:=[0, 1/2]\times S^d/(0, x)\sim (0, z),\quad x, z\in S^d.\]
Then, we have $X=Y_{1/2}\cup \mathbb{D}^{d+1}$ with $Y_{1/2}\cap \mathbb{D}^{d+1}=S^d$,
and $\mathcal{A}(Y_{1/2})$ also satisfies $K_{Y_{1/2}}$-duality.
\end{lem}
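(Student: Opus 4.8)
The plan is to exploit that $Y_{1/2}$ is the mapping cylinder of $\theta$ and hence deformation retracts onto $Y$, so that the $K_Y$-duality of $\mathcal{A}(Y)$ can simply be pulled back along the retraction. First I would fix the retraction $r\colon Y_{1/2}\to Y$ that collapses the collar $[1/2,1]\times S^d$ by $(t,x)\mapsto\theta(x)$, together with the inclusion $i\colon Y\hookrightarrow Y_{1/2}$; these satisfy $r\circ i=\mathrm{id}_Y$ and $i\circ r\simeq\mathrm{id}_{Y_{1/2}}$, so $r$ is a homotopy equivalence with homotopy inverse $i$. With the reparametrisation given in the statement, the ``disk region'' $[0,1]\times S^d/(0,\cdot)$ subdivides into the inner disk $[0,1/2]\times S^d$ and the collar $[1/2,1]\times S^d\subset Y_{1/2}$, glued along $\{1/2\}\times S^d$; this shows directly that $X=Y_{1/2}\cup\mathbb{D}^{d+1}$ and that the new attaching sphere $\{1/2\}\times S^d$ is embedded, i.e. $Y_{1/2}\cap\mathbb{D}^{d+1}=S^d$.

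Next I would identify $\mathcal{A}(Y_{1/2})$ with a pullback from $Y$. Since the bundle classification $[-,\operatorname{BAut}(A)]$ is a homotopy functor and $r$ is a homotopy equivalence, pullback along $r$ is a bijection on isomorphism classes of locally trivial continuous bundles, with inverse the restriction along $i$ (indeed $r^*\circ i^*=(i\circ r)^*\simeq\mathrm{id}$ and $i^*\circ r^*=(r\circ i)^*=\mathrm{id}$). Because the restriction of $\mathcal{A}(Y_{1/2})$ to $Y$ is again $\mathcal{A}(Y)$, this gives a $C(Y_{1/2})$-linear isomorphism $\mathcal{A}(Y_{1/2})\cong r^*\mathcal{A}(Y)$. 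I then set $\mathcal{D}_{Y_{1/2}}:=r^*\mathcal{D}_Y$, which is again locally trivial with fibre $D$, since pullback preserves both local triviality and fibres.

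The core of the argument is that the $K_Y$-duality survives $r^*$. By contravariance of $KK_X$ in the base (see \cite[Prop. 2.20]{K}), the map $r$ induces a functor $r^*\colon KK_Y\to KK_{Y_{1/2}}$, and this functor is monoidal for the balanced tensor products: one has $r^*C(Y)=C(Y_{1/2})$ and, writing $r^*\mathcal{E}=C(Y_{1/2})\otimes_{C(Y)}\mathcal{E}$, an isomorphism $r^*(\mathcal{E}\otimes_{C(Y)}\mathcal{F})\cong r^*\mathcal{E}\otimes_{C(Y_{1/2})}r^*\mathcal{F}$. As $r^*$ preserves identity morphisms and Kasparov products, applying it to duality classes $(\mu_Y,\nu_Y)$ for $(\mathcal{A}(Y),\mathcal{D}_Y)$ sends the operations $-\otimes I$, $I\otimes-$ and $\hat{\otimes}$ to their $KK_{Y_{1/2}}$-counterparts, so the two unit--counit identities of Def. \ref{SK} are carried verbatim to $(r^*\mathcal{A}(Y),r^*\mathcal{D}_Y)=(\mathcal{A}(Y_{1/2}),\mathcal{D}_{Y_{1/2}})$. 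Transporting along the isomorphism $\mathcal{A}(Y_{1/2})\cong r^*\mathcal{A}(Y)$ via Lem. \ref{b}~(4) then yields genuine duality classes for $(\mathcal{A}(Y_{1/2}),\mathcal{D}_{Y_{1/2}})$, which is exactly $K_{Y_{1/2}}$-duality.

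The main obstacle is structural rather than computational: one must check carefully that $r^*$ is monoidal with respect to $\otimes_{C(Y)}$ and compatible with $\hat{\otimes}$, as this is precisely what forces the unit--counit equations to survive the pullback, together with the identification $\mathcal{A}(Y_{1/2})\cong r^*\mathcal{A}(Y)$, which rests on homotopy invariance of bundle classification and not on any $C(X)$-algebra estimate. Once these two facts are in place, no further Kasparov-product bookkeeping is required, since the duality identities are then transported formally.
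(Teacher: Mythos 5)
Your proposal is correct and takes essentially the same route as the paper's proof: both use the deformation retraction $r\colon Y_{1/2}\to Y$ and homotopy invariance of $[-,\operatorname{BAut}(A)]$ to identify $\mathcal{A}(Y_{1/2})$ with the pullback $C(Y_{1/2})\otimes_{C(Y)}\mathcal{A}(Y)$, and then transport the duality classes of $(\mathcal{A}(Y),\mathcal{D}_Y)$ along this base change. The only difference is one of detail: you spell out the monoidality and product-compatibility of the base-change functor $r^{*}\colon KK_Y\to KK_{Y_{1/2}}$, which the paper compresses into the phrase that it is ``easy to check'' that $C(Y_{1/2})\otimes_{C(Y)}\mathcal{D}_Y$ is $K_{Y_{1/2}}$-dual to $\mathcal{A}(Y_{1/2})$.
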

\begin{proof}
Since the map $r : Y_{1/2}\to Y$ defined by
\[r(y)=y, \; y\in Y,\quad r(t, x)=\theta (x),\; (t, x)\in Y_{1/2}\]
is a deformation retract,
the pull-back \[r^* : [Y, \operatorname{BAut}(A)]\ni [\mathcal{A}]\mapsto [C(Y_{1/2})\otimes_{C(Y)}\mathcal{A}(Y)]\in [Y_{1/2}, \operatorname{BAut}(A)]\]
is a bijection, where $C(Y_{1/2})$ is regarded as a $C(Y)$-algebra by $r^* : C(Y)\to C(Y_{1/2})$ and the inverse map of $r^*$ is given by the restriction $\pi_Y$ with $\pi_Y(\mathcal{A}(Y_{1/2}))=\mathcal{A}(Y)$.
Thus, we have $\mathcal{A}(Y_{1/2})\cong C(Y_{1/2})\otimes_{C(Y)}\otimes \mathcal{A}(Y)$,
and it is easy to check that $C(Y_{1/2})\otimes_{C(Y)}\mathcal{D}_Y$ is $K_{Y_{1/2}}$-dual to $\mathcal{A}(Y_{1/2})$.
\end{proof}

The restriction $\mathcal{A}(\mathbb{D}^{d+1})$ is a bundle over the contractible space $\mathbb{D}^{d+1}$,
and one has $\mathcal{A}(\mathbb{D}^{d+1})\cong C(\mathbb{D}^{d+1})\otimes A$ which restricts to the $C(S^d)$-linear isomorphism
\[\varphi_A : \mathcal{A}(Y\cap \mathbb{D}^{d+1})\to C(S^d)\otimes A.\]
Since $\mathcal{A}(Y\cap \mathbb{D}^{d+1})\cong C(S^d)\otimes A$,
we have an isomorphism \[\psi'_D : \mathcal{D}_Y(Y\cap \mathbb{D}^{d+1})\to C(S^d)\otimes D\] by Thm. \ref{Ddeq} and uniqueness of the dual algebra.
This $\psi'_D$ is a candidate of  the clutching function to extend $\mathcal{D}_Y$ onto $X$.
We have the following presentations : \[C(X)=\{ (f, g)\in C(Y)\oplus C(\mathbb{D}^{d+1})\; |\; \pi_{Y\cap \mathbb{D}^{d+1}}(f)=\pi_{S^d}(g)\},\]
\[ \mathcal{A}=\{(a, b)\in \mathcal{A}(Y)\oplus C(\mathbb{D}^{d+1})\otimes A\; |\; \pi_{{Y}\cap \mathbb{D}^{d+1}}(a)=\varphi_A^{-1}(\pi_{S^d}(b))\},\]
and the $C(Y)$-linear $*$-homomorphisms representing the $K_Y$-duality classes :
\[\mu : C(Y_{})\otimes\mathcal{O}_\infty\otimes\mathbb{K}\to \mathcal{D}_Y\otimes_{C(Y_{})}\mathcal{A}(Y_{}),\]
\[\nu : \mathcal{A}(Y_{})\otimes_{C(Y_{})}\mathcal{D}_Y\to C(Y_{})\otimes\mathcal{O}_\infty\otimes\mathbb{K}.\]
We will modify $\psi'_D$ so that there are no obstructions to extending $\mu$ and $\nu$ onto $X$.
By Lem. \ref{b} 4.,
the both of 
\[{\rm id}_{C(S^d)}\otimes \nu_0,\quad \nu\upharpoonright_{S^d}\circ (\varphi_A^{-1}\otimes{\psi'_D}^{-1})\] give $K_{S^d}$-duality classes for $C(S^d)\otimes A$ and $C(S^d)\otimes D$,
and Lem. \ref{b} 2. shows that there exists a $C(S^d)$-linear isomorphism $\beta$ of $C(S^d)\otimes D$ satisfying \[KK_{S^d}(\nu\upharpoonright_{S^d}\circ(\varphi_A^{-1}\otimes ({\psi'_D}^{-1}\circ \beta)))=KK_{S^d}({\rm id}_{C(S^d)}\otimes \nu_0).\]
For the isomorphism $\psi_D :=\beta^{-1}\circ \psi'_D : \mathcal{D}_Y(Y\cap\mathbb{D}^{d+1})\to C(S^d)\otimes D$,
one has a continuous path of $C(S^d)$-linear homomorphisms
\[\nu_t : C(S^d)\otimes A\otimes D\to C(S^d)\otimes\mathcal{O}_\infty\otimes\mathbb{K},\quad \nu_0={\rm id}_{C(S^d)}\otimes \nu_0,\quad \nu_{1}=\nu\upharpoonright_{S^d}\circ(\varphi_A\otimes\psi_D)^{-1}\]
for $t\in [0, 1]$,
and we define the following nuclear separable continuous $C(X)$-algebra
\[\mathcal{D}_X :=\{ (a, b)\in \mathcal{D}_Y\oplus C(\mathbb{D}^{d+1})\otimes D\; |\; \pi_{Y_{}\cap\mathbb{D}^{d+1}}(a)=\psi_D^{-1}(\pi_{S^d}(b))\}.\]
Now one has
\begin{align*}
&\mathcal{A}\otimes_{C(X)}\mathcal{D}_X\\
=&\{(a, b)\in (\mathcal{A}(Y_{})\otimes_{C(Y_{})}\mathcal{D}_Y)\oplus (C(\mathbb{D}^{d+1})\otimes A\otimes D)\; |\; \pi_{Y_{}\cap\mathbb{D}^{d+1}}(a)=(\varphi_A^{-1}\otimes\psi_D^{-1})(\pi_{S^d}(b))\},\\
&\mathcal{D}_X\otimes_{C(X)}\mathcal{A}\\
=&\{(a, b)\in (\mathcal{D}_Y\otimes_{C(Y_{})}\mathcal{A}(Y_{}))\oplus (C(\mathbb{D}^{d+1})\otimes D\otimes A)\; |\; \pi_{Y_{}\cap\mathbb{D}^{d+1}}(a)=(\psi_D^{-1}\otimes\varphi_A^{-1})(\pi_{S^d}(b))\}.
\end{align*}
\begin{thm}\label{ind}
The algebras $\mathcal{D}_X$ and $\mathcal{A}$ are $K_X$-dual.
\end{thm}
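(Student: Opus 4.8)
The plan is to produce explicit duality classes for $\mathcal{D}_X$ and $\mathcal{A}$ by gluing the given $K_Y$-duality classes $(\mu,\nu)$ over $Y$ to the standard classes $(\mathrm{id}_{C(\mathbb{D}^{d+1})}\otimes\mu_0,\ \mathrm{id}_{C(\mathbb{D}^{d+1})}\otimes\nu_0)$ over the disk, interpolating on the collar with the path $\nu_t$ that the construction already supplies. Write $\hat\mu$ and $\hat\nu=\nu\upharpoonright_{S^d}\circ(\varphi_A\otimes\psi_D)^{-1}$ for the transports of $\mu,\nu$ to $C(S^d)\otimes A$ and $C(S^d)\otimes D$ through the clutching isomorphisms $\varphi_A,\psi_D$. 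Since $\varphi_A,\psi_D$ are isomorphisms, $(\hat\mu,\hat\nu)$ is again a genuine $K_{S^d}$-duality, and so is $(\mathrm{id}\otimes\mu_0,\mathrm{id}\otimes\nu_0)$; moreover the choice of $\psi_D$ has been made precisely so that $\hat\nu=\mathrm{id}\otimes\nu_0$ in $KK_{S^d}$.

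First I would upgrade this to an agreement of the \emph{units}. Applying Lem. \ref{b} 2. to the two $K_{S^d}$-dualities $(\hat\mu,\hat\nu)$ and $(\mathrm{id}\otimes\mu_0,\mathrm{id}\otimes\nu_0)$, the comparison automorphism $\alpha'=((\mathrm{id}\otimes\mu_0)\otimes I)\hat{\otimes}(I\otimes\hat\nu)$ collapses to the identity, because $\hat\nu=\mathrm{id}\otimes\nu_0$ and $(\mathrm{id}\otimes\mu_0,\mathrm{id}\otimes\nu_0)$ already satisfies the relation of Def. \ref{SK}; hence $\alpha=\alpha'^{-1}$ is also the identity and $\hat\mu=\mathrm{id}\otimes\mu_0$ in $KK_{S^d}$. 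Thm. \ref{KGE} then furnishes a continuous path $\mu_t$ of $C(S^d)$-linear $*$-homomorphisms from $\mathrm{id}\otimes\mu_0$ to $\hat\mu$, exactly parallel to $\nu_t$. Using $\mu$ (resp. $\nu$) on the $Y$-part, $\mathrm{id}\otimes\mu_0$ (resp. $\mathrm{id}\otimes\nu_0$) near the cone point of $\mathbb{D}^{d+1}$ where it is constant in $z$, and the reparametrised paths $\mu_{2t},\nu_{2t}$ on the collar $[0,1/2]\times S^d$, I would assemble honest $C(X)$-linear $*$-homomorphisms representing $\mu_X\in KK_X(C(X),\mathcal{D}_X\otimes_{C(X)}\mathcal{A})$ and $\nu_X\in KK_X(\mathcal{A}\otimes_{C(X)}\mathcal{D}_X,C(X))$, after the standard identification $C(X)\otimes\mathcal{O}_\infty\otimes\mathbb{K}\sim_{KK_X}C(X)$.

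Next I would verify the two unit--counit relations only up to invertibility, which is all that Lem. \ref{b} 1. requires. Their left-hand sides are classes $\alpha_X\in KK_X(\mathcal{A},\mathcal{A})$ and $\beta_X\in KK_X(\mathcal{D}_X,\mathcal{D}_X)$, and by Thm. \ref{Ddeq} it suffices to check that the evaluation at every $x\in X$ is invertible. Over $Y$ and over the interior of $\mathbb{D}^{d+1}$ the fibre is the identity, since there $(\mu_X,\nu_X)$ restricts to the genuine $K_Y$- resp. point-duality. Over a collar point $(t,z)$ the evaluation is the Kasparov product of the fibres of $\mu_X$ and $\nu_X$; as these fibres are homotopic along $\mu_{2t},\nu_{2t}$ to the point values $\mu_0,\nu_0$, homotopy invariance of $KK$ forces the product to equal the point-duality value $I_A$ (resp. $I_D$). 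Thus $\alpha_X,\beta_X$ are fibrewise the identity, hence $KK_X$-equivalences, and Lem. \ref{b} 1. converts $(\mu_X,\nu_X)$ into genuine $K_X$-duality classes by replacing $\mu_X$ with $\mu_X\hat{\otimes}(\alpha_X^{-1}\otimes I_\mathcal{A})$, which proves that $\mathcal{D}_X$ and $\mathcal{A}$ are $K_X$-dual.

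The hard part is not any single computation but the two places where the gluing could fail: that $\mu_X$ is gluable at all, resolved by deducing $\hat\mu=\mathrm{id}\otimes\mu_0$ from $\hat\nu=\mathrm{id}\otimes\nu_0$ via Lem. \ref{b} 2.; and that the duality relations survive on the collar, where $\mu_X,\nu_X$ are merely paths rather than a fixed duality. It is exactly here that homotopy invariance of $KK$ collapses each collar fibre to its point value and Thm. \ref{Ddeq} promotes fibrewise triviality to a global $KK_X$-equivalence, after which Lem. \ref{b} 1. absorbs the discrepancy. The remaining content is the bookkeeping of the orders of tensor factors dictated by Def. \ref{SK}, which I would suppress.
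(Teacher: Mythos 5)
Your proposal is correct and follows essentially the same route as the paper: fix the clutching function $\psi_D$ so that the counits agree over $S^d$, extend both duality classes over the attached disk via Kirchberg--Gabe homotopies (Thm. \ref{KGE}), check that the unit--counit compositions are fibrewise invertible, and promote this to a $KK_X$-equivalence by Thm. \ref{Ddeq} before absorbing the discrepancy with Lem. \ref{b} 1. The only cosmetic difference is that you obtain the agreement of units $\hat\mu=\mathrm{id}_{C(S^d)}\otimes\mu_0$ abstractly from Lem. \ref{b} 2 (two dualities sharing a counit share their unit), whereas the paper verifies the same identity by a direct Kasparov-product computation in its Lem. \ref{L5}.
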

By identifying $\pi_{S^d} : C(\mathbb{D}^{d+1})\to C(S^d)$ with \[\{g(t, -)\in C([0, 1]\times S^d)\;|\; g(0, -)\in \mathbb{C}1_{C(S^d)}\}\ni g(t, -)\mapsto g(1,-)\in C(S^d),\]
we can extend $\nu$ to the $C(X)$-linear homomorphism
\[\tilde{\nu} : \mathcal{A}\otimes_{C(X)}\mathcal{D}_X\ni (a, b(t, -))\mapsto (\nu(a), \nu_t(b(t, -)))\in C(X)\otimes\mathcal{O}_\infty\otimes\mathbb{K}.\]
\begin{lem}\label{L5}
The composition $(\psi_D\otimes\varphi_A)\circ\mu\upharpoonright_{S^d}$ is homotopic to
${\rm id}_{C(S^d)}\otimes \mu_0$.
\end{lem}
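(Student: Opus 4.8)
The plan is to reduce the homotopy statement to an equality of classes in $KK_{S^d}$ and then appeal to the uniqueness half of Theorem \ref{KGE}. Both $(\psi_D\otimes\varphi_A)\circ\mu\upharpoonright_{S^d}$ and $\mathrm{id}_{C(S^d)}\otimes\mu_0$ are $C(S^d)$-linear $*$-homomorphisms from $C(S^d)\otimes\mathcal{O}_\infty\otimes\mathbb{K}$ into $C(S^d)\otimes D\otimes A$, and the target is a stable continuous $C(S^d)$-algebra whose fibers are Kirchberg algebras. Hence Theorem \ref{KGE} tells us that two such maps are homotopic through $C(S^d)$-linear $*$-homomorphisms as soon as they induce the same element of $KK_{S^d}$, and the whole problem becomes the verification of
\[KK_{S^d}\big((\psi_D\otimes\varphi_A)\circ\mu\upharpoonright_{S^d}\big)=KK_{S^d}\big(\mathrm{id}_{C(S^d)}\otimes\mu_0\big).\]

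To obtain this equality I would exhibit two sets of $K_{S^d}$-duality classes for the pair $C(S^d)\otimes A$, $C(S^d)\otimes D$ and compare them. After identifying $\mathcal{A}(Y\cap\mathbb{D}^{d+1})$ and $\mathcal{D}_Y(Y\cap\mathbb{D}^{d+1})$ with $C(S^d)\otimes A$ and $C(S^d)\otimes D$ through $\varphi_A$ and $\psi_D$, the first pair is the restriction of $(\mu,\nu)$ to $S^d$ transported by $\varphi_A,\psi_D$: this is again a duality pair because the identities of Definition \ref{SK} are equations in $KK_Y$, and they are preserved by the restriction ring homomorphism $KK_Y\to KK_{S^d}$ and by transport along the $KK_{S^d}$-equivalences $\varphi_A,\psi_D$ (Lemma \ref{b} 4.). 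The second pair is $(\mathrm{id}_{C(S^d)}\otimes\mu_0,\ \mathrm{id}_{C(S^d)}\otimes\nu_0)$, the base change to $C(S^d)$ of the fiberwise duality $(\mu_0,\nu_0)$, which satisfies the same adjunction identities. The decisive point is that $\psi_D$ was constructed (via $\beta$) exactly so that the two counits agree in $KK_{S^d}$, i.e. $\nu\upharpoonright_{S^d}\circ(\varphi_A^{-1}\otimes\psi_D^{-1})$ and $\mathrm{id}_{C(S^d)}\otimes\nu_0$ represent the same class.

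With both duality pairs in hand, the equality of units is forced by the equality of counits through Lemma \ref{b} 2. Writing $(\mu,\nu)$ for the transported restriction classes and $(\mu',\nu')$ for $(\mathrm{id}_{C(S^d)}\otimes\mu_0,\ \mathrm{id}_{C(S^d)}\otimes\nu_0)$, we have $\nu=\nu'$, so the invertible element $\alpha=(\mu\otimes I_{\mathcal{A}})\hat{\otimes}(I_{\mathcal{A}}\otimes\nu')$ of Lemma \ref{b} 2. equals $(\mu\otimes I_{\mathcal{A}})\hat{\otimes}(I_{\mathcal{A}}\otimes\nu)=I_{\mathcal{A}}$ by the adjunction identity for $(\mu,\nu)$; substituting $\alpha=I_{\mathcal{A}}$ into the relation $\mu=\mu'\hat{\otimes}(\alpha\otimes I_{\mathcal{B}})$ yields $\mu=\mu'$ in $KK_{S^d}$, which is precisely the displayed equality. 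I expect the only genuinely delicate step to be the bookkeeping that certifies both transported pairs as honest $K_{S^d}$-duality classes for the same ordered pair of algebras; once this is in place, Lemma \ref{b} 2. together with Theorem \ref{KGE} finishes the argument with no further computation.
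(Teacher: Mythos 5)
Your proof is correct and takes essentially the same route as the paper: both reduce the homotopy statement to the equality $KK_{S^d}((\psi_D\otimes\varphi_A)\circ\mu\upharpoonright_{S^d})=KK_{S^d}({\rm id}_{C(S^d)}\otimes\mu_0)$ (concluding via the uniqueness part of Thm. \ref{KGE}, which the paper uses implicitly), and both obtain that equality from the fact that the two $K_{S^d}$-duality pairs share the same counit by the very construction of $\psi_D$. The only difference is one of packaging: the paper carries out the relevant Kasparov-product manipulation explicitly, while you invoke Lem. \ref{b} 2. (whose proof is exactly that manipulation) as a black box, so the underlying algebra is identical.
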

\begin{proof}
Direct computation yields
\begin{align*}
&KK_{S^d}((\psi_D\otimes\varphi_A)\circ\mu\upharpoonright_{S^d})\\
=&((\psi_D\otimes\varphi_A)\circ\mu\upharpoonright_{S^d})\hat{\otimes}(I_{C(S^d)\otimes D(A)}\otimes((I_{C(S^d)\otimes A}\otimes ({\rm id}_{C(S^d)}\otimes\mu_0))\hat{\otimes}\\
&(({\rm id}_{C(S^d)}\otimes\nu_0)\otimes I_{C(S^d)\otimes A})))\\
=&({\rm id}_{C(S^d)}\otimes\mu_0)\hat{\otimes}(\psi_D^{-1}\otimes I_{C(S^d)\otimes A})\hat{\otimes}(\mu\upharpoonright_{S^d}\otimes I_{(\mathcal{D}(S^d)\otimes_{C(S^d)}C(S^d)\otimes A)})\hat{\otimes}\\
&(\psi_D\otimes((\varphi_A\otimes\psi_D)\hat{\otimes}({\rm id}_{C(S^d)}\otimes \nu_0))\otimes I_{C(S^d)\otimes A})\\
=&({\rm id}_{C(S^d)}\otimes\mu_0)\hat{\otimes}(\psi_D^{-1}\otimes I_{C(S^d)\otimes A})\hat{\otimes}(\mu\upharpoonright_{S^d}\otimes I_{\mathcal{D}(S^d)}\otimes I_{C(S^d)\otimes A})\hat{\otimes}\\
&(I_{\mathcal{D}(S^d)}\otimes(({\rm id}_{C(S^d)}\otimes\nu_0)\circ(\varphi_A\otimes\psi_D))\otimes I_{C(S^d)\otimes A})\hat{\otimes}(\psi_D\otimes I_{C(S^d)\otimes A}),
\end{align*}
and the equation $KK_{S^d}(\nu\upharpoonright_{S^d})=KK_{S^d}(({\rm id}_{C(S^d)}\otimes\nu_0)\circ(\varphi_A\otimes\psi_D))$ implies
\[KK_{S^d}((\psi_D\otimes\varphi_A)\circ \mu\upharpoonright_{S^d})=KK_{S^d}({\rm id}_{C(S^d)}\otimes\mu_0).\]
\end{proof}

By the above lemma,
we can also find a map
\[\tilde{\mu} : C(X)\otimes\mathcal{O}_\infty\otimes\mathbb{K}\to\mathcal{D}_X\otimes_{C(X)}\mathcal{A}\]
and it is easy to check that $\tilde{\mu}_x$ (resp. $\tilde{\nu}_x$) is homotopic to either $\mu_x, x\in Y$ or $\mu_0$ (resp. $\nu_x$ or $\nu_0$) for every $x\in X$.
Thus, one has
\[(I_{\mathcal{A}_x}\otimes \tilde{\mu}_x)\hat{\otimes}(\tilde{\nu}_x\otimes I_{\mathcal{A}_x})\in KK(\mathcal{A}_x, \mathcal{A}_x)^{-1},\]
\[(\tilde{\mu}_x\otimes I_{(\mathcal{D}_X)_x})\hat{\otimes}(I_{(\mathcal{D}_X)_x}\otimes\tilde{\nu}_x)\in KK((\mathcal{D}_X)_x, (\mathcal{D}_X)_x)^{-1}\]
by Lem. \ref{b}, 1., and Thm. \ref{Ddeq} shows 
\[(I_\mathcal{A}\otimes\tilde{\mu})\hat{\otimes}(\tilde{\nu}\otimes I_\mathcal{A})\in KK_X(\mathcal{A}, \mathcal{A})^{-1},\]
\[(\tilde{\mu}\otimes I_{\mathcal{D}_X})\hat{\otimes}(I_{\mathcal{D}_X}\otimes\tilde{\nu})\in KK_X(\mathcal{D}_X, \mathcal{D}_X)^{-1}.\]

Finally, Lem. \ref{b}, 1. proves Thm. \ref{ind},
and the following is obtained by induction.
\begin{cor}\label{CXd}
Let $C$ be a separable nuclear UCT C*-algebra with finitely generated K-groups,
and let $X$ be a finite CW-complex.
For every locally trivial continuous $C(X)$-algebra $\mathcal{C}$ with fiber $C$,
there exists a locally trivial continuous $C(X)$-algebra $\mathcal{D}(\mathcal{C})$ with fiber $D(C)$ which is  $K_X$-dual to $\mathcal{C}$.
\end{cor}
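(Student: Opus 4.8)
The plan is to deduce the corollary from Thm. \ref{ind} by inducting on the cells of the finite CW-complex $X$, using Thm. \ref{SWK} for the base case and Lem. \ref{L2} to handle general attaching maps. First I would reduce to the situation fixed throughout this section, in which the fiber is a stable Kirchberg algebra. Given $\mathcal{C}$ with fiber $C$, Thm. \ref{de} yields a locally trivial continuous $C(X)$-algebra $\mathcal{A}$ with $\mathcal{C}\sim_{KK_X}\mathcal{A}$ whose fiber $A$ is a stable Kirchberg algebra $KK$-equivalent to $C$; let $D$ be the stable Kirchberg algebra K-dual to $A$, so that $D\sim_{KK}D(C)$. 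Since $K_X$-duality is stable under $KK_X$-equivalence by Lem. \ref{b} 4., any $K_X$-dual of $\mathcal{A}$ becomes, after transporting its duality classes along the equivalence $\mathcal{C}\to\mathcal{A}$, a $K_X$-dual of $\mathcal{C}$. Hence it suffices to build a $K_X$-dual $\mathcal{D}_X$ of $\mathcal{A}$ with fiber $D$, which represents $D(C)$ as the latter is defined only up to $KK$-equivalence.

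Next I would fix a filtration $X^{(0)}=X_0\subset X_1\subset\cdots\subset X_N=X$ in which each $X_{i+1}$ is obtained from $X_i$ by attaching a single cell $\mathbb{D}^{d+1}$ along a map $\theta\colon S^d\to X_i$; such a filtration exists for any finite CW-complex. For the base case, $X_0$ is a finite set of points, and Thm. \ref{SWK}, applicable because $C$ and hence $D(C)$ have finitely generated K-groups, furnishes the fiberwise K-dual, which assembles to a $K_{X_0}$-dual of $\mathcal{A}(X_0)$ with fiber $D$. For the inductive step I assume $\mathcal{A}(X_i)$ admits a $K_{X_i}$-dual with fiber $D$. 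Applying Lem. \ref{L2} with $Y=X_i$, I replace $X_i$ by the homotopy equivalent $(X_i)_{1/2}$ so that the attaching map becomes an embedding $S^d\hookrightarrow(X_i)_{1/2}$ and the duality is carried along the deformation retract, the total space $X_{i+1}=(X_i)_{1/2}\cup\mathbb{D}^{d+1}$ now satisfying $(X_i)_{1/2}\cap\mathbb{D}^{d+1}=S^d$. Thm. \ref{ind} then applies directly and produces a $K_{X_{i+1}}$-dual with fiber $D$. After $N$ steps this yields $\mathcal{D}_X:=\mathcal{D}_{X_N}$, and transporting back to $\mathcal{C}$ as in the first paragraph gives the desired $\mathcal{D}(\mathcal{C})$.

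The hard part is concentrated entirely in Thm. \ref{ind}, whose proof is already in place: extending the duality classes across a new cell requires untwisting the clutching isomorphism $\psi'_D$ on $S^d$ by a $C(S^d)$-linear automorphism $\beta$ of $C(S^d)\otimes D$ so that, by Lem. \ref{b} 2., the restricted counit class agrees on the nose with ${\rm id}_{C(S^d)}\otimes\nu_0$. This is exactly what lets $\tilde{\nu}$ and, via Lem. \ref{L5}, $\tilde{\mu}$ extend continuously over $\mathbb{D}^{d+1}$, after which Thm. \ref{Ddeq} promotes the fiberwise invertible adjunction composites to $KK_X$-equivalences and Lem. \ref{b} 1. converts them into genuine duality classes. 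For the induction itself the only points requiring care are bookkeeping: that Lem. \ref{L2} preserves the fiber $D$ of the dual, which holds since $C((X_i)_{1/2})\otimes_{C(X_i)}\mathcal{D}_{X_i}$ has the same fibers as $\mathcal{D}_{X_i}$, and that the fiber of the assembled dual stays $D$ at every stage.
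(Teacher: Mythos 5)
Your proposal is correct and follows essentially the same route as the paper: the paper also reduces to a stable Kirchberg fiber via Thm.~\ref{de}, fixes the fiberwise duality data $(\mu_0,\nu_0)$, and obtains Cor.~\ref{CXd} by cell-by-cell induction, with the $0$-skeleton handled by Thm.~\ref{SWK}, non-injective attaching maps handled by Lem.~\ref{L2}, and the inductive step being exactly Thm.~\ref{ind}. Your explicit remarks on transporting duality classes along $KK_X$-equivalences (Lem.~\ref{b}~4.) and on the dual's fiber being preserved under Lem.~\ref{L2} merely spell out what the paper leaves implicit in its one-line deduction.
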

\begin{rem}
Since $K_X$-duals are uniquely determined up to $KK_X$-equivalence,
Thm. \ref{Ddeq} implies that if both of $C$ and $D(C)$ are stable Kirchberg algebras
the map \[[X, \operatorname{BAut}(C)]\ni [\mathcal{C}]\mapsto [\mathcal{D}(\mathcal{C})]\in [X, \operatorname{BAut}(D(C))]\]
is a well-defined bijection.
In particular,
for $C=D(C)=\mathcal{O}_\infty\otimes\mathbb{K}$,
the bijection is exactly equal to
\[E^1_{\mathcal{O}_\infty}(X)\ni c\mapsto -c\in E^1_{\mathcal{O}_\infty}(X).\] 
\end{rem}
\begin{rem}
It might be meaningful to relate the above bijection to the Izumi--Matui's invariant for the group action $G$ (i.e., to investigate $[\operatorname{B}G, \operatorname{BAut}(C)]\to [\operatorname{B}G, \operatorname{BAut}(D(C))]$ in terms of the group actions).
However,
things seem to be not so easy even in the simplest case $C=\mathcal{O}_\infty^s$.
Moreover, the condition of having finitely generated K-groups has never appeared in the theory of group actions.
\end{rem}

In the rest of this section,
we prove the above lemmas.


\section{Bundles of the reciprocal algebras}\label{5}
In this section,
we prove our main result that the reciprocal Kirchberg algebras $A$ and $B$ share the same structure of their bundles (see Thm. \ref{taio}).
\subsection{Construction of the map $R_{A, B}$}
Let $\mathcal{A}$ be an arbitrary locally trivial continuous $C(X)$-algebra with fiber $A$.
In the Puppe sequence
$$C_{u_\mathcal{A}}\xrightarrow{e_\mathcal{A}} C(X)\xrightarrow{u_\mathcal{A}} \mathcal{A},$$
the mapping cone $C_{u_\mathcal{A}}$ is a locally trivial continuous $C(X)$-algebra with fiber $C_{u_A}$.
By Thm. \ref{de} and Cor. \ref{CXd}, there exists a locally trivial continuous $C(X)$-algebra $\mathcal{B}^s$ with fiber $B\otimes\mathbb{K}$ such that $C_{u_\mathcal{A}}$ and $\mathcal{B}^s$ are $K_X$-dual with duality classes
$$\mu\in KK_X(C(X), C_{u_\mathcal{A}}\otimes_{C(X)}\mathcal{B}^s),\quad \nu\in KK_X(\mathcal{B}^s\otimes_{C(X)}C_{u_\mathcal{A}}, C(X)).$$
Since $KK_X(C(X), \mathcal{B}^s)=K_0(\mathcal{B}^s)$ (see \cite[Proof of Cor. 2.8.]{D3}),
one obtains a properly infinite full projection $p\in \mathcal{B}^s$ with $[p]_0=\mu\hat{\otimes}(e_\mathcal{A}\otimes I_{\mathcal{B}^s})\in K_0(\mathcal{B}^s)=KK_X(C(X), \mathcal{B}^s)$.
\begin{lem}\label{bij}
For every $x\in X$,
we have $\pi_x(p)\mathcal{B}^s_x\pi_x(p)\cong B$.
Thus, $\mathcal{B}:=p\mathcal{B}^s p$ is a locally trivial continuous $C(X)$-algebra with fiber $B$.
\end{lem}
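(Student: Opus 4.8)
The plan is to fix a point $x\in X$ and reduce the claimed isomorphism to a $K$-theoretic computation. First I would check that $\pi_x(p)$ is a \emph{nonzero} projection: the function $x\mapsto\|\pi_x(p)\|$ is continuous and takes values in $\{0,1\}$, so its zero set is clopen; were it nonempty, the closed ideal generated by $p$ would be contained in a proper ideal of the form $\ker\pi_U$, contradicting the fullness of $p$. Since $\mathcal{B}^s_x\cong B\otimes\mathbb{K}$ is a stable UCT Kirchberg algebra, the corner $\pi_x(p)\mathcal{B}^s_x\pi_x(p)$ is then a \emph{unital} UCT Kirchberg algebra; its $K$-groups are canonically $K_*(\mathcal{B}^s_x)=K_*(B)$, and the class of its unit is $[\pi_x(p)]_0\in K_0(B)$. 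By the classification of unital UCT Kirchberg algebras (Thm. \ref{KGE} for $X$ a single point), the isomorphism $\pi_x(p)\mathcal{B}^s_x\pi_x(p)\cong B$ will follow once I produce an automorphism of $K_0(B)$ carrying $[\pi_x(p)]_0$ to $[1_B]_0$.

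To compute $[\pi_x(p)]_0$ I would evaluate the defining equation $[p]_0=\mu\hat{\otimes}(e_\mathcal{A}\otimes I_{\mathcal{B}^s})$ at $x$. Since the evaluation map $\pi_x\colon KK_X(C(X),-)\to KK(\mathbb{C},(-)_x)$ is compatible with the Kasparov product, this gives
\[[\pi_x(p)]_0=\mu_x\hat{\otimes}\big(KK(e_A)\otimes I_{\mathcal{B}^s_x}\big),\qquad \mu_x:=\pi_x(\mu),\ (e_\mathcal{A})_x=e_A.\]
Evaluating the two $K_X$-duality equations shows that $(\mu_x,\nu_x)$ are Spanier--Whitehead duality classes for $C_{u_A}$ and $\mathcal{B}^s_x$, so by Cor. \ref{pd} the map $\mu_x\hat{\otimes}(-\otimes I_{\mathcal{B}^s_x})\colon KK(C_{u_A},\mathbb{C})\to K_0(\mathcal{B}^s_x)=K_0(B)$ is an isomorphism sending $KK(e_A)$ to $[\pi_x(p)]_0$. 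Hence it carries $\langle KK(e_A)\rangle$ onto $\langle[\pi_x(p)]_0\rangle$ and induces
\[K_0(B)/\langle[\pi_x(p)]_0\rangle\cong KK(C_{u_A},\mathbb{C})/\langle KK(e_A)\rangle\cong KK(SA,\mathbb{C}),\]
the last isomorphism coming from the Puppe sequence for $u_A$, exactly as in the proof of Thm. \ref{ca}. Note that this computation is insensitive to which duality class $\mu_x$ is chosen, since only the induced quotient is used.

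On the other hand the Puppe sequence for $u_B$ gives $K_0(B)/\langle[1_B]_0\rangle\cong K_1(C_{u_B})$, and the reciprocality of $A$ and $B$ yields $K_1(C_{u_B})\cong KK(SA,\mathbb{C})$ (the identical chain of isomorphisms appears in the proof of Thm. \ref{ca}). Thus $K_0(B)/\langle[\pi_x(p)]_0\rangle\cong K_0(B)/\langle[1_B]_0\rangle$, and since the $K$-groups are finitely generated, Prop. \ref{kmc} furnishes an automorphism of $K_0(B)$ taking $[\pi_x(p)]_0$ to $[1_B]_0$; combined with the identity on $K_1$ this gives the desired isomorphism of the fibers. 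For the final assertion of the lemma, local triviality of $\mathcal{B}=p\mathcal{B}^s p$ follows by trivializing $\mathcal{B}^s$ over a small closed neighborhood $Y$ of each point, where $p|_Y$ is a projection in $C(Y)\otimes B\otimes\mathbb{K}$ that, $Y$ being contractible, is homotopic and hence unitarily equivalent to a constant projection $q_0$, so $p|_Y\,\mathcal{B}^s(Y)\,p|_Y\cong C(Y)\otimes q_0(B\otimes\mathbb{K})q_0\cong C(Y)\otimes B$.

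The main obstacle is the middle step: one must recognize $[\pi_x(p)]_0$ as the image of $KK(e_A)$ under the fibered duality isomorphism and then see that the resulting quotient $KK(C_{u_A},\mathbb{C})/\langle KK(e_A)\rangle$ is the \emph{same} group $KK(SA,\mathbb{C})$ that computes $K_0(B)/\langle[1_B]_0\rangle$. This is precisely where reciprocality enters, through $K_1(C_{u_B})\cong KK(SA,\mathbb{C})$, and where finite generation of the $K$-groups is indispensable, since it is what allows Prop. \ref{kmc} to upgrade the abstract isomorphism of quotients into an automorphism matching the two distinguished unit classes.
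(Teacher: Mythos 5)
Your proposal is correct and follows essentially the same route as the paper's own proof: evaluate $[p]_0$ at $x$, use the fiberwise duality isomorphism of Cor.~\ref{pd} to identify $K_0(B)/\langle[\pi_x(p)]_0\rangle$ with $KK(C_{u_A},\mathbb{C})/\langle KK(e_A)\rangle\cong KK(SA,\mathbb{C})\cong K_1(C_{u_B})\cong K_0(B)/\langle[1_B]_0\rangle$, and then apply Prop.~\ref{kmc} together with the classification of unital UCT Kirchberg algebras. The only differences are that you spell out steps the paper leaves implicit (nonvanishing of $\pi_x(p)$, that the evaluated classes are duality classes, and the local-triviality argument for the corner), which is fine.
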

\begin{proof}
Since $\mathcal{B}^s$ is locally trivial,
the local triviality of $\mathcal{B}$ immediately follows.
So we prove $\pi_x(p)\mathcal{B}^s_x\pi_x(p)\cong B$.
By the evaluation map,
one has $$[\pi_x(p)]_0=\mu_x\hat{\otimes}(e_{u_{\mathcal{A}_x}}\otimes I_{\mathcal{B}^s_x})\in KK(\mathbb{C}, \mathcal{B}^s_x).$$
Cor. \ref{pd} shows 
$$KK(\mathbb{C}, \mathcal{B}^s_x)/\langle [\pi_x(p)]_0 \rangle \cong KK(C_{u_{\mathcal{A}_x}}, \mathbb{C})/\langle KK(e_{\mathcal{A}_x}) \rangle,$$
and the right hand side is isomorphic to $$KK(S\mathcal{A}_x, \mathbb{C})\cong KK(SA, \mathbb{C})\cong K_1(C_{u_B})$$
by the Puppe sequence
$$KK(\mathbb{C}, \mathbb{C})\xrightarrow{e_{\mathcal{A}_x}\hat{\otimes}-}KK(C_{u_{\mathcal{A}_x}}, \mathbb{C})\xrightarrow{\iota_{\mathcal{A}_x}\hat{\otimes}-}KK(S\mathcal{A}_x, \mathbb{C})\to KK(S, \mathbb{C}).$$
Since $\mathcal{B}^s_x \cong B\otimes\mathbb{K}$,
we have $$K_0(B\otimes\mathbb{K})/\langle [\pi_x(p)]_0\rangle \cong K_1(C_{u_B})\cong K_0(B\otimes\mathbb{K})/\langle [1_B]_0 \rangle,$$
and Prop. \ref{kmc} proves the statement.
\end{proof}
\begin{prop}\label{wel}
Let $\mathcal{A}_i,\; i=1, 2$ be locally trivial continuous $C(X)$-algebras with $C(X)$-linear isomorphism $\varphi : \mathcal{A}_1\to \mathcal{A}_2$,
and let $\mathcal{B}_i, \; i=1, 2$ be the $C(X)$-algebras obtained from $\mathcal{A}_i$ as in Lem. \ref{bij}.
Then, $\mathcal{B}_1$ and $\mathcal{B}_2$ are $C(X)$-linearly isomorphic.
\end{prop}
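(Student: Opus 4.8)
The plan is to transport the entire construction of Lem.~\ref{bij} along \(\varphi\), and then match the two resulting properly infinite full projections at the level of \(K_0\).

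First I would produce the induced isomorphism of mapping cones. Since both \(u_{\mathcal{A}_i}\) are the canonical unital inclusions of \(C(X)\) and \(\varphi\) is \(C(X)\)-linear, one has \(\varphi\circ u_{\mathcal{A}_1}=u_{\mathcal{A}_2}\), so \(\varphi\) together with \(\mathrm{id}_{C(X)}\) forms a commuting square. Functoriality of the mapping cone then gives a \(C(X)\)-linear isomorphism \(\tilde\varphi\colon C_{u_{\mathcal{A}_1}}\to C_{u_{\mathcal{A}_2}}\) with \(e_{\mathcal{A}_2}\circ\tilde\varphi=e_{\mathcal{A}_1}\); in particular \(KK_X(e_{\mathcal{A}_1})=KK_X(\tilde\varphi)\hat{\otimes}KK_X(e_{\mathcal{A}_2})\) and \(\tilde\varphi\) is a \(KK_X\)-equivalence.

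Next I would track the duality. Writing \((\mu^{(i)},\nu^{(i)})\) for the duality classes realizing \(\mathcal{B}_i^s\) as the \(K_X\)-dual of \(C_{u_{\mathcal{A}_i}}\), I would pull the second pair back along \(\tilde\varphi\) using Lem.~\ref{b}~4., making \(\mathcal{B}_2^s\) a \(K_X\)-dual of \(C_{u_{\mathcal{A}_1}}\) as well. Uniqueness of duals (Lem.~\ref{b}~3.) then yields a \(KK_X\)-equivalence \(\gamma\in KK_X(\mathcal{B}_1^s,\mathcal{B}_2^s)^{-1}\) intertwining the two duality isomorphisms of Cor.~\ref{pd}. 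Since by construction \([p_i]_0=\mu^{(i)}\hat{\otimes}(e_{\mathcal{A}_i}\otimes I_{\mathcal{B}_i^s})\) is exactly the image of \(KK_X(e_{\mathcal{A}_i})\) under the duality isomorphism \(KK_X(C_{u_{\mathcal{A}_i}},C(X))\to KK_X(C(X),\mathcal{B}_i^s)\), the relation \(e_{\mathcal{A}_2}\circ\tilde\varphi=e_{\mathcal{A}_1}\) combined with this intertwining should give \([p_1]_0\hat{\otimes}\gamma=[p_2]_0\) in \(K_0(\mathcal{B}_2^s)=KK_X(C(X),\mathcal{B}_2^s)\). This \(KK_X\)-bookkeeping—essentially the same kind of manipulation as in the proof of Lem.~\ref{b}—is the point I expect to require the most care.

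Finally I would realize \(\gamma\) geometrically and compare the projections. Because the fibers \(B\otimes\mathbb{K}\) are stable, we may identify \(\mathcal{B}_i^s\otimes\mathbb{K}\cong\mathcal{B}_i^s\), so Thm.~\ref{Ddeq} lifts \(\gamma\) to a \(C(X)\)-linear isomorphism \(\Phi\colon\mathcal{B}_1^s\to\mathcal{B}_2^s\) with \(KK_X(\Phi)=\gamma\). Then \(\Phi(p_1)\) and \(p_2\) are properly infinite full projections in \(\mathcal{B}_2^s\) with \([\Phi(p_1)]_0=[p_1]_0\hat{\otimes}\gamma=[p_2]_0\). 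Viewing them as \(C(X)\)-linear \(*\)-homomorphisms \(C(X)\to\mathcal{B}_2^s\), \(f\mapsto f\,\Phi(p_1)\) and \(f\mapsto f\,p_2\), these carry the same \(KK_X\)-class, so the uniqueness half of Thm.~\ref{KGE} makes them asymptotically unitarily equivalent through \(C(X)\)-linear multiplier unitaries; contractibility of \(U(\mathcal{M}(\mathcal{B}_2^s))\), together with the fact that sufficiently close projections are unitarily equivalent, then produces a \(C(X)\)-linear unitary \(w\in\mathcal{M}(\mathcal{B}_2^s)\) with \(w\,\Phi(p_1)\,w^*=p_2\). Conjugation by \(w\) restricts to a \(C(X)\)-linear isomorphism \(\Phi(p_1)\mathcal{B}_2^s\Phi(p_1)\to p_2\mathcal{B}_2^s p_2\), and precomposing with \(\Phi\colon p_1\mathcal{B}_1^s p_1\to\Phi(p_1)\mathcal{B}_2^s\Phi(p_1)\) delivers the desired isomorphism \(\mathcal{B}_1\cong\mathcal{B}_2\). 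Besides the duality bookkeeping, the delicate step here is precisely this upgrade from equality of \(K_0\)-classes to a genuine \(C(X)\)-linear Murray--von Neumann equivalence of the global projections, for which the parametrized uniqueness theorem is indispensable.
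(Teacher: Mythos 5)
Your proposal is correct and follows essentially the same route as the paper's proof: produce a $KK_X$-equivalence of the mapping cones compatible with the maps $e_{\mathcal{A}_i}$, use the duality-class bookkeeping of Lem.~\ref{b} to show the two projections' classes correspond under a $KK_X$-equivalence of the stabilized dual algebras, and then realize that equivalence geometrically to identify the corners $\mathcal{B}_1\cong\mathcal{B}_2$. The only minor differences are that the paper gets the cone equivalence from Lem.~\ref{RMN} rather than from functoriality of the mapping cone (your version is even a genuine $C(X)$-linear isomorphism, which is fine), and that the paper leaves the final step (equal $K_0$-classes of full properly infinite projections $\Rightarrow$ unitarily equivalent $\Rightarrow$ isomorphic corners, via Thm.~\ref{Ddeq} and the uniqueness part of Thm.~\ref{KGE}) implicit, whereas you spell it out.
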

\begin{proof}
Applying Lem. \ref{RMN} to the following diagram
$$\xymatrix{S\mathcal{A}_1\ar[d]^{I_S\otimes \varphi}\ar[r]&C_{u_{\mathcal{A}_1}}\ar[r]^{e_{\mathcal{A}_1}}&C(X)\ar[r]^{u_{\mathcal{A}_1}}\ar@{=}[d]&\mathcal{A}_1\ar[d]^{\varphi}\\
S\mathcal{A}_2\ar[r]&C_{u_{\mathcal{A}_2}}\ar[r]^{e_{\mathcal{A}_2}}&C(X)\ar[r]^{u_{\mathcal{A}_2}}&\mathcal{A}_2,
}$$
one has a $KK_X$-equivalence $\gamma : C_{u_{\mathcal{A}_1}}\to C_{u_{\mathcal{A}_2}}$ making the diagram commutes.
We take the $C(X)$-algebras $\mathcal{B}^s_i$ and duality classes $\mu_i \in KK_X(C(X), C_{u_{\mathcal{A}_i}}\otimes_{C(X)} \mathcal{B}^s_i)$ for $i=1, 2$.
By Lem. \ref{b},
one has a $KK_X$-equivalence $\mathcal{B}^s_1\xrightarrow{\alpha} \mathcal{B}^s_2$.
Let $p_i$ denote the projection defined by $[p_i]_0=\mu_i\hat{\otimes}(e_{\mathcal{A}_i}\otimes I_{\mathcal{B}^s_i})$.
Direct computation yields
\begin{align*}
[p_1]_0\hat{\otimes}\alpha=&\mu_1\hat{\otimes}(e_{\mathcal{A}_1}\otimes I_{\mathcal{B}^s_1})\hat{\otimes}\alpha\\
=&\mu_1\hat{\otimes}(\gamma \otimes I_{\mathcal{B}^s_1})\hat{\otimes}(e_{\mathcal{A}_2}\otimes I_{\mathcal{B}^s_1})\hat{\otimes}(I_{C(X)}\otimes \alpha)\\
=&\mu_1\hat{\otimes}(\gamma \otimes I_{\mathcal{B}^s_1})\hat{\otimes}(I_{C_{u_{\mathcal{A}_2}}}\otimes \alpha )\hat{\otimes}(e_{\mathcal{A}_2}\otimes I_{\mathcal{B}^s_2}).
\end{align*}
Lem. \ref{b} shows $\mu_1\hat{\otimes}(\gamma \otimes I_{\mathcal{B}^s_1})\hat{\otimes}(I_{C_{u_{\mathcal{A}_2}}}\otimes \alpha )$ also provides duality classes and one has
$$\mu_1\hat{\otimes}(\gamma \otimes I_{\mathcal{B}^s_1})\hat{\otimes}(I_{C_{u_{\mathcal{A}_2}}}\otimes \alpha )=\mu_2\hat{\otimes}(I_{C_{u_{\mathcal{A}_2}}}\otimes \alpha')$$
for some $\alpha'\in KK_X(\mathcal{B}^s_2, \mathcal{B}^s_2)^{-1}$.
Finally,
we have $[p_1]_0\hat{\otimes}\alpha=[p_2]_0\hat{\otimes}\alpha' \in K_0(\mathcal{B}^s_2)$,
and this shows
$$\mathcal{B}_1=p_1\mathcal{B}^s_1p_1\cong p_2\mathcal{B}^s_2p_2=\mathcal{B}_2.$$
\end{proof}
By Prop. \ref{wel},
we obtain a well-defined natural map 
$${R_{A, B}} : [X, \operatorname{BAut}(A)]\ni [\mathcal{A}]\mapsto [\mathcal{B}]\in [X, \operatorname{BAut}(B)]$$
for the reciprocal Kirchberg algebras $A$ and $B$,
and we show the bijectivity of $R_{A, B}$ in the same spirit as \cite[Sec. 2]{D2}.
\subsection{Proof of Thm. \ref{RB}}
\begin{thm}\label{taio}
The map $R_{A, B}$ is bijective.
\end{thm}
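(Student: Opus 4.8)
The plan is to prove bijectivity by induction on the cells of $X$, in the same spirit as the construction of the $K_X$-dual in Cor.~\ref{CXd}. Using the mapping-cylinder reduction of Lem.~\ref{L2} I may assume $X=Y\cup\mathbb{D}^{d+1}$ with $Y\cap\mathbb{D}^{d+1}=S^d$, taking as base case the $0$-skeleton, where every bundle is trivial and $R_{A,B}$ is vacuously bijective, and assuming inductively that $R^Y_{A,B}$ is a bijection. Since $\mathbb{D}^{d+1}$ is contractible, a locally trivial bundle with fibre $A$ over $X$ is obtained by clutching a bundle over $Y$ to the trivial bundle $C(\mathbb{D}^{d+1})\otimes A$ along an isomorphism over $S^d$, and the clutchings refining a fixed restriction to $Y$ are controlled by $[S^d,\operatorname{Aut}(A)]$. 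Thus $[X,\operatorname{BAut}(A)]$ fits into the clutching (Mayer--Vietoris) sequence of pointed sets with $[S^{d+1},\operatorname{BAut}(A)]=\pi_{d+1}(\operatorname{BAut}(A))=\pi_d(\operatorname{Aut}(A))$ on the left, and the idea is to compare this sequence for $A$ with the one for $B$ through $R_{A,B}$.

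First I would record the naturality of $R_{A,B}$ under restriction, which is built into its construction (Prop.~\ref{wel}): for $Y\hookrightarrow X$ one has $\pi_Y\circ R^X_{A,B}=R^Y_{A,B}\circ\pi_Y$. Hence the induced ladder
\[
\xymatrix{
\pi_d(\operatorname{Aut}(A))\ar[r]\ar[d]&[X,\operatorname{BAut}(A)]\ar[r]\ar[d]^{R^X_{A,B}}&[Y,\operatorname{BAut}(A)]\ar[d]^{R^Y_{A,B}}\\
\pi_d(\operatorname{Aut}(B))\ar[r]&[X,\operatorname{BAut}(B)]\ar[r]&[Y,\operatorname{BAut}(B)]
}
\]
commutes on the right, where the right vertical map is bijective by the inductive hypothesis.

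The key step is to identify the left vertical map. On bundles trivial over $Y$ the clutching datum lies in $\pi_d(\operatorname{Aut}(A))$, and I would show that $R_{A,B}$ transforms it by the anti-isomorphism of Thm.~\ref{MT}.3, realized analytically by $D^{S^d}_{A,B}$ in Thm.~\ref{mul} via Thm.~\ref{DEn}. Indeed, over $S^d$ the restriction $\mathcal{A}(S^d)$ is trivial, so the duality classes may be chosen outright, and by Lem.~\ref{b} and Cor.~\ref{pd} the dual-and-corner construction defining $R_{A,B}$ sends a clutching automorphism $\alpha$ of $C(S^d)\otimes A$ to the corner automorphism induced by the dual class $d_{\mu,\nu}(\alpha)$; this is exactly the map computed in Sec.~\ref{M3}. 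Since $\pi_d$ is abelian for $d\geq 1$, the anti-isomorphism induces a group isomorphism $\pi_d(\operatorname{Aut}(A))\cong\pi_d(\operatorname{Aut}(B))$, so the left vertical map is bijective. A diagram chase for the clutching sequence of pointed sets — using its exactness together with the fact that the $\pi_0(\operatorname{Aut})$- and $\pi_d(\operatorname{Aut})$-actions on the middle term for $A$ and for $B$ are intertwined by the isomorphisms just produced — then gives bijectivity of $R^X_{A,B}$, completing the induction.

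The main obstacle is precisely the compatibility asserted above. Because $\mathcal{D}(-)$ is not functorial over $X$ and the duality classes are pinned down only up to the ambiguities catalogued in Lem.~\ref{b}, matching the globally defined $R_{A,B}$ with the locally defined clutching transformation forces me to propagate a coherent choice of duality classes across $Y$, $\mathbb{D}^{d+1}$ and $S^d$, in the same cell-wise untwisting manner as in the proof of Thm.~\ref{ind}. I cannot shortcut this by asserting $C_{u_\mathcal{B}}\sim_{KK_X}\mathcal{D}(\mathcal{A})$, since that global cone equivalence is genuinely unavailable (Rem.~\ref{Q}). The way around is J.~Gabe's existence theorem (Thm.~\ref{KGE}): it converts the fibrewise $KK$-equivalences produced by the reciprocal diagram of Thm.~\ref{ca}, read now over $S^d$ and over $Y$, into honest $C(X)$-linear $*$-homomorphisms and homotopies of bundles, so that the clutching data can be re-glued explicitly. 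Certifying the resulting gluings as genuine bundle isomorphisms by Dadarlat's fibrewise criterion (Thm.~\ref{Ddeq}) then closes the inductive step, and hence establishes that $R_{A,B}$ is bijective.
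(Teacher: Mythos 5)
Your proposal takes a completely different route from the paper, and it contains a genuine gap at its central step. The paper does not induct over cells at all: injectivity is proved by taking a $C(X)$-linear isomorphism $\mathcal{B}_1\to\mathcal{B}_2$, propagating it through the duality classes (Lem.~\ref{b}) to a $KK_X$-equivalence $C_{u_{\mathcal{A}_1}}\to C_{u_{\mathcal{A}_2}}$ compatible with $e_{\mathcal{A}_i}$ and $u_{\mathcal{A}_i}$, and then invoking Lem.~\ref{RMN}; surjectivity is proved by explicitly manufacturing a preimage of $[\mathcal{B}]$ --- take the $K_X$-dual $\mathcal{D}$ of $\mathcal{B}$ (Cor.~\ref{CXd}), replace it by a unital Kirchberg model $\mathcal{D}^\sharp$ (Thm.~\ref{de}), realize the dual of $u_{\mathcal{B}}$ as an honest $C(X)$-linear map $d:\mathcal{D}^\sharp\to C(X)\otimes\mathcal{O}_\infty$ via Gabe's theorem, and cut a corner of $(SC_d)^\sharp\otimes\mathbb{K}$ by a projection to get $\mathcal{A}$. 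All the cell-wise work is already packaged into Cor.~\ref{CXd}; the bijectivity argument itself is global.

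The gap in your approach is twofold. First, the commutativity of your left-hand square --- that $R_{A,B}$ sends a bundle clutched by $g\in\pi_d(\operatorname{Aut}(A))$ to the bundle clutched by the image of $g$ under the anti-isomorphism of Thm.~\ref{mul} --- is the entire content of the theorem in disguise, and you only assert it; tracing the cone-dual-corner construction of $R_{A,B}$ through a clutching decomposition is not a routine application of Thm.~\ref{ind} and Thm.~\ref{KGE}, since $\mathcal{D}(-)$ is not functorial and the duality classes over $Y$, $\mathbb{D}^{d+1}$ and $S^d$ are only coherent up to the ambiguities of Lem.~\ref{b}. Second, even granting that square, the diagram chase does not close. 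Exactness of the clutching sequence of pointed sets controls only the fiber over the trivial class on $Y$; the fiber of $[X,\operatorname{BAut}(A)]\to[Y,\operatorname{BAut}(A)]$ over a nontrivial $[\mathcal{A}_Y]$ is an orbit of $[S^d,\operatorname{Aut}(A)]$ modulo the gauge action of bundle automorphisms of $\mathcal{A}_Y$, so injectivity requires that $R_{A,B}$ match these \emph{stabilizers}, i.e.\ a twisted version of Thm.~\ref{mul} for automorphism groups of nontrivial bundles, which neither you nor the paper possesses. Equivariance alone yields only one inclusion of stabilizers; the natural repair --- composing with $R_{B,A}$ to invert --- is unavailable, because whether $R_{B,A}=R_{A,B}^{-1}$ holds is exactly the open Question at the end of Sec.~\ref{5}, and Rem.~\ref{Q} likewise flags that the based version of the statement (which your clutching sequence naturally lives in) is unknown. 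These are not technicalities one can defer: they are the reason the paper proves bijectivity by direct construction rather than by a five-lemma argument.
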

\begin{proof}
First, we prove the injectivity.
Take two Puppe sequences $C_{u_{\mathcal{A}_i}}\xrightarrow{e_{\mathcal{A}_i}} C(X)\xrightarrow{u_{\mathcal{A}_i}} \mathcal{A}_i, i=1, 2$ and $C(X)$-algebras $\mathcal{B}^s_i$ with duality classes $$\mu_i \in KK_X(C(X), C_{u_{\mathcal{A}_i}}\otimes_{C(X)}\mathcal{B}^s_i),\quad \nu_i\in KK_X(\mathcal{B}^s_i\otimes_{C(X)}C_{u_{\mathcal{A}_i}}, C(X)).$$
The unital $C(X)$-algebra $\mathcal{B}_i$ with $[\mathcal{B}_i]={R_{A, B}}([\mathcal{A}_i])$ is obtained as a corner of $\mathcal{B}^s_i$ and the following diagram commutes
$$\xymatrix{C(X)\ar[r]^{\mu_i\hat{\otimes}(e_{\mathcal{A}_i}\otimes I_{\mathcal{B}^s_i})}&\mathcal{B}^s_i\\
C(X)\ar@{=}[u]\ar[r]^{u_{\mathcal{B}_i}}&\mathcal{B}_i,\ar[u]
}$$
where the right vertical map is the inclusion providing a $KK_X$-equivalence.
We will show $\mathcal{A}_1\cong \mathcal{A}_2$ under the assumption that there is a $C(X)$-linear isomorphism $\mathcal{B}_1\to\mathcal{B}_2$.
By assumption,
one has a $KK_X$-equivalence $\alpha$ making the following diagram commute
$$\xymatrix{C(X)\quad\ar[r]^{\mu_1\hat{\otimes}(e_{\mathcal{A}_1}\otimes I_{\mathcal{B}^s_1})}\ar@{=}[d]&\quad\mathcal{B}^s_1\ar[d]^{\alpha}\\
C(X)\quad\ar[r]^{\mu_2\hat{\otimes}(e_{\mathcal{A}_2}\otimes I_{\mathcal{B}^s_2})}&\quad\mathcal{B}^s_2,}$$
and Lem. \ref{b} also gives $\beta\in KK_X(C_{u_{\mathcal{A}_1}}, C_{u_{\mathcal{A}_2}})^{-1}$.

Let $f_i$ denote the element $(\mu_i\hat{\otimes}(e_{\mathcal{A}_i}\otimes I_{\mathcal{B}^s_i}))\otimes I_{C_{u_{\mathcal{A}_i}}}$,
and direct computation yields
\begin{align*}
f_1\hat{\otimes}(\alpha\otimes I_{C_{u_{\mathcal{A}_1}}})\hat{\otimes}(I_{\mathcal{B}^s_2}\otimes\beta)=&((\mu_2\hat{\otimes}(e_{\mathcal{A}_2}\otimes I_{\mathcal{B}^s_2}))\otimes I_{C_{u_{\mathcal{A}_1}}})\hat{\otimes}(I_{\mathcal{B}^s_2}\otimes\beta)\\
=&\beta\hat{\otimes}f_2.
\end{align*}
The element $\nu_1':=(\alpha\otimes I_{C_{u_{\mathcal{A}_1}}})\hat{\otimes}(I_{\mathcal{B}^s_2}\otimes \beta)\hat{\otimes}\nu_2$ provides duality classes for $C_{u_{\mathcal{A}_1}}$ and $\mathcal{B}^s_1$ by Lem. \ref{b},
and one can find a $\gamma \in KK_X(C_{u_{\mathcal{A}_1}}, C_{u_{\mathcal{A}_1}})^{-1}$ with
$$\nu_1=(I_{\mathcal{B}^s_1}\otimes\gamma)\hat{\otimes}\nu'_1.$$
Now one has the following commutative diagram
$$\xymatrix{C_{u_{\mathcal{A}_1}}\ar[r]^{f_1}\ar[d]^{\gamma}&\mathcal{B}_1^s\otimes_{C(X)}C_{u_{\mathcal{A}_1}}\ar[r]^{\nu_1}\ar[d]^{I\otimes\gamma}&C(X)\ar@{=}[d]\\
C_{u_{\mathcal{A}_1}}\ar[d]^{\beta}\ar[r]^{f_1}&\mathcal{B}_1^s\otimes_{C(X)}C_{u_{\mathcal{A}_1}}\ar[d]^{(\alpha\otimes I)\hat{\otimes}(I\otimes \beta)}\ar[r]^{\nu'_1}&C(X)\ar@{=}[d]\\
C_{u_{\mathcal{A}_2}}\ar[r]^{f_2}&\mathcal{B}^s_2\otimes_{C(X)}C_{u_{\mathcal{A}_2}}\ar[r]^{\nu_2}&C(X).
}$$
Since $f_i\hat{\otimes}\nu_i=e_{\mathcal{A}_i}$,
we can apply Lem. \ref{RMN} for 
$$\xymatrix{SC(X)\ar@{=}[d]\ar[r]^{I_S\otimes u_{\mathcal{A}_1}}&S\mathcal{A}_1\ar[r]&C_{u_{\mathcal{A}_1}}\ar[d]^{\gamma\hat{\otimes}\beta}\ar[r]^{e_{\mathcal{A}_1}}&C(X)\ar@{=}[d]\\
SC(X)\ar[r]^{I_S\otimes u_{\mathcal{A}_2}}&S\mathcal{A}_2\ar[r]&C_{u_{\mathcal{A}_2}}\ar[r]^{e_{\mathcal{A}_2}}&C(X),}$$
and obtain the isomorphism $\mathcal{A}_1\cong \mathcal{A}_2$.

Next,
we show the surjectivity.
Fix an arbitrary locally trivial continuous $C(X)$-algebra $\mathcal{B}$ with fiber $B$,
and we construct $[\mathcal{A}]\in [X, \operatorname{BAut}(A)]$ with $[\mathcal{B}]=R_{A, B}([\mathcal{A}])$.
There exists a separable nuclear locally trivial continuous $C(X)$-algebra $\mathcal{D}$ such that $\mathcal{D}$ and $\mathcal{B}$ are $K_X$-dual with duality classes
\[\mu\in KK_X(C(X), \mathcal{D}\otimes_{C(X)}\mathcal{B}), \quad\nu \in KK_X(\mathcal{B}\otimes_{C(X)}\mathcal{D}, C(X)).\]
Thanks to Thm. \ref{de}, 
one has a unital nuclear locally trivial continuous $C(X)$-algebra $\mathcal{D}^\sharp$ ($\sim_{KK_X}\mathcal{D}$) whose fiber $(\mathcal{D}^\sharp)_x (\sim_{KK} D(B)=C_{u_A})$ is a Kirchberg algebra. 
By Thm. \ref{KGE},
there exists a $C(X)$-linear $*$-homomorphism $d : \mathcal{D}^\sharp\to C(X)\otimes\mathcal{O}_\infty$ making the following diagram commute
\[\xymatrix{
\mathcal{D}\ar[r]^{(u_{\mathcal{B}}\otimes I_\mathcal{D})\hat{\otimes}\nu}\ar[d]&C(X)\ar[d]\\
\mathcal{D}^\sharp\ar[r]^{d}&C(X)\otimes\mathcal{O}_\infty,
}\]
where every vertical arrow is a $KK_X$-equivalence.
The map $d$ gives the Puppe sequence $SC(X)\otimes\mathcal{O}_\infty\xrightarrow{\iota} C_d\to \mathcal{D}^\sharp\to C(X)\otimes\mathcal{O}_\infty$ with a separable nuclear locally trivial continuous $C(X)$-algebra $C_d$.
We show ${C_d}_x\sim_{KK}SA$.
Since $KK(d_x)\in KK((\mathcal{D}^\sharp)_x, \mathcal{O}_\infty)$ is identified with $(u_{\mathcal{B}_x}\otimes I_{\mathcal{D}_x})\hat{\otimes}\nu_x\in KK(\mathcal{D}_x, \mathbb{C})$,
the isomorphism
\[(-\otimes I_{\mathcal{D}_x})\hat{\otimes}\nu_x : KK(\mathbb{C}, \mathcal{B}_x)\to KK(\mathcal{D}_x, \mathbb{C})\]
shows
\[KK((\mathcal{D}^\sharp)_x, \mathcal{O}_\infty)/\langle KK(d_x)\rangle \cong KK(\mathbb{C}, \mathcal{B}_x)/\langle KK(u_{\mathcal{B}_x})\rangle \cong K_1(C_{u_B})=KK(SA, \mathbb{C}).\]
Thus, we have isomorphisms
\[KK((\mathcal{D}^\sharp)_x, \mathbb{K})/\langle KK(d_x)\rangle=KK(SA, \mathbb{C})=KK(C_{u_A}, \mathbb{C})/\langle KK(e_A)\rangle.\]


The same argument as in proof of Thm. \ref{ca} gives a $KK$-equivalence $h\in KK(C_{u_A}, (\mathcal{D}^\sharp)_x)^{-1}$ with the following commutative diagram
\[\xymatrix{S\mathcal{O}_\infty\ar[r]^{\iota_x}&{C_d}_x\ar[r]&(\mathcal{D}^\sharp)_x\ar[r]^{d_x}&\mathcal{O}_\infty\\
S\ar[r]^{I_S\otimes u_A}\ar[u]&SA\ar[r]&C_{u_A}\ar[u]^{h}\ar[r]^{e_A}&\mathbb{C},\ar[u]
}\]
and one has $\sigma \in KK(SA, {C_d}_x)^{-1}$ making the above diagram commute.
Thanks to Thm. \ref{de},
one has a diagram
\[\xymatrix{SC(X)\ar[d]\ar@{.>}[r]^{I_S\otimes [p]_0}&S(SC_d)^\sharp\otimes\mathbb{K}\\
S^2SC(X)\otimes\mathcal{O}_\infty\ar[r]^{I_{S^2}\otimes\iota}&S^2C_d\ar[u]\\
SC(X)\otimes\mathcal{O}_\infty\ar[u]\ar[r]^{\iota}&C_d,\ar[u]
}\]
where all vertical maps are $KK_X$-equivalences and the broken arrow determines a properly infinite full projection $p\in (SC_d)^\sharp\otimes\mathbb{K}$.
The commutative diagram
\[\xymatrix{
S\ar[d]\ar[r]^{I_S\otimes [\pi_x(p)]_0}&S(C_{d_x})^\sharp\otimes\mathbb{K}\\
S\mathcal{O}_\infty\ar[r]^{\iota_x}&C_{d_x}\ar[u]\\
S\ar[u]\ar[r]^{I_S\otimes u_A}&SA\ar[u]^{\sigma}
}\]
shows
\[\pi_x(p)((SC_d)^\sharp\otimes\mathbb{K})_x\pi_x(p)\cong A.\] 
Thus, we obtain a locally trivial continuous $C(X)$-algebras $\mathcal{A}:= p((SC_d)^\sharp\otimes \mathbb{K})p$ with fiber $A$ and the following commutative diagram
\[\xymatrix{S^2\mathcal{A}\ar[r]&SC_{u_{\mathcal{A}}}\ar@{.>}[d]\ar[r]^{I_S\otimes e_\mathcal{A}}&SC(X)\ar[d]\ar[r]^{I_S\otimes u_\mathcal{A}}&S\mathcal{A}\\
SC_d\ar[u]\ar[r]&S(\mathcal{D}^\sharp)\ar[r]^{I_S\otimes d}&SC(X)\otimes\mathcal{O}_\infty\ar[r]^{\iota}&C_d\ar[u]\\
&S\mathcal{D}\ar[u]\ar[r]^{I_S\otimes((u_\mathcal{B}\otimes I_\mathcal{D})\hat{\otimes}\nu)}&SC(X)\ar[u]&
}\]
where every vertical arrow is a $KK_X$-equivalence and the broken arrow is given by Lem. \ref{RMN}.
Finally,
we obtain a $KK_X$-equivalence $\gamma \in KK_X(C_{u_\mathcal{A}}, \mathcal{D})$ with $\gamma\hat{\otimes}(u_\mathcal{B}\otimes I_\mathcal{D})\hat{\otimes}\nu=e_\mathcal{A}$.
It is straightforward to check $\mu\hat{\otimes}(\gamma^{-1}\otimes I_\mathcal{B})\hat{\otimes}(e_\mathcal{A}\otimes I_\mathcal{B})=u_\mathcal{B}$,
and $\mu\hat{\otimes}(\gamma^{-1}\otimes I_\mathcal{B})\in KK_X(C(X), C_{u_\mathcal{A}}\otimes_{C(X)}\mathcal{B})$ provides duality classes for $C_{u_{\mathcal{A}}}$ and $\mathcal{B}$.
Since the definition of $R_{A, B}$ is independent of the choice of duality classes, this implies $[\mathcal{B}]= {R_{A, B}}([\mathcal{A}])$.
\end{proof}
As mentioned in Rem. \ref{Q},
the categorical picture of Thm. \ref{ca} is not fully generalized for the case of $C(X)$-algebras.
So we may ask the following question.
\begin{que}
Does the equation $R_{A, B}^{-1}=R_{B, A}$ hold?
In other words,
is there a $KK_X$-equivalence $C_{u_\mathcal{B}}\to \mathcal{D}(\mathcal{A})$ making the diagram in Rem. \ref{Q} commutative?
\end{que}
\section{Appendix}\label{Ap}
We use the same notation and terminology as in Sec. \ref{nota}.
\begin{lem}\label{G1}
Let $G=G(p)\not =0$ be a finite Abel $p$-group with $L(G)=t$.
For $1\leq l\leq {\rm max}\{k\in I(G)\}$ and an element $g\in G$ of order $p^l$,
there exists $t\geq s\geq 1$
and we have an isomorphism
$$G\ni g \mapsto (0, \cdots, 0, p^{k_1-r_1}, \cdots, p^{k_s-r_s})\in (\mathbb{Z}_{p^{n_1}}\oplus\cdots\oplus\mathbb{Z}_{p^{n_{t-s}}})\oplus(\mathbb{Z}_{p^{k_1}}\oplus\cdots\oplus \mathbb{Z}_{p^{k_s}})$$
with $r_s=l, 0<r_i$, where the set $\{n_1, \cdots , n_{t-s}, k_1, \cdots , k_s\}$ is equal to $I(G)$. 
For $s\geq 2$,
the above $k_i, r_i$ satisfy
$$k_i<k_{i+1},\quad r_i<r_{i+1},\quad 0\leq k_1-r_1,\quad k_i-r_i<k_{i+1}-r_{i+1}.$$
\end{lem}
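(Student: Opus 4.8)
The plan is to induct on the order $|G|$, at each stage splitting off the cyclic summand that carries the lowest component of $g$; the relevant data is the height (Ulm) sequence of $g$. Write $\mathrm{ht}(x)=\max\{h\ge 0: x\in p^hG\}$ with $\mathrm{ht}(0)=\infty$, and recall the elementary facts $\mathrm{ht}(p x)\ge \mathrm{ht}(x)+1$ and that $\mathrm{ht}(x)=0$ together with $\mathrm{ht}(p^m x)=m$ forces $\mathrm{ht}(p^i x)=i$ for $0\le i\le m$. I will use the standard purity criterion: a cyclic $\langle x\rangle$ with $\mathrm{ord}(x)=p^m$ is a direct summand of $G$ if and only if $\mathrm{ht}(p^i x)=i$ for $0\le i\le m-1$. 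Put $h_j:=\mathrm{ht}(p^j g)$ with the convention $h_l:=\infty$, let $a_1:=h_0=\mathrm{ht}(g)$, and let $r_1$ be the first gap, i.e. the least $j\in\{1,\dots,l\}$ with $h_j>h_{j-1}+1$. Then $h_j=a_1+j$ for $0\le j\le r_1-1$; set $k_1:=a_1+r_1$ and, when $r_1<l$, $c:=h_{r_1}-r_1$, which satisfies $c>a_1$ precisely because of the gap.

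The heart of the argument is constructing an order-$p^{k_1}$ summand adapted to $g$, with a remainder of height exactly $c$. When $r_1=l$ I simply take $f:=g$; otherwise, since $p^{r_1}g$ has height $c+r_1$ I may write $p^{r_1}g=p^{r_1+c}u$ and set $g'':=p^c u$, so that $p^{r_1}g''=p^{r_1}g$ and hence $\mathrm{ht}(g'')=c$ (the inequality $\mathrm{ht}(p^{r_1}g'')\ge \mathrm{ht}(g'')+r_1$ gives $\le c$, and $g''\in p^cG$ gives $\ge c$). Then $f:=g-g''$ has height $a_1$ and order exactly $p^{r_1}$, so $f=p^{a_1}e_1$ with $\mathrm{ht}(e_1)=0$ and $\mathrm{ord}(e_1)=p^{k_1}$. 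A direct height computation—using $h_j=a_1+j$ for $j<r_1$ and $\mathrm{ht}(g'')=c>a_1$—shows $\mathrm{ht}(p^ie_1)=i$ for $0\le i\le k_1-1$, so $\langle e_1\rangle\cong\mathbb{Z}_{p^{k_1}}$ is a direct summand. Choosing a complement and absorbing the $\langle e_1\rangle$-component of $g''$ into the first term (it has height $\ge c>a_1$, so it only rescales $e_1$ by a unit) yields $G=\langle e'\rangle\oplus G'_0$ with $g=p^{a_1}e'+g'''$, $g'''\in G'_0$, and $\mathrm{ht}(g''')=c$.

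I then apply the induction hypothesis to $g'''\in G'_0$ (where $|G'_0|<|G|$), obtaining a normal form with component heights $a_2<\dots<a_s$ and orders $p^{r_2}<\dots<p^{r_s}$. It remains to check the two inequalities linking the peeled datum to the recursion. First, $a_1<a_2$, because $a_2=\mathrm{ht}(g''')=c>a_1$. Second, $r_1<r_2$: since $p^{a_1}e'$ has order $p^{r_1}$ we have $p^jg'''=p^jg$ for all $j\ge r_1$, whence $\mathrm{ht}(p^{r_1}g''')=h_{r_1}=a_2+r_1$; as $\mathrm{ht}(p^{r_1}g''')\ge a_2+r_1$ with equality precisely when $g'''$ has no gap in positions $1,\dots,r_1$, this forces the first gap $r_2$ of $g'''$ to exceed $r_1$. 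Collecting $\langle e'\rangle$ together with the summands of $G'_0$ meeting $g'''$ gives the $s$ factors $\mathbb{Z}_{p^{k_i}}$ on which $g$ has component $p^{a_i}=p^{k_i-r_i}$ (after a unit rescaling of each generator), while the summands disjoint from the support of $g$ furnish the $\mathbb{Z}_{p^{n_j}}$; uniqueness of elementary divisors identifies $\{n_j\}\sqcup\{k_i\}$ with $I(G)$. The boundary conditions then read off directly: $r_i\ge 1$ since each component is nonzero, $k_1-r_1=a_1\ge 0$, and $p^{r_s}$ is the maximal component order, equal to $\mathrm{ord}(g)=p^l$, so $r_s=l$.

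I expect the summand construction of the second paragraph to be the main obstacle. The naive choice of complement only yields $\mathrm{ht}(g''')\le c$, and it is the explicit construction of the remainder $g''=p^cu$ from the divisibility $p^{r_1}g\in p^{r_1+c}G$ that pins its height down to exactly $c$—which is in turn what forces the recursion to produce strictly increasing orders $r_1<r_2$. Everything else is bookkeeping governed by the height sequence.
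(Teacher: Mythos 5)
Your proof is correct, and it takes a genuinely different route from the paper's. The paper argues in coordinates: it fixes an arbitrary cyclic decomposition of $G$, normalizes every component of $g$ to a power of $p$ by a unit, locates the least index $i_1$ whose component has the full order $p^l$, and kills every other absorbable component by explicit shear automorphisms of pairs of cyclic summands (well defined precisely because of divisibility inequalities such as $(k'_i-R_i)-(k'_{i_1}-R_{i_1})+k'_{i_1}\geq k'_i$); it then recurses on the initial segment of components of order strictly smaller than $p^l$. You instead work invariantly with the height sequence $h_j=\mathrm{ht}(p^jg)$: you peel off the \emph{bottom} of $g$ --- a cyclic summand $\langle e_1\rangle$ of order $p^{k_1}$, $k_1=a_1+r_1$, adapted to the gap-free initial segment of the height sequence --- using the purity criterion (a cyclic subgroup whose generator has gap-free heights is pure, and a pure bounded subgroup is a direct summand, a standard fact going back to Pr\"ufer and Kulikov), and recurse on the complement; the paper, by contrast, peels off the \emph{top} component (the one of order $p^l$) and recurses downward. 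Your crucial step is sound and correctly identified as the crux: constructing $g''=p^cu$ from the divisibility $p^{r_1}g\in p^{r_1+c}G$ pins $\mathrm{ht}(g''')$ to exactly $c$ (the bound $\geq c$ since $g'''=g''-w$ with both terms of height $\geq c$, the bound $\leq c$ since $p^{r_1}g'''=p^{r_1}g$ has height $c+r_1$), and this exactness is what delivers both interlacing inequalities $a_1<a_2$ and $r_1<r_2$; the only point left implicit is that, in the normal form furnished by the induction hypothesis, $r_2$ is indeed the first gap of the height sequence of $g'''$, which is an immediate computation. As for what each approach buys: the paper's proof is entirely self-contained, needing nothing beyond the cyclic decomposition theorem and hand-built automorphisms, while yours imports a standard but nontrivial splitting theorem; in return, your construction exhibits the data $(k_i-r_i,\,r_i)$ as intrinsic invariants of the pair $(G,g)$, read off from the height sequence, so the normal form is manifestly canonical --- something the paper's sweeping procedure does not make visible and only recovers implicitly through the quotient computations of Cor.~\ref{G2} and Cor.~\ref{tort}.
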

We note that the set $\{n_1, \cdots n_{t-s}\}$ is empty if $t=s$.
\begin{proof}
Every element $x\in \mathbb{Z}_{p^k}$ of order $p^r\; (r\leq k)$ is of the form $x=ap^{k-r}$ with $GCD(a, p)=1$, $a\in\mathbb{Z}$.
One has $a^{-1}\in \mathbb{Z}_{p^k}$ and the multiplication by $a^{-1}$ is an automorphism of $\mathbb{Z}_{p^k}$ that sends $x$ to $p^{k-r}$.
Thus we have an isomorphism $$G\ni g\mapsto (x_1,\cdots, x_t)\in \mathbb{Z}_{p^{k'_1}}\oplus\cdots\oplus\mathbb{Z}_{p^{k'_t}},$$ where every $x_i$ is of the form $x_i=p^{k'_i-R_i}$, $0\leq R_i\leq l$.

We may assume $k'_1\leq k'_2\leq\cdots\leq k'_t$,
and write $i_1:={\rm min}\{i \;|\;R_i=l\}$.
We will consider the following two cases :
$${I)}\colon {\rm there\;\; exists}\; i>i_1\; {\rm with}\; x_i=p^{k'_i-R_i}\not =0\in \mathbb{Z}_{p^{k'_i}},$$
$${II)}\colon {\rm there\;\; exists}\; i<i_1\; {\rm with}\; x_i=p^{k'_i-R_i}\not =0\in \mathbb{Z}_{p^{k'_i}},\;\; k'_i-R_i\geq k'_{i_1}-R_{i_1}.$$
If there exists $i>i_1$ satisfying I),
we have $$k'_i\geq k'_{i_1},\quad R_i\leq R_{i_1}=l,\quad (k'_i-  R_i)-(k'_{i_1}-R_{i_1})+k'_{i_1}\geq k'_i.$$
Thus, the following isomorphism
$$\mathbb{Z}_{p^{k'_{i_1}}}\oplus\mathbb{Z}_{p^{k'_i}}\ni (x, y)\mapsto (x, xp^{(k'_i-R_i)-(k'_{i_1}-R_{i_1})}+y)\in \mathbb{Z}_{p^{k'_{i_1}}}\oplus\mathbb{Z}_{p^{k'_i}}$$
is well-defined and this sends $(x_{i_1}, 0)$ to $(x_{i_1}, x_i)$.

If there exists $i<i_1$ satisfying II),
we have
$$(k'_i-R_i)-(k'_{i_1}-R_{i_1})+k'_{i_1}\geq k'_i,$$ 
and the following isomorphism is well-defined
$$\mathbb{Z}_{p^{k'_i}}\oplus\mathbb{Z}_{p^{k'_{i_1}}}\ni (y, x)\mapsto (xp^{(k'_i-R_i)-(k'_{i_1}-R_{i_1})}+y, x)\in \mathbb{Z}_{p^{k'_i}}\oplus\mathbb{Z}_{p^{k'_{i_1}}},$$
and this sends $(0, x_{i_1})$ to $(x_i, x_{i_1})$.

Now one can obtain an isomorphism
$$\mathbb{Z}_{p^{k'_1}}\oplus\cdots\oplus\mathbb{Z}_{p^{k'_t}}\ni (x_1, \cdots, x_t)\mapsto (y_1, \cdots, y_t) \in \mathbb{Z}_{p^{k'_1}}\oplus\cdots\oplus\mathbb{Z}_{p^{k'_t}}$$
with $y_i=p^{k'_i-R_i}, R_i\leq l$ such that there uniquely exists $i_1$ satisfying
$$R_{i_1}=l,\quad y_j=0 \;{\rm for}\; i_1<j,\quad R_j< R_{i_1}\; {\rm for}\; j<i_1,\quad k'_j-R_j<k'_{i_1}-R_{i_1}\; {\rm for}\;  y_j\not =0.$$
Since $k'_j-R_j<k'_{i_1}-R_{i_1}$ and $R_j< R_{i_1}$ imply $k'_j< k'_{i_1}$, we can prove the statement by applying the same argument for $(y_1, \cdots, y_{(i_1-1)})\in \mathbb{Z}_{p^{k'_1}}\oplus \cdots\oplus\mathbb{Z}_{p^{k'_{(i_1-1)}}}$ inductively.
\end{proof}
\begin{cor}\label{G2}
Let $g\in G=G(p)$, $I(G)=\{n_1,\cdots, n_{t-s}, k_1,\cdots, k_s\}$ and $r_i$ be as in Lem. \ref{G1}.
Then the quotient group $G/{\langle g\rangle}$ is isomorphic to
$$(\mathbb{Z}_{p^{n_1}}\oplus \cdots\oplus\mathbb{Z}_{p^{n_{t-s}}})\oplus(\mathbb{Z}_{p^{k_1-r_1}}\oplus\bigoplus_{i=2}^s \mathbb{Z}_{p^{k_i-r_i+r_{i-1}}})\;\;  (s\geq2),$$
$$(\mathbb{Z}_{p^{n_1}}\oplus \cdots\oplus\mathbb{Z}_{p^{n_{t-1}}})\oplus\mathbb{Z}_{p^{k_1-r_1}}\;\; (s=1).$$
In particular, we have $I(G)\cap I(G/\langle g\rangle)=\{n_1,\cdots , n_{t-s}\}$,
and $(G/\langle g\rangle, G)$ satisfies ($**$).
\end{cor}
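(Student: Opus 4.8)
The plan is to reduce everything to the cyclic part produced by Lem.~\ref{G1} and then compute the quotient by a determinantal-divisor (Smith normal form) calculation. First I would apply Lem.~\ref{G1} to replace $G$ by the isomorphic group $N\oplus H$, where $N=\mathbb{Z}_{p^{n_1}}\oplus\cdots\oplus\mathbb{Z}_{p^{n_{t-s}}}$ and $H=\mathbb{Z}_{p^{k_1}}\oplus\cdots\oplus\mathbb{Z}_{p^{k_s}}$, under which $g$ becomes $(0,g')$ with $g'=(p^{k_1-r_1},\ldots,p^{k_s-r_s})\in H$. Because the $N$-coordinate of $g$ vanishes, $\langle g\rangle=0\oplus\langle g'\rangle$ and therefore $G/\langle g\rangle\cong N\oplus(H/\langle g'\rangle)$. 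This isolates the whole problem into computing $H/\langle g'\rangle$, and it already explains why the summand $\mathbb{Z}_{p^{n_1}}\oplus\cdots\oplus\mathbb{Z}_{p^{n_{t-s}}}$ survives untouched in the answer.

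Next I would present $H/\langle g'\rangle$ as the cokernel of the $(s+1)\times s$ integer matrix $M$ whose first $s$ rows are $p^{k_i}\epsilon_i$ (with $\epsilon_i$ the standard basis of $\mathbb{Z}^s$) and whose last row is $(p^{k_1-r_1},\ldots,p^{k_s-r_s})$. Its torsion is described by the Smith normal form $d_1\mid\cdots\mid d_s$, and since every entry is a power of $p$ it suffices to track valuations: writing $D_j$ for the minimal $p$-valuation of a $j\times j$ minor of $M$ (and $D_0:=0$), one has $v_p(d_j)=D_j-D_{j-1}$. A nonzero $j\times j$ minor is of exactly one of two shapes: a \emph{diagonal} one avoiding the last row, which forces equal row and column sets $R$ and has valuation $\sum_{i\in R}k_i$; or one using the last row, which forces the chosen columns to contain the $j-1$ diagonal indices $S$ plus one extra index $c$, with valuation $\sum_{i\in S}k_i+(k_c-r_c)=\sum_{i\in T}k_i-r_c$ for $T:=S\cup\{c\}$.

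The key step, which I expect to be the main obstacle, is the minimization giving $D_j=k_1+\cdots+k_j-r_j$. Since $k_1<\cdots<k_s$, the diagonal minors are minimized at $R=\{1,\ldots,j\}$, value $k_1+\cdots+k_j$. For the second shape I would minimize $\sum_{i\in T}k_i-r_c$ over $j$-subsets $T$ and $c\in T$: taking the $j$ smallest indices $T=\{1,\ldots,j\}$ minimizes $\sum k_i$, and among these the largest $r_c$ is $r_j$, so the optimum is $k_1+\cdots+k_j-r_j$. The delicate point is to rule out trading a small index for a larger one carrying a bigger $r$; the exchange inequality works out precisely because $(k_i-r_i)$ is increasing (Lem.~\ref{G1}), so replacing index $j$ by $j+1$ changes the objective by $(k_{j+1}-r_{j+1})-(k_j-r_j)>0$. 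As $r_j>0$, the second shape beats the diagonal one, giving $D_j=k_1+\cdots+k_j-r_j$.

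Finally I would read off $v_p(d_1)=k_1-r_1$ and $v_p(d_j)=k_j-r_j+r_{j-1}$ for $j\ge2$; the same three monotonicities show these exponents are nondecreasing, hence legitimate invariant factors, and they reproduce the stated decomposition (the $s=1$ case reducing to $\mathbb{Z}_{p^{k_1}}/\langle p^{k_1-r_1}\rangle\cong\mathbb{Z}_{p^{k_1-r_1}}$). For the two ``in particular'' assertions I would observe that this decomposition exhibits the pair $(G/\langle g\rangle,G)$ in the alternating form of Sec.~\ref{nota}: over the common part $N$, the $G/\langle g\rangle$-exponents $a_1=k_1-r_1$, $a_i=k_i-r_i+r_{i-1}$ and the $G$-exponents $b_i=k_i$ satisfy $a_1<b_1<a_2<b_2<\cdots<a_s<b_s$, where $a_i<b_i$ comes from $r_{i-1}<r_i$ (and $r_1>0$) and $b_i<a_{i+1}$ from $(k_i-r_i)<(k_{i+1}-r_{i+1})$. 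This is condition $(*)$ and identifies $I(G)\cap I(G/\langle g\rangle)$ with $I(N)=\{n_1,\ldots,n_{t-s}\}$; condition $(**)$ then follows since $\max(I(G)\setminus I(G/\langle g\rangle))=k_s>k_s-r_s+r_{s-1}=\max(I(G/\langle g\rangle)\setminus I(G))$, once more by $r_{s-1}<r_s$.
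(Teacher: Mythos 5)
Your proposal is correct, but it computes the crucial quotient by a genuinely different method than the paper. Both arguments begin identically: Lem.~\ref{G1} splits off the summand $N=\mathbb{Z}_{p^{n_1}}\oplus\cdots\oplus\mathbb{Z}_{p^{n_{t-s}}}$, reducing everything to $H/\langle g'\rangle$ with $H=\mathbb{Z}_{p^{k_1}}\oplus\cdots\oplus\mathbb{Z}_{p^{k_s}}$ and $g'=(p^{k_1-r_1},\ldots,p^{k_s-r_s})$. At that point the paper proceeds by exhibiting explicit generators $h_i$ of $H$ (with $y^i_i=1$ and $y^i_j=p^{(k_j-r_j)-(k_i-r_i)}$ for $j>i$) whose multiples $p^{k_1-r_1}h_1$ and $p^{k_i-r_i+r_{i-1}}h_i$ lie in $\langle g'\rangle$; this yields a surjection from the claimed group onto $H/\langle g'\rangle$, which is then shown to be bijective by comparing cardinalities ($|H/\langle g'\rangle|=p^{k_1+\cdots+k_s-r_s}$, matching the telescoping sum of exponents). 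You instead run a Smith normal form computation: since every nonzero minor of your $(s+1)\times s$ matrix is $\pm$ a power of $p$, the invariant factors are read off from the minimal valuations $D_j$, and your minimization $D_j=k_1+\cdots+k_j-r_j$ is justified correctly — for a $j$-subset $T$ with $\max T=m\ge j$ one has $\sum_{i\in T}k_i-r_m\ge k_1+\cdots+k_{j-1}+(k_m-r_m)\ge k_1+\cdots+k_{j-1}+(k_j-r_j)$, exactly by the monotonicity of $k_i-r_i$ from Lem.~\ref{G1}, and $r_j>0$ makes the last-row minors beat the diagonal ones. The trade-off: the paper's route requires guessing the generators (i.e., already knowing the answer) but then needs only a one-line counting argument, while your route derives the invariant factors mechanically without guessing, at the price of the combinatorial minimization over minors. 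Your treatment of the two ``in particular'' claims via the full alternating chain $a_1<k_1<a_2<\cdots<a_s<k_s$ is also fine and slightly more explicit than the paper's remark $k_{i-1}<k_i-r_i+r_{i-1}<k_i$; just note that for $s=1$ the final inequality $\max(I(G/\langle g\rangle)\setminus I(G))<k_1$ rests on $r_1>0$ rather than on $r_{s-1}<r_s$, as you indicated parenthetically earlier in your argument.
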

\begin{proof}
It is easy to verify the case of $s=1$, and we prove the case of $s\geq 2$.
By Lem. \ref{G1},
we have $$G/\langle g\rangle\cong (\mathbb{Z}_{p^{n_1}}\oplus\cdots\oplus\mathbb{Z}_{p^{n_{t-s}}})\oplus((\mathbb{Z}_{p^{k_1}}\oplus\cdots\oplus\mathbb{Z}_{p^{k_s}})/\langle (p^{k_1-r_1},\cdots, p^{k_s-r_s})\rangle).$$
We write $h_i :=(y^i_1,\cdots, y^i_s)\in \mathbb{Z}_{p^{k_1}}\oplus\cdots\oplus\mathbb{Z}_{p^{k_s}}$ with $$y^i_j=0 \; (j<i),\quad y^i_i=1,\quad y^i_j=p^{(k_j-r_j)-(k_i-r_i)}\; (j>i).$$
Note that $\{h_i\}_{i=1}^s$ generate $\mathbb{Z}_{p^{k_1}}\oplus\cdots\oplus\mathbb{Z}_{p^{k_s}}$ and they satisfy the following equations
$$p^{k_1-r_1}h_1=(p^{k_1-r_1},\cdots, p^{k_s-r_s}),$$
$$p^{k_i-r_i+r_{i-1}}h_i=p^{r_{i-1}}(p^{k_1-r_1},\cdots, p^{k_s-r_s})$$
which provide the surjection $$\mathbb{Z}_{p^{k_1-r_1}}\oplus\bigoplus_{i=2}^s \mathbb{Z}_{p^{k_i-r_i+r_{i-1}}}\to(\mathbb{Z}_{p^{k_1}}\oplus\cdots\oplus\mathbb{Z}_{p^{k_s}})/\langle (p^{k_1-r_1},\cdots, p^{k_s-r_s})\rangle.$$
Since $|(\mathbb{Z}_{p^{k_1}}\oplus\cdots\oplus\mathbb{Z}_{p^{k_s}})/\langle (p^{k_1-r_1},\cdots, p^{k_s-r_s})\rangle|=p^{k_1+\cdots+k_s}/p^{r_s}$,
the above map is bijective.

By Lem. \ref{G1}, one has $k_{i-1}<k_i-r_i+r_{i-1}<k_i$ and this proves 
\begin{align*}
&I(G/\langle g\rangle)\cap I(G)\\
\subset&\{n_1,\cdots, n_{t-s}, k_1-r_1, k_2-r_2+r_1,\cdots, k_s-r_s+r_{s-1}\}\cap\{n_1,\cdots, n_{t-s}, k_1,\cdots,k_s\}\\
=&\{n_1, \cdots, n_{t-s}\}.
\end{align*}
Since $\{n_1,\cdots, n_{t-s}\}\subset I(G/\langle g\rangle)\cap I(G)$,
we have $\{n_1,\cdots, n_{t-s}\}= I(G/\langle g\rangle)\cap I(G)$.
\end{proof}
\begin{cor}\label{tort}
Let $g_m\in G=G(p), m=1, 2$ be elements of the finite Abel $p$-group satisfying $G/\langle g_1\rangle\cong G/\langle g_2\rangle$.
Then, there exists an automorphism of $G$ that sends $g_1$ to $g_2$.
\end{cor}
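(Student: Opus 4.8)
The plan is to reduce the statement to the normal-form machinery already established in Lemma \ref{G1} and Corollary \ref{G2}. The key observation is that for \emph{any} element $g\in G=G(p)$, Lemma \ref{G1} produces a specific normal form: an automorphism of $G$ sending $g$ to a canonical vector $(0,\dots,0,p^{k_1-r_1},\dots,p^{k_s-r_s})$ inside a decomposition $\mathbb{Z}_{p^{n_1}}\oplus\cdots\oplus\mathbb{Z}_{p^{n_{t-s}}}\oplus\mathbb{Z}_{p^{k_1}}\oplus\cdots\oplus\mathbb{Z}_{p^{k_s}}$ with the increasing conditions on the $k_i,r_i$. Corollary \ref{G2} then shows that the quotient $G/\langle g\rangle$ determines, and is determined by, the data $\{n_1,\dots,n_{t-s}\}$ together with the sequence $k_1-r_1,\,k_2-r_2+r_1,\dots,k_s-r_s+r_{s-1}$. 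So the strategy is: if $G/\langle g_1\rangle\cong G/\langle g_2\rangle$, show that $g_1$ and $g_2$ have \emph{the same} normal form up to reindexing, whence composing the two normal-form automorphisms (one of them inverted) yields an automorphism of $G$ carrying $g_1$ to $g_2$.

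First I would apply Lemma \ref{G1} to each $g_m$, obtaining automorphisms $\phi_m$ of $G$ that put $g_m$ in normal form with parameters $s^{(m)},\,\{n_i^{(m)}\},\,\{k_i^{(m)}\},\,\{r_i^{(m)}\}$. Since an automorphism of $G$ does not change the isomorphism type of $G$ or of $G/\langle g_m\rangle$, I may as well assume $g_m$ itself is already in normal form. Next I would invoke Corollary \ref{G2} to read off the invariants of the quotients. The hypothesis $G/\langle g_1\rangle\cong G/\langle g_2\rangle$ forces the multisets of cyclic-summand exponents of the two quotients to coincide; combined with the fact that both decompositions sit inside the \emph{same} group $G$ (so the total multiset $I(G)$ is fixed), this pins down the relationship between the two sets of parameters.

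The crux is to recover $s$, the $k_i$, and the $r_i$ from the isomorphism type of $G$ together with that of $G/\langle g\rangle$. Here I would argue that the ``untouched'' summands $\{n_1,\dots,n_{t-s}\}=I(G)\cap I(G/\langle g\rangle)$ are intrinsic by the last sentence of Corollary \ref{G2}, so they agree for $g_1$ and $g_2$; consequently the complementary blocks $\{k_1,\dots,k_s\}$, which are determined as $I(G)$ minus the common part, also agree, giving $s^{(1)}=s^{(2)}$ and $k_i^{(1)}=k_i^{(2)}$ after reordering. It then remains to recover the $r_i$. From Corollary \ref{G2} the quotient exponents are $k_1-r_1$ and $k_i-r_i+r_{i-1}$ for $i\geq 2$, and the strict interlacing $k_{i-1}<k_i-r_i+r_{i-1}<k_i$ established there lets me match each quotient exponent to a unique index $i$; this recovers the partial sums of the $r_i$ block by block, hence the $r_i$ themselves, and forces $r_i^{(1)}=r_i^{(2)}$ (note in particular $r_s^{(m)}$ equals the order exponent $l$, which is the same since $g_1,g_2$ generate isomorphic-index subgroups).

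I expect the main obstacle to be the bookkeeping in this matching step: the exponents $k_i-r_i+r_{i-1}$ can a priori collide across different indices, and one must use the strict monotonicity relations $k_i<k_{i+1}$, $r_i<r_{i+1}$, $k_i-r_i<k_{i+1}-r_{i+1}$ from Lemma \ref{G1} to guarantee that the assignment of each quotient summand to its originating index is unambiguous, so that equality of the quotient multisets really does propagate back to equality of the raw parameters $(k_i,r_i)$. Once that is settled, both $g_1$ and $g_2$ are carried by their respective Lemma \ref{G1} automorphisms to the identical canonical vector, and the composite $\phi_1^{-1}\circ\phi_2$ is the desired automorphism of $G$ sending $g_1$ to $g_2$. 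Finally, Proposition \ref{kmc} for a general finitely generated Abel group follows by treating the free part separately and applying this $p$-primary result to each $p$-component, since $G/\langle g\rangle$ decomposes compatibly with the primary decomposition.
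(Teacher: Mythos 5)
Your proposal is correct and follows essentially the same route as the paper's own proof: put both $g_1, g_2$ into the Lemma \ref{G1} normal form over the same decomposition of $G$, use Corollary \ref{G2} (the intrinsic intersection $I(G)\cap I(G/\langle g_m\rangle)$ together with the strictly increasing quotient exponents $k_1-r_1,\ k_i-r_i+r_{i-1}$) to force the parameters $s$, $k_i$, $r_i$ to coincide for $m=1,2$, and then compose the two normal-form automorphisms. The only additions in the paper's version are the separate treatment of the degenerate case $g_1=0$ (where Lemma \ref{G1} does not apply) and the observation that $g_1, g_2$ have the same order $p^l$, both of which follow immediately by comparing cardinalities of the quotients.
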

\begin{proof}
If $g_1=0$, the statement is trivial.
So we may assume that $g_1$ and $g_2$ have the same order $p^l, l\geq 1$.
By assumption, one has $$I(G)\cap I(G/\langle g_1 \rangle)=I(G)\cap I(G/\langle g_2 \rangle)=\{n_1, \cdots, n_{t-s}\},$$
where we write $I(G)=\{n_1, \cdots, n_{t-s}, k_1,\cdots, k_s\}$.
There exist  isomorphisms
$$G\ni g_m\mapsto (0,\cdots, 0, p^{k_1-r^m_1},\cdots, p^{k_s-r^m_s})\in (\mathbb{Z}_{p^{n_1}}\oplus\cdots \oplus\mathbb{Z}_{p^{n_{t-s}}})\oplus(\mathbb{Z}_{p^{k_1}}\oplus\cdots\oplus\mathbb{Z}_{p^{k_s}})$$
by Lem. \ref{G1} and Cor. \ref{G2},
and $k_i,\; r_i^m$ satisfy
$$k_i<k_{i+1}, \quad 0<r_i^m<r_{i+1}^m,\quad 0\leq k_i-r^m_i<k_{i+1}-r^m_{i+1},\quad r_s^m=l.$$
The assumption $G/\langle g_1\rangle\cong G/\langle g_2\rangle$ and Cor. \ref{G2} show $r_i^1=r_i^2$,
and this proves the statement.
\end{proof}

For a finite Abel $p$-group $G=G(p)$ and $g\in G,\;l\geq 0$,
the map $$G\ni x\mapsto [(x, 0)]\in (G\oplus \mathbb{Z})/\langle (g, p^l)\rangle$$ is injective and gives an exact sequence
$$0\to G\to (G\oplus\mathbb{Z})/\langle (g, p^l)\rangle\to \mathbb{Z}_{p^l}\to 0.$$
Thus, one has $|(G\oplus\mathbb{Z})/\langle (g, p^l)\rangle|=p^l|G|$,
and we will see that $(G, (G\oplus \mathbb{Z})/\langle (g, p^l)\rangle)$ satisfies ($**$).
If $l=0$,
one can easily find an isomorphism $G\oplus \mathbb{Z}\ni (g, 1)\mapsto (0, 1)\in G\oplus\mathbb{Z}$.

\begin{lem}\label{G3}
For $l>0$ and $g\not=0$,
we assume that there are no automorphisms of $G\oplus\mathbb{Z}$ which send $(g, p^l)$ to $(0, p^l)$.
Then, for some $s\geq 1$, there exists an isomorphism 
$$G\oplus\mathbb{Z}\to (\mathbb{Z}_{p^{n_1}}\oplus\cdots\oplus\mathbb{Z}_{p^{n_{t-s}}})\oplus (\mathbb{Z}_{p^{k_1}}\oplus\cdots\oplus\mathbb{Z}_{p^{k_s}})\oplus\mathbb{Z} $$
which sends  $(g, p^l)$ to $(0,\cdots 0, p^{k_1-r_1},\cdots, p^{k_s-r_s}, p^l)$
with $k_i-r_i<l, 0<r_i$,
and we have 
$$k_i<k_{i+1},\quad 0\leq k_i-r_i<k_{i+1}-r_{i+1},\quad 0< r_i<r_{i+1} \;\;{\rm for}\;\; s\geq 2.$$
Here, we write $I(G)=\{n_1,\cdots, n_{t-s}, k_1, \cdots, k_s\}$ as in Lem. \ref{G1}.
\end{lem}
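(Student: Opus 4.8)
The plan is to dispose of the free summand first and then quote Lemma~\ref{G1} for the torsion part. I would begin by recording the shape of $\mathrm{Aut}(G\oplus\mathbb{Z})$ and translating the hypothesis into a divisibility statement. Since any automorphism preserves the torsion subgroup $G$ and induces an automorphism of the free quotient, it has the form $(x,m)\mapsto(\alpha(x)+mh,\pm m)$ with $\alpha\in\mathrm{Aut}(G)$ and $h\in G$. Hence an automorphism carries $(g,p^l)$ to $(0,p^l)$ exactly when the sign is $+$ and $\alpha(g)=-p^l h$ for some $h$, i.e. when $\alpha(g)\in p^lG$; as $\alpha$ preserves $p^lG$, this happens iff $g\in p^lG$. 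Thus the hypothesis is equivalent to $g\notin p^lG$, and for the proof I would only use the easy direction, namely that $g\in p^lG$ \emph{produces} such an automorphism.

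Next I would decompose $G=\bigoplus_i\mathbb{Z}_{p^{k_i'}}$ and, exactly as in the opening step of Lemma~\ref{G1}, multiply each cyclic factor by a unit so that $g=(p^{k_1'-R_1},\dots,p^{k_t'-R_t})$ with $0\le R_i\le k_i'$. I then split the coordinates into the \emph{divisible} ones, with $k_i'-R_i\ge l$ (equivalently $g_i\in p^l\mathbb{Z}_{p^{k_i'}}$), and the rest. For each divisible coordinate the automorphism $(x,m)\mapsto(x+c_im,m)$ of $\mathbb{Z}_{p^{k_i'}}\oplus\mathbb{Z}$ with $c_i=-p^{\,k_i'-R_i-l}$ sends $(p^{k_i'-R_i},p^l)$ to $(0,p^l)$; since each touches only the $i$-th torsion factor and fixes the free coordinate $p^l$, they commute and compose to a single automorphism $(x,m)\mapsto(x+mh,m)$ killing all divisible coordinates at once. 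These killed factors will feed the $\mathbb{Z}_{p^{n_j}}$ block. Were \emph{every} coordinate divisible, this automorphism would carry $(g,p^l)$ to $(0,p^l)$, contradicting the hypothesis; so at least one coordinate survives and $s\ge1$.

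Let $H$ be the sum of the surviving torsion factors and $g_H\in H$ the nonzero image of $g$, all of whose coordinate valuations are $<l$ by construction. Applying Lemma~\ref{G1} to $(H,g_H)$ puts $g_H$ in the form $(0,\dots,0,p^{k_1-r_1},\dots,p^{k_s-r_s})$ with $0\le k_1-r_1<\dots<k_s-r_s$, $0<r_1<\dots<r_s$ and $k_1<\dots<k_s$. The free $\mathbb{Z}$ still carrying $p^l$ becomes the last coordinate, while the killed factors together with the zero-summands produced by Lemma~\ref{G1} assemble into the $\mathbb{Z}_{p^{n_j}}$ block, yielding the asserted normal form with all of its inequalities.

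The step demanding the most care is the bound $k_i-r_i<l$ on the surviving valuations: a careless appeal to Lemma~\ref{G1} might appear to raise a coordinate's $p$-valuation and so violate the conclusion. I would settle this by the observation that the clutching isomorphisms in the proof of Lemma~\ref{G1} only annihilate coordinates or leave the chosen anchor untouched, so each surviving nonzero coordinate is an \emph{original} coordinate of $g_H$; consequently each $k_i-r_i$ equals one of the input valuations and is therefore $<l$. (Alternatively one could rerun the induction of Lemma~\ref{G1} directly on $G\oplus\mathbb{Z}$, using the free summand as the top anchor and tracking valuations.) The remaining points—the compatibility of the killing automorphisms on the shared free coordinate, and that ``no automorphism to $(0,p^l)$'' is precisely what forces $s\ge1$—are routine.
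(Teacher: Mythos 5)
Your proof is correct, and it reaches the conclusion by a mild but genuine rearrangement of the paper's argument. Both proofs rest on the same two ingredients: Lemma~\ref{G1}'s normal form for the torsion part, and automorphisms of the shape $(x,m)\mapsto (x+mh,m)$ that use the free summand to absorb torsion coordinates lying in $p^lG$. The paper runs them in the opposite order: it first applies Lemma~\ref{G1}'s argument to $(G,g)$, carrying the free coordinate $p^l$ along untouched, and only then strips the top coordinate $p^{k_s-r_s}$ whenever $k_s-r_s\ge l$, via the inverse of $(y,x)\mapsto (xp^{k_s-r_s-l}+y,x)$; since the stripped element is still in normal form, no re-normalization is needed, and the hypothesis forces the stripping to stop with $s\ge 1$ coordinates left, all of valuation $<l$. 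You instead kill every $p^l$-divisible coordinate first and then apply Lemma~\ref{G1} to the surviving summand $H$; the price is exactly the point you flagged yourself: the \emph{statement} of Lemma~\ref{G1} does not guarantee that the valuations of its output are among the valuations of the input, so you must inspect its proof (unit scalings plus clutching maps that only annihilate coordinates and never alter the anchor) to conclude $k_i-r_i<l$. Your inspection of that proof is accurate, so the argument is complete, but note that the paper's ordering avoids having to look inside Lemma~\ref{G1} at all. What your route buys in exchange is a cleaner logical frame: the explicit description of $\operatorname{Aut}(G\oplus\mathbb{Z})$ as $(x,m)\mapsto(\alpha(x)+mh,\pm m)$ and the resulting equivalence of the hypothesis with $g\notin p^lG$ make both the killing step and the claim $s\ge 1$ transparent, whereas the paper leaves both implicit.
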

\begin{proof}
Since $g\not =0$, the same argument as in Lem. \ref{G1} gives an isomorphism
$$G\oplus\mathbb{Z}\to (\mathbb{Z}_{p^{n_1}}\oplus\cdots\oplus\mathbb{Z}_{p^{n_{t-s}}})\oplus (\mathbb{Z}_{p^{k_1}}\oplus\cdots\oplus\mathbb{Z}_{p^{k_s}})\oplus \mathbb{Z}$$
which sends $(g, p^l)$ to $(0,\cdots 0, p^{k_1-r_1},\cdots, p^{k_s-r_s}, p^l)$ with $0<r_i$,
and we have
$$k_i<k_{i+1},\quad 0\leq k_i-r_i<k_{i+1}-r_{i+1},\quad 0<r_i<r_{i+1}$$
If $k_s-r_s\geq l$,
the following isomorphism
$$\mathbb{Z}_{p^{k_s}}\oplus\mathbb{Z}\ni (y, x)\mapsto (xp^{k_s-r_s-l}+y, x)\in \mathbb{Z}_{p^{k_s}}\oplus\mathbb{Z}$$
sends $(0, p^l)$ to $(p^{k_s-r_s}, p^l)$.
Therefore we may assume $k_i-r_i<k_s-r_s< l$, and this proves the statement.
\end{proof}
\begin{cor}\label{G4}
Let $(g, p^l)$, $k_i, r_i$ and $n_1,\cdots, n_{t-s}$ be as in Lem. \ref{G3}.
Then the quotient group $(G\oplus\mathbb{Z})/\langle (g, p^l)\rangle$ is isomorphic to
$$(\mathbb{Z}_{p^{n_1}}\oplus\cdots\oplus\mathbb{Z}_{p^{n_{t-s}}})\oplus(\mathbb{Z}_{p^{k_1-r_1}}\oplus(\bigoplus_{i=2}^s\mathbb{Z}_{p^{k_i-r_i+r_{i-1}}})\oplus\mathbb{Z}_{p^{l+r_s}})\;\; (s\geq 2),$$
$$(\mathbb{Z}_{p^{n_1}}\oplus\cdots\oplus\mathbb{Z}_{p^{n_{t-s}}})\oplus(\mathbb{Z}_{p^{k_1-r_1}}\oplus\mathbb{Z}_{p^{l+r_1}})\;\; (s=1).$$
If $s=1$,
one has $0\leq k_1-r_1<k_1<l+r_1$.
If $s\geq 2$,
one has $$0\leq k_1-r_1<k_1<k_2-r_2+r_1,\quad k_i-r_i+r_{i-1}<k_i<k_{i+1}-r_{i+1}+r_i,\quad k_s-r_s+r_{s-1}<k_s<l+r_s.$$
In particular,
we have
$$I(G)\cap I((G\oplus\mathbb{Z})/\langle (g, p^l)\rangle)=\{n_1, \cdots, n_{t-s}\}\not =I(G),$$
and $(G, (G\oplus\mathbb{Z})/\langle (g, p^l)\rangle)$ satisfies ($**$).
\end{cor}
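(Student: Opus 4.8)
The plan is to reduce $G4$ to a single cyclic relation on the ``$k$-part'' and then imitate the generator-and-cardinality argument of Cor.~\ref{G2}, with one essential modification forced by the extra free summand. First I would invoke the isomorphism of Lem.~\ref{G3} to assume outright that $G\oplus\mathbb{Z}=(\mathbb{Z}_{p^{n_1}}\oplus\cdots\oplus\mathbb{Z}_{p^{n_{t-s}}})\oplus(\mathbb{Z}_{p^{k_1}}\oplus\cdots\oplus\mathbb{Z}_{p^{k_s}})\oplus\mathbb{Z}$ with $(g,p^l)=(0,\dots,0,p^{k_1-r_1},\dots,p^{k_s-r_s},p^l)$. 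Since $(g,p^l)$ has vanishing components in the $\mathbb{Z}_{p^{n_j}}$ summands, these split off, and it suffices to identify $Q:=(\mathbb{Z}_{p^{k_1}}\oplus\cdots\oplus\mathbb{Z}_{p^{k_s}}\oplus\mathbb{Z})/\langle w\rangle$, where $w=\sum_{i=1}^s p^{k_i-r_i}e_i+p^l e_{s+1}$ and $e_1,\dots,e_{s+1}$ are the standard generators. Expanding the determinant of the $(s+1)\times(s+1)$ relation matrix (the diagonal $p^{k_i}$ together with the row $w$) along the last column gives $|Q|=p^{\,l+k_1+\cdots+k_s}$.

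The key step is the choice of generators. In Cor.~\ref{G2} one could take $h_i=e_i+\sum_{j>i}p^{(k_j-r_j)-(k_i-r_i)}e_j$, but here the free term $p^l e_{s+1}$ of $w$ prevents $p^{k_i-r_i+r_{i-1}}h_i$ from collapsing onto a multiple of $w$. To repair this I would set, for $1\le i\le s$, $h_i=e_i+\sum_{j=i+1}^s p^{(k_j-r_j)-(k_i-r_i)}e_j+p^{\,l-(k_i-r_i)}e_{s+1}$ (the exponent $l-(k_i-r_i)$ is positive by Lem.~\ref{G3}), together with $h_{s+1}=e_{s+1}$. A direct computation then gives $p^{k_i-r_i}h_i=w-\sum_{j<i}p^{k_j-r_j}e_j$, and multiplying by $p^{r_{i-1}}$ (with $r_0:=0$) annihilates every term with $j<i$ because $r_{i-1}\ge r_j$; hence $p^{k_1-r_1}h_1=w$ and $p^{k_i-r_i+r_{i-1}}h_i=p^{r_{i-1}}w$ for $i\ge 2$, exactly as in Cor.~\ref{G2}, while $p^{\,l+r_s}h_{s+1}=p^{r_s}w$. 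Since $w\equiv 0$ in $Q$, each $h_i$ has order dividing $p^{k_i-r_i+r_{i-1}}$ and $h_{s+1}$ has order dividing $p^{\,l+r_s}$; as the $h_i$ are a unitriangular combination of the $e_i$ they generate $Q$, so there is a surjection from the claimed group onto $Q$, which must be an isomorphism by the order count above. Reattaching the $\mathbb{Z}_{p^{n_j}}$ summands yields the stated decomposition (and the $s=1$ case is the special instance $i=1$ only).

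Finally, all of the ordering inequalities for $s\ge 2$,
$$0\le k_1-r_1<k_1<k_2-r_2+r_1<k_2<\cdots<k_s-r_s+r_{s-1}<k_s<l+r_s,$$
follow termwise from $0\le k_i-r_i<k_{i+1}-r_{i+1}$, $0<r_i<r_{i+1}$ and $k_s-r_s<l$ of Lem.~\ref{G3}. This interlacing is exactly the alternating strictly increasing sequence required by ($*$), it shows that $\{k_1,\dots,k_s\}$ is disjoint from the new exponent multiset $\{k_1-r_1,\dots,k_s-r_s+r_{s-1},l+r_s\}$, giving $I(G)\cap I((G\oplus\mathbb{Z})/\langle(g,p^l)\rangle)=\{n_1,\dots,n_{t-s}\}\ne I(G)$, and it places the largest exponent $l+r_s>k_s$ on the $\tilde G$-side, which is precisely condition ($**$). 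I expect the main obstacle to be pinning down the correct correction term $p^{\,l-(k_i-r_i)}e_{s+1}$ inside $h_i$: this is the single place where the argument genuinely departs from Cor.~\ref{G2}, and it is what makes $p^{k_i-r_i+r_{i-1}}h_i$ reduce cleanly to $p^{r_{i-1}}w$ rather than to $p^{r_{i-1}}w$ plus a stray $e_{s+1}$-contribution.
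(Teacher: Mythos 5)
Your proposal is correct and takes essentially the same route as the paper: the paper's generators $u_i$ are exactly your $h_i$, including the crucial correction term $p^{\,l-(k_i-r_i)}$ in the free coordinate, and it concludes via the same surjection-plus-cardinality argument and the same interlacing inequalities for ($*$), ($**$) and the intersection claim. The only cosmetic difference is that you compute $|Q|$ by a determinant expansion, whereas the paper invokes the exact sequence $0\to G\to (G\oplus\mathbb{Z})/\langle (g,p^l)\rangle\to \mathbb{Z}_{p^l}\to 0$ established just before Lem.~\ref{G3}; both yield $p^{\,l+k_1+\cdots+k_s}$.
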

\begin{proof}
We consider the case of $s\geq 2$,
and the proof for $s=1$ is the same.
We write $$(g, p^l)=(0,\cdots ,0, p^{k_1-r_1},\cdots, p^{k_s-r_s},p^l)\in (\mathbb{Z}_{p^{n_1}}\oplus\cdots\oplus\mathbb{Z}_{p^{n_{t-s}}})\oplus (\mathbb{Z}_{p^{k_1}}\oplus\cdots\oplus\mathbb{Z}_{p^{k_s}})\oplus\mathbb{Z},$$
and it is enough to show $$(\mathbb{Z}_{p^{k_1-r_1}}\oplus(\bigoplus_{i=2}^s\mathbb{Z}_{p^{k_i-r_i+r_{i-1}}})\oplus\mathbb{Z}_{p^{l+r_s}})\cong (\mathbb{Z}_{p^{k_1}}\oplus\cdots\oplus\mathbb{Z}_{p^{k_s}}\oplus\mathbb{Z})/\langle (p^{k_1-r_1},\cdots, p^{k_s-r_s}, p^l)\rangle.$$
Choose $u_i\in \mathbb{Z}_{p^{k_1}}\oplus\cdots\oplus\mathbb{Z}_{p^{k_s}}\oplus\mathbb{Z}$ by
$$u_i:=(h^i_1,\cdots h^i_s, p^{l-(k_i-r_i)}),$$
$$u_{s+1}:=(0,\cdots, 0, 1),$$
with
$$h^i_j=0 \;(j<i),\quad h^i_i=1,\quad h^i_j=p^{(k_j-r_j)-(k_i-r_i)} \;(j>i).$$
We have $$p^{k_1-r_1}u_1=(p^{k_1-r_1},\cdots, p^{k_s-r_s}, p^l),\quad p^{l+r_s}u_{s+1}=p^{r_s}(p^{k_1-r_1},\cdots, p^{k_s-r_s}, p^l)$$
and
$$p^{k_i-r_i+r_{i-1}}u_i=p^{r_{i-1}}(p^{k_1-r_1},\cdots, p^{k_s-r_s}, p^l)$$
for $2\leq i\leq s$.
Since $\{u_j\}_{j=1}^{s+1}$ generate $\mathbb{Z}_{p^{k_1}}\oplus\cdots\oplus\mathbb{Z}_{p^{k_s}}\oplus\mathbb{Z}$, one has a surjection
$$(\mathbb{Z}_{p^{k_1-r_1}}\oplus(\bigoplus_{i=2}^s\mathbb{Z}_{p^{k_i-r_i+r_{i-1}}})\oplus\mathbb{Z}_{p^{l+r_s}})\to  (\mathbb{Z}_{p^{k_1}}\oplus\cdots\oplus\mathbb{Z}_{p^{k_s}}\oplus\mathbb{Z})/\langle (p^{k_1-r_1},\cdots, p^{k_s-r_s}, p^l)\rangle,$$
which is bijective by the equation 
\begin{align*}
&|(\mathbb{Z}_{p^{k_1}}\oplus\cdots\oplus\mathbb{Z}_{p^{k_s}}\oplus\mathbb{Z})/\langle (p^{k_1-r_1},\cdots, p^{k_s-r_s}, p^l)\rangle|\\
=&p^l|\mathbb{Z}_{p^{k_1}}\oplus\cdots\oplus\mathbb{Z}_{p^{k_s}}|\\
=&p^{k_1+\cdots + k_s+l}.
\end{align*}
\end{proof}
If $(g, p^l)$ is sent to $(0, p^l)$,
the quotient group $(G\oplus\mathbb{Z})/\langle (g, p^l)\rangle$ is equal to $G\oplus \mathbb{Z}_{p^l}$,
and the pair $(G, (G\oplus\mathbb{Z})/\langle (g, p^l)\rangle)$ satisfies ($**$).
\begin{cor}\label{kmc2}
Let $G=G(p)$ be a finite Abel p-group.
For $(g_m, p^l)\in G\oplus\mathbb{Z}, (m=1, 2,\; l\geq 0)$,
there exists an automorphism $\Theta : G\oplus\mathbb{Z}\to G\oplus\mathbb{Z}$ with $\Theta((g_1, p^l))=(g_2, p^l)$ if and only if we have $(G\oplus\mathbb{Z})/\langle (g_1, p^l)\rangle\cong(G\oplus\mathbb{Z})/\langle (g_2, p^l)\rangle$.
\end{cor}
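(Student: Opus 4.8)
The plan is to prove the nontrivial ``if'' direction, the ``only if'' direction being immediate: an automorphism $\Theta$ with $\Theta((g_1,p^l))=(g_2,p^l)$ descends to an isomorphism of the quotients by the matched cyclic subgroups. For the converse I would first detect the \emph{degenerate} case, namely when $(g,p^l)$ can be carried to $(0,p^l)$ by an automorphism of $G\oplus\mathbb{Z}$. By the remark preceding Lem.~\ref{G3} this forces $(G\oplus\mathbb{Z})/\langle(g,p^l)\rangle\cong G\oplus\mathbb{Z}_{p^l}$, so that $I(G)\subseteq I((G\oplus\mathbb{Z})/\langle(g,p^l)\rangle)$ in the multiset sense of Sec.~\ref{nota}; conversely, if $(g,p^l)$ is \emph{not} degenerate then Cor.~\ref{G4} gives $I(G)\cap I((G\oplus\mathbb{Z})/\langle(g,p^l)\rangle)=\{n_1,\dots,n_{t-s}\}\neq I(G)$. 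Hence degeneracy is detected by the abstract isomorphism type of the quotient alone, and if the two quotients are isomorphic then $(g_1,p^l)$ is degenerate iff $(g_2,p^l)$ is. When both are degenerate (this covers $l=0$ and the cases $g_m=0$), I pick automorphisms $\Theta_m$ with $\Theta_m((g_m,p^l))=(0,p^l)$ and take $\Theta:=\Theta_2^{-1}\circ\Theta_1$.

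The remaining, genuinely combinatorial, case is when both $(g_m,p^l)$ are nondegenerate, and I would run an argument parallel to Cor.~\ref{tort}. By Lem.~\ref{G3} each $(g_m,p^l)$ has a normal form governed by a splitting $I(G)=\{n_j^m\}\sqcup\{k_i^m\}$ and exponents $r_i^m$, with quotient $Q_m:=(G\oplus\mathbb{Z})/\langle(g_m,p^l)\rangle$ computed by Cor.~\ref{G4}. First, $I(G)\cap I(Q_m)=\{n_1^m,\dots,n_{t-s_m}^m\}$, so $Q_1\cong Q_2$ forces the $n$-multisets, hence $s_1=s_2=:s$, and then (removing the $n$'s from the fixed $I(G)$) the ordered tuples $k_1<\dots<k_s$ to coincide. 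Deleting the common $n$-part from $I(Q_m)$ leaves the exponent multiset $E^m=\{k_1-r_1^m\}\cup\{k_{i+1}-r_{i+1}^m+r_i^m:1\le i\le s-1\}\cup\{l+r_s^m\}$, and the strict interleaving
\[
k_1-r_1^m<k_1<k_2-r_2^m+r_1^m<k_2<\cdots<k_s<l+r_s^m
\]
of Cor.~\ref{G4} shows that exactly one element of $E^m$ lies in each gap of the fixed chain $k_1<\dots<k_s$. Thus $E^m$ together with $\{k_i\}$ recovers the sorted values $e_0^m<e_1^m<\cdots<e_s^m$ uniquely, and from $E^1=E^2$ I read off $r_1^m=k_1-e_0^m$ and inductively $r_{i+1}^m=k_{i+1}+r_i^m-e_i^m$, so $r_i^1=r_i^2$ for all $i$.

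With $\{n_j\},\{k_i\},\{r_i\}$ equal for $m=1,2$, the two normal forms literally coincide: Lem.~\ref{G3} supplies isomorphisms $\Phi_m$ from $G\oplus\mathbb{Z}$ onto one fixed group sending each $(g_m,p^l)$ to the single element $(0,\dots,0,p^{k_1-r_1},\dots,p^{k_s-r_s},p^l)$, so $\Theta:=\Phi_2^{-1}\circ\Phi_1$ is the desired automorphism. I expect the interleaving/uniqueness step of the second paragraph to be the main obstacle, since it is where finiteness of $G$ and the sharp inequalities of Cor.~\ref{G4} are genuinely used to pin down the $r_i$ from the quotient; the rest is bookkeeping already packaged in Lem.~\ref{G3}, Cor.~\ref{G4}, and the proof pattern of Cor.~\ref{tort}.
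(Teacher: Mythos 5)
Your proof is correct and takes essentially the same route as the paper: detect the degenerate case (an automorphism carrying $(g_m,p^l)$ to $(0,p^l)$) through the criterion $I(G)\cap I\bigl((G\oplus\mathbb{Z})/\langle (g_m,p^l)\rangle\bigr)=I(G)$ supplied by Cor.~\ref{G4}, and otherwise use the normal forms of Lem.~\ref{G3} together with the quotient computation of Cor.~\ref{G4} to conclude $r_i^1=r_i^2$ and compose the two normal-form isomorphisms. Your interleaving argument in the second paragraph is precisely the content the paper compresses into its final sentence ``Cor.~\ref{G4} proves $r^1_i=r^2_i$.''
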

\begin{proof}
We may assume $l>0$.
If there exists an automorphism of $G\oplus\mathbb{Z}$ which sends $(g_1, p^l)$ to $(0, p^l)$,
the quotient group $(G\oplus\mathbb{Z})/\langle (g_1, p^l)\rangle$ is isomorphic to $G\oplus\mathbb{Z}_{p^l}$,
and we have 
\begin{align*}
&I(G)\\
=&I(G)\cap I((G\oplus\mathbb{Z})/\langle (g_1, p^l)\rangle)\\
=&I(G)\cap I((G\oplus\mathbb{Z})/\langle (g_2, p^l)\rangle)
\end{align*}
and Cor. \ref{G4} shows that $(g_2, p^l)$ must be sent to $(0, p^l)$ by an automorphism.
So we may assume that there are no automorphisms that send $(g_m, p^l)$ to $(0, p^l)$.

Since $$I(G)\cap I((G\oplus\mathbb{Z})/\langle (g_1, p^l)\rangle)=I(G)\cap I((G\oplus\mathbb{Z})/\langle (g_2, p^l)\rangle),$$
Cor. \ref{G4} and Lem. \ref{G3} give  an isomorphism which sends $(g_m, p^l)\in G\oplus\mathbb{Z}$ to the following element :
$$(0,\cdots ,0, p^{k_1-r^m_1},\cdots, p^{k_s-r^m_s}, p^l)\in (\mathbb{Z}_{p^{n_1}}\oplus\cdots\oplus\mathbb{Z}_{p^{n_{t-s}}})\oplus (\mathbb{Z}_{p^{k_1}}\oplus\cdots\oplus\mathbb{Z}_{p^{k_s}})\oplus \mathbb{Z},$$
and Cor. \ref{G4} proves $r^1_i=r^2_i$ for $1\leq i\leq s$.
\end{proof}
\begin{lem}\label{mou}
Let $G=\bigoplus_p G(p)$ be a finite Abel group with $g=(g_p)_p\in G$, and let $n=\prod_p p^{n_p}$ be the prime decomposition of $n\in\mathbb{Z}_{\geq 1}$.
Then, the following hold $\colon$
\begin{enumerate}
\bibitem{}$(G/\langle g\rangle)(p)\cong G(p)/\langle g_p\rangle$,
\bibitem{}$((G\oplus \mathbb{Z})/\langle (g, n)\rangle)(p)\cong (G(p)\oplus\mathbb{Z})/\langle (g_p, p^{n_p})\rangle$.
\end{enumerate}
\end{lem}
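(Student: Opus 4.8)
The plan is to reduce everything to the behaviour of the $p$-primary component functor $(-)(p)$ on finite abelian groups, which is exact because it selects a direct summand. Statement 1 is the warm-up: since $g=\sum_q g_q$ with the $g_q$ lying in groups of pairwise coprime order, the order of $g$ equals $\prod_q \mathrm{ord}(g_q)$, so the cyclic group $\langle g\rangle$ has the same cardinality as $\bigoplus_q\langle g_q\rangle$ and is visibly contained in it; hence $\langle g\rangle=\bigoplus_q\langle g_q\rangle$, and in particular $\langle g\rangle(p)=\langle g_p\rangle$. Quotienting the primary decomposition $G=\bigoplus_q G(q)$ termwise gives $G/\langle g\rangle=\bigoplus_q G(q)/\langle g_q\rangle$, and since each $G(q)/\langle g_q\rangle$ is a $q$-group this is the primary decomposition of $G/\langle g\rangle$. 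Reading off the $p$-th term yields Statement 1.

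Statement 2 is the substantive one, and the difficulty is that $\langle(g,n)\rangle$ is infinite cyclic and so does not split along primes, making the termwise argument above unavailable. First I would write $n=p^{n_p}m$ with $\gcd(m,p)=1$ and set $Q=(G\oplus\mathbb{Z})/\langle(g,n)\rangle$, a finite group of order $n|G|$ (from the exact sequence $0\to G\to Q\to\mathbb{Z}/n\to 0$ used in the excerpt). To isolate $Q(p)$ I would realise it as $Q/p^kQ$ for $k$ large (larger than the exponent of $Q(p)$): this is legitimate because $Q=Q(p)\oplus Q'$ with $Q'$ of order prime to $p$, on which multiplication by $p$ is invertible. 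Computing $p^k(G\oplus\mathbb{Z})=p^kG\oplus p^k\mathbb{Z}$ and noting that for large $k$ one has $p^kG=\bigoplus_{q\neq p}G(q)$, the other primary summands of $G$ fall entirely into the relations, leaving $Q(p)\cong (G(p)\oplus\mathbb{Z}/p^k)/\langle(g_p,\bar n)\rangle$, where $\bar n=p^{n_p}m$ is read modulo $p^k$.

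The final clean-up is where the coprime factor $m$ is disposed of: since $m$ is a unit in $\mathbb{Z}/p^k$, the assignment $(x,t)\mapsto(x,m^{-1}t)$ is an automorphism of $G(p)\oplus\mathbb{Z}/p^k$ carrying $(g_p,p^{n_p}m)$ to $(g_p,p^{n_p})$, so $Q(p)\cong(G(p)\oplus\mathbb{Z}/p^k)/\langle(g_p,p^{n_p})\rangle$. It then remains to compare this with $R:=(G(p)\oplus\mathbb{Z})/\langle(g_p,p^{n_p})\rangle$; in $R$ one has $(0,p^{n_p})=(-g_p,0)$, whence $(0,p^k)=(-p^{k-n_p}g_p,0)=0$ once $k-n_p$ exceeds the exponent of $G(p)$, so adjoining the relation $(0,p^k)$ to $R$ changes nothing and $R\cong(G(p)\oplus\mathbb{Z}/p^k)/\langle(g_p,p^{n_p})\rangle\cong Q(p)$. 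I expect the main obstacle to be exactly this handling of the infinite summand $\mathbb{Z}$: one must take $k$ large enough simultaneously for the reduction $Q(p)\cong Q/p^kQ$, for the identity $p^kG=\bigoplus_{q\neq p}G(q)$, and for the vanishing of $(0,p^k)$ in $R$, and then check that the unit rescaling genuinely identifies the two presentations. An alternative and perhaps cleaner route is to tensor the presentation of $Q$ with the localization $\mathbb{Z}_{(p)}$, which is exact and sends a finite abelian group to its $p$-primary part while turning $m$ into an honest unit; the same rescaling then completes the proof.
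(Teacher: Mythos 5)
Your proof is correct, and for statement 2 it follows a genuinely different route from the paper's. For statement 1 both arguments coincide: the order of $g$ is $\prod_q \mathrm{ord}(g_q)$, hence $\langle g\rangle=\bigoplus_q\langle g_q\rangle$ and the quotient splits termwise. For statement 2, however, the paper works \emph{globally}: writing $r_p=n/p^{n_p}$ and $R_p=\prod_{q\neq p}\mathrm{ord}(g_q)$, it defines a single map
\[
\bigoplus_{p\in F}\bigl(G(p)\oplus\mathbb{Z}\bigr)\ni ((h_p,x_p))_p\mapsto \Bigl((R_ph_p)_p,\ \textstyle\sum_p R_pr_px_p\Bigr)\in G\oplus\mathbb{Z},
\]
checks that it carries each generator $(g_p,p^{n_p})$ into $\langle(g,n)\rangle$ (because $R_p g_q=0$ for $q\neq p$, so the image is $R_p(g,n)$), deduces surjectivity of the induced map on quotients from $\gcd\{R_pr_p\}=1$, and then gets injectivity for free by comparing cardinalities ($\prod_p p^{n_p}|G(p)|=n|G|$); since each local model is a $p$-group, this single isomorphism exhibits the full primary decomposition at once. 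You instead work \emph{locally, one prime at a time}: you isolate $Q(p)$ as $Q/p^kQ$ for $k$ large, compute that quotient by pushing $p^k(G\oplus\mathbb{Z})$ into the relations, dispose of the coprime factor $m$ by the unit rescaling $(x,t)\mapsto(x,m^{-1}t)$ of $G(p)\oplus\mathbb{Z}/p^k$, and finally observe that the relation $(0,p^k)$ is already a multiple of $(g_p,p^{n_p})$, so reintroducing the infinite summand $\mathbb{Z}$ costs nothing. What each approach buys: yours avoids both the ad hoc coefficients $R_p, r_p$ and the counting step, rests on the standard (and exact) operation of passing to the $p$-primary part, and generalizes immediately via localization at $\mathbb{Z}_{(p)}$ as you note; the paper's buys an explicit global isomorphism identifying all primary components simultaneously, with injectivity obtained by a one-line cardinality count instead of your bookkeeping of how large $k$ must be. The only point demanding care in your version is exactly the one you flag — choosing $k$ large enough for the three uses at once — and that is harmless since one just takes the maximum.
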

\begin{proof}
We write $n=\prod_pp^{n_p}=p^{n_p}r_p$,
and let $R_p$ denote $\prod_{q\not =p}ord(g_q)$.
One has $\langle g\rangle= \bigoplus_p\langle g_p\rangle\subset G$ by $GCD(R_p, p)=1$,
and this proves the statement 1.

Next, we check the statement 2.
For a finite set $F:=\{p\; |\; G(p)\not=0, \;{\rm or}\; n_p\not=0\}$,
one has $GCD\{R_pr_p\; |\; p\in F\}=1$,
which implies the surjectivity of the map
$$\bigoplus_{p\in F}(G(p)\oplus\mathbb{Z})\ni ((h_p, x_p))_p\mapsto ((R_ph_p)_p, (\sum_p R_pr_px_p))\in (\bigoplus_{p\in F}G(p))\oplus\mathbb{Z}.$$
The above map induces a well-defined surjection
$$\bigoplus_{p\in F}((G(p)\oplus\mathbb{Z})/\langle (g_p, p^{n_p})\rangle)\to ((\bigoplus_{p\in F}G(p))\oplus\mathbb{Z})/\langle ((g_p)_p, n)\rangle.$$
Since $\prod_{p\in F}p^{n_p}|G(p)|=n|G|$,
the above map is injective,
and this proves the statement 2.
\end{proof}

We prove Prop. \ref{vn} and \ref{kmc} using  the above preliminaries.
\begin{proof}[{Proof of Proposition \ref{vn}}]
First, we discuss the case that $[1_A]_0\in K_0(A)$ is a torsion element.
We write $K_0(A)=(\bigoplus_p K_0(A)(p))\oplus \mathbb{Z}^F\ni [1_A]_0=((g_p)_p, 0,\cdots ,0)$,
and it is easy to check that $K_1(C_{u_A})(p)=K_0(A)(p)/\langle g_p\rangle$.
Thus, Cor. \ref{G2} proves that $(K_1(C_{u_A})(p), K_0(A)(p))$ satisfies ($**$).

Next,
we consider the case that $[1_A]_0\in K_0(A)$ is not a torsion element.
Then,
we may assume $$[1_A]_0=((g_p)_p, n, 0,\cdots, 0)\in K_0(A)=(\bigoplus_p K_0(A)(p))\oplus \mathbb{Z}^{1+F}$$
for some $n\geq 1$.
For the prime decomposition $n=\prod_pp^{n_p}$,
Lem. \ref{mou} proves $K_1(C_{u_A})(p)=(K_0(A)(p)\oplus\mathbb{Z})/\langle (g_p, p^{n_p})\rangle$,
and Cor. \ref{G4} shows that $(K_0(A)(p), K_1(C_{u_A})(p))$ satisfies ($**$).
\end{proof}
\begin{proof}[{Proof of Proposition \ref{kmc}}]
Let $G$ be a finitely generated Abel group, and let $g_1, g_2\in G$ satisfy $G/\langle g_1\rangle\cong G/\langle g_2\rangle$.
Then, $g_1$ is a torsion element (resp. a non-torsion element) if and only if so is $g_2$ by the comparison of ranks of $G/\langle g_i\rangle$.
By comparison of the orders of torsion parts of $G/\langle g_i\rangle$, we may assume that $G=H\oplus \mathbb{Z}$ for a finite Abel group $H$ and $g_i=(h_i, n)\in H\oplus \mathbb{Z}$ for some $n\in\mathbb{Z}_{\geq 0}$.
We write $h_i=({h^i}_p)_p\in H=\bigoplus_p H(p)$, $n=\prod_pp^{n_p}$,
and Lem. \ref{mou} shows
$$(G/\langle g_i\rangle)(p)\cong H(p)/\langle {h^i}_p\rangle\quad (n=0),$$
$$(G/\langle g_i\rangle)(p)\cong (H(p)\oplus \mathbb{Z})/\langle({h^i}_p, p^{n_p})\rangle\quad (n\not=0).$$
We write $r_p=\prod_{q\not=p}q^{n_q}=n/p^{n_p}$, and denote by $r_p^{-1}$ the inverse map of $H(p)\ni x\mapsto r_px\in H(p)$.
Now one has
$$H(p)/\langle h^1_p\rangle\cong H(p)/\langle h^2_p\rangle \quad (n=0),$$
$$(H(p)\oplus\mathbb{Z})/\langle (r_p^{-1}(h^1_p), p^{n_p})\rangle \cong (H(p)\oplus\mathbb{Z})/\langle (r_p^{-1}(h^2_p), p^{n_p})\rangle\quad (n\not=0),$$
and Cor. \ref{tort} proves the statement for the case $n=0$.
Applying Cor. \ref{kmc2} to the case $n\not=0$,
one has an automorphism $\theta$ of $H(p)\oplus \mathbb{Z}$ with $$\theta((r_p^{-1}(h^1_p), p^{n_p}))=(r_p^{-1}(h^2_p), p^{n_p}).$$
Since the map $(\bigoplus_{q\not=p}{\rm id}_{{H(q)}})\oplus \theta$ sends $((h^1_q)_q, (h^1_p, n))\in (\bigoplus_p H(p))\oplus\mathbb{Z}$ to $((h^1_q)_q, (h^2_p, n))$,
one can obtain desired isomorphism $H\oplus\mathbb{Z}\ni ((h^1_p)_p, n)\mapsto ((h^2_p)_p, n)\in H\oplus\mathbb{Z}$ by applying the same argument for every $q$.
\end{proof}

On behalf of all authors, the corresponding author states that there is no conflict of interest.
This paper has no associated data.

\end{document}